\documentclass[a4paper,smallmargins,12pt]{article}

\usepackage[]{philip_arxiv_layout}
\usepackage{bm}
\usepackage{dsfont}
\usepackage{amssymb}
\usepackage{mathtools}
\usepackage{microtype}
\usepackage{nicefrac}

\newcommand{\ot}{\leftarrow}

\newcommand{\tvto}{\overset{tv}{\to}}

\newcommand{\diff}{\mathrm{d}}
\newcommand{\angs}[1]{\left( #1 \right)}

\DeclareMathOperator*{\marg}{\mathrm{marg}}

\DeclareMathOperator*{\Indep}{\perp\mkern-11mu\perp}
\DeclareMathOperator*{\nIndep}{\not\mkern-2mu\perp\mkern-11mu\perp}
\DeclareMathOperator*{\Perp}{\perp}
\DeclareMathOperator*{\nPerp}{\not\perp}
\DeclareMathOperator*{\given}{|}
\DeclareMathOperator*{\Do}{\mathrm{do}}

\newcommand{\I}{\mathds{1}}

\newcommand{\EE}{\mathbb{E}}

\newcommand{\NN}{\mathbb{N}}
\newcommand{\PP}{\mathbb{P}}
\newcommand{\QQ}{\mathbb{Q}}
\newcommand{\SSb}{\mathbb{S}}
\newcommand{\RR}{\mathbb{R}}

\newcommand{\Bcal}{\mathcal{B}}

\newcommand{\Dcal}{\mathcal{D}}
\newcommand{\Ecal}{\mathcal{E}}
\newcommand{\Fcal}{\mathcal{F}}
\newcommand{\Gcal}{\mathcal{G}}

\newcommand{\Ical}{\mathcal{I}}

\newcommand{\Mcal}{\mathcal{M}}

\newcommand{\Pcal}{\mathcal{P}}

\newcommand{\Scal}{\mathcal{S}}

\newcommand{\Xcal}{\mathcal{X}}

\usepackage{tikz}
\usetikzlibrary{positioning, arrows}
\usetikzlibrary{automata}
\usetikzlibrary{arrows}
\usetikzlibrary{arrows.meta, calc}
\usetikzlibrary{shapes,positioning,decorations.markings}
\usetikzlibrary{decorations.pathreplacing}
\usetikzlibrary{arrows,fit}
\usetikzlibrary{matrix,decorations.pathmorphing}
\usetikzlibrary{patterns}

\tikzstyle{var}=[circle,draw,thick,minimum size=20pt,inner sep=0pt]
\tikzstyle{vare}=[ellipse,draw,thick,minimum size=20pt,inner sep=0pt]
\tikzstyle{vari}=[shape=rectangle,draw,thick,minimum size=20pt,inner sep=0pt]
\tikzstyle{varh}=[circle,draw,thick,minimum size=20pt,inner sep=0pt,dashed]

\tikzstyle{arr}=[->,>=stealth',draw,thick]
\tikzstyle{arrh}=[->,>=stealth',draw,fill,thick,dashed]
\tikzstyle{biarr}=[<->,>=stealth',draw,thick]

\tikzstyle{ndint} = [draw, line width=1pt, color=teal, shape=rectangle, minimum size=20pt,inner sep=0pt, text=black]
\tikzstyle{ndout} = [draw, line width=1pt, shape=circle, color=blue, minimum size=20pt,inner sep=0pt, text=black]
\tikzstyle{ndlat} = [draw, line width=1pt, shape=circle, color=red, minimum size=20pt,inner sep=0pt, text=black]

\tikzstyle{arint} = [style={->,>=Latex,thick,teal}]
\tikzstyle{arout} = [style={->,>=Latex,thick,blue}]
\tikzstyle{arlout} = [style={->,>=Latex,thick,red}]
\tikzstyle{arlat} = [style={<->,>=Latex,thick,red}]
\tikzstyle{car} = [style={o->,>=Latex,thick}]
\tikzstyle{carc} = [style={o-o,>=Latex,thick}]
\tikzstyle{sarr}=[style={{Rays[n=6]}->,>=Latex,thick}]
\tikzstyle{sars}=[style={{Rays[n=6]}-{Rays[n=6]},thick}]

\tikzstyle{implies}=[double,double equal sign distance,-implies]
\tikzstyle{iff}=[double,double equal sign distance,-implies,postaction={draw,thin,double,double equal sign distance,implies-}]

\tikzset{
    loop arc/.style n args={3}{
        /utils/exec={
            \pgfmathsetmacro{\outAngle}{#1+#2/2}
            \pgfmathsetmacro{\inAngle}{#1-#2/2}
        },
        out=\outAngle,
        in=\inAngle,
        min distance=#3
    }
}

\tikzset{
    loop arc rev/.style n args={3}{
        /utils/exec={
            \pgfmathsetmacro{\outAngle}{#1-#2/2}
            \pgfmathsetmacro{\inAngle}{#1+#2/2}
        },
        out=\outAngle,
        in=\inAngle,
        min distance=#3
    }
}

\usepackage{tikz}
\usetikzlibrary{positioning, arrows}
\usetikzlibrary{automata}
\usetikzlibrary{arrows}
\usetikzlibrary{arrows.meta, calc}
\usetikzlibrary{shapes,positioning,decorations.markings}
\usetikzlibrary{decorations.pathreplacing}
\usetikzlibrary{arrows,fit}
\usetikzlibrary{matrix,decorations.pathmorphing}
\usetikzlibrary{patterns}
\tikzstyle{var}=[circle,draw,thick,minimum size=20pt,inner sep=0pt]
\tikzstyle{vare}=[ellipse,draw,thick,minimum size=20pt,inner sep=0pt]
\tikzstyle{vari}=[shape=rectangle,draw,thick,minimum size=20pt,inner sep=0pt]
\tikzstyle{varh}=[circle,draw,thick,minimum size=20pt,inner sep=0pt,dashed]
\tikzstyle{arr}=[->,>=stealth',draw,thick]
\tikzstyle{arrh}=[->,>=stealth',draw,fill,thick,dashed]
\tikzstyle{biarr}=[<->,>=stealth',draw,thick]

\tikzstyle{ndint} = [draw, line width=1pt, color=teal, shape=rectangle, minimum size=20pt,inner sep=0pt, text=black]
\tikzstyle{ndout} = [draw, line width=1pt, shape=circle, color=blue, minimum size=20pt,inner sep=0pt, text=black]
\tikzstyle{ndlat} = [draw, line width=1pt, shape=circle, color=red, minimum size=20pt,inner sep=0pt, text=black]

\DeclareMathOperator*{\Anc}{\mathrm{Anc}}

\DeclareMathOperator*{\pa}{\mathrm{pa}}

\usepackage{thmtools}
\usepackage{thm-restate}

\declaretheorem[name=Theorem,sibling=theorem,numberwithin=section]{theoremapp}
\declaretheorem[name=Lemma,sibling=theorem,numberwithin=section]{lemapp}
\declaretheorem[name=Definition,sibling=theorem,numberwithin=section,style=definition]{definitionapp}

\declaretheorem[name=Corollary,sibling=theorem,numberwithin=section]{corollaryapp}

\author{Philip Boeken\thanks{Department of Mathematics, VU Amsterdam, \url{p.a.boeken@vu.nl}} \and Joris M.\ Mooij\thanks{Korteweg-de Vries Institute for Mathematics, University of Amsterdam, \url{j.m.mooij@uva.nl}}}
\date{\today}

\title{Causal Graphs, Markov Properties and Do-calculus for Stochastic Differential Equations}

\begin{document}
\maketitle
\begin{abstract}
	Stochastic differential equations (SDEs) are widely used to model continuous-time dynamical systems, but graphical causal models for them are not yet well-understood. We consider \emph{systems of causal SDEs} that are equipped with an explicit causal semantics. We pose solvability conditions for systems of causal SDEs such that they have well-defined observational and interventional distributions --- even after marginalisation --- and provide a general class of Lipschitz semimartingale SDEs that satisfies these conditions. As core results we establish the $\sigma$-separation Markov property and the do-calculus in terms of the system's \emph{causal graph} for probabilistic independence and interventions on the level of sample paths. For a class of additive-noise SDEs we prove a stronger $d$-separation Markov property, even if the system is cyclic. As a corollary of the do-calculus, we obtain an explicit causal interpretation of the graph: that the absence of a directed path implies the absence of a causal effect. We further introduce \emph{time-split} systems, which consider the causal relations between the processes when evaluated on disjoint intervals or time-points, and use them to reason about subsampled time-series, continuous-time Granger non-causality and local independence. Finally, we discuss how constraint-based causal discovery algorithms (PC, FCI, CCD, CCI) apply directly to SDEs within our framework when conditional independence between sample paths can be consistently tested.

\end{abstract}


\section{Introduction}
Stochastic differential equations (SDEs) are widely used to model continuous-time dynamical systems subject to randomness, with applications across the natural sciences and engineering.
Causal graphs for these continuous-time models are not yet well understood.
Celebrated tools from causal modelling are the causal graph, the \emph{Markov Property} (graphical separations between vertices imply conditional independencies between the variables), the \emph{do-calculus} (graphical criteria that relate observational distributions to interventional distributions), and \emph{causal discovery} (estimating the causal graph from data). These tools have not yet been worked out for general (possibly cyclic) systems of SDEs.

Causal modelling of continuous-time systems has been worked out in specific settings. Interventional semantics for equilibrium states of ODEs are developed in \cite{mooij2013ordinary}, and for the dynamic regime of ODEs in \cite{rubenstein2018deterministic} and \cite{bongers2022causal}. \cite{hansen2014causal} defined causal semantics for SDEs.
Graphical models with a Markov property for Granger non-causality have been introduced by \cite{eichler2010granger,eichler2012graphical}.
A continuous-time notion of Granger non-causality called \emph{local independence} has been introduced by \cite{aalen1978nonparametric,aalen2008survival,florens1996noncausality,comte1996noncausality} and developed graphically by \cite{didelez2008graphical,mogensen2020markov,mogensen2022graphical} with causal interpretations by \cite{aalen2012causality,roysland2024graphical} and nonparametric testing in \cite{christgau2023nonparametric}.
Causal discovery for discrete-time systems has been investigated by \cite{malinsky2018causala,runge2019detecting,reiter2024causal}; see \cite{assaad2022survey} for an overview. Causal discovery in continuous time has likewise been carried out for specific graphs: dynamic-independence graphs for stochastic kinetic networks \citep{bowsher2010stochastic}, local-independence graphs via an FCI-type algorithm \citep{mogensen2018causal}, and the directed graph of a stable process from its characteristic exponents \citep{bruck2026graph}. Other approaches to causal discovery for systems of SDEs include \cite{manten2024signature}, \cite{guan2024identifying}, \cite{engelke2024levy} and \cite{nathaniel2025deep}. None of these lines of work establishes a graphical Markov property in terms of $\sigma$- or $d$-separation for probabilistic independence between sample paths, or provides a do-calculus for general (possibly cyclic) SDEs.

The framework of Structural Causal Models (SCMs) provides many tools for causal reasoning in static and discrete-time settings: causal graphs encode conditional independence structure, the do-calculus can be employed to identify causal effects from observational distributions, and constraint-based algorithms recover causal structure from data.
Building on the cyclic-SCM theory of \cite{forre2017markov,bongers2021foundations,forre2025mathematical} and the pathwise solution function for SDEs of \cite{przybylowicz2024skorohod}, this paper develops such a framework for general systems of causal SDEs, allowing cycles, instantaneous relations, jumps, and non-Markovianity.

Causal models that abstract away time in an unspecified way are inherently ambiguous \citep{reisach2025case}. Most existing approaches that explicitly model time do so in discrete time, e.g., via structural vector autoregression models or dynamic Bayesian networks. We give a formal interpretation of these models as projections of underlying continuous-time systems observed at specific points in time. This projection can distort causal inference: conditional independencies may vanish, estimates of causal effects may be invalid, and inferred causal structure may not correspond to its true continuous-time counterpart.
By grounding causal semantics directly in continuous-time dynamics, our framework provides a principled foundation for interpreting discretely observed data while remaining mindful of the underlying processes.

\subsection{Contributions}
We equip systems of SDEs with a structural causal semantics via the notion of perfect intervention, following \cite{mooij2013ordinary,hansen2014causal,rubenstein2018deterministic,peters2020causal}, and we give a definition of the \emph{causal graph} of such systems of causal SDEs. Although not strictly necessary for our analysis, we also give a pathwise, functional formulation of the stochastic integral (Definition~\ref{def:sde_pathwise}): it lets one read the system of SDEs as a structural causal model (SCM), which we find conceptually clarifying, and it supplies the notation $\Phi_v$ for each causal mechanism (structural equation) that we rely on throughout the proofs. We then provide the following results:
\begin{enumerate}[label=(\roman*)]
    \item \emph{Solvability.} We analyse the \emph{essential unique solvability} (Definition \ref{def:ess_unique_solvable}) of a system of causal SDEs with respect to a subset of variables, which is a solvability condition for a subset of the system, under arbitrary adapted inputs for the remaining variables. We provide two model classes of particular interest that satisfy these conditions: a class of semimartingale SDEs that satisfies standard Lipschitz and linear-growth conditions (Assumption~\ref{ass:uniquely_solvable}), and an additive noise SDE driven by Brownian motion (Assumption~\ref{ass:additive_noise}).
    \item \emph{Marginalisation.} We define the marginalisation of a system of causal SDEs on a subset of the variables (Definition~\ref{def:marginalisation}) that is causally consistent with the original model, and show that essential unique solvability is preserved under this operation (Theorem~\ref{thm:marg_closure_simple}).
    \item \emph{Markov properties.} Following the acyclification strategy of \cite{forre2017markov,bongers2021foundations,forre2025mathematical}, we prove the $\sigma$-separation Markov property in the causal graph $G(\Dcal)$ for certain essentially uniquely solvable systems (Theorem~\ref{thm:sigma_sep_mp}). For the class of additive-noise SDEs, we strengthen this to the $d$-separation Markov property (Theorem~\ref{thm:dsep_mp}).
    \item \emph{Do-calculus.} We establish the three rules of the do-calculus in terms of $\sigma$-separation in the causal graph for essentially uniquely solvable systems (Theorem~\ref{thm:do_calculus}). As a consequence, the absence of a directed path implies the absence of a causal effect, giving the graph a clear causal interpretation (Theorem~\ref{thm:causation_implies_path}).
    \item \emph{Time-splitting and subsampling.} We introduce \emph{time-split} systems of causal SDEs, which separately model the causal relations between processes on disjoint subintervals (Definition~\ref{def:sdes_timesplit}), and as a special case the \emph{subsampled} system that marginalises the time-split system onto a set of time-point measurements (Definition~\ref{def:subsampled}). We transfer the $\sigma$- and $d$-separation Markov properties and the do-calculus to them, which formalises continuous-time Granger non-causality \citep{granger1969investigating} and local independence \citep{schweder1970composable,didelez2008graphical,mogensen2020markov}, and yields a Markov property for local independence.
    \item \emph{Causal discovery.} We give a proof-of-concept application of the cyclic constraint-based discovery algorithms FCI \citep{spirtes1999algorithm,mooij2020constraintbased} and CCI \citep{strobl2019constraintbased} to SDEs, relying on a conditional independence oracle.
\end{enumerate}
All results are illustrated on a single running example, introduced in the following section. Proofs are given in the appendix.

\subsection{Example: the repressilator}
To make the exposition more concrete, we examine a biological system that has been extensively studied and lends itself naturally to a stochastic modelling perspective. Gene regulatory networks are a prototypical example: they consist of interacting components whose dynamics are driven both by feedback loops and by random fluctuations at the molecular level. Among such systems, the \emph{repressilator} \citep{elowitz2000synthetic} has become a canonical case study. It illustrates how cyclic interactions, noise, and causal mechanisms interact in a way that is both biologically relevant and mathematically tractable. We use this system as a running example of a system of SDEs that are \emph{causal}, which means that the variable on the left-hand side of the equation is directly caused by those on the right-hand side of the equation.
\begin{example}[Repressilator]
    The \emph{repressilator} is an oscillator of gene expressions in the E.~coli bacteria, specifically of the genes \emph{lacI, tetR, cI} and \emph{GFP}. Extending the original ODE model of \cite{elowitz2000synthetic} by adding Brownian noise, the dynamics of the mRNA abundance $M_i$ and protein values $P_i$ for each gene $i$ and its predecessor $j$ for $(i,j) \in$ $\{$(lacI, cI), (tetR, lacI), (cI, tetR), (GFP, tetR)$\}$ are given by
    \begin{align*}
        \diff M_{i}(t) & = \left(\frac{\alpha}{1+|P_{j}(t)|^n} + \alpha_0 - M_{i}(t)\right) \diff t + \sigma \diff Z^M_{i}(t) \\
        \diff P_{i}(t) & = \beta(M_{i}(t) - P_{i}(t)) \diff t + \sigma \diff Z^P_{i}(t),
    \end{align*}
    with given exogenous random variables for the initial conditions $M_i(0)$ and $P_i(0)$, independent Brownian motions $Z^M_{i}$ and $Z^P_{i}$, and values of $\alpha, \alpha_0, \beta, \sigma > 0$ and Hill coefficient $n\geq 1$. Setting $n=1$ gives the well-known Michaelis-Menten kinetics.
    In this model, protein abundance $P_j$ inhibits the mRNA $M_i$ of protein $i$, which is required for growth of the protein abundance $P_i$.
    The dependence structure is graphically depicted in Figure \ref{fig:repressilator_graph}, where $u\to v$ if variable $u$ occurs in the SDE for variable $v$. In Definition \ref{def:sde_graph} we will formalise this as a causal graph.
    A simulated sample path of the proteins in the cycle is provided in Figure \ref{fig:repressilator_obs}.
    \begin{figure}[!htb]
        \centering
        \begin{subfigure}{0.38\linewidth}
            \centering
            \begin{tikzpicture}[scale=1.6]
                \node[] (X1) at (-0.5,0.866) {$M_{lacI}$};
                \node[] (Y1) at (0.5,0.866) {$P_{lacI}$};
                \node[] (X2) at (1,0) {$M_{tetR}$};
                \node[] (Y2) at (0.5,-0.866) {$P_{tetR}$};
                \node[] (X3) at (-0.5,-0.866) {$M_{cI}$};
                \node[] (Y3) at (-1,0) {$P_{cI}$};
                \node[] (X4) at (0.5,-1.732) {$M_{GFP}$};
                \node[] (Y4) at (-0.5,-1.732) {$P_{GFP}$};
                \node[] () at (0,-2) {};
                \draw[arr] (X1) to (Y1);
                \draw[arr] (Y1) to (X2);
                \draw[arr] (X2) to (Y2);
                \draw[arr] (Y2) to (X3);
                \draw[arr] (X3) to (Y3);
                \draw[arr] (Y3) to (X1);
                \draw[arr] (Y2) to (X4);
                \draw[arr] (X4) to (Y4);
                \draw[arr] (X1) to [loop arc={90}{60}{0.6cm}] (X1);
                \draw[arr] (X2) to [loop arc={0}{60}{0.6cm}] (X2);
                \draw[arr] (X3) to [loop arc rev={180}{60}{0.6cm}] (X3);
                \draw[arr] (X4) to [loop arc={0}{60}{0.6cm}] (X4);
                \draw[arr] (Y1) to [loop arc={90}{60}{0.6cm}] (Y1);
                \draw[arr] (Y2) to [loop arc={0}{60}{0.6cm}] (Y2);
                \draw[arr] (Y3) to [loop arc rev={180}{60}{0.6cm}] (Y3);
                \draw[arr] (Y4) to [loop arc rev={180}{60}{0.6cm}] (Y4);
            \end{tikzpicture}
            \caption{Causal graph.}
            \label{fig:repressilator_graph}
        \end{subfigure}
        \hfill
        \begin{subfigure}{0.6\linewidth}
            \centering
            \includegraphics[width=1\linewidth]{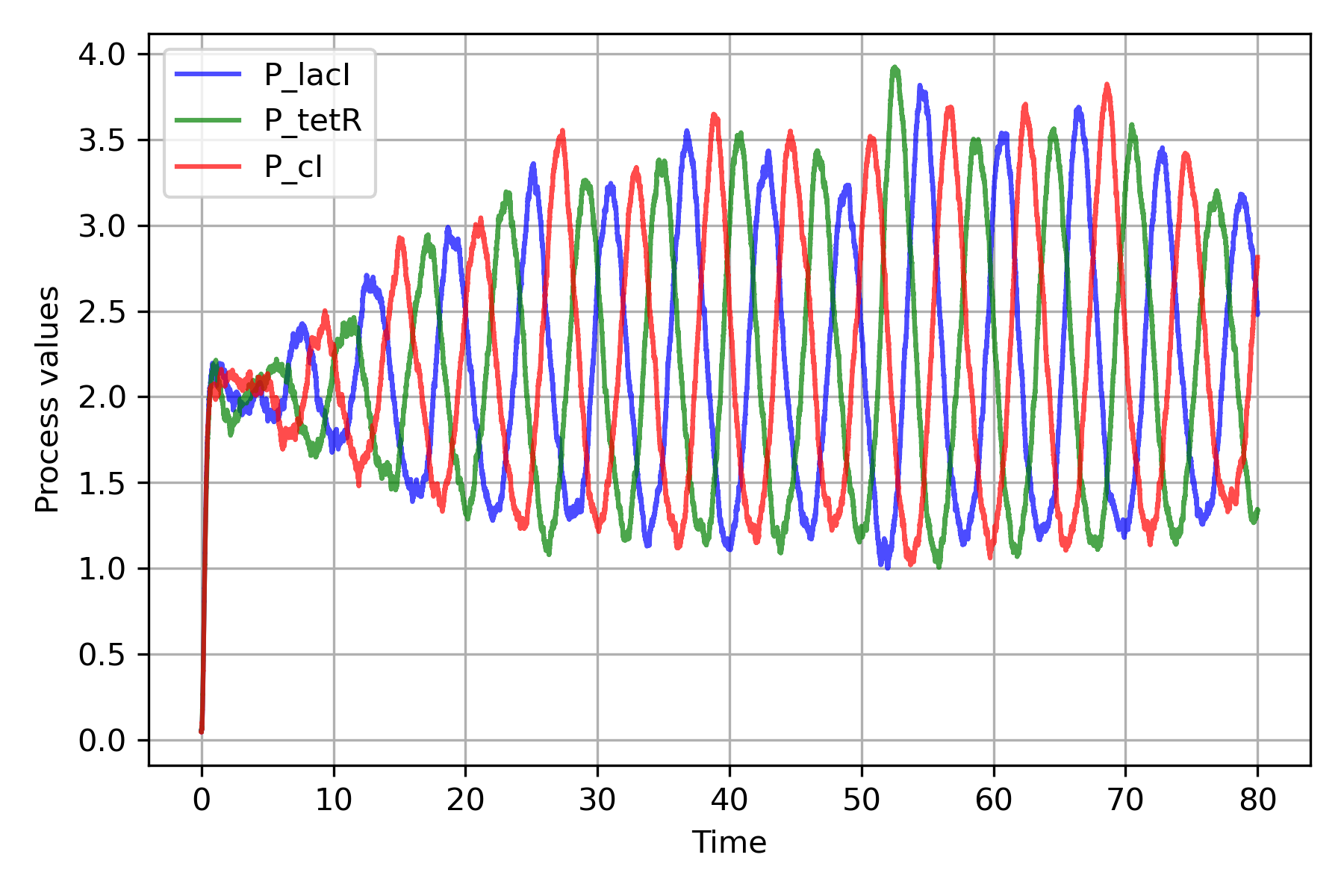}
            \caption{Sample trajectories of the proteins $P_{lacI}, P_{tetR}$ and $P_{cI}$.}
            \label{fig:repressilator_obs}
        \end{subfigure}
        \caption{The repressilator of the E.\ coli bacteria, modelled with additive noise.}
    \end{figure}
\end{example}

Experimentation is a cornerstone of the discovery of causal relations: by deliberately perturbing a system and observing the resulting changes, one can distinguish genuine cause-effect relationships from mere correlations. In the context of stochastic dynamics, this means modifying certain components of the governing equations and analysing how the system responds over time. Importantly, interventions are not only of interest for discovering causal structure, they also arise naturally in practice, where one aims to steer a system toward a desired state. Translating this intuition into the language of SDEs, a \emph{perfect intervention} amounts to altering or overriding the dynamics of a particular variable by fixing that component to a pre-specified sample path, without any dependency on the remainder of the system, and without any side-effects. In the following example, we illustrate this idea by intervening on the repressilator.

\begin{example}[Intervening on the repressilator]
    A typical way of experimenting with gene regulatory networks is to knock out a particular gene, disabling the production of the protein.
    Suppose we knock out the lacI gene, then we obtain the system where we set $M_{lacI}(t)=0$ for all $t$. A sample of the resulting process for the proteins is depicted in Figure \ref{fig:repressilator_knockout}. We see that tetR is not inhibited anymore and thus reaches high activity levels, highly suppressing cI abundance. Similarly, when exciting the lacI gene, this suppresses tetR, which in turn excites cI levels, as depicted in Figure \ref{fig:repressilator_excite}.
    \begin{figure}[!htb]
        \centering
        \begin{subfigure}{0.496\linewidth}
            \centering
            \includegraphics[width=1\linewidth]{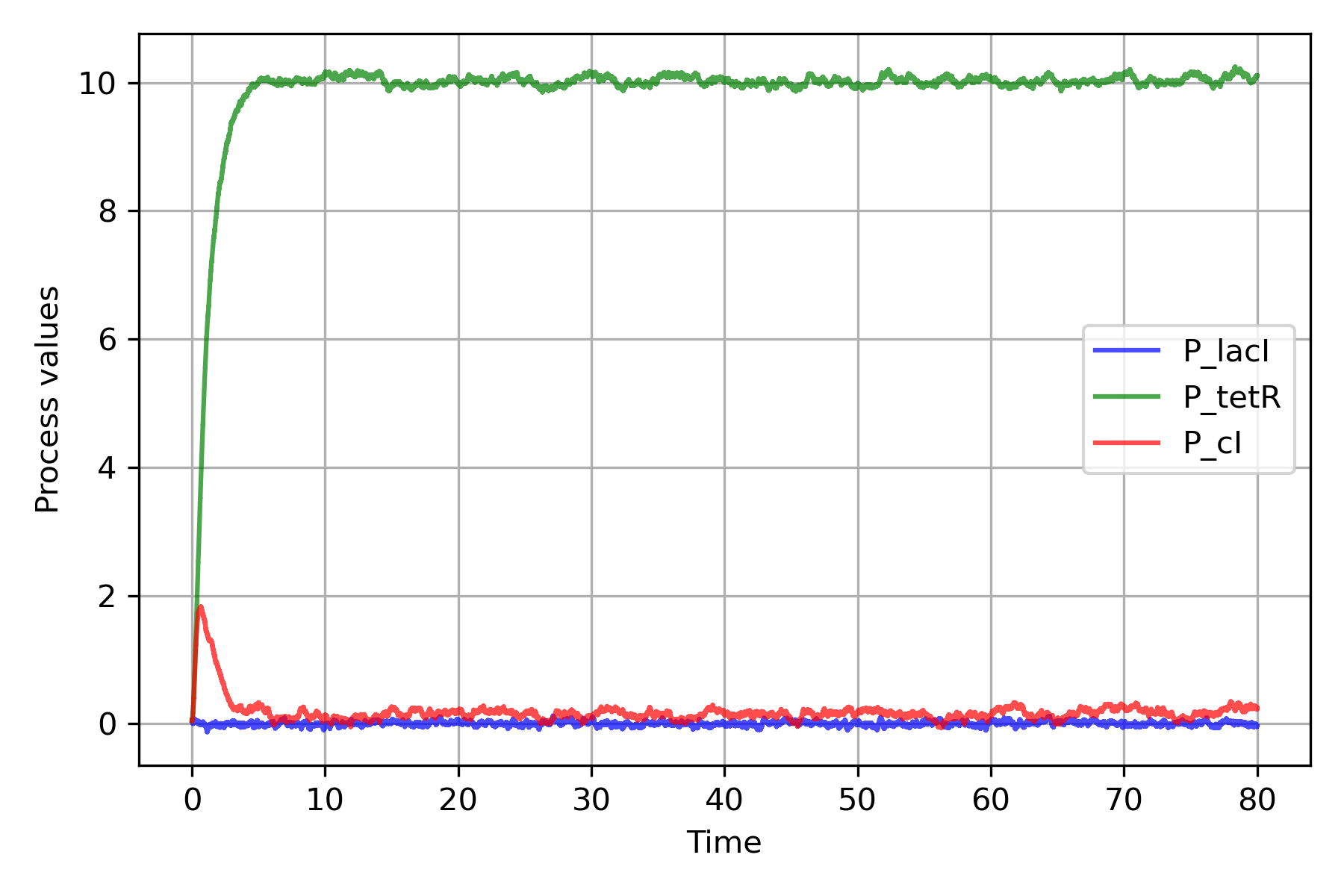}
            \caption{Gene `lacI' knocked out.}
            \label{fig:repressilator_knockout}
        \end{subfigure}
        \hfill
        \begin{subfigure}{0.496\linewidth}
            \centering
            \includegraphics[width=1\linewidth]{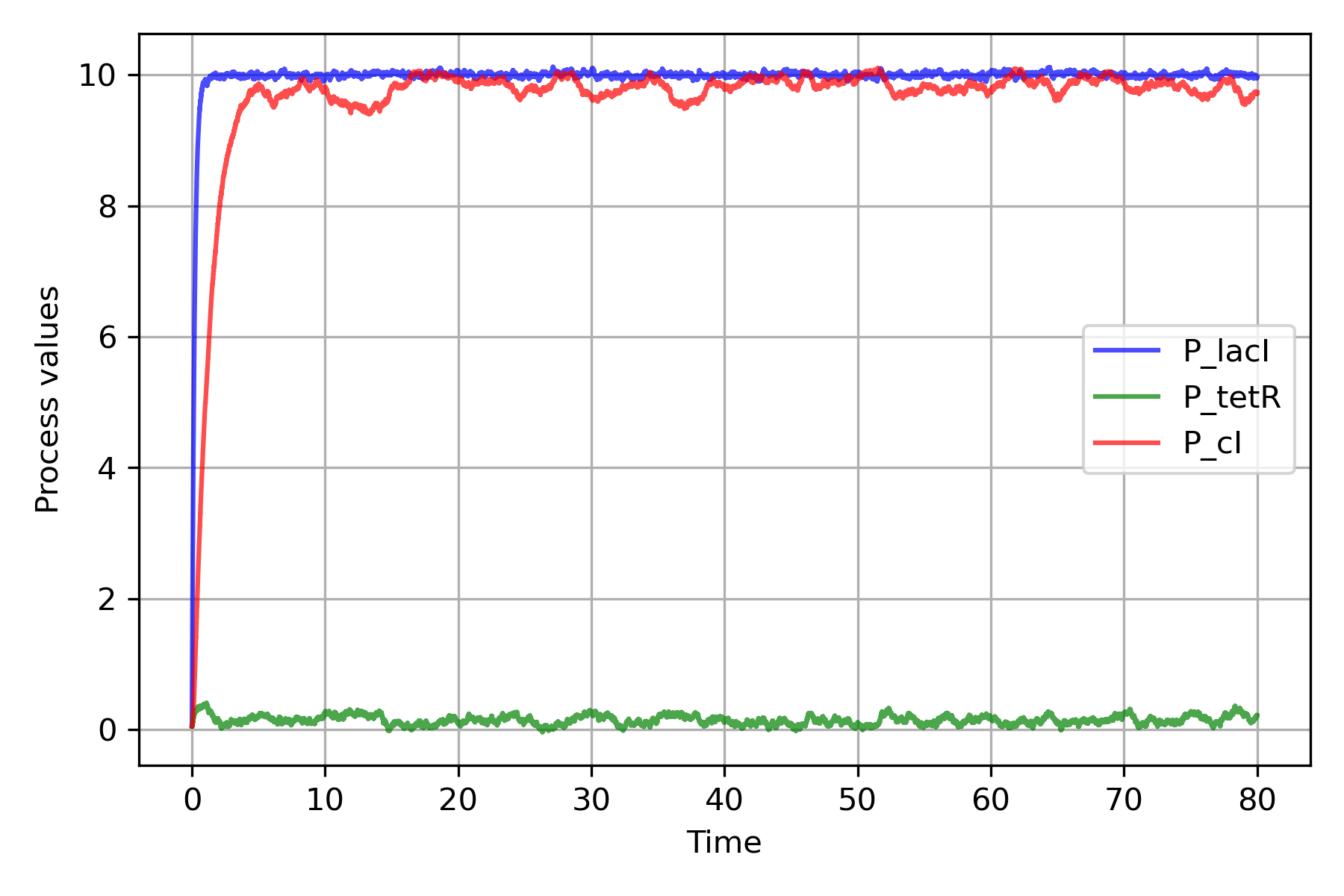}
            \caption{Gene `lacI' excited.}
            \label{fig:repressilator_excite}
        \end{subfigure}
        \caption{The effects of interventions on the `lacI' gene of the repressilator.}
        \label{fig:repressilator_int}
    \end{figure}
\end{example}



In the following section, we make systems of causal SDEs and interventions on them (as demonstrated using the repressilator) mathematically precise, allowing for jumps in the sample paths, non-Markovianity, and instantaneous dependencies.

\section{Systems of causal SDEs}\label{sec:sdes}
Let $(\Omega, \Fcal, \PP)$ denote a filtered probability space with filtration $\Fcal = (\Fcal_t)_{t\in [0,T]}$ for some $T<\infty$, satisfying \emph{the usual conditions}: $\Fcal$ is right-continuous and contains all $\PP$-null-sets.
Let $D([0,T], \RR^n)$ be the space of functions from $[0,T]$ to $\RR^n$ that are càdlàg (continue \`a droite, limité \`a gauche, i.e.\ right-continuous with left limits).\footnote{Throughout, we equip $D([0,T], \RR^n)$ with the $J_1$ topology \citep{skorokhod1956limit} and the Borel $\sigma$-algebra, making it a standard Borel space. We make no distinction between $D([0,T], \RR)^n$ and $D([0,T], \RR^n)$ since they are measurably isomorphic. The product topology on $D([0,T], \RR)^n$ is strictly weaker than the Skorokhod topology on $D([0,T], \RR^n)$ \citep[Section VI.1b]{jacod2003limit}, but this does not cause problems: the only continuity conditions are imposed in Assumptions \ref{ass:uniquely_solvable} and \ref{ass:additive_noise} via Theorem~\ref{thm:ito_map}, which are insensitive to this distinction.}
A stochastic process $X: \Omega \times [0,T] \to \RR^n$ is adapted if $X(t) \in \Fcal_t$ for all $t\in [0,T]$, and càdlàg if it has càdlàg sample paths almost surely, in which case we can represent it as a process $X:\Omega \to D([0,T], \RR^n)$.
Let $\mathbb{S}([0,T], \RR^n)$ denote the class of $\RR^n$-valued semimartingales, i.e.\ adapted processes $X : \Omega \to D([0,T], \RR^n)$ that admit a decomposition $X = \Lambda + M$ with $\Lambda$ a process of finite variation and $M$ a local martingale.\footnote{Note that $\SSb([0,T], \RR^n) = \SSb([0,T], \RR)^n$.}
Denoting with $\Pcal([0,T], \RR^m)$ the set of predictable processes on $\RR^m$, the stochastic (Itô) integral of $G$ w.r.t.\ $H$ is a mapping
\begin{equation}\label{eqn:stochastic_integral}
    J: \Pcal([0,T], \RR^m) \times \SSb([0,T], \RR^m) \to \SSb([0,T], \RR), ~~(G, H) \mapsto \int_0^{(\cdot)} G(s) \diff H(s),
\end{equation}
where $\int_0^t G(s) \diff H(s) = \sum_{i=1}^m\int_0^t G_i(s) \diff H_i(s)$ (\citealp{protter2005stochastic}, Chapter IV, Theorem 15).

We call a Borel-measurable function $f : [0,T] \times D([0,T], \RR^m) \to \RR^n$ \emph{càdlàg} if $t\mapsto f(t, x) \in D([0,T], \RR^n)$ for all $x\in D([0,T], \RR^m)$, \emph{adapted} if $f(t, x) = f(t, x^{\wedge t})$ for all $x\in D([0,T], \RR^m)$ and $t\in[0,T]$, where $x^{\wedge t}(s) := x(s\wedge t)$.
If the process $X$ is càdlàg and adapted and the function $f$ is measurable, càdlàg and adapted, then the processes $f(t, X)$ and $f(t-, X) := \lim_{s\uparrow t}f(s, X)$ are adapted, and the process $f(t-, X)$ is predictable \citep[Lemma 2.2]{przybylowicz2024skorohod}.

\begin{definition}[System of causal SDEs]\label{def:sdes}
    Given a filtered probability space $(\Omega, \Fcal, \PP)$ satisfying the usual conditions, a \emph{system of causal stochastic differential equations (SDEs)} is a tuple $\Dcal = (V, W, X_W, f, g, h)$ with $V$ and $W$ finite disjoint index sets, for each $w\in W$ a stochastic process $X_w \in \SSb([0,T], \RR)$ with the family $(X_w)_{w \in W}$ mutually independent, and for each $v\in V$ a \emph{causal SDE}
    \begin{align}\label{eqn:sde}
        X_v(t) = f_v(t, X_{\alpha(v)}) + \int_0^t g_{v}(s-, X_v, X_{\beta(v)}) \diff h_v(s, X_{\gamma(v)}) \quad \text{for all $t\in[0,T]$,}
    \end{align}
    with given sets $\alpha(v)\subseteq V\cup W$, $\beta(v) \subseteq (V \cup W)\setminus \{v\}$ and $\gamma(v)\subseteq W$, and for some $m_v \in \NN\cup \{0\}$, measurable, càdlàg and adapted functions
    \begin{align*}
        f_v & : [0,T]\times D([0,T], \RR^{|\alpha(v)|}) \to \RR         \\
        g_v & : [0,T] \times D([0,T], \RR^{|\beta(v)|+1}) \to \RR^{m_v} \\
        h_v & : [0,T] \times D([0,T], \RR^{|\gamma(v)|}) \to \RR^{m_v}
    \end{align*}
    such that $h_v(t, X_{\gamma(v)})\in \SSb([0,T], \RR^{m_v})$.
\end{definition}
A \emph{solution} (commonly referred to as a \emph{strong solution}) of a system of SDEs $\Dcal$ is an adapted process $X_V: \Omega \to D([0,T], \RR^{|V|})$ such that for each $v\in V$ the equation (\ref{eqn:sde}) holds $\PP$-a.s. The qualifier ``causal'' reflects that, like structural equations in SCMs \citep{pearl2009causality}, each equation \eqref{eqn:sde} encodes a causal mechanism rather than an algebraic constraint. Algebraically rewriting the system of SDEs might give the same solutions, but changes the causal graph and the effects of interventions (as defined below).

The function $f_v(t, X_{\alpha(v)})$ can be used to model the initial value of $X_v$ and instantaneous functional dependencies. For the former, we say that \emph{$f_v$ models the initial condition} if $f_v(t, X_{\alpha(v)})$ is constant in $t$, so that it only describes the initial value $X_v^0 := f_v(0, X_{\alpha(v)}^{\wedge 0})$. In this case, equations of the form (\ref{eqn:sde}) are often suggestively written as
\begin{equation*}
    \diff X_v(t) = g_{v}(t-, X_v, X_{\beta(v)}) \diff h_v(t, X_{\gamma(v)}) \quad\quad X_v(0) = X_v^0,
\end{equation*}
hence the name stochastic \emph{differential} equation. Since $h_v$ is typically not differentiable this has no other meaning than the integral equation.
The processes $g_v(t-, X_v, X_{\beta(v)})$ and $h_v(t, X_{\gamma(v)})$ are called \emph{integrands} and \emph{integrators} respectively. We call \emph{$g_v$ Markov} if $g_v(t, X_v, X_{\beta(v)}) = g_v(t, X_v(t), X_{\beta(v)}(t))$, and \emph{time-invariant} if $g_v(t, X_v, X_{\beta(v)}) = g_v(X_v(t), X_{\beta(v)}(t))$. If $X_{\gamma(v)}$ is a semimartingale and $h_v(t, X_{\gamma(v)}) = h_v(X_{\gamma(v)}(t))$ and the map $x\mapsto h_v(x)$ is twice continuously differentiable, then $h_v(t, X_{\gamma(v)})$ is also a semimartingale. If the process $h_v(t, X_{\gamma(v)})$ is deterministic, it is a semimartingale if and only if its path is of finite variation. If the integrator $h_v(t, X_{\gamma(v)})$ is $\PP$-almost surely continuous, one obtains the same solutions without taking the left limit in the integrand.

Definition \ref{def:sdes} generalises common definitions of SDEs, by allowing for functional relations via $f_v$.
For specific choices of integrators and integrands, systems of causal SDEs model some special cases:
\begin{enumerate}[label=\roman*)]
    \item If $h_v(s, X_{\gamma(v)}) = s$, the `stochastic integral' $\int g_v(s-, X_v, X_{\beta(v)}) \diff h_v(s, X_{\gamma(v)})$ reduces to the Riemann integral $\int g_v(s, X_v, X_{\beta(v)}) \diff s$, so Ordinary Differential Equations and Random Differential Equations are special cases of SDEs. If $h_v(s, X_{\gamma(v)})$ is of finite variation, the integral $\int g_v(s-, X_v, X_{\beta(v)}) \diff h_v(s, X_{\gamma(v)})$ reduces to the Stieltjes integral. If $g_v(s, X_v, X_{\beta(v)})$ and $h_v(s, X_{\gamma(v)})$ have no common discontinuities and finite $p$-variation and $q$-variation respectively with $p^{-1}+q^{-1} >1$, the integral reduces to the Young integral \citep[Section 1.3]{lyons2007differential}.
    \item If $f_v$ models the initial condition, $g_v$ is Markov and $h_v(\cdot, X_{\gamma(v)})$ is a vector of independent L\'evy processes for all $v\in V$, the solution $X_V$ is temporally Markov\footnote{Process $X\in \SSb$ is called temporally \emph{Markov} if $\PP(X_{t+s} \given \Fcal_{t}) = \PP(X_{t+s} \given X_{t})$ for all $s,t\in [0,T]$ such that $s+t \in [0,T]$; it is temporally \emph{strong Markov} if this holds for any stopping time $t$.} (see e.g.\ \citealp{protter2005stochastic}, Chapter V, Theorem 32 for the case that $g_v$ is Lipschitz). If additionally $g_v$ is time-invariant, then the solution is temporally strong Markov.
    \item If $f_v$ models the initial condition, $g_v$ is Markov and $h_v(t, X_{\gamma(v)}) = (t, X_{\gamma(v)}(t))$ where $X_{\gamma(v)}$ is a Brownian motion, any solution $X_v$ has continuous and possibly non-differentiable sample paths.
    Such an SDE is called an \emph{It\^o diffusion}.
    \item If $f_v$ models the initial condition, $g_v$ is Markov and $h_v(t, X_{\gamma(v)})$ is a jump process (e.g.\ a Poisson process), the solution $X_v$ can have jumps as well. If $h_v(t, X_{\gamma_1(v)}, X_{\gamma_2(v)}) = (t, X_{\gamma_1(v)}(t), X_{\gamma_2(v)}(t))$ with $X_{\gamma_1(v)}$ and $X_{\gamma_2(v)}$ a Brownian motion and a jump process respectively, such an SDE is called a \emph{jump-diffusion}.
\end{enumerate}
Given a system of causal SDEs $\Dcal$, we define its \emph{causal graph} as follows:
\begin{definition}[Causal graph]\label{def:sde_graph}
    Given a system of causal SDEs $\Dcal = \angs{V, W, X_W, f, g, h}$,
    the \emph{augmented causal graph} is the directed graph $G^+(\Dcal) = (V\cup W, E)$ where
    \begin{align*}
        E := & \left\{u \to v : v \in V, u \in \alpha(v) \text{ and $f_v$ is not constant in } X_u\right\}               \\
             & \cup \left\{u \to v : v \in V, u \in \{v\}\cup \beta(v) \text{ and $g_v$ is not constant in } X_u\right\} \\
             & \cup \left\{u \to v : v \in V, u \in \gamma(v) \text{ and $h_v$ is not constant in } X_u\right\}
    \end{align*}
    The \emph{causal graph} is the directed mixed graph $G(\Dcal)=(V, E', L)$, where $L = \{i \leftrightarrow j : i \neq j \in V, (i \leftarrow k \to j) \in G^+(\Dcal) \text{ for some $k\in W$}\}$ and $E'$ is the restriction of $E$ to $V$.
\end{definition}
For a subset $A$ of the vertices of a directed mixed graph $G$, we write $\pa(A) := \{u\notin A : u\to v \text{ in } G \text{ for some } v\in A\}$ for the \emph{parents} of $A$ (excluding $A$) and $\Anc(A) := A\cup\{u : u\to\cdots\to v \text{ in } G \text{ for some } v\in A\}$ for its \emph{ancestors} (by definition including $A$), where $u$ ranges over the vertices of $G$. Unless another graph is specified the ambient graph is $G(\Dcal)$, so in particular $\pa(A)\subseteq V$. We identify each vertex $v\in V$ with its associated process $X_v$. We sometimes abuse notation to write the labels of the random variables for the vertices in the graphs (as in Figure \ref{fig:repressilator_graph}) and for graphical separation statements (e.g.\ in Section \ref{sec:time-evaluations}).
For the repressilator, the causal graph $G(\Dcal)$ is depicted in Figure \ref{fig:repressilator_graph}, where the noise processes and initial values are considered to be exogenous.

In line with \cite{mooij2013ordinary}, \cite{hansen2014causal} and \cite{peters2020causal}, we define perfect interventions on systems of causal SDEs as follows.
\begin{definition}[Perfect intervention]\label{def:intervened_SDE}
    Given a system of causal SDEs $\Dcal = \angs{V, W, X_W, f, g, h}$, intervention target $S \subseteq V$ and intervention value $x_S \in D([0,T], \RR)^{|S|}$, the \emph{perfectly intervened system of causal SDEs} is defined as $\Dcal_{\Do(X_S = x_S)} = \angs{V, W, X_W, f^\circ, g^\circ, h^\circ}$ with for each $v\in V$ the mechanisms $f_v^\circ(t, \cdot) := x_v(t)$, $g_v^\circ := 0$ and $h_v^\circ := 0$ if $v\in S$ and $f_v^\circ := f_v$, $g_v^\circ := g_v$ and $h_v^\circ := h_v$ otherwise, that is, we have for each $v\in V$ the \emph{intervened causal SDE}
    \begin{align*}
        \begin{cases}
            X_v(t) = x_v(t)                                                                                    & \text{if $v \in S$}            \\
            X_v(t) = f_v(t, X_{\alpha(v)}) + \int_0^t g_{v}(s-, X_v, X_{\beta(v)}) \diff h_v(s, X_{\gamma(v)}) & \text{if $v\in V\setminus S$}.
        \end{cases}
    \end{align*}
\end{definition}

If one wants to model interventions on parameters that determine the dynamics of other variables, those parameters should be modelled as an endogenous process, which can be intervened upon.

Mathematically `intervening' on a system of causal SDEs should be done with caution: it may not always reflect a realistic action. Suppose we can precisely determine the inflow and the outflow of atoms in the repressilator. Intervening on both inflow and outflow simultaneously could lead to physical contradictions (as the number of atoms is a conserved quantity in chemical reactions), so the mathematical intervention may have no realistic counterpart, and only interventions on a subset of the variables, for a subset of intervention values, may have a realistic interpretation.

\subsection{Pathwise interpretation of systems of causal SDEs}
The integrals in Definitions \ref{def:sdes} and \ref{def:intervened_SDE} are interpreted as It\^o integrals as given by \eqref{eqn:stochastic_integral}, defining each $X_v$ as a random variable on the underlying probability space $(\Omega, \Fcal, \PP)$. This `direct' dependence on $\omega$ of all variables conceals an independence structure that is convenient for causal inference.
The pathwise stochastic integral of \cite{karandikar1995pathwise} --- improved upon by \cite{przybylowicz2024skorohod} --- makes this independence structure explicit by replacing the $\omega$-dependent stochastic integral with an explicit functional dependence between sample paths. In particular, \cite{przybylowicz2024skorohod} (Appendix~\ref{app:przybylowicz}, Corollary~\ref{cor:pathwise_integral}) provides an adapted measurable map
\begin{equation*}
    \Psi : D([0,T], \RR^m) \times D([0,T], \RR^m) \to D([0,T], \RR),
\end{equation*}
with the property that for every càdlàg adapted process $G$ and every semimartingale $H \in \SSb([0,T], \RR^m)$, the process $\Psi(G, H)$ is a càdlàg version of the It\^o integral $\int_0^\cdot G(s{-}) \diff H(s)$.
This allows for a pathwise expression of the SDE, where all endogenous variables only depend on $\omega$ though the exogenous variables, and not directly.

\begin{definition}[Pathwise interpretation of systems of causal SDEs]\label{def:sde_pathwise}
    Given a system of causal SDEs $\Dcal$, for each variable $v \in V$ the SDE for $X_v$ can be represented by the equation
    \begin{equation*}
        X_v = \Phi_v(X_v, X_{\alpha(v)}, X_{\beta(v)}, X_{\gamma(v)}) \quad \text{$\PP$-a.s.}
    \end{equation*}
    where for all $t\in [0,T]$,
    \begin{equation*}
        \Phi_v(x_v, x_{\alpha(v)}, x_{\beta(v)}, x_{\gamma(v)})(t) := f_v\bigl(t, x_{\alpha(v)}\bigr) + \Psi\bigl(g_v(\cdot, x_v, x_{\beta(v)}), h_v(\cdot, x_{\gamma(v)})\bigr)(t).
    \end{equation*}
    For $O\subseteq V, x_V\in D([0,T], \RR)^{|V|}$ and $x_W\in D([0,T], \RR)^{|W|}$, write
    \begin{equation*}
        \Phi_O(x_V, x_W) := (\Phi_v(x_v, x_{\alpha(v)}, x_{\beta(v)}, x_{\gamma(v)}))_{v\in O}.
    \end{equation*}
\end{definition}
Although not strictly necessary for our purposes, this functional interpretation may guide conceptual understanding of how interventions propagate through the system, and it provides a notation $\Phi_v$ for each SDE that is useful in the proofs.
The pathwise SDE formulation also clarifies why the processes $X_{\alpha(v)}$ and $X_{\beta(v)}$ are allowed to be endogenous (and hence intervenable), and the integrator processes $X_{\gamma(v)}$ are required to be exogenous (and hence non-intervenable). Namely, the map $\Psi$ correctly represents the stochastic integral for all adapted measurable càdlàg integrand processes (so in particular also for all deterministic càdlàg values of the integrand), but it requires the integrator to be a semimartingale, and not every deterministic càdlàg sample path is a semimartingale -- this only holds when the path is of finite variation.

Under this pathwise reading, a system of causal SDEs is itself a (possibly cyclic) \emph{Structural Causal Model} (SCM) \citep{pearl2009causality,bongers2021foundations} on path space, i.e.\ a tuple $\angs{V, W, \Xcal_V, \Xcal_W, \Phi, \PP(X_W)}$ where $V$ and $W$ are disjoint finite index sets of endogenous variables and exogenous variables respectively, the domains $\Xcal_V = \prod_{i\in V}D([0,T], \RR)$ and $\Xcal_W = \prod_{i\in W}D([0,T], \RR)$ are products of Skorokhod spaces, the exogenous distribution $\PP(X_W) = \bigotimes_{w\in W}\PP(X_w)$ is the product of the laws of the exogenous processes, and for each $v\in V$ the \emph{causal mechanism} $\Phi_v : \Xcal_V \times \Xcal_W \to \Xcal_v$ yields the \emph{structural equation}
\begin{equation*}
    X_v = \Phi_v(X_V, X_W).
\end{equation*}
Such an SCM on path space is also known as a \emph{Dynamic Structural Causal Model} \citep{rubenstein2018deterministic,boeken2024dynamic}.
The causal graph $G(\Dcal)$ of Definition~\ref{def:sde_graph} is a supergraph of the graph of the SCM as defined in \cite{bongers2021foundations}, and the perfect intervention on the system of causal SDEs coincides with the corresponding perfect intervention on the SCM. The remainder of the paper leverages this pathwise SCM interpretation to prove various results that exist for SCMs (Markov properties, the do-calculus, constraint-based causal discovery) for systems of causal SDEs.

\section{Solvability of systems of causal SDEs}\label{sec:solvability}

The primary objects of causal inference are the observational and interventional distributions. If $\Dcal$ has a solution $X_V$ then the \emph{observational distribution} is the law $\PP(X_V)$, and if for $S\subseteq V$ and $x_S\in \Xcal_S$ (write $O := V\setminus S$) the system $\Dcal_{\Do(X_S=x_S)}$ has solution $X_O$ then the \emph{interventional distribution} is the law of $X_O$, denoted by $\PP(X_O\given \Do(X_S=x_S))$. However, these distributions are not always well-defined. To resolve this, \cite{bongers2021foundations} introduced for (possibly cyclic) SCMs solvability requirements such that the observational distribution and all interventional distributions are well-defined. However, these solvability requirements are too strong for our purposes, and not necessary to derive useful results. Instead, we consider the following solvability property of systems of causal SDEs.

\begin{definition}[Essential unique solvability]\label{def:ess_unique_solvable}
    Let $O \subseteq V$ and write $S := V \setminus O$. The system of causal SDEs $\Dcal$ is \emph{essentially uniquely solvable w.r.t.\ $O$} if there exists a measurable adapted function $I^{[O]} : \Xcal_S \times \Xcal_W \to \Xcal_O$ such that for every measurable adapted function $\varphi_S : \Xcal_W \to \Xcal_S$ the following two conditions hold:
    \begin{itemize}
        \item $I^{[O]}$ is a.s.\ a fixed point of the system of causal SDEs: we have $\PP(X_W)$-almost surely
        \begin{equation}\label{eqn:ess_unique_solvable}
            I^{[O]}(\varphi_S(X_W), X_W) = \Phi_O\bigl(I^{[O]}(\varphi_S(X_W), X_W), \varphi_S(X_W), X_W\bigr);
        \end{equation}
        \item $I^{[O]}$ is an a.s.\ unique fixed point, i.e., for every measurable adapted function $\varphi_O : \Xcal_W \to \Xcal_O$ satisfying $\varphi_O(X_W) = \Phi_O\bigl(\varphi_O(X_W), \varphi_S(X_W), X_W\bigr)$ $\PP(X_W)$-a.s., we have $\PP(X_W)$-a.s.
        \begin{equation*}
            \varphi_O(X_W) = I^{[O]}(\varphi_S(X_W), X_W).
        \end{equation*}
    \end{itemize}
    We refer to such a function $I^{[O]}$ as a \emph{solution function (w.r.t.\ $O$)}.
\end{definition}
A solution function may be constant on a large part of its domain: whenever $\Dcal$ is essentially uniquely solvable w.r.t.\ $O$, one can choose a solution function $I^{[O]}(x_{\pa(O)}, x_W)$ that depends on $X_S$ (with $S := V \setminus O$) only through the endogenous parents $\pa(O)$ of $O$ (Lemma~\ref{lem:acy_factorisation}).

This notion of essential unique solvability differs from the notion of \cite{bongers2021foundations}. We require existence of the fixed point outside of a null set that may depend on the input process $\varphi_S$, and our uniqueness condition holds outside of a null set that may depend on the alternative solution $\varphi_O$. In \cite{bongers2021foundations}, existence and uniqueness must hold simultaneously on a single, uniform null set, that only depends on $\Phi_O$ and not on $\varphi_S$ and $\varphi_O$. In this regard our notion of solvability is weaker, and this weakening is suitable for obtaining solution functions for a certain class of systems of causal SDEs, as shown in Section \ref{sec:solvability_sufficient_cond} (Theorem~\ref{thm:ito_map}).\footnote{Note that \cite{boeken2024dynamic} overlooked this nuance and assumed that the SDEs satisfy the solvability requirements of \cite{bongers2021foundations}.} Due to this weakening, we cannot rely on existing results for simple SCMs (uniqueness of interventional distributions, Markov property, do-calculus) but we prove these results from scratch.

First, we indeed have that essential unique solvability allows for clear expressions of the observational and interventional distributions.

\begin{restatable}{theorem}{obsintdist}\label{thm:obs_int_dist}
    Let $\Dcal$ be a system of causal SDEs. If $\Dcal$ is essentially uniquely solvable w.r.t.\ $V$ with solution function $I^{[V]} : \Xcal_W \to \Xcal_V$, then the observational distribution of $\Dcal$ is unique and satisfies
    \begin{equation*}
        \PP(X_V) = \PP(I^{[V]}(X_W)).
    \end{equation*}
    If $L, O, S\subseteq V$ partition $V$ and $\Dcal$ is essentially uniquely solvable w.r.t.\ $O \subseteq V$ with solution function $I^{[O]} : \Xcal_S \times \Xcal_L \times \Xcal_W \to \Xcal_O$, then for any $x_S \in \Xcal_S$ the intervened system $\Dcal_{\Do(X_S = x_S)}$ is essentially uniquely solvable w.r.t.\ $O$ with solution function $I^{[O]}(x_S, \cdot, \cdot): \Xcal_L \times \Xcal_W\to \Xcal_O$. If $L = \emptyset$, the interventional distribution satisfies
    \begin{equation*}
        \PP\bigl(X_O \given \Do(X_S = x_S)\bigr) = \PP(I^{[O]}(x_S, X_W)),
    \end{equation*}
    is unique, and $x_S \mapsto \PP(X_O\given \Do(X_S=x_S))$ is a Markov kernel.
\end{restatable}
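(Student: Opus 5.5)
The proof is a direct unpacking of Definition~\ref{def:ess_unique_solvable}, and I would treat the three claims in order.

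\emph{Observational distribution.} With $O=V$ and $S=\emptyset$, the input $\varphi_S$ is trivial, so the first condition says that $X_V := I^{[V]}(X_W)$ satisfies $X_V=\Phi_V(X_V,X_W)$ $\PP$-a.s. Since $I^{[V]}$ is measurable and adapted and $X_W$ is adapted, $X_V$ is an adapted càdlàg process. By the property of $\Psi$ (Corollary~\ref{cor:pathwise_integral}), each pathwise equation coincides a.s.\ with the Itô equation \eqref{eqn:sde}, so $X_V$ is a solution of $\Dcal$.

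For uniqueness, let $\tilde X_V$ be any solution. The key step is to write $\tilde X_V=\varphi_V(X_W)$ for some measurable adapted $\varphi_V$. I would argue that the filtration may be taken to be the (usual augmentation of the) one generated by $X_W$, so that strong solutions are $X_W$-measurable. Doob--Dynkin factorisation on the standard Borel space $D$, applied for each $t$ and then patched via càdlàg regularity, gives a measurable adapted $\varphi_V$. The uniqueness clause then yields $\tilde X_V=I^{[V]}(X_W)$ a.s., hence $\PP(\tilde X_V)=\PP(I^{[V]}(X_W))$.

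I expect this factorisation step to be the main obstacle. If the filtration is larger than the one generated by $X_W$, I would instead restrict the notion of solution to processes that are adapted functions of $X_W$, which is implicit in the pathwise SCM reading, and state this explicitly.

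\emph{Intervened system.} In $\Dcal_{\Do(X_S=x_S)}$ the mechanisms for $O$ are unchanged. The variables in $S$ are now fixed to $x_S$, so they enter $\Phi_O$ only as the constant input $x_S$. Given any measurable adapted $\varphi_L:\Xcal_W\to\Xcal_L$, I would define $\varphi_{S\cup L}:=(x_S,\varphi_L)$. This is measurable and adapted, since $x_S$ is deterministic and càdlàg, and a constant path is trivially adapted in the sense $x^{\wedge t}$-dependence.

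Both conditions of essential unique solvability for $\Dcal$ with this input then give exactly existence and a.s.\ uniqueness of fixed points for $I^{[O]}(x_S,\cdot,\cdot)$ in the intervened system. The map $I^{[O]}(x_S,\cdot,\cdot)$ is measurable and adapted as a section of a jointly measurable adapted map.

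\emph{Interventional distribution.} For $L=\emptyset$, the intervened system is essentially uniquely solvable w.r.t.\ $O=V\setminus S$, and the endogenous set of $\Dcal_{\Do(X_S=x_S)}$ on which a solution must be found is $O$. The first part, applied to the intervened system, gives uniqueness of the law and $\PP(X_O\given\Do(X_S=x_S))=\PP(I^{[O]}(x_S,X_W))$.

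Finally, for the kernel property I would note that $x_S\mapsto \PP(I^{[O]}(x_S,X_W)\in B)=\int \I_B(I^{[O]}(x_S,x_W))\,\PP(X_W)(\diff x_W)$ is measurable for each Borel $B$. This follows from Fubini--Tonelli, since $(x_S,x_W)\mapsto\I_B(I^{[O]}(x_S,x_W))$ is jointly measurable. For each fixed $x_S$ this expression is a probability measure in $B$, so $x_S\mapsto \PP(X_O\given\Do(X_S=x_S))$ is a Markov kernel.
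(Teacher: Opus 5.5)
Your proposal is correct and follows the paper's proof essentially step by step. Existence comes from the fixed-point clause and uniqueness of the law from the uniqueness clause; for the intervened system you use the input $(\varphi_L, x_S)$, and Tonelli on the jointly measurable map gives the Markov-kernel property. On the factorisation step you flag, the paper simply adopts your fallback: it only compares $I^{[V]}(X_W)$ against solutions of the form $\varphi_V(X_W)$ with $\varphi_V$ measurable and adapted. So the Doob--Dynkin detour is not needed, and its ``without loss of generality'' choice of filtration is not actually free in the paper's setup.
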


\begin{example}\label{ex:linearised_repressilator}
    Consider a linearised version of the repressilator with dynamical model
    \begin{align*}
        \diff M_{i}(t) & = \left(\gamma - M_{i}(t)  - \lambda P_{j}(t)\right) \diff t + \sigma \diff Z_{i}^M(t) \\
        \diff P_{i}(t) & = \beta(M_{i}(t) - P_{i}(t)) \diff t + \sigma \diff Z_{i}^P(t)
    \end{align*}
    for each gene and its predecessor $(i,j) \in \{\textrm{(lacI, cI), (tetR, lacI), (cI, tetR), (GFP, tetR)}\}$. This system is for example essentially uniquely solvable with respect to $M_i$ for each $i$, with solution function
    \begin{align*}
        I^{[M_i]}(M_i^0, P_j, Z_i^M)(t) & := M_i^0 e^{-t} + \int_0^t e^{-(t-s)}\bigl(\gamma - \lambda P_j(s)\bigr) \diff s + \sigma \int_0^t e^{-(t-s)} \diff Z_i^M(s),
    \end{align*}
    with the integrals interpreted as pathwise maps $\Psi$. This solution function determines the causal effect of the endogenous parent variable $P_j$ on $M_i$: for any intervention path $x_{P_j} \in \Xcal_{P_j}$ the interventional distribution is written as $\PP\bigl(X_{M_i} \given \Do(X_{P_j} = x_{P_j})\bigr) = \PP(I^{[M_i]}(M_i^0, x_{P_j}, Z_i^M))$.
\end{example}

\subsection{Sufficient conditions for solvability}\label{sec:solvability_sufficient_cond}
We provide two classes of systems of causal SDEs that are essentially uniquely solvable with respect to every $O\subseteq V$. Given a measurable map $g:[0,T]\times D([0,T], \RR^k)\times D([0,T], \RR^\ell) \to D([0,T], \RR^m)$ consider the conditions that there exists a measurable càdlàg $K:[0,T]\times D([0,T], \RR^\ell)\to (0,\infty)$ such that for all $x, x_1, x_2 \in D([0,T], \RR^k)$, $y \in D([0,T], \RR^\ell)$, $t \in [0,T]$,
\begin{align}
    \label{eqn:linear_growth}
    \|g(t, x, y)\|                  & \leq K(t, y)\bigl(1 + \sup_{0\leq s\leq t}\|x(s)\|\bigr), \\
    \label{eqn:lipschitz}
    \|g(t, x_1, y) - g(t, x_2, y)\| & \leq K(t, y) \sup_{0\leq s\leq t}\|x_1(s) - x_2(s)\|.
\end{align}
We refer to \eqref{eqn:linear_growth} and \eqref{eqn:lipschitz} respectively as the \emph{linear-growth} and \emph{Lipschitz} conditions on $g$ in $x$ given $y$. These conditions can be verified component-wise, as shown in Lemma~\ref{lem:joint_vs_componentwise} in the appendix. Under these conditions, \cite{przybylowicz2024skorohod} (see also Appendix~\ref{app:przybylowicz}, Theorem~\ref{thm:przybylowicz_app}) prove the existence of a measurable, adapted solution function.\footnote{For SDEs driven by Brownian motion, similar results have been shown by \cite{yamada1971uniqueness} and \cite{kallenberg1996existence}, and for general semimartingale SDEs this has been shown by \cite{karandikar1995pathwise}, whose solution function was proven to be measurable with respect to Borel sets generated by a (non-Polish) topology that is strictly stronger than the Skorokhod topology.} Translated to systems of causal SDEs, their result implies the following.

\begin{restatable}{theorem}{itomap}\label{thm:ito_map}
    Let $\Dcal$ be a system of causal SDEs and let $O \subseteq V$. If for every $v\in O$ we have $\alpha(v) \cap O = \emptyset$ and the integrand $g_v$ satisfies the linear-growth and Lipschitz conditions in $(X_v, X_{\beta(v) \cap O})$ given $X_{\beta(v) \setminus O}$, then $\Dcal$ is essentially uniquely solvable w.r.t.\ $O$.
\end{restatable}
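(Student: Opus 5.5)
The plan is to reduce the system restricted to $O$ to a single vector-valued path-dependent SDE with random (adapted) coefficients, and then invoke the result of \cite{przybylowicz2024skorohod} (Theorem~\ref{thm:przybylowicz_app}). That result provides a measurable adapted solution map which, for any adapted càdlàg inputs, yields an a.s.\ unique strong solution.

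\textbf{Step 1: stack the $O$-equations into one SDE.} Write $S := V\setminus O$. Because $\alpha(v)\cap O=\emptyset$ for $v\in O$, the term $f_v(t, x_{\alpha(v)})$ depends only on $(x_S, x_W)$, so it enters as an exogenous càdlàg adapted input $F_v(t, x_S, x_W)$. The integrand $g_v(t, x_v, x_{\beta(v)})$ is Lipschitz and of linear growth in $(x_v, x_{\beta(v)\cap O})$ given $x_{\beta(v)\setminus O}$. It therefore defines a map $G(t, x_O, y)$ with $y := (x_S, x_W)$, whose components satisfy \eqref{eqn:linear_growth}--\eqref{eqn:lipschitz} in $x_O$ given $y$. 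Lemma~\ref{lem:joint_vs_componentwise} lifts these componentwise bounds to the stacked map, with constant $K(t,y)$ obtained as a sum or maximum of the componentwise $K$'s. The integrator $H := (h_v(\cdot, x_{\gamma(v)}))_{v\in O}$ depends only on $x_W$, since $\gamma(v)\subseteq W$, and is a semimartingale. Arranging $G$ block-diagonally against $H$, the $O$-system reads
\[
X_O(t) = F(t, Y) + \int_0^t G(s-, X_O, Y)\,\diff H(s),\qquad Y := (\varphi_S(X_W), X_W),
\]
which has exactly the form treated in Theorem~\ref{thm:przybylowicz_app}, with the driving input $Y$ as a parameter.

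\textbf{Step 2: define $I^{[O]}$ and check both conditions.} Let $I^{[O]}(x_S, x_W)$ be the Przybyłowicz solution map evaluated at the inputs $(F(\cdot, x_S, x_W), G(\cdot,\cdot,(x_S,x_W)), H(x_W))$. It is measurable and adapted as a composition of measurable adapted maps. Now fix an arbitrary measurable adapted $\varphi_S$. Then $Y = (\varphi_S(X_W), X_W)$ is an adapted càdlàg process, so $F(\cdot, Y)$ is adapted, $G(\cdot,\cdot,Y)$ satisfies the random Lipschitz and growth bounds with the càdlàg adapted $K(t,Y)$, and $H(X_W)$ is a semimartingale.

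For existence, the cited theorem gives that $I^{[O]}(Y)$ is a.s.\ a strong solution. Since $\Psi$ is a version of the Itô integral (Corollary~\ref{cor:pathwise_integral}), this is precisely the fixed-point identity \eqref{eqn:ess_unique_solvable}.

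For uniqueness, take any adapted $\varphi_O(X_W)$ satisfying the fixed-point equation a.s. It is then a strong solution of the same SDE, and pathwise uniqueness (part of the cited theorem, or the standard result in \citealp{protter2005stochastic}, Chapter V, for random Lipschitz coefficients) forces it to be indistinguishable from $I^{[O]}(Y)$.

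\textbf{Main obstacle.} The delicate step is the bookkeeping that makes the hypotheses of Theorem~\ref{thm:przybylowicz_app} hold with the exact functional form and measurability it requires. In particular, the a.s.\ equality of $\Psi(G(\cdot,X_O,Y),H)$ with the Itô integral must be justified for adapted inputs that depend on $\omega$ only through $X_W$. The constant $K(t,Y)$ must also be shown to be càdlàg and adapted, which holds because $K$ is càdlàg and adapted in $y$. My first attempt would be to verify this measurability by the composition lemma \citep[Lemma 2.2]{przybylowicz2024skorohod}. The null sets can be allowed to depend on $\varphi_S$ and $\varphi_O$, which is exactly what Definition~\ref{def:ess_unique_solvable} permits.
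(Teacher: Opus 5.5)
Your proposal is correct and follows the paper's proof. You stack the $O$-equations into one joint SDE, with $f_O$ an exogenous input because $\alpha(v)\cap O=\emptyset$. You lift the componentwise linear-growth and Lipschitz bounds to $g_O$ via Lemma~\ref{lem:joint_vs_componentwise}, and take the measurable solution map of Theorem~\ref{thm:przybylowicz_app} as $I^{[O]}$ for every adapted input $\varphi_S$. Your block-diagonal arrangement of $g_O$ against $h_O$ and your remark that $\Psi$ agrees a.s.\ with the It\^o integral only make explicit what the paper leaves implicit. One small notational point: the solution map is built for the fixed coefficient $g_O$ and takes the parameter path $y=(x_S,x_W)$ as its second argument, not the coefficient function itself.
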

For simplicity, this sufficient condition assumes that there are no functional relations between the variables in $O$.
To obtain essential unique solvability, we apply Theorem~\ref{thm:ito_map} separately to each \emph{strongly connected component} (SCC) of $G(\Dcal)$ (subsets whose every two vertices are connected by a directed path) and compose the resulting solution functions along the topological order of SCCs; essential unique solvability w.r.t.\ each SCC of $G(\Dcal)$ then lifts to essential unique solvability w.r.t.\ $V$ (Lemma~\ref{lem:scc_composition} in the appendix). However, this does not imply essential unique solvability with respect to every subset $O'\subseteq V$. The following assumption allows for this construction.

\begin{restatable}[]{ass}{uniquelysolvable}\label{ass:uniquely_solvable}
    Consider a system of causal SDEs $\Dcal = \angs{V, W, X_W, f, g, h}$ such that
    \begin{enumerate}[label=(\roman*)]
        \item for every SCC $S$ of $G(\Dcal)$ and every $v \in S$, $\alpha(v) \cap S = \emptyset$, and
        \item for every $v \in V$, writing $S_v$ for the SCC of $G(\Dcal)$ containing $v$, $g_v$ satisfies the linear-growth and Lipschitz conditions in $(X_v, X_{\beta(v) \cap S_v})$ given $X_{\beta(v) \setminus S_v}$.
    \end{enumerate}
\end{restatable}
This assumption is closed under perfect interventions, so we obtain the following result.
\begin{restatable}[]{theorem}{esssolvableunderass}\label{thm:ess_solvable_under_ass}
    If a system of causal SDEs $\Dcal$ satisfies Assumption~\ref{ass:uniquely_solvable}, then it is essentially uniquely solvable w.r.t.\ every subset $O \subseteq V$.
\end{restatable}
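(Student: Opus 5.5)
Fix an arbitrary $O \subseteq V$ and write $S := V\setminus O$. The plan is to reduce essential unique solvability w.r.t.\ $O$ to essential unique solvability w.r.t.\ $V$ of an auxiliary system, and then apply Theorem~\ref{thm:ito_map} SCC by SCC together with Lemma~\ref{lem:scc_composition}.

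\textbf{Step 1: embed the inputs $X_S$ as exogenous inputs.} Inputs on $S$ are arbitrary measurable adapted functions $\varphi_S(X_W)$. I view the equations for $O$ with $X_S$ substituted as a system $\Dcal'$ on $O$ in which $X_S$ enters only as an externally given adapted càdlàg input. This is the same as working with the intervened system $\Dcal_{\Do(X_S = \cdot)}$ where the intervention value is random but adapted. Theorem~\ref{thm:ito_map} and Lemma~\ref{lem:scc_composition} are formulated with inputs $\varphi_S$ in exactly this way. The graph of this system is the induced subgraph $G(\Dcal)_O$.

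\textbf{Step 2: the assumption restricts to $O$.} Every SCC $C$ of $G(\Dcal)_O$ is contained in a unique SCC $S_v$ of $G(\Dcal)$, since removing vertices can only split strongly connected components. Consequently:
\begin{itemize}
\item For $v\in C$, Assumption~\ref{ass:uniquely_solvable}(i) gives $\alpha(v)\cap C\subseteq \alpha(v)\cap S_v=\emptyset$.
\item For (ii), I need the Lipschitz and linear-growth conditions in $(X_v,X_{\beta(v)\cap C})$ given $X_{\beta(v)\setminus C}$. These follow from the conditions in $(X_v, X_{\beta(v)\cap S_v})$ given $X_{\beta(v)\setminus S_v}$: freeze the coordinates in $\beta(v)\cap (S_v\setminus C)$ as part of the conditioning variable $y$.
\end{itemize}

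\textbf{Step 3: the main obstacle, freezing coordinates.} When coordinates are moved into $y$, the constant $K(t,y)$ is a priori independent of the frozen coordinates. Linear growth, however, involves $1+\sup_s\|x(s)\|$ over all of $(X_v,X_{\beta(v)\cap S_v})$. So I would define
\[
K'(t,y,z) := K(t,y)\bigl(1+\sup_{s\le t}\|z(s)\|\bigr),
\]
where $z$ denotes the frozen coordinates. This $K'$ is measurable, càdlàg in $t$ (it is a product of càdlàg functions; the running supremum of a càdlàg path is càdlàg), and positive. With it, both \eqref{eqn:linear_growth} and \eqref{eqn:lipschitz} hold in the remaining variables. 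Componentwise verification via Lemma~\ref{lem:joint_vs_componentwise} may be used here.

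\textbf{Step 4: solve each SCC and compose.} Applying Theorem~\ref{thm:ito_map} to each SCC $C$ of $G(\Dcal)_O$ gives essential unique solvability w.r.t.\ $C$, with all other variables treated as adapted inputs. Lemma~\ref{lem:scc_composition} then composes these solution functions along a topological order of the SCCs of $G(\Dcal)_O$. This yields a measurable adapted solution function $I^{[O]}:\Xcal_S\times\Xcal_W\to\Xcal_O$ satisfying both conditions of Definition~\ref{def:ess_unique_solvable}.

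The a.s.\ uniqueness for an arbitrary candidate $\varphi_O$ propagates inductively through the composition. At each SCC, the inputs from earlier SCCs coincide a.s.\ with the already constructed solution, so the candidate agrees a.s.\ with the solution on that SCC as well.
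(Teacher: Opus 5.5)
Your proof is correct and follows essentially the same route as the paper: treat $X_S$ as adapted inputs, note that each SCC of the subgraph of $G(\Dcal)$ induced on $O$ lies inside an SCC of $G(\Dcal)$, so that Assumption~\ref{ass:uniquely_solvable} yields the hypotheses of Theorem~\ref{thm:ito_map} on it, and compose along a topological order as in Lemma~\ref{lem:scc_composition}. Your explicit constant $K'(t,y,z) = K(t,y)\bigl(1+\sup_{s\le t}\|z(s)\|\bigr)$ fills in the linear-growth detail that the paper only asserts when it moves the coordinates $\beta(v)\cap(\bar C\setminus C)$ into the conditioning variable.
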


For the $d$-separation Markov property (in Section \ref{sec:dsep_mp} below) we will need a more restricted model class:
\begin{restatable}[]{ass}{additivenoise}\label{ass:additive_noise}
    Consider the additive-noise causal SDE
    \begin{equation}\label{eqn:sde_additive}
        X_V(t) = X_\alpha^0 + \int_0^t \mu(s, X_V(s)) \diff s + \int_0^t \Sigma \diff X_\gamma(s),
    \end{equation}
    for the endogenous index set $V$ with $|V| =: d$ and exogenous index sets $W = \alpha\cup \gamma$ with $\alpha, \gamma$ disjoint such that $|\alpha| = |\gamma| = d$, with $v\mapsto \alpha(v)$ a bijection of $V$ onto $\alpha$, where $X_\alpha^0$ is a vector of mutually independent exogenous random variables satisfying $\EE[\|X_\alpha^0\|^2] < \infty$, and $X_\gamma$ is a $d$-dimensional standard Brownian motion with independent components, independent of $X_\alpha^0$.
    The drift $\mu: [0,T] \times \RR^d \to \RR^d$ has $t\mapsto \mu(t, x_V(t))$ continuous in $t$ and satisfies the linear-growth and Lipschitz conditions in $x_V$, and $\Sigma\in \RR^{d\times d}$ is an invertible matrix.
\end{restatable}

\begin{restatable}[]{theorem}{esssolvableunderadditive}\label{thm:ess_solvable_under_additive}
    If a system of causal SDEs $\Dcal$ satisfies Assumption~\ref{ass:additive_noise}, then it satisfies Assumption~\ref{ass:uniquely_solvable}.
\end{restatable}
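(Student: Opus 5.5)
The plan is to write the additive-noise system \eqref{eqn:sde_additive} explicitly in the format of Definition~\ref{def:sdes}, and then check items (i) and (ii) of Assumption~\ref{ass:uniquely_solvable} one at a time. For every $v\in V$, the $v$-th row of \eqref{eqn:sde_additive} is a causal SDE with the following choices:
\begin{itemize}
\item $f_v(t, X_{\alpha(v)}) := X^0_{\alpha(v)}$, which is constant in $t$, so $f_v$ models the initial condition;
\item $\beta(v) := V\setminus\{v\}$ and $\gamma(v) := \gamma$;
\item integrator $h_v(t, X_\gamma) := (t, X_\gamma(t))\in\RR^{1+d}$, a continuous semimartingale, so taking the left limit in the integrand is immaterial;
\item integrand $g_v(t, X_v, X_{\beta(v)}) := (\mu_v(t, X_V(t)), \Sigma_{v,\cdot})$.
\end{itemize}
The integrand $g_v$ is Markov, and it is càdlàg in $t$ because $t\mapsto\mu(t,x_V(t))$ is continuous. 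It is adapted because it depends only on the time-$t$ value. It is measurable because it is continuous in $x_V$ for fixed $t$ (by the Lipschitz condition) and continuous in $t$.

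Item (i) holds trivially. Here $\alpha(v)=\{\alpha(v)\}\subseteq W$ is a single exogenous index, so $\alpha(v)\cap S=\emptyset$ for every SCC $S\subseteq V$.

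For item (ii), fix $v$ with SCC $S_v$ and check that $g_v$ satisfies \eqref{eqn:linear_growth} and \eqref{eqn:lipschitz} in $(X_v, X_{\beta(v)\cap S_v})$ given $X_{\beta(v)\setminus S_v}$. The second component of $g_v$, the constant row $\Sigma_{v,\cdot}$, has norm at most $\|\Sigma\|$. It therefore contributes at most $\|\Sigma\|$ to the linear-growth bound and nothing to the Lipschitz bound.

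For the drift component, write $x_V=(x_{A},y)$ with $A:=\{v\}\cup(\beta(v)\cap S_v)$ and $y:=x_{\beta(v)\setminus S_v}$. By hypothesis there is a constant $K$ such that $\|\mu(t,x)\|\le K(1+\|x\|)$ and $\|\mu(t,x)-\mu(t,x')\|\le K\|x-x'\|$ for $x,x'\in\RR^d$. The Lipschitz bound in $x_A$ given $y$ is then immediate:
\[
\|\mu_v(t,x_A(t),y(t))-\mu_v(t,x_A'(t),y(t))\|\le K\|x_A(t)-x_A'(t)\|\le K\sup_{s\le t}\|x_A(s)-x_A'(s)\|.
\]
For linear growth,
\[
\|\mu_v(t,x_A(t),y(t))\|\le K\bigl(1+\|y(t)\|+\|x_A(t)\|\bigr)\le K(t,y)\bigl(1+\sup_{s\le t}\|x_A(s)\|\bigr),
\]
where I take $K(t,y):=K(1+\|y(t)\|)+\|\Sigma\|$. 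This function is strictly positive and measurable, and it is càdlàg in $t$ because $y$ is càdlàg. This is exactly the form allowed in \eqref{eqn:linear_growth}, since the constant $K(t,y)$ may depend on the conditioning path. If a joint-versus-componentwise issue arises, I can invoke Lemma~\ref{lem:joint_vs_componentwise} to combine the bounds for the two components of $g_v$.

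The only steps needing any care are the precise encoding of the constant diffusion term and the time integral as a single $\RR^{1+d}$-valued integrator, and the verification that the resulting $g_v$ meets the measurability, càdlàg and adaptedness requirements of Definition~\ref{def:sdes}. The main obstacle is the linear-growth bound, because the variables outside $S_v$ appear unboundedly; this is handled by absorbing $\|y(t)\|$ into the path-dependent constant $K(t,y)$. Everything else is routine, and the square-integrability of $X^0_\alpha$ and the invertibility of $\Sigma$ are not needed for this theorem.
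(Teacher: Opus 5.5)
Your proposal is correct and follows essentially the same route as the paper. You use the same encoding ($f_v$ the initial value, $\beta(v)=V\setminus\{v\}$, $g_v=(\mu_v,\Sigma_{v\cdot})$, $h_v=(t,X_{\gamma(v)}(t))$), obtain item (i) from $\alpha(v)\subseteq W$, and obtain item (ii) by inheriting the bounds from $\mu$, with the constant row contributing only to the linear-growth constant. Your explicit absorption of $\|y(t)\|$ into $K(t,y)$ just spells out what the paper's ``by restriction'' leaves implicit.

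The one difference is that the paper takes $\gamma(v):=\{w\in\gamma:\Sigma_{vw}\neq 0\}$ rather than all of $\gamma$. This is irrelevant for Assumption~\ref{ass:uniquely_solvable}, whose SCCs involve only directed edges within $V$. However, with $\gamma(v)=\gamma$, Definition~\ref{def:sde_graph} would put a bidirected edge between every pair of endogenous variables, which would make the later $d$-separation results vacuous.
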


We now show that the repressilator satisfies Assumption~\ref{ass:additive_noise}.
\begin{example}[Repressilator is essentially uniquely solvable]\label{ex:repressilator_solvable}
    We may write the repressilator in vector form as
    \begin{equation*}
        \diff X(t) = \mu(X(t))\diff t + \sigma\diff Z(t)
    \end{equation*}
    with drift
    \begin{align*}
        \mu(X) =
        \begin{pmatrix}
            \frac{\alpha}{1 + |P_{cI}|^n} + \alpha_0 - M_{lacI}   \\
            \beta (M_{lacI} - P_{lacI})                           \\
            \frac{\alpha}{1 + |P_{lacI}|^n} + \alpha_0 - M_{tetR} \\
            \beta (M_{tetR} - P_{tetR})                           \\
            \frac{\alpha}{1 + |P_{tetR}|^n} + \alpha_0 - M_{cI}   \\
            \beta (M_{cI} - P_{cI})                               \\
            \frac{\alpha}{1 + |P_{tetR}|^n} + \alpha_0 - M_{GFP}  \\
            \beta (M_{GFP} - P_{GFP})
        \end{pmatrix},
        \quad
        X =
        \begin{pmatrix}
            M_{lacI} \\
            P_{lacI} \\
            M_{tetR} \\
            P_{tetR} \\
            M_{cI}   \\
            P_{cI}   \\
            M_{GFP}  \\
            P_{GFP}
        \end{pmatrix}
        \quad\text{and}\quad
        Z =
        \begin{pmatrix}
            Z^M_{lacI} \\
            Z^P_{lacI} \\
            Z^M_{tetR} \\
            Z^P_{tetR} \\
            Z^M_{cI}   \\
            Z^{P}_{cI} \\
            Z^M_{GFP}  \\
            Z^P_{GFP}
        \end{pmatrix}.
    \end{align*}
    We verify the linear-growth and Lipschitz conditions on $\mu$ componentwise. For each $M_i$ component (i.e.\ rows 1, 3, 5, 7) we have $\left| \frac{\alpha}{1+|P_j(t)|^n}+\alpha_0-M_i\right| \leq \max\{\alpha + \alpha_0, 1\}(1+\|(P_j(t), M_i(t))\|)$, so it satisfies the linear-growth condition in $(P_j, M_i)$, and the function $p \mapsto \frac{\alpha}{1+|p|^n}$ has Lipschitz constant upper bounded by $\alpha n$, so one readily verifies that the component is $\sqrt{2}\max\{\alpha n, 1\}$-Lipschitz in $(P_j, M_i)$. Each $P_i$ component is linear in $(M_i, P_i)$ and so satisfies both the linear-growth condition and the Lipschitz condition in $(M_i, P_i)$. Hence $\mu$ satisfies the linear-growth and Lipschitz conditions jointly. Moreover, the diffusion matrix $\Sigma = \sigma I_8$ is invertible, with inverse $\sigma^{-1} I_8$. If the initial condition $X^0$ is a vector of mutually independent variables with $\EE[\|X^0\|^2] < \infty$, independent of the driving Brownian motion $Z$, then the repressilator satisfies Assumption~\ref{ass:additive_noise}, and by Theorems \ref{thm:ess_solvable_under_ass} and \ref{thm:ess_solvable_under_additive} it is essentially uniquely solvable w.r.t.\ every $O\subseteq V$.
\end{example}

\section{Marginalisation}\label{sec:marginalisation}

When a subset $L \subseteq V$ of variables is not of direct interest or unobserved, one may want to reason about $\Dcal$ on $O := V \setminus L$. The marginalisation of a system of causal SDEs substitutes a solution function for $L$ into the SDE coefficients for $V \setminus L$.

\begin{definition}[Marginalisation]\label{def:marginalisation}
    Let $L\subseteq V$ and let $\Dcal = \angs{V, W, X_W, f, g, h}$ be a system of causal SDEs that is essentially uniquely solvable w.r.t.\ $L \subseteq V$ with solution function $I^{[L]}: \Xcal_{O} \times \Xcal_W \to \Xcal_L$, writing $O := V \setminus L$. The \emph{marginalised system of causal SDEs} $\Dcal_{\marg(L)} := \angs{O, W, X_W, \tilde f, \tilde g, \tilde h}$ has for each $v \in O$ an SDE of the form \eqref{eqn:sde} given by the following parameters:
    \begin{align*}
        \tilde f_v(t, x_O, x_W)         & := f_v\bigl(t, x_{\alpha(v) \cap O}, I^{[L]}_{\alpha(v) \cap L}(x_O, x_W), x_{\alpha(v) \cap W}\bigr),      \\
        \tilde g_v(s{-}, x_v, x_O, x_W) & := g_v\bigl(s{-}, x_v, x_{\beta(v) \cap O}, I^{[L]}_{\beta(v) \cap L}(x_O, x_W), x_{\beta(v) \cap W}\bigr), \\
        \tilde h_v(s, x_W)              & := h_v\bigl(s, x_{\gamma(v)}\bigr).
    \end{align*}
\end{definition}

Since adapted solution functions $I^{[L]}$ are not unique, the marginalised system $\Dcal_{\marg(L)}$ is itself not unique. The induced distributions are nonetheless unambiguous: whenever $\Dcal$ is essentially uniquely solvable w.r.t.\ $L$ and $V$, the observational distributions on $O := V \setminus L$ coincide, $\PP_{\Dcal}(X_O) = \PP_{\Dcal_{\marg(L)}}(X_O)$; and more generally, for $O' \subseteq O$ with $S := O \setminus O'$, whenever $\Dcal$ is essentially uniquely solvable w.r.t.\ $L$ and $L \cup O'$, the interventional distributions coincide, $\PP_{\Dcal}(X_{O'} \given \Do(X_S)) = \PP_{\Dcal_{\marg(L)}}(X_{O'} \given \Do(X_S))$ (Lemma~\ref{lem:marg_well_defined}).
Marginalisation is therefore a powerful notion of abstraction: (causal) inference on the observed part of a system remains valid in the underlying system. It is moreover compatible with itself and with intervention: sequentially marginalising over two disjoint subsets $L_1, L_2 \subseteq V$ yields the same observational and interventional distributions regardless of the order (Lemma~\ref{lem:marginalisation_steps}), and marginalisation commutes with intervention (Lemma~\ref{lem:intervention_marg_commute}). These results rest on the following closure property: marginalising over $L$ preserves essential unique solvability, in that solvability of $\Dcal$ w.r.t.\ $L$ and $L \cup O'$ transfers to solvability of $\Dcal_{\marg(L)}$ w.r.t.\ $O'$.

\begin{restatable}{theorem}{margclosuresimple}\label{thm:marg_closure_simple}
    Let $\Dcal$ be a system of causal SDEs, and let $L, O \subseteq V$ be disjoint. If $\Dcal$ is essentially uniquely solvable w.r.t.\ $L$ and w.r.t.\ $L \cup O$, then the marginalised system $\Dcal_{\marg(L)}$ is essentially uniquely solvable w.r.t.\ $O$. Moreover, for any solution function $I^{[L \cup O]}$ of $\Dcal$ w.r.t.\ $L \cup O$, its restriction $I^{[L \cup O]}_{O}$ is a solution function of $\Dcal_{\marg(L)}$ w.r.t.\ $O$.
\end{restatable}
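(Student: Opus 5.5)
The plan is to verify directly that $J := I^{[L\cup O]}_O$ meets both conditions of Definition~\ref{def:ess_unique_solvable} for $\Dcal_{\marg(L)}$ w.r.t.\ $O$. Write $S := V\setminus(L\cup O)$, so in the marginalised system (vertex set $V\setminus L = O\cup S$) the complement of $O$ is $S$. The key observation is that, by Definition~\ref{def:marginalisation}, for every $v\in O$ the marginalised mechanism is
\[
\tilde\Phi_v(x_O, x_S, x_W) = \Phi_v\bigl(x_O, x_S, I^{[L]}(x_O, x_S, x_W), x_W\bigr).
\]
The reason is that $\Psi$ is applied to the integrand $g_v(\cdot, x_v, x_{\beta(v)\cap O}, I^{[L]}_{\beta(v)\cap L}, \dots)$ and the integrator $h_v$, which is unchanged. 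This is a purely pathwise identity, so no null sets enter here.

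\textbf{Existence.} Fix a measurable adapted $\varphi_S:\Xcal_W\to\Xcal_S$. Define
\[
Y_{L\cup O} := I^{[L\cup O]}(\varphi_S(X_W), X_W),
\]
which is a.s.\ a fixed point of $\Phi_{L\cup O}$ with input $\varphi_S$. In particular $Y_L = \Phi_L(Y_L, Y_O, \varphi_S, X_W)$ a.s. Now view $\psi := (Y_O, \varphi_S)$ as a measurable adapted input $\Xcal_W\to\Xcal_{O\cup S}$; it is adapted because solution functions are adapted and compositions of adapted maps are adapted. The uniqueness clause of essential unique solvability of $\Dcal$ w.r.t.\ $L$ then gives $Y_L = I^{[L]}(Y_O,\varphi_S(X_W), X_W)$ a.s. Substituting into the $O$-component of the fixed-point equation yields
\[
Y_O = \Phi_O\bigl(Y_O,\varphi_S, I^{[L]}(Y_O,\varphi_S,X_W), X_W\bigr) = \tilde\Phi_O(Y_O,\varphi_S,X_W) \quad \text{a.s.},
\]
which is the existence condition for $J$.

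\textbf{Uniqueness.} Let $\varphi_O$ be measurable adapted with $\varphi_O = \tilde\Phi_O(\varphi_O,\varphi_S,X_W)$ a.s. Define
\[
\varphi_L := I^{[L]}(\varphi_O(X_W),\varphi_S(X_W),X_W),
\]
which is measurable and adapted. Existence for $\Dcal$ w.r.t.\ $L$, applied with input $(\varphi_O,\varphi_S)$, gives $\varphi_L = \Phi_L(\varphi_L,\varphi_O,\varphi_S,X_W)$ a.s. By the identity above, the hypothesis on $\varphi_O$ reads $\varphi_O = \Phi_O(\varphi_O,\varphi_S,\varphi_L,X_W)$ a.s. Hence $(\varphi_L,\varphi_O)$ is a measurable adapted a.s.\ fixed point of $\Phi_{L\cup O}$ given $\varphi_S$. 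Uniqueness for $\Dcal$ w.r.t.\ $L\cup O$ then gives $(\varphi_L,\varphi_O) = I^{[L\cup O]}(\varphi_S,X_W)$ a.s., and in particular $\varphi_O = J(\varphi_S(X_W),X_W)$ a.s.

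The main obstacle is bookkeeping rather than analysis. First, all the composed maps ($\psi$, $\varphi_L$, and $\tilde f,\tilde g,\tilde h$ as functions of $x_O, x_S, x_W$) must be measurable and adapted, so that the quantifier ``for every measurable adapted input'' can legitimately be invoked. Second, the null sets must behave: each appeal to solvability produces its own null set, which depends on the particular input. Since only finitely many appeals are made, the union of these null sets is still null. Adaptedness of $\tilde g_v$ in its path arguments follows because $I^{[L]}$ is adapted, so $I^{[L]}(x)^{\wedge t}$ depends only on $x^{\wedge t}$. The pathwise identity for $\tilde\Phi_v$ must also be checked carefully, including the left-limit convention inside $\Psi$.
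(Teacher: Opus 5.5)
Your proposal is correct and follows essentially the same route as the paper's proof. The paper also starts from the pathwise identity $\tilde\Phi_O(x_O,x_S,x_W)=\Phi_O\bigl(I^{[L]}(x_O,x_S,x_W),x_O,x_S,x_W\bigr)$, proves existence by applying the uniqueness clause for $L$ to the input $\bigl(I^{[L\cup O]}_O(\varphi_S(X_W),X_W),\varphi_S(X_W)\bigr)$, and proves uniqueness by lifting $\varphi_O$ to $\bigl(I^{[L]}(\varphi_O,\varphi_S,X_W),\varphi_O\bigr)$ and invoking uniqueness w.r.t.\ $L\cup O$.
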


Any system of causal SDEs satisfying Assumption~\ref{ass:uniquely_solvable} or Assumption~\ref{ass:additive_noise} is essentially uniquely solvable w.r.t.\ every $O \subseteq V$ (Theorems~\ref{thm:ess_solvable_under_ass} and~\ref{thm:ess_solvable_under_additive}). By Theorem~\ref{thm:marg_closure_simple}, any such system therefore remains essentially uniquely solvable w.r.t.\ every set of endogenous variables after any marginalisation.

\section{Graphical Markov properties}\label{sec:markov_properties}
A cornerstone of reasoning with causal graphs is the Markov property: that a $d$-separation or $\sigma$-separation in the graph implies a conditional independence in the distribution. The Markov property is the foundation of the do-calculus and constraint-based causal discovery algorithms, as will be treated in Sections \ref{sec:causal_inference} and \ref{sec:constraint_based_cd}.

Formally, a walk $\pi$ in a DMG $G =(V, E, L)$, is \emph{$d$-blocked} by $C\subseteq V$ if it has an end-point or a non-collider in $C$, or if there is a collider which is not in $\Anc(C)$. For sets of nodes $A, B, C\subseteq V$, we call $A$ and $B$ \emph{$d$-separated} given $C$, written $A\Perp_G^d B \given C$, if all walks between $A$ and $B$ are $d$-blocked by $C$.
The $d$-separation Markov property then means that for all $A, B, C \subseteq V$, we have the implication
\begin{equation*}
    A\Perp^d_G B\given C \implies X_A \Indep X_B \given X_C.
\end{equation*}
This is known to hold for acyclic SCMs, as well as for certain cyclic SCMs: if all variables are discrete and the SCM is ancestrally uniquely solvable \citep{pearl1996identifying,neal2000deducing,forre2017markov}, or if the structural equations are linear and depend on an exogenous variable whose distribution has a density with respect to Lebesgue measure \citep{spirtes1994conditional,forre2017markov}, see also \cite{bongers2021foundations}. In the next section, we will see that a class of cyclic additive-noise SDEs can be added to this list.

It is known that there are cyclic SCMs for which the $d$-separation Markov property does not hold, for example for certain nonlinear cyclic SCMs with Gaussian noise \citep{spirtes1994conditional,spirtes1995directed}. A more suitable separation criterion for cyclic SCMs is the following notion of \emph{$\sigma$-separation} \citep{forre2017markov,bongers2021foundations,forre2025mathematical}.
Given a DMG $G=(V, E)$, a set of nodes $C\subseteq V$ and a walk $\pi$ in DMG $G$:
\begin{itemize}
    \item a non-collider $v$ is called \emph{blockable} if it points towards a neighbouring node on the walk that is not in the same strongly connected component as $v$,
    \item the walk $\pi$ is called \emph{$\sigma$-blocked by $C$} if it has an end-point or a blockable non-collider in $C$, or if there is a collider on $\pi$ that is not in $\Anc(C)$.
\end{itemize}
For sets of nodes $A, B, C\subseteq V$, we call $A$ and $B$ \emph{$\sigma$-separated} given $C$, written $A\Perp_G^\sigma B \given C$, if all walks between $A$ and $B$ are $\sigma$-blocked by $C$.
For general DMGs, $\sigma$-separation implies $d$-separation, and for acyclic DMGs the two notions coincide (since every non-collider is automatically blockable).
Self-loops $v\to v$ do not affect $d$- or $\sigma$-separations (Theorem~\ref{thm:self_cyclic_graph_separations}); we may therefore ignore them when reading off separations.

\subsection{A \texorpdfstring{$\sigma$}{sigma}-separation Markov property for systems of causal SDEs}\label{sec:sigmasep_mp}

The $\sigma$-separation Markov property was established for simple SCMs by \cite{forre2017markov,bongers2021foundations,forre2025mathematical}. Here we extend this result to systems of causal SDEs that are essentially uniquely solvable w.r.t.\ every SCC of $G(\Dcal)$, following the same proof strategy: we construct an \emph{acyclification} by replacing each SCC's SDE with its solution function, show that this acyclified system is observationally equivalent to the original and has a graph that is a subgraph of any graphical acyclification, and then invoke the $d$-separation Markov property for acyclic SCMs \cite[Theorem 6.3]{bongers2021foundations}. The full proof is given in the appendix.

\begin{restatable}[]{theorem}{sigmasepmarkov}\label{thm:sigma_sep_mp}
    Let $\Dcal$ be a system of causal SDEs that is essentially uniquely solvable w.r.t.\ every SCC of $G(\Dcal)$. For all $A, B, C \subseteq V\cup W$ we have
    \begin{equation}\label{eqn:sigmasep_mp}
        A \Perp_{G^+(\Dcal)}^\sigma B \given C \implies X_A \Indep X_B \given X_C.
    \end{equation}
\end{restatable}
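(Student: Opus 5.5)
The plan is to follow the acyclification strategy announced before the statement. Let $S_1,\dots,S_k$ be the SCCs of $G(\Dcal)$, listed in a topological order of the DAG of SCCs. By assumption, for each $j$ there is a solution function $I^{[S_j]} : \Xcal_{V\setminus S_j}\times\Xcal_W\to\Xcal_{S_j}$. By Lemma~\ref{lem:acy_factorisation} we may choose it to depend on $x_{V\setminus S_j}$ only through $x_{\pa(S_j)}$. The same factorisation argument should also let us drop exogenous inputs outside $\pa_{G^+}(S_j)\cap W$, since $\Phi_{S_j}$ is constant in them.

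We then define the \emph{acyclified} system as an SCM on path space. Each $v\in S_j$ gets the structural equation
\[
X_v = I^{[S_j]}_v\bigl(X_{\pa(S_j)}, X_{\pa_{G^+}(S_j)\cap W}\bigr).
\]
Its augmented graph $G^{+}_{\mathrm{acy}}$ has edges $u\to v$ for $u\in\pa_{G^+}(S_j)$ and $v\in S_j$, and no edges inside an SCC. It is therefore acyclic, and it is a subgraph of the graphical acyclification of $G^+(\Dcal)$ used by \cite{forre2017markov,bongers2021foundations}. For that acyclification, $\sigma$-separation in $G^+(\Dcal)$ implies $d$-separation in the acyclification, and $d$-separation is preserved when passing to a subgraph. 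Hence
\[
A\Perp^\sigma_{G^+(\Dcal)}B\given C \implies A\Perp^d_{G^+_{\mathrm{acy}}}B\given C.
\]
The acyclified SCM has mutually independent exogenous variables $X_W$ with measurable mechanisms on standard Borel spaces. The $d$-separation Markov property for acyclic SCMs \cite[Theorem 6.3]{bongers2021foundations} then gives the conditional independence for its unique solution $\tilde X_V$, which is obtained by composing along the topological order.

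It remains to show that $\PP(\tilde X_V, X_W)=\PP(X_V, X_W)$ for any solution $X_V$ of $\Dcal$; this is the step I expect to be the main obstacle. I would argue by induction over the topological order, with $\tilde X_{S_1\cup\dots\cup S_{j-1}} = X_{S_1\cup\dots\cup S_{j-1}}$ a.s.\ as hypothesis. Fix $j$ and set $\varphi_{V\setminus S_j}$ to be the adapted measurable map of $X_W$ giving the upstream components. For the earlier SCCs this is $\tilde X$, which is a function of $X_W$. For the remaining coordinates we cannot use $X$ directly, because $X_V$ need not be a measurable function of $X_W$; however, $I^{[S_j]}$ ignores them, so we may plug in any fixed path there. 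Since $X_{S_j}$ solves $X_{S_j}=\Phi_{S_j}(X_{S_j},\varphi,X_W)$ a.s., the uniqueness clause of Definition~\ref{def:ess_unique_solvable} would give $X_{S_j}=I^{[S_j]}(\varphi,X_W)$ a.s. The subtlety is that the uniqueness clause is stated only for solutions of the form $\varphi_{S_j}(X_W)$. To handle this, I would first invoke Theorem~\ref{thm:obs_int_dist} and Lemma~\ref{lem:scc_composition}: together they provide a solution function $I^{[V]}$ and show that the observational law is $\PP(I^{[V]}(X_W))$. We can therefore work with the solution $X_V=I^{[V]}(X_W)$, which is a measurable adapted function of $X_W$, and the inductive uniqueness argument then applies verbatim.

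Finally, the case where $A,B,C$ contain exogenous vertices is covered automatically, since $W$ is part of the acyclic augmented graph. This yields \eqref{eqn:sigmasep_mp}.
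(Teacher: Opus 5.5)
Your overall route is the paper's. You acyclify using SCC solution functions and pass from $\sigma$-separation in $G^+(\Dcal)$ to $d$-separation in the graphical acyclification \cite[Proposition 3.5.2]{forre2025mathematical}. You then drop to a subgraph, apply \cite[Theorem 6.3]{bongers2021foundations}, and transfer via equality of the joint law of $(X_V,X_W)$ computed through the composed $I^{[V]}$ of Lemma~\ref{lem:scc_composition}.

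The step that does not go through as written is the removal of exogenous inputs. Lemma~\ref{lem:acy_factorisation} works for endogenous coordinates because replacing $\varphi_L$ by a constant path is itself an admissible adapted input, so both clauses of Definition~\ref{def:ess_unique_solvable} apply verbatim. Write $W_1 := \pa_{G^+}(S_j)\cap W$ and $W_2 := W\setminus W_1$. For $W_2$ the situation is different: the definition only controls $I^{[S_j]}$ at points $(\varphi(X_W),X_W)$ with the true $X_W$ in the second slot. It says nothing about $I^{[S_j]}(\varphi(X_W),X_{W_1},y)$ for a fixed path $y$. A solution function can be altered adaptedly on a $\PP(X_W)$-null event and remain a solution function; an example is the event $\{X_{W_2}^{\wedge t_1}=y^{\wedge t_1}\}$ with $t_1>0$ and Brownian $X_{W_2}$. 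So substituting an arbitrary fixed path can yield a map that is not a solution function. This matters in your induction: the upstream input $\tilde X_{\pa(S_j)}$ generally depends on $X_{W_2}$ through earlier SCCs. Hence the uniqueness step $X_{S_j}=I^{[S_j]}(\varphi,X_W)$ is justified only for the unreduced solution function, not for the reduced mechanism you put into the acyclic SCM.

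The repair uses the independence of $X_{W_1}$ and $X_{W_2}$ together with Fubini, rather than constant substitution.
\begin{itemize}
\item For every fixed path $z$, the map $x_W\mapsto\varphi(x_{W_1},z)$ is an admissible adapted input, and $\Phi_{S_j}$ is literally constant in $x_{W_2}$.
\item Hence for every $z$, $I^{[S_j]}(\varphi(x_{W_1},z),x_{W_1},y)$ is a fixed point of $\Phi_{S_j}(\cdot,\varphi(x_{W_1},z),x_{W_1})$ for $\PP(X_{W_1})\otimes\PP(X_{W_2})$-a.e.\ $(x_{W_1},y)$.
\item By Fubini on the triple product, for $\PP(X_{W_2})$-a.e.\ path $y$ the adapted map $x_W\mapsto I^{[S_j]}(\varphi(x_W),x_{W_1},y)$ is a $\PP(X_W)$-a.s.\ fixed point at input $\varphi$.
\item By the uniqueness clause it therefore equals $I^{[S_j]}(\varphi(X_W),X_W)$ a.s.
\end{itemize}
Choosing such a good $y^*$ for the particular upstream input, one per SCC, defines your reduced mechanism and closes the induction; graph inclusion is then immediate.

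The paper instead keeps all of $x_W$ as arguments of the acyclified mechanisms, so equality of laws is immediate. It places the non-dependence on irrelevant exogenous coordinates in the graph comparison (Lemma~\ref{lem:weak_acy}(ii)), justified there by appeal to essential uniqueness. Your repaired version needs only an a.s.\ identity along the one input actually used.
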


For sets $A, B, C \subseteq V$ restricted to endogenous variables, this implies the $\sigma$-separation Markov property in the (non-augmented) causal graph $G(\Dcal)$, via the correspondence between $\sigma$-separation in $G(\Dcal)$ and $G^+(\Dcal)$.
We call $\Dcal$ $\sigma$-\emph{faithful} if the reverse implication of \eqref{eqn:sigmasep_mp} holds; this need not hold in general.
\begin{example}[Graphical separation in the repressilator]\label{ex:repressilator_separation}
    Consider the cyclic graph in Figure \ref{fig:repressilator_graph}. In this graph we have the $d$-separation $P_{lacI} \Perp^d P_{tetR} \given M_{tetR}, M_{lacI}$, but a $\sigma$-connection $P_{lacI} \nPerp^\sigma P_{tetR} \given M_{tetR}, M_{lacI}$: on the walk $P_{lacI} \to M_{tetR} \to P_{tetR}$ the non-collider $M_{tetR}$ points to $P_{tetR}$, which lies in the same strongly connected component $\{M_{lacI}, P_{lacI}, M_{tetR}, P_{tetR}, M_{cI}, P_{cI}\}$, so $M_{tetR}$ is not blockable and conditioning on it does not block the walk. Similarly, we have a $\sigma$-connection $P_{lacI} \nPerp^\sigma P_{GFP} \given M_{tetR}, M_{lacI}$, and we have a $\sigma$-separation $P_{lacI} \Perp^\sigma P_{GFP} \given P_{tetR}$.

    For essentially uniquely solvable cyclic systems of causal SDEs the $\sigma$-separation Markov property holds (Theorem~\ref{thm:sigma_sep_mp}), but the question remains: can this be strengthened to a $d$-separation Markov property? Do we have a conditional independence $P_{lacI} \Indep P_{tetR} \given M_{tetR}, M_{lacI}$?
\end{example}

\subsection{A d-separation Markov property for additive-noise SDEs}\label{sec:dsep_mp}
In this section we will show that the $d$-separation Markov property holds for (possibly cyclic) additive noise SDEs satisfying Assumption~\ref{ass:additive_noise}.
Our proof technique relies on a time-discretised process for which the $d$-separation Markov property holds by acyclicity. By showing that the law of the discretised process -- suitably embedded in the Skorokhod space $D([0,T], \RR^d)$ -- converges in total variation to the law of the original process as a probability measure on $D([0,T], \RR^d)$, conditional independence in the time-discretised process implies a conditional independence in the solution of the original process.

\bigskip
We consider the following discrete-time Euler-Maruyama-type scheme, with initial value $X_V^\Delta(0) = X_\alpha^0$ and
\begin{equation*}
    X_V^\Delta(t_{k+1}) = X_V^\Delta(t_{k}) + \mu(t_k, X_V^\Delta(t_k))(t_{k+1}-t_k) + \Sigma\bigl(X_\gamma(t_{k+1}) - X_\gamma(t_k)\bigr)
\end{equation*}
with $t_k := Tk/n$, $n\in\NN$, $k=0, ..., n$ and independent increment processes $X_\gamma^{[t_{k}, t_{k+1}]} := X_\gamma - X_\gamma(t_k)$ considered as a random variable in $D([t_k, t_{k+1}], \RR^d)$. This induces a discrete-time \emph{Euler SCM} $\Mcal_\Dcal^\Delta$ with endogenous variables $X_v^\Delta(t_0), ..., X_v^\Delta(t_n)$ for all $v\in V$ and exogenous independent variables $X_w^0$ for all $w\in\alpha$ and $X_w^{[0, t_{1}]}, ..., X_w^{[t_{n-1}, t_{n}]}$ for all $w\in \gamma$ \citep{hansen2014causal}.
We refer to the graph $G(\Mcal_\Dcal^\Delta)$ of $\Mcal_\Dcal^\Delta$ as the \emph{Euler graph}, see Example \ref{ex:euler_graph} below.
In contrast, we call the causal graph $G(\Dcal)$ the \emph{summary graph}, since each of its vertices represents an entire process $X_v$ on $[0,T]$ rather than the individual time-points $X_v^\Delta(t_k)$ as displayed by the Euler graph (or the subinterval processes of the time-split graph in Section~\ref{sec:time-evaluations}).
An important observation is that the Euler graph is acyclic, so the Euler SCM satisfies the $d$-separation Markov property.

The following lemma relates the causal graph to the Euler graph. It is an adaptation of \cite{ferreira2024identifying}, who consider discrete-time stochastic processes with a corresponding summary graph.

\begin{restatable}[]{lem}{dsepsummaryeuler}\label{thm:dsep_summary_euler}
    Let $\Dcal$ be the SDE from Assumption~\ref{ass:additive_noise} and let $\Mcal_\Dcal^\Delta$ be the Euler SCM (for any $n\in\NN$), then for $A, B, C\subseteq V$ we have
    \begin{equation*}
        X_A \Perp_{G(\Dcal)}^d X_B \given X_C \implies X_A^\Delta \Perp_{G(\Mcal_\Dcal^\Delta)}^d X_B^{\Delta} \given X_C^{\Delta} \implies X_A^\Delta \Indep X_B^\Delta \given X_C^\Delta
    \end{equation*}
    where we write $X_A^\Delta := (X_A^\Delta(t_0), ..., X_A^\Delta(t_n))$.
\end{restatable}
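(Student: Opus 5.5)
The plan splits the statement into its two implications; the second one is routine. The Euler SCM $\Mcal_\Dcal^\Delta$ is acyclic. Each $X^\Delta_v(t_{k+1})$ is a measurable function of $X^\Delta_V(t_k)$ and the independent exogenous increment $X_{\gamma}^{[t_k,t_{k+1}]}$, and the exogenous variables $X^0_w$, $X^{[t_k,t_{k+1}]}_w$ are mutually independent. The $d$-separation Markov property for acyclic SCMs \cite[Theorem 6.3]{bongers2021foundations} therefore gives the second implication directly. The only care needed is to define the Euler graph precisely. For $k<n$ there is an edge $X_u^\Delta(t_k)\to X_v^\Delta(t_{k+1})$ iff $u=v$, or $\mu_v$ depends on $x_u$, i.e.\ $u\to v$ in $G(\Dcal)$. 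The exogenous node $X^{[t_k,t_{k+1}]}_w$ points to $X_v^\Delta(t_{k+1})$ iff $\Sigma_{vw}\neq 0$, so shared noise produces bidirected edges $X_u^\Delta(t_{k+1})\leftrightarrow X_v^\Delta(t_{k+1})$ exactly when $u\leftrightarrow v$ in $G(\Dcal)$. The initial nodes $X^\Delta_v(t_0)$ have only their own $X^0_{\alpha(v)}$ as parent.

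For the first implication I argue by contraposition, following \cite{ferreira2024identifying}. Suppose there is a walk $\pi^\Delta$ in the Euler graph between some $X_a^\Delta(t_i)$ and $X_b^\Delta(t_j)$, with $a\in A$ and $b\in B$, that is $d$-open given $X_C^\Delta$. Project it onto $G(\Dcal)$ by sending each node $X_v^\Delta(t_k)$ to $v$. Each directed edge maps either to an edge $u\to v$ of $G(\Dcal)$ or, when $u=v$, to a self-loop, which I delete (Theorem~\ref{thm:self_cyclic_graph_separations} lets us ignore self-loops). Each bidirected edge maps to a bidirected edge. This gives a walk $\pi$ from $a$ to $b$ in $G(\Dcal)$, and I then check that $\pi$ is $d$-open given $C$.

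That check is the main obstacle, because collider and non-collider status can change when consecutive time copies of the same vertex are collapsed. First take a non-collider of $\pi$ at vertex $v$. It comes from a maximal segment of $\pi^\Delta$ staying at copies of $v$ (joined by edges $X_v^\Delta(t_k)\to X_v^\Delta(t_{k+1})$), and the open status of $\pi^\Delta$ has to be transferred from that segment. If the segment contains a non-collider, that node is not in $X_C^\Delta$, so $v\notin C$. If every node of the segment is a collider, then the endpoints of the segment have mixed orientations. Such a segment can have no internal node, since any copy of $v$ with edges to both neighbours inside the segment is a non-collider there. Hence the segment is a single node, and it would be a collider at $v$ in $\pi$ too, a contradiction. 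So the non-collider case goes through.

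Next take a collider of $\pi$ at vertex $v$. Some node of its segment is a collider, or the segment's edges are all arrowheads into the segment. Either way a descendant of some copy $X_v^\Delta(t_k)$ lies in $X_C^\Delta$ (an open collider, or one reached by following the forward-in-time self-edges), so some $c\in C$ has $X_c^\Delta(t_m)$ as a descendant of $X_v^\Delta(t_k)$. Projecting that directed path gives $v\in\Anc(C)$ in $G(\Dcal)$.

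The endpoints also need handling: if $\pi^\Delta$ meets copies of $A$ or $B$ in its interior, I shorten it to the last copy of $A$ and the first copy of $B$, and the endpoints are then not in $C$ whenever $A\cap C=B\cap C=\emptyset$, which is without loss of generality. Having shown $\pi$ is $d$-open given $C$ in $G(\Dcal)$, this contradicts $A\Perp^d_{G(\Dcal)}B\given C$, which proves the first implication.
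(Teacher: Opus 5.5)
Your proposal is correct and follows essentially the same route as the paper: contrapose, project an open walk of the acyclic Euler graph onto $G(\Dcal)$ by dropping time indices and collapsing the runs of copies of a vertex joined by $X_v^\Delta(t_k)\to X_v^\Delta(t_{k+1})$ edges, check that colliders and non-colliders stay open, and then invoke the acyclic $d$-separation Markov property for the second implication. Three small tidy-ups.

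\begin{itemize}
\item In the non-collider case, the clean reason a run of two or more nodes cannot consist only of colliders is that the tail of any self-edge in the run is a non-collider. Your \emph{no internal node} phrasing misses two-node runs and walks that backtrack along a self-edge.
\item The same fact shows that a run entered by arrowheads at both ends always contains a collider of $\pi^\Delta$, so your second branch in the collider case is not needed.
\item The endpoint shortening is unnecessary, since the endpoints of an open walk already lie outside $X_C^\Delta$.
\end{itemize}
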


\begin{example}[Euler graph of repressilator]\label{ex:euler_graph}
    Considering only the variables $P_{lacI}$, $M_{tetR}$, $P_{tetR}$ and $M_{lacI}$ of the repressilator, the causal graph of the marginalised SDE $\Dcal' := \Dcal_{\marg(M_{cI}, P_{cI}, M_{GFP}, P_{GFP})}$ is given in Figure \ref{fig:repressilator_graph_euler_summary}, from which we read off the $d$-separation $P_{lacI} \Perp^d P_{tetR} \given M_{tetR}, M_{lacI}$. The Euler graph is depicted in Figure \ref{fig:repressilator_graph_euler_unrolled}, where we observe that $P_{lacI}^\Delta \Perp^d P_{tetR}^\Delta \given M_{tetR}^\Delta, M_{lacI}^\Delta$. For example, the displayed paths from $P_{lacI}^\Delta(t_0)$ to $P_{tetR}^\Delta(t_2)$ and from $P_{tetR}^\Delta(t_2)$ to $P_{lacI}^\Delta(t_4)$ are blocked by $M_{tetR}^\Delta(t_1)$ and $M_{lacI}^\Delta(t_3)$ respectively. Since this discrete-time SCM is acyclic, we have $P_{lacI}^\Delta \Indep P_{tetR}^\Delta \given M_{tetR}^\Delta, M_{lacI}^\Delta$.
\end{example}
\begin{figure}[!htb]
    \centering
    \begin{subfigure}{.45\linewidth}
        \centering
        \begin{tikzpicture}[scale=1.6]
            \node[] (X1) at (-.75,0) {$M_{lacI}$};
            \node[] (Y1) at (0,.75) {$P_{lacI}$};
            \node[] (X2) at (.75,0) {$M_{tetR}$};
            \node[] (Y2) at (0,-.75) {$P_{tetR}$};
            \draw[arr] (X1) to (Y1);
            \draw[arr] (Y1) to (X2);
            \draw[arr] (X2) to (Y2);
            \draw[arr] (Y2) to (X1);
            \draw[arr] (X1) to [loop arc rev={180}{60}{0.6cm}] (X1);
            \draw[arr] (Y1) to [loop arc={90}{60}{0.6cm}] (Y1);
            \draw[arr] (X2) to [loop arc={0}{60}{0.6cm}] (X2);
            \draw[arr] (Y2) to [loop arc rev={-90}{60}{0.6cm}] (Y2);
        \end{tikzpicture}
        \caption{The causal graph $G(\Dcal')$.}
        \label{fig:repressilator_graph_euler_summary}
    \end{subfigure}
    \begin{subfigure}{.45\linewidth}
        \centering
        \begin{tikzpicture}
	\node[] (A) at (.3,0) {$P_{lacI}^\Delta$};
	\node[] (C1) at (.3,-1) {$M_{tetR}^\Delta$};
	\node[] (B) at (.3,-2) {$P_{tetR}^\Delta$};
	\node[] (C2) at (.3,-3) {$M_{lacI}^\Delta$};
	\node[] () at (1,.6) {$t_0$};
	\node[] () at (2,.6) {$t_1$};
	\node[] () at (3,.6) {$t_2$};
	\node[] () at (4,.6) {$t_3$};
	\node[] () at (5,.6) {$t_4$};
	\node[] () at (6,.6) {$t_5$};
	\node[] (A1) at (1,0) {$\cdot$};
	\node[] (A2) at (2,0) {$\cdot$};
	\node[] (A3) at (3,0) {$\cdot$};
	\node[] (A4) at (4,0) {$\cdot$};
	\node[] (A5) at (5,0) {$\cdot$};
	\node[] (A6) at (6,0) {$\cdot$};
	\node[] (C11) at (1,-1) {$\cdot$};
	\node[] (C12) at (2,-1) {$\cdot$};
	\node[] (C13) at (3,-1) {$\cdot$};
	\node[] (C14) at (4,-1) {$\cdot$};
	\node[] (C15) at (5,-1) {$\cdot$};
	\node[] (C16) at (6,-1) {$\cdot$};
	\node[] (B1) at (1,-2) {$\cdot$};
	\node[] (B2) at (2,-2) {$\cdot$};
	\node[] (B3) at (3,-2) {$\cdot$};
	\node[] (B4) at (4,-2) {$\cdot$};
	\node[] (B5) at (5,-2) {$\cdot$};
	\node[] (B6) at (6,-2) {$\cdot$};
	\node[] (C21) at (1,-3) {$\cdot$};
	\node[] (C22) at (2,-3) {$\cdot$};
	\node[] (C23) at (3,-3) {$\cdot$};
	\node[] (C24) at (4,-3) {$\cdot$};
	\node[] (C25) at (5,-3) {$\cdot$};
	\node[] (C26) at (6,-3) {$\cdot$};
	\node[color=gray!50] () at (6.5,0) {...};
	\node[color=gray!50] () at (6.5,-1) {...};
	\node[color=gray!50] () at (6.5,-2) {...};
	\node[color=gray!50] () at (6.5,-3) {...};
	\draw[arr,color=gray!50] (A1) to (A2);
	\draw[arr,color=gray!50] (A2) to (A3);
	\draw[arr,color=gray!50] (A3) to (A4);
	\draw[arr,color=gray!50] (A4) to (A5);
	\draw[arr,color=gray!50] (A5) to (A6);
	\draw[arr,color=gray!50] (B1) to (B2);
	\draw[arr,color=gray!50] (B2) to (B3);
	\draw[arr,color=gray!50] (B3) to (B4);
	\draw[arr,color=gray!50] (B4) to (B5);
	\draw[arr,color=gray!50] (B5) to (B6);
	\draw[arr,color=gray!50] (C11) to (C12);
	\draw[arr,color=gray!50] (C12) to (C13);
	\draw[arr,color=gray!50] (C13) to (C14);
	\draw[arr,color=gray!50] (C14) to (C15);
	\draw[arr,color=gray!50] (C15) to (C16);
	\draw[arr,color=gray!50] (C21) to (C22);
	\draw[arr,color=gray!50] (C22) to (C23);
	\draw[arr,color=gray!50] (C23) to (C24);
	\draw[arr,color=gray!50] (C24) to (C25);
	\draw[arr,color=gray!50] (C25) to (C26);
	\draw[arr,color=gray!50] (A2) to (C13);
	\draw[arr,color=gray!50] (A3) to (C14);
	\draw[arr,color=gray!50] (A4) to (C15);
	\draw[arr,color=gray!50] (A5) to (C16);
	\draw[arr,color=gray!50] (B1) to (C22);
	\draw[arr,color=gray!50] (B2) to (C23);
	\draw[arr,color=gray!50] (B4) to (C25);
	\draw[arr,color=gray!50] (B5) to (C26);
	\draw[arr,color=gray!50] (C11) to (B2);
	\draw[arr,color=gray!50] (C13) to (B4);
	\draw[arr,color=gray!50] (C14) to (B5);
	\draw[arr,color=gray!50] (C15) to (B6);
	\draw[arr,color=gray!50] (C21) to (A2);
	\draw[arr,color=gray!50] (C22) to (A3);
	\draw[arr,color=gray!50] (C23) to (A4);
	\draw[arr,color=gray!50] (C25) to (A6);
	\draw[arr] (A1) to (C12);
	\draw[arr] (C12) to (B3);
	\draw[arr] (B3) to (C24);
	\draw[arr] (C24) to (A5);
\end{tikzpicture}
        \caption{The Euler graph $G(\Mcal_{\Dcal'}^\Delta)$.}
        \label{fig:repressilator_graph_euler_unrolled}
    \end{subfigure}
    \caption{The marginalisation of the repressilator onto $\{P_{lacI}, M_{tetR}, P_{tetR}, M_{lacI}\}$, showcasing that $P_{lacI} \Perp^d P_{tetR} \given M_{tetR}, M_{lacI}$ implies $P_{lacI}^\Delta \Perp^d P_{tetR}^\Delta \given M_{tetR}^\Delta, M_{lacI}^\Delta$.}
    \label{fig:repressilator_graph_euler}
\end{figure}

It follows from Lemma~\ref{thm:dsep_summary_euler} that the $d$-separation Markov property holds for any discrete-time SCM which does not have instantaneous cycles, similar to results shown by \cite{ferreira2024identifying}, \cite{niemiro2024causal} and \cite{reiter2024causal}.

\bigskip
To obtain a $d$-separation Markov property for the SDE, we aim to deduce the conditional independence $X_A\Indep X_B \given X_C$ from the conditional independence $X_A^\Delta \Indep X_B^\Delta \given X_C^\Delta$. To do so, we first extend the discrete-time process $X_V^\Delta$ to a continuous-time process $X_V^n$, for which we show that $X_A^n \Indep X_B^n \given X_C^n$ holds as well. Finally, we show that the distribution $\PP(X_V^n)$ converges in total variation to $\PP(X_V)$, which by \cite{lauritzen2024total} implies the desired conditional independence $X_A \Indep X_B \given X_C$.

Obtaining total variation convergence of the law of the Euler scheme to the law of the solution of the SDE is not straightforward. For example, if we extend $X_V^\Delta$ to continuous time by setting $X^n(t) := X^\Delta(t_k)$ for $t\in[t_k, t_{k+1})$, then $\PP(X_V^n)$ will never converge in total variation to the law of the SDE since the set of continuous paths is a Borel measurable set in the Skorokhod space, giving total variation distance $d_{TV}(X_V^\Delta, X_V^n) = 1$. To mitigate this problem we consider for $t\in[t_k, t_{k+1})$ the continuously interpolated Euler scheme
\begin{equation}\label{eqn:euler_continuous}
    X^n_V(t) = X^n_V(t_{k}) + \mu(t_k, X^n_V(t_k))(t-t_k) + \Sigma\bigl(X_\gamma(t) - X_\gamma(t_k)\bigr).
\end{equation}
This Euler approximation has the same noise structure as the original SDE, which ensures that the processes both have a density with respect to a (transformed) Wiener measure on $D([0,T], \RR^{d})$ via Girsanov's theorem. Convergence of the density of $\PP(X^n_V)$ to the density of $\PP(X_V)$ finally implies the desired total variation convergence, via Scheffé's Theorem \citep{scheffe1947useful}.

Having introduced a suitable continuous-time extension of the Euler scheme, we show that it inherits the conditional independencies implied by $d$-separation in the causal graph $G(\Dcal)$. The proof is in the appendix, but relies on the insight that one can write $X_V^n$ as a function of $X_V^\Delta$ and a vector $\tilde X_V$ of Brownian bridges, with $X_V^\Delta\Indep \tilde X_V$ (Lemma~\ref{lem:euler_decomposition}). By Lemma~\ref{thm:dsep_summary_euler} and the $d$-separation Markov property we have $X_A^\Delta \Indep X_B^\Delta \given X_C^\Delta$, and by Lemma~\ref{lem:mp_brownian_bridge} we have $\tilde X_A\Indep \tilde X_B\given \tilde X_C$. Combining these conditional independencies then gives the following result:
\begin{restatable}[]{theorem}{continuouseulerdsepmp}\label{thm:continuous_euler_dsep_mp}
    For all $A, B, C \subseteq V$, the continuous Euler scheme satisfies
    \begin{equation*}
        X_A \Perp_{G(\Dcal)}^d X_B \given X_C \implies X_A^n \Indep X_B^n \given X_C^n.
    \end{equation*}
\end{restatable}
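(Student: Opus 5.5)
The plan is to decompose the continuous Euler scheme \eqref{eqn:euler_continuous} into a grid part and an independent bridge part, prove the conditional independence for each part separately, and then transfer it to $X^n$ through a measurable bijection. Write $h := T/n$, let $t\in[t_k,t_{k+1}]$, and split the Brownian increment into its linear interpolation plus a Brownian bridge:
\begin{equation*}
X_\gamma(t)-X_\gamma(t_k) = \tfrac{t-t_k}{h}\bigl(X_\gamma(t_{k+1})-X_\gamma(t_k)\bigr) + B_k(t).
\end{equation*}
Here $B_k$ is a $d$-dimensional Brownian bridge on $[t_k,t_{k+1}]$ with independent components. Substituting this into \eqref{eqn:euler_continuous} and using the Euler recursion for $X_V^\Delta(t_{k+1})-X_V^\Delta(t_k)$ gives
\begin{equation*}
X_V^n(t) = X_V^\Delta(t_k) + \tfrac{t-t_k}{h}\bigl(X_V^\Delta(t_{k+1})-X_V^\Delta(t_k)\bigr) + \Sigma B_k(t).
\end{equation*}
So $X^n_V = \Lambda(X^\Delta_V) + \tilde X_V$, where $\Lambda$ is piecewise-linear interpolation (acting componentwise) and $\tilde X_V := \Sigma B$ is obtained by concatenating the bridges. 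This is the content of Lemma~\ref{lem:euler_decomposition}.

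\textbf{Independence of the two parts.} The grid process $X^\Delta_V$ is a measurable function of $X^0_\alpha$ and the grid values $(X_\gamma(t_k))_k$. For Brownian motion, the bridges $B_k$ are independent of the grid values and of each other. Since $X_\gamma$ is also independent of $X^0_\alpha$, this gives $X^\Delta_V \Indep \tilde X_V$.

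\textbf{Conditional independence for each part.} Assume the $d$-separation $A \Perp^d_{G(\Dcal)} B \given C$. Lemma~\ref{thm:dsep_summary_euler} then yields $X^\Delta_A \Indep X^\Delta_B \given X^\Delta_C$, and Lemma~\ref{lem:mp_brownian_bridge} yields $\tilde X_A \Indep \tilde X_B \given \tilde X_C$. The correlation of $\tilde X_V$ induced by $\Sigma$ is exactly what the bidirected edges of $G(\Dcal)$ encode, which is why the bridge lemma is stated in terms of the same separation.

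\textbf{Combining.} Next I will use the elementary fact that if $(U_A,U_B,U_C)\Indep(Y_A,Y_B,Y_C)$ with $U_A\Indep U_B\given U_C$ and $Y_A\Indep Y_B\given Y_C$, then
\begin{equation*}
(U_A,Y_A)\Indep(U_B,Y_B)\given(U_C,Y_C).
\end{equation*}
To prove it, factor the joint law as the product of the two laws, and factor each of them through its conditional kernel given the $C$-component. Applying this with $U=X^\Delta$ and $Y=\tilde X$ gives
\begin{equation*}
(X^\Delta_A,\tilde X_A)\Indep(X^\Delta_B,\tilde X_B)\given(X^\Delta_C,\tilde X_C).
\end{equation*}

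\textbf{Transfer to $X^n$.} For each $D\in\{A,B,C\}$, the map $(x^\Delta_D,\tilde x_D)\mapsto\Lambda(x^\Delta_D)+\tilde x_D$ is a measurable bijection onto its image with measurable inverse. Indeed, bridges vanish at grid points, so $x^\Delta_D$ is recovered by evaluating $X^n_D$ at $t_0,\dots,t_n$; these evaluations are Borel on $D([0,T],\RR)$, and then $\tilde x_D = X^n_D-\Lambda(x^\Delta_D)$. Hence $\sigma(X^n_D)=\sigma(X^\Delta_D,\tilde X_D)$, and the conditional independence transfers verbatim to $X^n_A\Indep X^n_B\given X^n_C$.

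The main obstacle is Lemma~\ref{lem:mp_brownian_bridge}, because $\Sigma$ mixes the bridge components and the conditional independence structure of a Gaussian vector must be matched against the graph. I take that lemma as given. The remaining work is bookkeeping in the combination step, namely checking that the product-measure argument applies with conditioning on the pair $(X^\Delta_C,\tilde X_C)$ rather than on a single variable.
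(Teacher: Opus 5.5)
Your proposal is correct and follows the paper's proof essentially step for step. You decompose $X^n$ via Lemma~\ref{lem:euler_decomposition} into grid values and independent Brownian bridges, take the two conditional independencies from Lemmas~\ref{thm:dsep_summary_euler} and~\ref{lem:mp_brownian_bridge}, combine them using $X_V^\Delta \Indep \tilde X_V$, and transfer the result through $\sigma(X_S^n)=\sigma(X_S^\Delta)\vee\sigma(\tilde X_S)$; you also spell out the combination lemma and the measurability of the inverse map, which the paper leaves implicit.
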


The following result gives the desired total variation convergence of the Euler scheme.

\begin{restatable}[]{theorem}{eulertvconvergence}\label{thm:euler_tv_convergence}
    Let $\Dcal$ satisfy Assumption~\ref{ass:additive_noise}, then there is a subsequence $n_m$ such that $\PP(X^{n_m}_V)$ converges in total variation to $\PP(X_V)$.
\end{restatable}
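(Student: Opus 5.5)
The plan is to change measure to a common reference law. Let $Y := X_\alpha^0 + \Sigma X_\gamma$, a Brownian motion with covariance $\Sigma\Sigma^\top$ started from the law of $X_\alpha^0$. Since $\Sigma$ is invertible, both the solution $X_V$ of \eqref{eqn:sde_additive} and the interpolated Euler scheme \eqref{eqn:euler_continuous} are diffusions driven by the same nondegenerate noise $\Sigma X_\gamma$.

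Their drifts are $b(t,x) := \mu(t, x(t))$ and $b_n(t,x) := \mu(t_k, x(t_k))$ for $t\in[t_k,t_{k+1})$. Both are adapted path functionals with linear growth in $\sup_{s\le t}\|x(s)\|$, uniformly in $n$. By Girsanov's theorem, both laws are absolutely continuous with respect to $\QQ := \PP(Y)$ on $C([0,T],\RR^d) \subseteq D([0,T],\RR^d)$. The densities are
\begin{equation*}
    Z_n := \exp\Bigl(\int_0^T (\Sigma\Sigma^\top)^{-1} b_n(s,Y)\cdot \diff Y(s) - \tfrac12\int_0^T \bigl\|\Sigma^{-1} b_n(s,Y)\bigr\|^2 \diff s\Bigr),
\end{equation*}
and $Z$ is defined in the same way with $b$ in place of $b_n$. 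To justify that $Z_n$ and $Z$ are true densities, I would invoke Bene\v{s}' condition: linear growth in the running supremum implies that the stochastic exponentials are true martingales. Equivalently, one can use weak uniqueness of the SDEs with these drifts, which holds by Lipschitz continuity, and identify each law as $Z\,\diff\QQ$ via the Euler decomposition with drift removed.

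Next I would prove that $Z_n \to Z$ in $\QQ$-probability. On a path $y$ of $Y$, continuity of $y$ and joint continuity of $(t,x)\mapsto\mu(t,x)$ give $b_n(s,y)\to b(s,y)$ for every $s$. Joint continuity follows from continuity in $t$ together with the Lipschitz condition in $x$. Linear growth gives the domination $\|b_n(s,y)\| \le K(1+\sup_{r\le T}\|y(r)\|)$, which lies in every $L^p(\QQ)$ because Gaussian suprema have moments of all orders. Dominated convergence then gives $\int_0^T \|\Sigma^{-1}(b_n-b)(s,Y)\|^2\diff s\to 0$ in $L^1(\QQ)$. By the It\^o isometry, the stochastic-integral terms converge in $L^2(\QQ)$. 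Hence the exponents converge in probability, and by continuity of $\exp$ so do $Z_n\to Z$.

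Passing to a subsequence $n_m$, we get $Z_{n_m}\to Z$ $\QQ$-almost surely. Each $Z_{n_m}$ and $Z$ is a probability density with respect to $\QQ$, so Scheff\'e's theorem gives $\int|Z_{n_m}-Z|\diff\QQ\to 0$. This is exactly total variation convergence of $\PP(X^{n_m}_V)$ to $\PP(X_V)$. It holds as measures on $D([0,T],\RR^d)$, since both are supported on the Borel set $C([0,T],\RR^d)$. The extraction of this subsequence is why the statement only asserts convergence along $n_m$.

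The main obstacle I expect is rigorously establishing that the Girsanov densities are true martingale densities equal to the laws of $X_V$ and $X_V^n$. The drift here has only linear growth, so Novikov's condition fails globally. I would handle this by localisation, stopping at $\tau_R := \inf\{t: \sup_{s\le t}\|Y(s)\|\ge R\}$, applying Novikov on $[0,\tau_R]$, and using the uniform-in-$n$ moment bounds $\EE\sup_t\|X^n_V(t)\|^2 \le C$ from a standard Gr\"onwall argument to show $\EE_\QQ[Z_n]=1$. Uniqueness in law of the SDEs, which follows from pathwise uniqueness under Theorem~\ref{thm:ess_solvable_under_ass}, then identifies $Z\,\diff\QQ$ with $\PP(X_V)$.
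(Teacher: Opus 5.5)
Your proposal is correct and follows essentially the paper's route: Girsanov densities of both laws against a Gaussian reference, pointwise convergence $\theta_n\to\theta$ from continuity of $\mu$ and of the paths, dominated convergence plus the It\^o isometry to send the log-density ratio to $0$ in probability, an a.s.\ subsequence, and Scheff\'e. The differences are only technical. You keep the random initial condition inside $\QQ$ and cite Bene\v{s}' condition (verified conditionally on $X_\alpha^0$), whereas the paper disintegrates over $x_0$ and uses a piecewise Novikov bound. One small slip: because $X_\alpha^0$ is only assumed square-integrable, $\sup_t\|Y(t)\|$ need not have moments of all orders under $\QQ$, but the second moment is all your dominated-convergence step needs.
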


Finally, the conditional independence $X_A^n \Indep X_B^n \given X_C^n$ implies the desired conditional independence $X_A \Indep X_B \given X_C$ by the following result.
\begin{restatable}[\cite{lauritzen2024total}]{theorem}{citvclosed}\label{thm:ci_tv_closed}
    Given a sequence of random variables $(X_A^n, X_B^n, X_C^n)_{n\in\NN}$ such that $X_A^n\Indep X_B^n \given X_C^n$ for all $n\in\NN$ and given a random variable $(X_A, X_B, X_C)$ such that $\PP(X_A^n, X_B^n, X_C^n)$ converges in total variation to $\PP(X_A, X_B, X_C)$, we have $X_A\Indep X_B\given X_C$.
\end{restatable}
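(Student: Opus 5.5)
We need to prove that conditional independence is closed under total variation convergence, assuming the variables take values in standard Borel spaces (here, products of Skorokhod spaces). The approach is to characterise $X_A\Indep X_B\given X_C$ through a family of integral identities that are stable under TV limits. For standard Borel spaces, $X_A\Indep X_B\given X_C$ holds if and only if, for all bounded measurable $f,g$,
\begin{equation*}
\EE\bigl[f(X_A)g(X_B)\,k(X_C)\bigr] = \EE\bigl[\EE[f(X_A)\given X_C]\,\EE[g(X_B)\given X_C]\,k(X_C)\bigr]
\end{equation*}
for all bounded measurable $k$. Equivalently, $\EE[f(X_A)g(X_B)\given X_C]=\EE[f(X_A)\given X_C]\EE[g(X_B)\given X_C]$ a.s. The difficulty is that conditional expectations are not continuous functionals of the joint law. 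So rather than passing conditional expectations to the limit directly, I would work with a reference measure.

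Let $P_n := \PP(X_A^n,X_B^n,X_C^n)$ and $P := \PP(X_A,X_B,X_C)$. Set $Q := P + \sum_n 2^{-n}P_n$, a finite measure dominating all of these. Write densities $p_n = \diff P_n/\diff Q$ and $p=\diff P/\diff Q$. Total variation convergence means exactly $p_n\to p$ in $L^1(Q)$. Passing to a subsequence, we may also assume $p_n\to p$ $Q$-a.s.

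Conditional independence of $P_n$ is equivalent to a factorisation of the joint law relative to the $C$-marginal. Concretely, $P_n(\diff a,\diff b,\diff c)$ equals $\kappa^n_A(\diff a\given c)\,\kappa^n_B(\diff b\given c)\,P_n^C(\diff c)$, with Markov kernels $\kappa^n$ obtained by disintegration, which exists because the spaces are standard Borel. To avoid kernels, I would use the equivalent condition: for all bounded measurable $f,g,k$,
\begin{equation*}
\EE_{P_n}[f\,g\,k]\cdot \EE_{P_n}[k] = \EE_{P_n}[f\,k]\,\EE_{P_n}[g\,k].
\end{equation*}
This product identity only follows directly when $k$ is an indicator $\I_{C'}$ of a set of positive mass and the conditioning is elementary, which is not the case here. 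So I would instead use a more robust route through marginal TV convergence. First, $P_n^{AC}\to P^{AC}$ and $P_n^{C}\to P^{C}$ in TV, since marginalisation is a contraction in TV.

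Fix bounded measurable $f$ on $\Xcal_A$ and $g$ on $\Xcal_B$ with $|f|,|g|\le 1$. Let $\phi_n(c) := \EE_{P_n}[f\given c]$ and $\phi(c) := \EE_P[f\given c]$. The key estimate, and the main obstacle, is to show $\int |\phi_n-\phi|\,\diff P^C\to 0$. To handle it, express $\phi_n = \frac{\diff (f\cdot P_n^{AC})^C}{\diff P_n^C}$ as a ratio of densities with respect to $Q^C$. The numerator measures $\nu_n(\cdot)=\int f\,\diff P_n^{AC}(\cdot\times\cdot)$ converge in TV to $\nu$, and the denominators $P_n^C$ converge to $P^C$. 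On $\{\diff P^C/\diff Q^C>0\}$, the ratio of $L^1$-convergent densities converges in $Q^C$-measure, hence in $P^C$-measure. Since $|\phi_n|\le 1$, bounded convergence gives the $L^1(P^C)$ claim. The same argument applies to $g$ and to $fg$.

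Now write $\EE_{P}[fg\,k] = \int \EE_P[fg\given c]\,k\,\diff P^C$. By the previous step, this equals the limit of $\int \EE_{P_n}[fg\given c]\,k\,\diff P^C$. By conditional independence of $P_n$, the integrand factorises as $\phi_n\psi_n$, and since everything is bounded by $1$, the limit equals $\int \phi\psi k\,\diff P^C$. Hence $\EE_P[fg\given X_C] = \phi\psi$ a.s. for all such $f,g$, which is $X_A\Indep X_B\given X_C$.

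The delicate point is the ratio-of-densities convergence. The subtlety is that the convergence happens in $P^C$-probability rather than in $P_n^C$-probability: points where $\diff P^C/\diff Q^C=0$ are irrelevant because we integrate against $P^C$.
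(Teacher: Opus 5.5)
The paper does not prove this statement; it imports it from \cite{lauritzen2024total} and uses it as a black box in Theorem~\ref{thm:dsep_mp}. Your argument is a correct, self-contained proof via density ratios (an abstract Bayes formula), so it is a genuinely different route from simply citing the result. It runs as follows:
\begin{itemize}
\item Dominate everything by $Q$.
\item Write $\EE_{P_n}[f\given c]=u_n/q_n$, where $u_n$ and $q_n$ are the $Q^C$-densities of $E\mapsto\int f\,\I_E(c)\,\diff P_n^{AC}$ and of $P_n^C$.
\item Get $L^1(P^C)$-convergence of the conditional expectations on $\{q>0\}$ from convergence in $Q^C$-measure plus the bound by $1$.
\item Pass the factorisation to the limit by testing against bounded $k$.
\end{itemize}
Compared with the citation, this makes the result transparent and more general. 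It uses only Radon--Nikodym, so the standard Borel assumption, the disintegration into kernels and the a.s.\ subsequence you mention are all unnecessary and can be dropped.

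Two things to clean up.
\begin{itemize}
\item The displayed ``equivalent condition'' $\EE_{P_n}[fgk]\,\EE_{P_n}[k]=\EE_{P_n}[fk]\,\EE_{P_n}[gk]$ is not equivalent to conditional independence. With $k\equiv 1$ it asserts marginal independence. You discard it anyway, so simply delete it.
\item The one genuinely delicate step is the one you pass over quickly. Write $\chi_n:=\EE_{P_n}[fg\given c]$. Conditional independence of $P_n$ gives $\chi_n=\phi_n\psi_n$ only $P_n^C$-a.e., yet you integrate against $P^C$, which need not be absolutely continuous with respect to $P_n^C$. Your choice of versions fixes this. Define all three as $Q^C$-density ratios on $\{q_n>0\}$ and as $0$ on $\{q_n=0\}$. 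Inside $\{q_n>0\}$, $P_n^C$ and $Q^C$ have the same null sets, so the identity holds $Q^C$-a.e., hence $P^C$-a.e. Alternatively, note that $P^C(q_n=0)=\int_{\{q_n=0\}}(q-q_n)\,\diff Q^C\le\|q_n-q\|_{L^1(Q^C)}\to0$ and that everything is bounded by $1$.
\end{itemize}
With that made explicit, the proof is complete.
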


We now state the main result of this section, that the additive-noise SDE satisfies the $d$-separation Markov property.
\begin{theorem}\label{thm:dsep_mp}
    Let the system of causal SDEs $\Dcal$ satisfy Assumption~\ref{ass:additive_noise}. Then for all $A, B, C \subseteq V$,
    \begin{equation*}
        X_A \Perp_{G(\Dcal)}^d X_B \given X_C \implies X_A \Indep X_B \given X_C.
    \end{equation*}
\end{theorem}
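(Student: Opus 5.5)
The proof assembles the three preceding results, Theorems~\ref{thm:continuous_euler_dsep_mp}, \ref{thm:euler_tv_convergence} and~\ref{thm:ci_tv_closed}. Fix $A, B, C \subseteq V$ with $X_A \Perp^d_{G(\Dcal)} X_B \given X_C$. By Theorem~\ref{thm:ess_solvable_under_additive} and Theorem~\ref{thm:ess_solvable_under_ass}, $\Dcal$ is essentially uniquely solvable w.r.t.\ $V$. Theorem~\ref{thm:obs_int_dist} therefore gives a well-defined solution $X_V = I^{[V]}(X_W)$ with a unique law on $D([0,T],\RR^d)$. This fixes the object $\PP(X_V)$ in the claimed independence.

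For every $n\in\NN$, consider the continuously interpolated Euler scheme $X^n_V$ of \eqref{eqn:euler_continuous}. Theorem~\ref{thm:continuous_euler_dsep_mp} yields $X^n_A \Indep X^n_B \given X^n_C$ for all $n$. In particular, this holds along the subsequence $n_m$ supplied by Theorem~\ref{thm:euler_tv_convergence}, for which $\PP(X^{n_m}_V)\to\PP(X_V)$ in total variation on $D([0,T],\RR^d)$.

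The remaining step is to pass from total variation convergence of the joint law on $D([0,T],\RR^d)$ to total variation convergence of the law of the triple $(X_A^{n_m}, X_B^{n_m}, X_C^{n_m})$. The sets $A$, $B$, $C$ may overlap and need not cover $V$. The triple is the image of $X_V^{n_m}$ under the measurable map $x_V \mapsto (x_A, x_B, x_C)$. Measurability holds because coordinate projections $D([0,T],\RR^d)\to D([0,T],\RR)^{|A|}$ are Borel; this follows from the footnote's identification of $D([0,T],\RR^d)$ with $D([0,T],\RR)^d$ as measurable spaces. Total variation distance does not increase under pushforward by a measurable map, since
\begin{equation*}
    \sup_E |\PP(F(X^{n})\in E)-\PP(F(X)\in E)| \le \sup_{E'}|\PP(X^{n}\in E')-\PP(X\in E')|.
\end{equation*}
Hence the triple laws converge in total variation to $\PP(X_A,X_B,X_C)$. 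Theorem~\ref{thm:ci_tv_closed} then gives $X_A\Indep X_B\given X_C$.

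No step here is genuinely hard, because the heavy lifting sits in Theorems~\ref{thm:continuous_euler_dsep_mp} and~\ref{thm:euler_tv_convergence}. The points needing care are measurability of the projections on Skorokhod space, the trivial cases where some of $A,B,C$ are empty or overlapping, and the fact that only a subsequence converges. The subsequence causes no trouble, since Theorem~\ref{thm:ci_tv_closed} only requires some sequence along which the independences hold and the laws converge.
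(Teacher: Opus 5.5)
Your proposal is correct and follows the paper's proof: combine Theorem~\ref{thm:continuous_euler_dsep_mp}, the total-variation-convergent subsequence from Theorem~\ref{thm:euler_tv_convergence}, and Theorem~\ref{thm:ci_tv_closed}. The one step you spell out is that convergence of $\PP(X_V^{n_m})$ transfers to the law of $(X_A, X_B, X_C)$, because total variation distance cannot increase under a measurable pushforward. The paper uses this implicitly in its proof and states it explicitly only in the proof of Theorem~\ref{thm:markov_timesplit}(ii).
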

\begin{proof}
    Let $A \Perp_{G(\Dcal)}^d B \given C$, then by Theorem~\ref{thm:continuous_euler_dsep_mp} the continuous Euler scheme satisfies $X_A^n \Indep X_B^n \given X_C^n$. By Theorem~\ref{thm:euler_tv_convergence} there is a subsequence $n_m$ such that $\PP(X_{A, B, C}^{n_m}) \tvto \PP(X_{A, B, C})$. By Theorem~\ref{thm:ci_tv_closed} this implies $X_A \Indep X_B \given X_C$.
\end{proof}

Theorem~\ref{thm:dsep_mp} holds for $A, B, C\subseteq V$, so for conditional independencies between endogenous variables. While Theorem~\ref{thm:continuous_euler_dsep_mp} (and the underlying lemmas) extend to $A, B, C\subseteq V\cup W$ on the augmented causal graph $G^+(\Dcal)$, the total variation convergence of Theorem~\ref{thm:euler_tv_convergence} does not extend to the joint law of endogenous and exogenous variables, since the measures $\PP(X_V^n, X_W)$ and $\PP(X_V, X_W)$ are mutually singular and hence have total variation distance 1. Hence Theorem~\ref{thm:dsep_mp} is not straightforwardly extended to the augmented graph. For conditional independencies involving exogenous variables, the $\sigma$-separation Markov property (Theorem~\ref{thm:sigma_sep_mp}) still applies.



\begin{example}[$d$-separation Markov property]
    By Theorem~\ref{thm:dsep_mp}, the system of causal SDEs satisfies the $d$-separation Markov property, so the $d$-separation $P_{lacI} \Perp^d P_{tetR} \given M_{tetR}, M_{lacI}$ in the graph from Example \ref{ex:repressilator_separation} implies $P_{lacI} \Indep P_{tetR} \given M_{tetR}, M_{lacI}$.
\end{example}

\section{Do-calculus}\label{sec:causal_inference}

A question central to causal inference is whether a causal effect (an interventional distribution) can be identified from the observational distribution.
When the underlying causal graph is known, such questions can be answered by the rules of do-calculus. We employ a formulation that uses the notion of a causal graph with \emph{intervention variables} \citep{spirtes1993causation,pearl1993comment,forre2020causal,dawid2021decisiontheoretic}, that is equivalent to the well-known formulation of the do-calculus in terms of \emph{mutilated graphs} \citep{pearl1995causal,pearl2009causality}.

\begin{definition}[Graph with intervention variables]\label{def:intervention_variables_graph}
    Given a DMG $G = (V, E, L)$ and intervention target $S\subseteq V$, let $G_{\Do(R_S)}$ be the graph $G$ appended with the vertex $R_v$ and edge $R_v \to v$ for each $v\in S$.
\end{definition}
More details on \emph{systems of causal SDEs with intervention variables} are given in the appendix (cf.\ Definition \ref{def:intervention_variables}).
The following formulation is inspired by \cite{forre2025mathematical}, Theorem 5.1.2. In the main text we only focus on the graphical aspect of the intervention variables, as this is sufficient for formulating the do-calculus.

\begin{restatable}[]{theorem}{docalculus}\label{thm:do_calculus}
    Let $\Dcal$ be a system of causal SDEs that is essentially uniquely solvable w.r.t.\ every $O \subseteq V$, and let $A, B, C \subseteq V$ be pairwise disjoint. Then:
    \begin{enumerate}
        \item \emph{Insertion/deletion of observation.} If $A \Perp^\sigma_{G(\Dcal)} B \given C$, then
        \begin{equation*}
            \PP(X_A \given X_B, X_C) = \PP(X_A \given X_C) \quad \text{ $\PP(X_B, X_C)$-a.s.}
        \end{equation*}
        \item \emph{Action/observation exchange.} If $A \Perp^\sigma_{G(\Dcal)_{\Do(R_B)}} R_B \given B \cup C$ and $\PP(X_C \mid X_B = x_B) \ll \PP(X_C \mid \Do(X_B = x_B))$ for $\PP(X_B)\text{-a.a.\ } x_B$, then
        \begin{equation*}
            \PP(X_A \given \Do(X_B), X_C) = \PP(X_A \given X_B, X_C) \quad \text{  $\PP(X_B, X_C)$-a.s.}
        \end{equation*}
        \item \emph{Insertion/deletion of action.} If $A \Perp^\sigma_{G(\Dcal)_{\Do(R_B)}} R_B \given C$, then for every $x_B \in \Xcal_B$ such that $\PP(X_C) \ll \PP(X_C \mid \Do(X_B = x_B))$,
        \begin{equation*}
            \PP(X_A \given \Do(X_B = x_B), X_C) = \PP(X_A \given X_C) \quad \text{$\PP(X_C)$-a.s.}
        \end{equation*}
    \end{enumerate}
\end{restatable}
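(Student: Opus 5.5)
Following the approach of \cite{forre2025mathematical}, Theorem 5.1.2, we reduce all three rules to the $\sigma$-separation Markov property (Theorem~\ref{thm:sigma_sep_mp}), applied to an extended system in which the intervention is itself a variable. Concretely, given $B$, we build a \emph{system of causal SDEs with intervention variables} $\Dcal_{\Do(R_B)}$. It adds, for each $v\in B$, an exogenous regime variable $R_v$ taking values in $\Xcal_v\cup\{\star\}$, independent of $X_W$. The mechanism of $X_v$ is replaced by $\Phi_v$ when $R_v=\star$ and by the constant path $R_v$ otherwise. The mechanisms of all $v\notin B$ are unchanged. Its augmented graph restricted to $V\cup R_B$ is exactly $G(\Dcal)_{\Do(R_B)}$, and its SCCs coincide with those of $G(\Dcal)$, except that nodes in $B$ may become singletons. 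Since $R_B$ is exogenous it can be placed in $W$, and Theorem~\ref{thm:sigma_sep_mp} will apply to the extended system.

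The first step is solvability. From essential unique solvability of $\Dcal$ w.r.t.\ every $O\subseteq V$, I would construct a solution function for the extended system w.r.t.\ $V$ (and w.r.t.\ each SCC) by case distinction on $R_B$. On $\{R_B=\star\}$ I use $I^{[V]}$. On $\{R_{B'}=\star,\ R_{B\setminus B'}=x\}$ I use $I^{[V\setminus(B\setminus B')]}(x,\cdot)$ together with $x$ on $B\setminus B'$. This function is measurable in $(r_B,x_W)$ because the regime sets are measurable, and the a.s.\ existence and uniqueness conditions follow regime by regime. The second step identifies the laws via Theorem~\ref{thm:obs_int_dist}. Conditionally on $R_B=\star$, the joint law of $X_V$ is the observational law. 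Conditionally on $R_B=x_B$, it is $\PP(X_V\given\Do(X_B=x_B))$, and since this is a Markov kernel in $x_B$, the conditioning is well defined. I would put a suitable mixture law on $R_B$, e.g.\ mass $1/2$ on $\star$ and $1/2$ on $\PP(X_B)$; this choice makes the absolute-continuity hypotheses usable.

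Rule 1 is immediate from Theorem~\ref{thm:sigma_sep_mp} applied to $\Dcal$ together with the existence of regular conditional distributions on standard Borel spaces. For Rule 3, $\sigma$-separation $A\Perp^\sigma R_B\given C$ in $G(\Dcal)_{\Do(R_B)}$ gives $X_A\Indep R_B\given X_C$ by the Markov property. Hence $\PP(X_A\given X_C,R_B=x_B)=\PP(X_A\given X_C,R_B=\star)$, with $\PP(X_C, R_B)$ defining the null sets. Comparing the two regimes then requires translating ``almost surely w.r.t.\ the joint law'' into ``$\PP(X_C)$-a.s.'', and this is exactly where $\PP(X_C)\ll\PP(X_C\given\Do(X_B=x_B))$ is used: a version of the interventional conditional is defined $\PP(X_C\given\Do(x_B))$-a.s., and the hypothesis lets it be evaluated $\PP(X_C)$-a.s. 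For a single fixed $x_B$, I would instead take $R_B$ to be a two-point variable on $\{\star,x_B\}$, which avoids measure-zero issues for atomless choices. For Rule 2, $A\Perp^\sigma R_B\given B\cup C$ gives $X_A\Indep R_B\given (X_B,X_C)$. On the regime $R_B=\star$ this yields $\PP(X_A\given X_B,X_C)$, and on the regime with $R_B$ distributed as $\PP(X_B)$ (where $X_B=R_B$) it yields $\PP(X_A\given\Do(X_B),X_C)$. These two agree a.s.\ w.r.t.\ the mixture. The condition $\PP(X_C\given X_B=x_B)\ll\PP(X_C\given\Do(x_B))$ ensures that the null sets of the interventional regime are null for $\PP(X_B,X_C)$.

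I expect the main obstacle to be the measure-theoretic bookkeeping in the last two steps rather than the graph theory. Specifically, it requires showing that the extended system is genuinely essentially uniquely solvable w.r.t.\ every SCC, with a single jointly measurable solution function across regimes, and carefully transferring the almost-sure equalities between the mixture law and the stated null sets. I would handle the graph part by checking that $\sigma$-separation in $G(\Dcal)_{\Do(R_B)}$ coincides with that in the augmented graph of the extended system restricted to the relevant vertices, using the correspondence noted after Theorem~\ref{thm:sigma_sep_mp}.
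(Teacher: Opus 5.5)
Your plan follows the paper's proof in all essentials. You build an auxiliary system with regime variables $R_v\in\Xcal_v\cup\{\star\}$ and assemble a solution function cell by cell, according to which coordinates of $B$ are intervened on. You take the mixture $\tfrac12\delta_{(\star,\dots,\star)}+\tfrac12\PP(X_B)$ for Rule~2 and a two-point law for Rule~3, and use the absolute-continuity hypotheses to move null sets from the interventional regime to the observational one. You make $R_B$ exogenous and condition on it directly, where the paper uses endogenous copies $X_{R_v}=X_{E_v}$ together with Lemma~\ref{lem:intervention_variables_lemma}(ii)--(iii). That is a legitimate repackaging, and it even simplifies the solvability step: for each fixed value $r$ of $R_B$ the regime is deterministic, so you can apply the solvability of $\Dcal$ section by section and then use Fubini. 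The paper's regime inputs $\varphi_{R_B}$ may depend on $X_W$, so it needs the time-$0$ localisation of Lemma~\ref{lem:f0_local_solvability} instead.

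The step that fails as written is ``since $R_B$ is exogenous it can be placed in $W$'', with graph exactly $G(\Dcal)_{\Do(R_B)}$. Under your Rule~2 law, and also under your joint two-point law on $\{(\star,\dots,\star),x_B\}$ for Rule~3, the coordinates $R_v$ ($v\in B$) are dependent once $|B|\geq 2$. They are therefore not mutually independent exogenous variables as Definition~\ref{def:sdes} requires. Theorem~\ref{thm:sigma_sep_mp} rests on the acyclic Markov property with independent noise, so it cannot be invoked for a graph in which the $R_v$ are unrelated sources. Indeed it would assert $R_u\Indep R_v$, since these are $\sigma$-separated given $\emptyset$.

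To fix this, treat $R_B$ as a single exogenous node, or equivalently add bidirected edges $R_u\leftrightarrow R_v$. Then check that your hypothesis still gives the needed separation in this enlarged graph. Take a $\sigma$-open walk from $A$ to $R_B$ in the enlarged graph and cut it at its first $R$-vertex, which it must enter via $v\leftarrow R_v$. The interior statuses, SCCs and ancestral sets are unchanged, so the truncated walk is $\sigma$-open in $G(\Dcal)_{\Do(R_B)}$, contradicting the hypothesis. This is exactly the extra argument in the paper's Rule~2. For Rule~3 the paper avoids it by taking the product law $\bigotimes_{v\in B}(\tfrac12\delta_\star+\tfrac12\delta_{x_v})$.

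Separately, your remark that nodes of $B$ ``may become singletons'' is incorrect. On $\{R_v=\star\}$ the mechanism of $v$ is still $\Phi_v$, so the SCCs are exactly those of $G(\Dcal)$, consistent with your own graph claim. This is why the cell-wise construction needs $\Dcal$ to be solvable w.r.t.\ $C\setminus B_2$ for every SCC $C$ and every $B_2\subseteq C\cap B$.
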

In particular, any system of causal SDEs satisfying Assumption~\ref{ass:uniquely_solvable} is essentially uniquely solvable w.r.t.\ every $O\subseteq V$ (Theorem~\ref{thm:ess_solvable_under_ass}), and hence the $\sigma$-separation do-calculus applies in the causal graph $G(\Dcal)$.

A $d$-separation analogue of Theorem~\ref{thm:do_calculus} does not follow automatically from the $d$-separation Markov property for systems of causal SDEs that satisfy Assumption~\ref{ass:additive_noise}: the proof of Theorem~\ref{thm:do_calculus} for Rules 2 and 3 uses an auxiliary system $\Dcal_{\Do(R_B \sim \nu)}$ (cf.\ Definition \ref{def:intervention_variables}) for which the required $d$-separation Markov property does not automatically follow. We conjecture this $d$-separation do-calculus to hold. For acyclic graphs, $\sigma$-separation and $d$-separation coincide, so on acyclic systems the $d$-separation do-calculus follows trivially from Theorem~\ref{thm:do_calculus}.

We obtain a fundamental consequence of the do-calculus for inferring non-causation in systems of causal SDEs, and thereby establish a clear causal interpretation of the causal graph.
\begin{restatable}[]{theorem}{causationimpliespath}\label{thm:causation_implies_path}
    Let $\Dcal$ be a system of causal SDEs that is essentially uniquely solvable w.r.t.\ every $O\subseteq V$ and let $u, v \in V$. If there is no directed path from $u$ to $v$ in $G(\Dcal)$, then $\PP(X_v \given \Do(X_u)) = \PP(X_v)$.
\end{restatable}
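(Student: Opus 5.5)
The plan is to obtain the statement as a direct instance of Rule~3 of the do-calculus (Theorem~\ref{thm:do_calculus}) with $A := \{v\}$, $B := \{u\}$ and $C := \emptyset$. If $u = v$, a directed path (of length zero) trivially exists, so we may assume $u \neq v$; the sets $A, B, C$ are then pairwise disjoint, as Theorem~\ref{thm:do_calculus} requires. With $C = \emptyset$, the absolute continuity condition $\PP(X_C) \ll \PP(X_C \mid \Do(X_B = x_B))$ concerns distributions on a one-point space and holds trivially, for every $x_u \in \Xcal_u$. Rule~3 then gives $\PP(X_v \given \Do(X_u = x_u)) = \PP(X_v)$ for every $x_u$. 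The interventional distribution is well defined and is a Markov kernel in $x_u$ by Theorem~\ref{thm:obs_int_dist}, since $\Dcal$ is essentially uniquely solvable w.r.t.\ $V \setminus \{u\}$. This is exactly the claim.

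The only real work is the graphical step: showing that $\{v\} \Perp^\sigma_{G(\Dcal)_{\Do(R_u)}} R_u \given \emptyset$ whenever $G(\Dcal)$ has no directed path from $u$ to $v$. I would argue this by contradiction. Suppose some walk $\pi$ between $R_u$ and $v$ in $G(\Dcal)_{\Do(R_u)}$ is not $\sigma$-blocked by $\emptyset$. Since $C = \emptyset$ and $\Anc(\emptyset) = \emptyset$, every collider on $\pi$ would block it. Hence $\pi$ has no colliders, and its endpoints trivially are not in $C$. The vertex $R_u$ has exactly one incident edge, $R_u \to u$, so $\pi$ starts $R_u \to u \cdots$. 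On a collider-free walk, once an edge points forward (away from $R_u$), every subsequent edge must also point forward: an edge $a \to b$ followed by an edge into $b$ from the other side, or by a bidirected edge at $b$, would make $b$ a collider. So every edge of $\pi$ is directed towards $v$, and $\pi$ is a directed walk $R_u \to u \to \cdots \to v$. Removing $R_u$ leaves a directed walk from $u$ to $v$ in $G(\Dcal)$, which contains a directed path from $u$ to $v$, a contradiction. Bidirected edges in $G(\Dcal)$ cannot occur on such a walk, since each bidirected edge has arrowheads at both ends and would create a collider immediately.

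I expect no serious obstacle. The main points of care are the handling of bidirected edges and self-loops in the walk argument; self-loops are harmless by Theorem~\ref{thm:self_cyclic_graph_separations}, or directly, because a self-loop traversed forward keeps the walk directed. The other point is confirming that Rule~3 with $C = \emptyset$ reads as an equality of unconditional laws, $\PP(X_v \given \Do(X_u = x_u)) = \PP(X_v)$, holding for all $x_u$ and hence as an identity of kernels.
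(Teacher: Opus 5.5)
Your proposal is correct and follows the paper's proof: apply Rule~3 of Theorem~\ref{thm:do_calculus} with $A=\{v\}$, $B=\{u\}$, $C=\emptyset$. The absolute continuity condition is vacuous. The graphical premise $v \Perp^\sigma_{G(\Dcal)_{\Do(R_u)}} R_u$ follows from the absence of a directed path, and your collider-free-walk argument simply spells out a step the paper states in one line (as does your explicit treatment of the case $u=v$).
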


Another consequence of Theorem~\ref{thm:do_calculus} is that (generalised) adjustment formulae like backdoor adjustment, and the ID algorithm -- which are all derived from the do-calculus -- are also valid, provided the appropriate absolute continuity conditions hold \citep{pearl2009causality,forre2020causal,forre2025mathematical}.

Similar results have been derived in other frameworks. \cite{roysland2024graphical} provide identifiability criteria for causal effects based on local independence graphs. \cite{gill2001causal} provide a g-formula for continuous-time processes in the potential outcomes framework, that \cite{ryalen2026causal} partially extend to a g-formula for time-varying treatment regimes for marked point processes.
The do-calculus and subsequent adjustment formulae consider the identification of certain estimands in terms of observational distributions. We note that the construction of \emph{estimators} for such expressions can be highly non-trivial when the variables take values in function spaces, see e.g.\ \cite{gill2001causal}, \cite{lok2008statistical}, \cite{rytgaard2022continuoustime}, and \cite{schwank2026nonparametric}.

\section{Time-splitting and subsampling}\label{sec:time-evaluations}
When reasoning about causal dynamical systems, it can be useful to evaluate subprocesses on distinct time intervals, for example, when analysing how local modifications propagate through the system. Formally, this requires extending the framework of causal SDEs to time-split systems, in which the global time interval is partitioned into subintervals, each equipped with its own structural dynamics while remaining consistent with the overall system.

\begin{example}[Time-split repressilator]\label{ex:repressilator_ts}
    Suppose that for some $t\in (0,T)$ we want to consider the repressilator on the time points $0, t, T$, and the intermediate intervals $(0,t)$ and $(t, T)$.
    This can straightforwardly be modelled by considering the variables $X^0, X^{(0,t)}, X^t, X^{(t,T)}, X^T$ with dynamics described by the SDEs
    \begin{align*}
        X^{(0,t)}(s) & = X^0  \hspace{40pt}            + \int_0^s\mu(u, X^0, X^{(0,t)}) du + \sigma \tilde{Z}^{(0,t)}(s) & s\in (0,t) \\
        X^t          & = X^{(0, t)}(t-)                                                                                               \\
        X^{(t,T)}(s) & = X^t  \hspace{40pt}          + \int_{t}^s\mu(u, X^t, X^{(t,T)}) du + \sigma \tilde{Z}^{(t,T)}(s) & s\in (t,T) \\
        X^T          & = X^{(t,T)}(T-)
    \end{align*}
    where $\tilde{Z}^{(0,t)}(s) := Z(s)-Z(0)$ for $s\in(0,t)$ and $\tilde{Z}^{(t,T)}(s) := Z(s)-Z(t)$ for $s\in(t,T)$ are independent exogenous variables. The graph of the time-split system (marginalised onto $P_{lacI}, P_{tetR}$ and $P_{GFP}$ for visual clarity, denoted by $\bar\Dcal$) is shown in Figure \ref{fig:timesplit_subsampled}.

    To model the inhibition of $P_{lacI}$ during the first half of the process, we can consider the time-split SDE with $t=T/2$ and doing the perfect intervention $\Do(M_{lacI}^0 = M_{lacI}^{(0,t)} = 0)$. After releasing the intervention the system returns to its stable behaviour, as depicted in Figure \ref{fig:repressilator_knockout_half}.
    \begin{figure*}[htb]
        \centering
        \begin{tikzpicture}[scale=1,xscale=1.4,yscale=1.3]
            \node[] (X1) at (0, 0) {$P_{lacI}^{[0,T]}$};
            \node[] (X2) at (0, -1) {$P_{tetR}^{[0,T]}$};
            \node[] (X3) at (0, -2) {$P_{GFP}^{[0,T]}$};
            \draw[arr,bend right=20] (X1) to (X2);
            \draw[arr,bend right=20] (X2) to (X1);
            \draw[arr] (X2) to (X3);
            \draw[arr] (X1) to [loop arc={90}{30}{0.4cm}] (X1);
            \draw[arr] (X2) to [loop arc rev={180}{30}{0.4cm}] (X2);
            \draw[arr] (X3) to [loop arc rev={270}{30}{0.4cm}] (X3);
        \end{tikzpicture}
        \begin{tikzpicture}[scale=1,xscale=1.4,yscale=1.3]
            \node[] () at (-1.8, -.9) {$\overset{\textrm{\tiny Def.\ \ref{def:sdes_timesplit}}}{\implies}$};
            \node[] (X1[0]) at (-1, 0) {$P_{lacI}^0$};
            \node[] (X1[0t]) at (0, 0) {$P_{lacI}^{(0,t)}$};
            \node[] (X1[t]) at (1, 0) {$P_{lacI}^{t}$};
            \node[] (X1[tT]) at (2, 0) {$P_{lacI}^{(t,T)}$};
            \node[] (X1[T]) at (3, 0) {$P_{lacI}^{T}$};
            \node[] (X2[0]) at (-1, -1) {$P_{tetR}^0$};
            \node[] (X2[0t]) at (0, -1) {$P_{tetR}^{(0,t)}$};
            \node[] (X2[t]) at (1, -1) {$P_{tetR}^{t}$};
            \node[] (X2[tT]) at (2, -1) {$P_{tetR}^{(t,T)}$};
            \node[] (X2[T]) at (3, -1) {$P_{tetR}^{T}$};
            \node[] (X3[0]) at (-1, -2) {$P_{GFP}^{0}$};
            \node[] (X3[0t]) at (0, -2) {$P_{GFP}^{(0,t)}$};
            \node[] (X3[t]) at (1, -2) {$P_{GFP}^{t}$};
            \node[] (X3[tT]) at (2, -2) {$P_{GFP}^{(t,T)}$};
            \node[] (X3[T]) at (3, -2) {$P_{GFP}^{T}$};
            \draw[arr] (X1[0]) to (X1[0t]);
            \draw[arr] (X2[0]) to (X2[0t]);
            \draw[arr] (X1[0t]) to (X1[t]);
            \draw[arr] (X1[t]) to (X1[tT]);
            \draw[arr] (X1[tT]) to (X1[T]);
            \draw[arr] (X2[0t]) to (X2[t]);
            \draw[arr] (X2[t]) to (X2[tT]);
            \draw[arr] (X2[tT]) to (X2[T]);
            \draw[arr] (X1[0]) to (X2[0t]);
            \draw[arr] (X2[0]) to (X1[0t]);
            \draw[arr,bend right=20] (X1[0t]) to (X2[0t]);
            \draw[arr,bend right=20] (X2[0t]) to (X1[0t]);
            \draw[arr] (X1[0t]) to (X2[t]);
            \draw[arr] (X2[0t]) to (X1[t]);
            \draw[arr] (X1[t]) to (X2[tT]);
            \draw[arr] (X2[t]) to (X1[tT]);
            \draw[arr,bend right=20] (X1[tT]) to (X2[tT]);
            \draw[arr,bend right=20] (X2[tT]) to (X1[tT]);
            \draw[arr] (X1[t]) to (X2[tT]);
            \draw[arr] (X2[t]) to (X1[tT]);
            \draw[arr] (X1[tT]) to (X2[T]);
            \draw[arr] (X2[tT]) to (X1[T]);
            \draw[arr,bend right=20] (X1[tT]) to (X2[tT]);
            \draw[arr,bend right=20] (X2[tT]) to (X1[tT]);
            \draw[arr] (X3[0]) to (X3[0t]);
            \draw[arr] (X3[0t]) to (X3[t]);
            \draw[arr] (X3[t]) to (X3[tT]);
            \draw[arr] (X3[tT]) to (X3[T]);
            \draw[arr] (X2[0]) to (X3[0t]);
            \draw[arr] (X2[0t]) to (X3[0t]);
            \draw[arr] (X2[0t]) to (X3[t]);
            \draw[arr] (X2[t]) to (X3[tT]);
            \draw[arr] (X2[tT]) to (X3[tT]);
            \draw[arr] (X2[tT]) to (X3[T]);
            \draw[arr] (X1[0t]) to [loop arc={90}{30}{0.4cm}] (X1[0t]);
            \draw[arr] (X2[0t]) to [loop arc rev={230}{20}{0.3cm}] (X2[0t]);
            \draw[arr] (X3[0t]) to [loop arc rev={270}{30}{0.4cm}] (X3[0t]);
            \draw[arr] (X1[tT]) to [loop arc={90}{30}{0.4cm}] (X1[tT]);
            \draw[arr] (X2[tT]) to [loop arc rev={230}{20}{0.3cm}] (X2[tT]);
            \draw[arr] (X3[tT]) to [loop arc rev={270}{30}{0.4cm}] (X3[tT]);
        \end{tikzpicture}
        \begin{tikzpicture}[scale=1,xscale=1.4,yscale=1.3]
            \node[] () at (-1.8, -.9) {$\overset{\textrm{\tiny Def.\ \ref{def:subsampled}}}{\implies}$};
            \node[] () at (0, -2.61) {};
            \node[] (X1[0]) at (-1, 0) {$P_{lacI}^0$};
            \node[] (X1[t]) at (0, 0) {$P_{lacI}^{t}$};
            \node[] (X1[T]) at (1, 0) {$P_{lacI}^{T}$};
            \node[] (X2[0]) at (-1, -1) {$P_{tetR}^0$};
            \node[] (X2[t]) at (0, -1) {$P_{tetR}^{t}$};
            \node[] (X2[T]) at (1, -1) {$P_{tetR}^{T}$};
            \node[] (X3[0]) at (-1, -2) {$P_{GFP}^{0}$};
            \node[] (X3[t]) at (0, -2) {$P_{GFP}^{t}$};
            \node[] (X3[T]) at (1, -2) {$P_{GFP}^{T}$};
            \draw[arr,color=gray] (X1[0]) to (X3[t]);
            \draw[arr,color=gray] (X1[t]) to (X3[T]);
            \draw[arr] (X1[0]) to (X1[t]);
            \draw[arr] (X1[0]) to (X2[t]);
            \draw[arr] (X1[t]) to (X1[T]);
            \draw[arr] (X1[t]) to (X2[T]);
            \draw[arr] (X2[0]) to (X1[t]);
            \draw[arr] (X2[0]) to (X2[t]);
            \draw[arr] (X2[0]) to (X3[t]);
            \draw[arr] (X2[t]) to (X1[T]);
            \draw[arr] (X2[t]) to (X2[T]);
            \draw[arr] (X2[t]) to (X3[T]);
            \draw[arr] (X3[0]) to (X3[t]);
            \draw[arr] (X3[t]) to (X3[T]);
        \end{tikzpicture}
        \caption{The causal graph $G(\bar\Dcal)$, the time-split causal graph $G(\bar\Dcal^\Ecal)$ with $\Ecal = \{\{0\}, (0,t), \{t\}, (t, T), \{T\}\}$, and the subsampled causal graph $G(\bar\Dcal^{\{0, t, T\}})$ (see Definition~\ref{def:subsampled} below).}
        \label{fig:timesplit_subsampled}
    \end{figure*}
    \begin{figure}[!htb]
        \centering
        \includegraphics[width=0.7\linewidth]{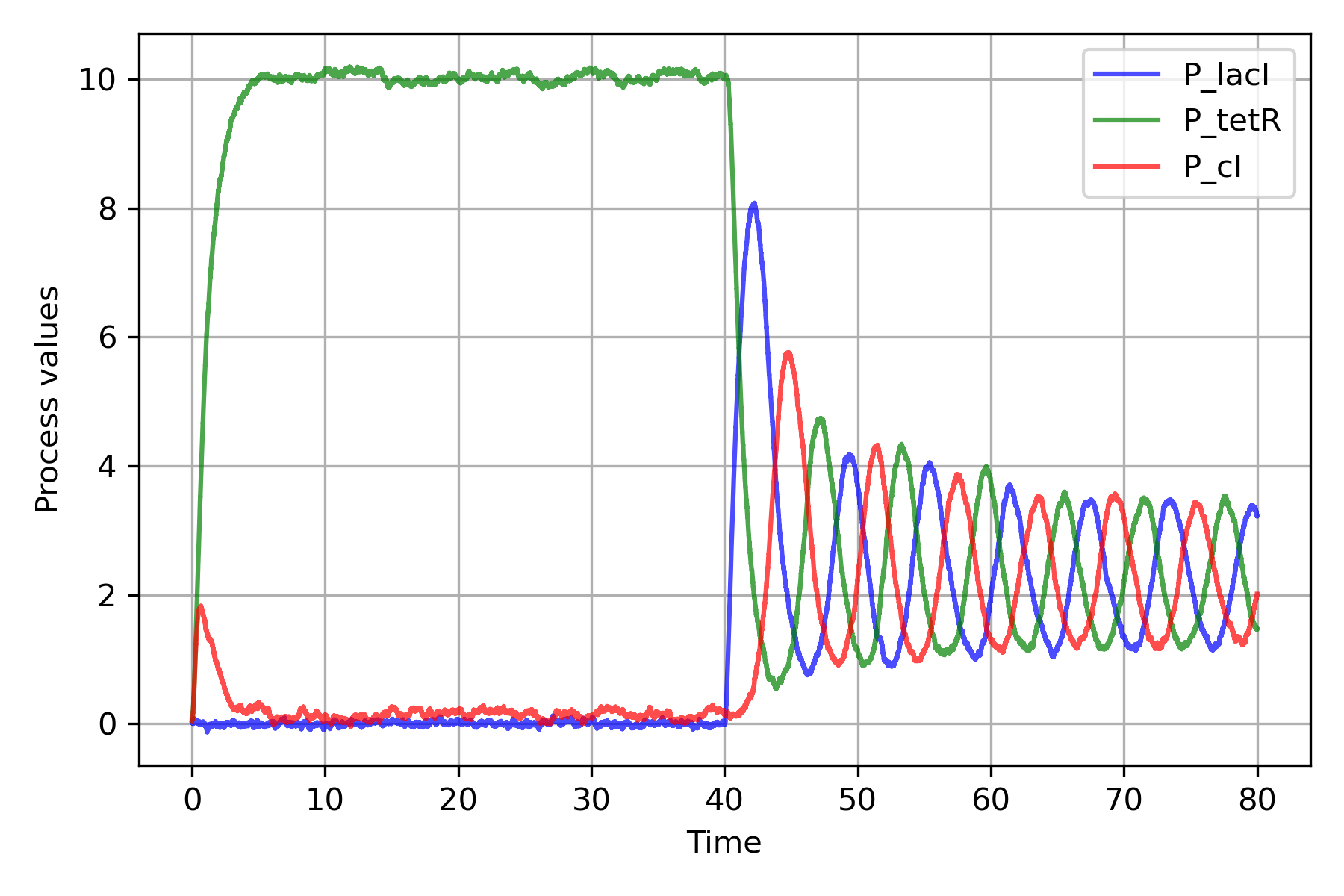}
        \caption{Repressilator with gene `lacI' knocked out in the first half of the simulation.}
        \label{fig:repressilator_knockout_half}
    \end{figure}
\end{example}

We now formalise this construction as follows. Let $\Ecal = \{\Ical_1, ..., \Ical_m\}$ be a finite partition of $[0,T]$ into intervals with \emph{temporal ordering}, i.e., for $i < j$ we have $s < t$ for all $s\in \Ical_i$ and $t\in \Ical_j$. For càdlàg path $x \in D([0,T], \RR^n)$, if we let $x^\Ical := x|_\Ical \in D(\Ical, \RR^n)$ be the restriction of $x$ to $\Ical$ and write $x^\Ecal = (x^{\Ical_1}, ..., x^{\Ical_m})$, then let $\mathrm{cat}(x^\Ecal) \in D([0,T], \RR^n)$ be the concatenation $\mathrm{cat}(x^\Ecal)(t) := \sum_{\Ical_k}\I_{\Ical_k}(t) x^{\Ical_k}(t)$ that retrieves $x$. For consecutive intervals $\Ical_k, \Ical_{k+1}$ in $\Ecal$, let
\begin{equation*}
    b_{k, k+1} :=
    \begin{cases}
        (\sup\Ical_{k})- & \text{if } \Ical_{k} \text{ is right-open},  \\
        \sup\Ical_{k}    & \text{if } \Ical_{k} \text{ is right-closed}
    \end{cases}
\end{equation*}
and $b_{0,1}:=0$ and $b_{m, m+1}:=T$.
If for the càdlàg process $X(t) = \int_0^tg\diff h$ and partition $(\Ical_1, \Ical_2)$ of $[0,T]$ we set $X^{\Ical_1}(t) = \int_0^{t}g\diff h$ for all $t\in \Ical_1$ and $X^{\Ical_2}(t) = X^{\Ical_1}(b_{1,2}) + \int_{b_{1,2}}^t g\diff h$ for all $t\in\Ical_2$, then if $X$ has a jump at the boundary point $\sup\Ical_1$, it will be contained in $X^{\Ical_1}(b_{1,2})$ when $\Ical_1$ is right-closed and in $\int_{b_{1,2}}^t g\diff h$ when $\Ical_1$ is right-open. If for $y\in D([0,T], \RR^n)$ we let $y^{\Ical_1}(t) := y(t) \in D(\Ical_1, \RR^n)$ and $y^{\Ical_k}(t) := y(t) - y(b_{k-1,k}) \in D(\Ical_k, \RR^n)$ for $k\geq 2$ be the increment path (not counting jumps at $\inf \Ical_k$ if $\Ical_k$ is left-open), then let $\mathrm{rec}(y^\Ecal)(t) := \sum_{k}\I_{\Ical_k}(t)\cdot (\sum_{j<k} y^{\Ical_j}(b_{j, j+1}) + y^{\Ical_k}(t))$ be the reconstruction that retrieves $y$.

\begin{definition}[Time-split system of SDEs]\label{def:sdes_timesplit}
    Let $\Dcal = \angs{V, W, X_W, f, g, h}$ be a system of causal SDEs on $[0,T]$ and let $\Ecal = \{\Ical_1, ..., \Ical_m\}$ be a finite partition of $[0,T]$ with temporal ordering.
    The \emph{time-split system of SDEs} $\Dcal^\Ecal := \angs{V\times\Ecal, W\times\Ecal, X_{W\times\Ecal}, f_V^\Ecal, g_V^\Ecal, h_V^\Ecal}$ is defined with for each $(w,\Ical_k)\in W\times \Ecal$ the \emph{exogenous increment process} $X_w^{\Ical_k} \in D(\Ical_k, \RR)$ defined as $X_w^{\Ical_1}(t):=X_w(t)$ for $t\in\Ical_1$ and $X_w^{\Ical_k}(t) := X_w(t) - X_w(b_{k-1, k})$ for $t\in\Ical_k$ for all $k\geq 2$, and for each $(v, \Ical_k) \in V\times \Ecal$ the causal SDE\footnote{The pathwise integration map $\Psi$ is stated for stochastic integrals on $[0,T]$, but can be extended to any interval $\Ical_k$.}
    \begin{align*}
        X_v^{\Ical_k}(s) = X_v^{\Ical_{k-1}}(b_{k-1,k}) & + f_v^{\Ical_k}\bigl(s, X_{\alpha(v)}^\Ecal\bigr) + \int_{b_{k-1,k}}^s g_v^{\Ical_k}\bigl(u-,  X_{\{v\}\cup\beta(v)}^\Ecal\bigr) \diff h_v^{\Ical_k}\bigl(u, X_{\gamma(v)}^\Ecal\bigr)
    \end{align*}
    for $s\in\Ical_k$, with
    \begin{align*}
        f_v^{\Ical_k}(s, x_{\alpha(v)}^\Ecal)          & := f_v\bigl(s, \mathrm{cat}(x_{\alpha(v)\cap V}^\Ecal), \mathrm{rec}(x_{\alpha(v)\cap W}^\Ecal)\bigr) - f_v\bigl(b_{k-1,k}, \mathrm{cat}(x_{\alpha(v)\cap V}^\Ecal), \mathrm{rec}(x_{\alpha(v)\cap W}^\Ecal)\bigr), \\
        g_v^{\Ical_k}(u-, x_{\{v\}\cup\beta(v)}^\Ecal) & := g_v\bigl(u-, \mathrm{cat}(x_{\{v\}\cup(\beta(v)\cap V)}^\Ecal), \mathrm{rec}(x_{\beta(v)\cap W}^\Ecal)\bigr),                                                                                                    \\
        h_v^{\Ical_k}(u, x_{\gamma(v)}^\Ecal)          & := h_v\bigl(u, \mathrm{rec}(x_{\gamma(v)}^\Ecal)\bigr),
    \end{align*}
    and where we set $X_v^{\Ical_0}(b_{0,1}) := f_v(0, X_{\alpha(v)}^{\Ical_1})$.
\end{definition}
The time-split system is observationally equivalent to the original: concatenating any solution of $\Dcal^\Ecal$ yields a solution of $\Dcal$, and conversely the restrictions of any solution of $\Dcal$ form a solution of $\Dcal^\Ecal$; see Lemma~\ref{lem:timesplit_equivalent}. Note however that after intervening on a variable $X_v^\Ical$ with $\Ical$ left-open, the resulting path need not be càdlàg. This imposes an additional condition on the notion of essential unique solvability w.r.t.\ $O\subseteq V\times \Ecal$, which is stated in terms of input processes $\varphi_{S}:\Xcal_W \to \Xcal_S$ where $S = V\times \Ecal \setminus O$. Only $\varphi_S$ should be considered such that for $(v, \Ical)\in S$ the path $\varphi_{(v,\Ical)}$ has a left limit at $\sup \Ical$ (which is not automatic when $\Ical$ is right-open), and if $(v, \Ical_k), (v, \Ical_{k+1})\in S$, the concatenation of $\varphi_{(v, \Ical_k)}$ and $\varphi_{(v, \Ical_{k+1})}$ must be right-continuous at $\sup\Ical_k$. We refer to such $\varphi_S$ as \emph{admissible}.
The causal graph $G(\Dcal^\Ecal)$ has vertex set $V\times\Ecal$ and edges $(u, \Ical') \to (v, \Ical)$ derived exactly as in Definition \ref{def:sde_graph}. We note three sources of graphical sparsity:
\begin{enumerate}[label=(\roman*)]
    \item \emph{Adaptedness}: by adaptedness of $f_v, g_v, h_v$, the values of $f_v^{\Ical_k}, g_v^{\Ical_k}, h_v^{\Ical_k}$ only depend on $X_{V\cup W}^{\Ical_j}$ with $j \leq k$, so there are no edges into $(v, \Ical_k)$ from later intervals.
    \item \emph{Markovianity}: If in the original SDE $g_v$ is Markov and $f_v$ either models the initial condition or is Markov then
    there are no edges from any $X_u^{\Ical_j}$ with $u\in V$ and $j<k-1$ to $X_v^{\Ical_k}$.
    \item \emph{Independent increments}: if an exogenous process $X_w$ has independent increments, then any two increment processes $X_w^{\Ical_j}$ and $X_w^{\Ical_k}$ with $j\neq k$ are independent. Consequently, if $X_v$ depends on an exogenous process $X_w$, then in the graph $G(\Dcal^\Ecal)$ there is no bidirected edge between $X_v^{\Ical_{k-1}}$ and $X_v^{\Ical_k}$ because of the common cause $X_w$.
\end{enumerate}

If $\Dcal$ satisfies Assumption~\ref{ass:uniquely_solvable} and its exogenous processes have independent increments, then by Lemma~\ref{lem:timesplit_solvable} in the Appendix, $\Dcal^\Ecal$ is essentially uniquely solvable w.r.t.\ every $O \subseteq V \times \Ecal$. By Lemma~\ref{lem:timesplit_embedding} in the Appendix, every time-split system of causal SDEs $\Dcal^\Ecal$ whose exogenous processes have independent increments can be embedded as a system of causal SDEs on $[0,T]$, so the Markov properties (Theorems \ref{thm:sigma_sep_mp} and \ref{thm:dsep_mp}) and the do-calculus (Theorem~\ref{thm:do_calculus}) apply directly to $\Dcal^\Ecal$, with $\sigma$- and $d$-separation read off from the time-split causal graph $G(\Dcal^\Ecal)$.

\begin{restatable}[Markov properties for time-split systems of causal SDEs]{theorem}{markovtimesplit}\label{thm:markov_timesplit}
    Let $\Ecal$ be a finite partition of $[0,T]$ into intervals with temporal ordering, and let $\Dcal$ be a system of causal SDEs whose exogenous processes $X_w$ ($w \in W$) have independent increments.
    \begin{enumerate}[label=(\roman*)]
        \item If $\Dcal$ satisfies Assumption~\ref{ass:uniquely_solvable}, then $\Dcal^\Ecal$ satisfies the $\sigma$-separation Markov property: for all $A, B, C_1, ..., C_r\subseteq V$ and $\Ical_A, \Ical_B, \Ical_{C_1}, ..., \Ical_{C_r} \subseteq \Ecal$,
        \begin{equation*}
            X_A^{\Ical_A}\Perp^\sigma_{G(\Dcal^\Ecal)} X_B^{\Ical_B} \given X_{C_1}^{\Ical_{C_1}}, ..., X_{C_r}^{\Ical_{C_r}} \implies X_A^{\Ical_A} \Indep X_B^{\Ical_B} \given X_{C_1}^{\Ical_{C_1}}, ..., X_{C_r}^{\Ical_{C_r}}.
        \end{equation*}
        \item If $\Dcal$ satisfies Assumption~\ref{ass:additive_noise}, then $\Dcal^\Ecal$ satisfies the $d$-separation Markov property: for all $A, B, C_1, ..., C_r\subseteq V$ and $\Ical_A, \Ical_B, \Ical_{C_1}, ..., \Ical_{C_r} \subseteq \Ecal$,
        \begin{equation*}
            X_A^{\Ical_A}\Perp^d_{G(\Dcal^\Ecal)} X_B^{\Ical_B} \given X_{C_1}^{\Ical_{C_1}}, ..., X_{C_r}^{\Ical_{C_r}} \implies X_A^{\Ical_A} \Indep X_B^{\Ical_B} \given X_{C_1}^{\Ical_{C_1}}, ..., X_{C_r}^{\Ical_{C_r}}.
        \end{equation*}
    \end{enumerate}
\end{restatable}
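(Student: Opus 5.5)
The plan is to reduce both parts of the theorem to the Markov properties already proved for systems of causal SDEs on $[0,T]$. The reduction goes through the embedding of Lemma~\ref{lem:timesplit_embedding}: every time-split system $\Dcal^\Ecal$ whose exogenous processes have independent increments can be realised as a system of causal SDEs $\tilde\Dcal$ on $[0,T]$. The endogenous processes of $\tilde\Dcal$ correspond one-to-one (via measurable bijections of path spaces) to the variables $X_v^{\Ical_k}$, and its causal graph coincides with $G(\Dcal^\Ecal)$, or is a subgraph of it. Concretely, I would realise each $X_v^{\Ical_k}$ as a process on $[0,T]$ that is frozen outside $\Ical_k$: constant before $\inf\Ical_k$ and stopped at $b_{k,k+1}$, after taking its left limit. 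The exogenous increments $X_w^{\Ical_k}$ are embedded analogously. By the independent-increments hypothesis these exogenous increments are mutually independent semimartingales, as Definition~\ref{def:sdes} requires. Since separation statements are preserved under passing to supergraphs only in the correct direction (a subgraph has more separations), it suffices that $G(\tilde\Dcal)\subseteq G(\Dcal^\Ecal)$ after identifying vertices.

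For part (i), I would argue as follows. Assumption~\ref{ass:uniquely_solvable} for $\Dcal$ gives, by Lemma~\ref{lem:timesplit_solvable}, essential unique solvability of $\Dcal^\Ecal$ with respect to every $O\subseteq V\times\Ecal$, in particular with respect to every SCC of $G(\Dcal^\Ecal)$. I would check that this transfers to $\tilde\Dcal$. Solution functions carry over by composing with the embedding bijections. The admissibility restriction on inputs $\varphi_S$ corresponds exactly to the inputs that are valid (càdlàg) processes in $\tilde\Dcal$. Then Theorem~\ref{thm:sigma_sep_mp}, applied to $\tilde\Dcal$ and restricted to endogenous sets, yields the implication for $\sigma$-separation. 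The sets $X_{C_1}^{\Ical_{C_1}},\dots$ are just a subset of vertices of $V\times\Ecal$, so the statement is literally an instance of the theorem.

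For part (ii), I would take the direct route rather than the embedding, since $\tilde\Dcal$ is generally not of additive-noise form (frozen components have zero diffusion, so $\Sigma$ is not invertible). The $d$-separation in $G(\Dcal^\Ecal)$ should instead be pushed through the Euler scheme. Choose the grids $t_k$ to contain all endpoints $\sup\Ical_k$. The continuous Euler scheme $X_V^n$ then splits into restrictions $X_v^{n,\Ical_k}$, and the Euler graph refines $G(\Dcal^\Ecal)$. The argument of Lemma~\ref{thm:dsep_summary_euler} would then show that $d$-separation in $G(\Dcal^\Ecal)$ implies $d$-separation of the corresponding grid variables. Together with the Brownian-bridge decomposition (Lemmas~\ref{lem:euler_decomposition}, \ref{lem:mp_brownian_bridge}), as in Theorem~\ref{thm:continuous_euler_dsep_mp}, this gives conditional independence for the restricted Euler processes. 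Restriction to intervals is a measurable map, so the total variation convergence of Theorem~\ref{thm:euler_tv_convergence} passes to the laws of the restrictions. Theorem~\ref{thm:ci_tv_closed} then concludes.

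I expect the main obstacle to be the graph comparison in part (ii): showing that every $d$-connecting walk in the Euler graph between grid points of $X_A^{\Ical_A}$ and $X_B^{\Ical_B}$ projects to a $d$-connecting walk in $G(\Dcal^\Ecal)$. This requires handling colliders whose descendants reach $C$ only through later intervals, and the boundary variables at $b_{k,k+1}$. I would first try the projection argument of \cite{ferreira2024identifying}, using that Euler edges only go forward in time, which matches adaptedness sparsity (i) of $G(\Dcal^\Ecal)$.
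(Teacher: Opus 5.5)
Your proposal is correct and matches the paper's proof. Part (i) goes through the embedded time-split system $\bar\Dcal^\Ecal$: same causal graph, essential unique solvability carried over from Lemma~\ref{lem:timesplit_solvable} via Lemma~\ref{lem:timesplit_embedding}, then Theorem~\ref{thm:sigma_sep_mp}, and finally pulling the independence back through $\mathrm{res}$. Part (ii) bypasses the embedding exactly as you suggest, using an Euler grid containing all interval endpoints, the Brownian-bridge decomposition, the fact that total variation distance only decreases under restriction, and Theorem~\ref{thm:ci_tv_closed}.

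The paper handles the graph comparison you flag only by appeal to the projection argument of Lemma~\ref{thm:dsep_summary_euler}, so your plan is at the same level of detail. As you suspect, the interval boundaries are the point needing care: for continuous paths, a restriction to a left-open interval $(s,t]$ already determines the boundary value $X(s)$.
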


\begin{restatable}[Do-calculus for time-split systems of causal SDEs]{theorem}{docalctimesplit}\label{thm:do_calculus_timesplit}
    Let $\Dcal$ be a system of causal SDEs satisfying Assumption~\ref{ass:uniquely_solvable} whose exogenous processes $X_w$ ($w \in W$) have independent increments, and let $\Ecal$ be a finite partition of $[0,T]$ into intervals with temporal ordering. For $A, B, C_1, \ldots, C_r \subseteq V$ and $\Ical_A, \Ical_B, \Ical_{C_1}, \ldots, \Ical_{C_r} \subseteq \Ecal$ such that the blocks $A^{\Ical_A}, B^{\Ical_B}, C_1^{\Ical_{C_1}}, \ldots, C_r^{\Ical_{C_r}}$ are pairwise disjoint in $V\times\Ecal$, write $X_{C}^{\Ical_C} := (X_{C_1}^{\Ical_{C_1}}, \ldots, X_{C_r}^{\Ical_{C_r}})$. Then:
    \begin{enumerate}
        \item \emph{Insertion/deletion of observation.} If $X_A^{\Ical_A} \Perp^\sigma_{G(\Dcal^\Ecal)} X_B^{\Ical_B} \given X_{C}^{\Ical_C}$, then
        \begin{equation*}
            \PP(X_A^{\Ical_A} \given X_B^{\Ical_B}, X_{C}^{\Ical_C}) = \PP(X_A^{\Ical_A} \given X_{C}^{\Ical_C}) \quad \PP(X_B^{\Ical_B}, X_{C}^{\Ical_C})\text{-a.s.}
        \end{equation*}
        \item \emph{Action/observation exchange.} If $X_A^{\Ical_A} \Perp^\sigma_{G(\Dcal^\Ecal)_{\Do(R_B^{\Ical_B})}} R_B^{\Ical_B} \given X_B^{\Ical_B}, X_{C}^{\Ical_C}$ and $\PP(X_{C}^{\Ical_C} \given X_B^{\Ical_B} = x_B) \ll \PP(X_{C}^{\Ical_C} \given \Do(X_B^{\Ical_B} = x_B))$ for $\PP(X_B^{\Ical_B})$-a.a.\ $x_B$, then
        \begin{equation*}
            \PP(X_A^{\Ical_A} \given \Do(X_B^{\Ical_B}), X_{C}^{\Ical_C}) = \PP(X_A^{\Ical_A} \given X_B^{\Ical_B}, X_{C}^{\Ical_C}) \quad \PP(X_B^{\Ical_B}, X_{C}^{\Ical_C})\text{-a.s.}
        \end{equation*}
        \item \emph{Insertion/deletion of action.} If $X_A^{\Ical_A} \Perp^\sigma_{G(\Dcal^\Ecal)_{\Do(R_B^{\Ical_B})}} R_B^{\Ical_B} \given X_{C}^{\Ical_C}$, then for every $x_B \in \Xcal_{B^{\Ical_B}}$ such that $\PP(X_{C}^{\Ical_C}) \ll \PP(X_{C}^{\Ical_C} \given \Do(X_B^{\Ical_B} = x_B))$,
        \begin{equation*}
            \PP(X_A^{\Ical_A} \given \Do(X_B^{\Ical_B} = x_B), X_{C}^{\Ical_C}) = \PP(X_A^{\Ical_A} \given X_{C}^{\Ical_C}) \quad \PP(X_{C}^{\Ical_C})\text{-a.s.}
        \end{equation*}
    \end{enumerate}
\end{restatable}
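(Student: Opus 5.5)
The plan is to reduce Theorem~\ref{thm:do_calculus_timesplit} to Theorem~\ref{thm:do_calculus} applied to a system of causal SDEs on $[0,T]$ that realises $\Dcal^\Ecal$. First, by Lemma~\ref{lem:timesplit_solvable}, since $\Dcal$ satisfies Assumption~\ref{ass:uniquely_solvable} and its exogenous processes have independent increments, $\Dcal^\Ecal$ is essentially uniquely solvable w.r.t.\ every $O\subseteq V\times\Ecal$ (for admissible inputs). Next, by Lemma~\ref{lem:timesplit_embedding}, I take the embedding $\hat\Dcal$ of $\Dcal^\Ecal$ as a system of causal SDEs on $[0,T]$. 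Concretely, each $(v,\Ical_k)$ becomes a process on $[0,T]$ that is frozen outside $\Ical_k$ (constant before $\inf\Ical_k$, and equal to its left limit or value at $b_{k,k+1}$ afterwards). Each exogenous increment $X_w^{\Ical_k}$ is extended in the same way, so the family remains mutually independent by the independent-increments assumption.

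I then check three properties of $\hat\Dcal$. First, its causal graph coincides with $G(\Dcal^\Ecal)$, so $\sigma$-separations in $G(\Dcal^\Ecal)_{\Do(R_B^{\Ical_B})}$ transfer verbatim; adding intervention nodes commutes with relabelling vertices. Second, $\hat\Dcal$ inherits essential unique solvability w.r.t.\ every subset. The solution functions of $\Dcal^\Ecal$ are transported along the measurable bijection between $D(\Ical_k,\RR)$ and the subspace of frozen paths in $D([0,T],\RR)$. Third, perfect interventions commute with the embedding: $\Do(X_B^{\Ical_B}=x_B)$ on $\Dcal^\Ecal$ corresponds to the intervention on $\hat\Dcal$ with the embedded path. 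For this I use Theorem~\ref{thm:obs_int_dist} to identify the interventional laws as pushforwards of $\PP(X_W)$ under the respective solution functions.

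With these identifications, the three rules follow directly from Theorem~\ref{thm:do_calculus}. The conditional distributions and the absolute continuity hypotheses are invariant under the measurable isomorphism of path spaces, so they translate one-to-one. Disjointness of the blocks in $V\times\Ecal$ gives the pairwise disjointness of $A,B,C$ required there.

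The main obstacle is that admissibility is only required for inputs to $\Dcal^\Ecal$, whereas Theorem~\ref{thm:do_calculus} quantifies over all adapted inputs $\varphi_S$ and all intervention values in $\Xcal_S$. An intervention value on a left-open or right-open interval need not extend to a càdlàg frozen path. To handle this, I would first try to define the embedding so that every element of $D(\Ical_k,\RR)$ with a left limit at $\sup\Ical_k$ maps into $D([0,T],\RR)$. I would then show that the proof of Theorem~\ref{thm:do_calculus} only uses inputs of the form $\varphi_S=$ (solution components) or (randomised intervention values $R_B\sim\nu$). These inputs are admissible whenever $\nu$ charges only admissible paths, which holds $\PP$-a.s.\ for observational values. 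The "every $x_B$" in Rule~3 must accordingly be read as every admissible $x_B$. If this fails, the fallback is to rerun the proof of Theorem~\ref{thm:do_calculus} directly on $\Dcal^\Ecal$, restricting all quantifiers to admissible inputs.
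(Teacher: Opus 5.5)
Your proposal is correct and follows the paper's proof essentially step for step. The paper likewise embeds $\Dcal^\Ecal$ as $\bar\Dcal^\Ecal$ on $[0,T]$ via $\mathrm{ext}$, uses Lemma~\ref{lem:timesplit_solvable} and Lemma~\ref{lem:timesplit_embedding} for graph equality, transfer of essential unique solvability and identification of the interventional laws, transfers the absolute-continuity hypotheses by pushforward, applies Theorem~\ref{thm:do_calculus} to the embedded system, and pulls the conclusions back via $\mathrm{res}$. Your extra discussion of admissibility is a legitimate refinement of a point the paper handles only implicitly, since Lemma~\ref{lem:timesplit_embedding}(i) is proved only for admissible inputs and Theorem~\ref{thm:do_calculus} is then invoked without comment. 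Reading Rule~3 for admissible $x_B$ is therefore a sensible clarification rather than a departure.
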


\begin{example}
    The summary, time-split, and subsampled causal graphs of the repressilator are depicted in Figure \ref{fig:timesplit_subsampled}.
    In the time-split graph, the parents of $P_{GFP}^{(0,t)}$ are $\{P_{GFP}^0, P_{tetR}^0, P_{tetR}^{(0,t)}\}$. Let $Z := (P_{tetR}^0, P_{tetR}^{(0,t)})$. Since $P_{GFP}^0$ is a source with no other outgoing edges, every backdoor path from $P_{GFP}^{(0,t)}$ to $P_{GFP}^T$ must enter $P_{GFP}^{(0,t)}$ via $P_{tetR}^0$ or $P_{tetR}^{(0,t)}$, so conditioning on $Z$ blocks all such paths. Moreover, $Z$ contains no descendants of $P_{GFP}^{(0,t)}$. Assuming the absolute-continuity conditions of Theorem~\ref{thm:do_calculus_timesplit} hold, Rule 3 gives $\PP(Z \given \Do(P_{GFP}^{(0,t)})) = \PP(Z)$, and Rule 2 gives $\PP(P_{GFP}^T \given \Do(P_{GFP}^{(0,t)}), Z) = \PP(P_{GFP}^T \given P_{GFP}^{(0,t)}, Z)$, which combine to yield the backdoor adjustment formula
    \begin{equation}\label{eqn:adjustment}
        \PP\left(P_{GFP}^T \given \Do(P_{GFP}^{(0,t)})\right) = \int \PP\left(P_{GFP}^T \given P_{GFP}^{(0,t)}, P_{tetR}^0, P_{tetR}^{(0,t)}\right) \diff\PP(P_{tetR}^0, P_{tetR}^{(0,t)}).
    \end{equation}
\end{example}

Finally, we consider time-split systems where the endogenous variables only consist of time-points.

\begin{definition}[Subsampled system of causal SDEs]\label{def:subsampled}
    Let $\Dcal$ be a system of causal SDEs satisfying Assumption~\ref{ass:uniquely_solvable} whose exogenous processes have independent increments, so that the time-split system $\Dcal^\Ecal$ is essentially uniquely solvable w.r.t.\ every $O \subseteq V \times \Ecal$ by Lemma~\ref{lem:timesplit_solvable}. Given a set of time-points $\Scal \subseteq [0,T]$, let $\Ecal'$ be a set of intervals such that $\Ecal := \Scal \cup \Ecal'$ partitions $[0,T]$. The \emph{subsampled system of causal SDEs} is defined as
    \begin{equation*}
        \Dcal^\Scal := \bigl(\Dcal^\Ecal\bigr)_{\marg(V \times \Ecal')},
    \end{equation*}
    the marginalisation of the time-split system $\Dcal^\Ecal$ onto the time-point components $V \times \Scal$. Since $\Dcal^\Ecal$ is essentially uniquely solvable w.r.t.\ every $O \subseteq V \times \Ecal$, Theorem~\ref{thm:marg_closure_simple} gives that $\Dcal^\Scal$ is again essentially uniquely solvable w.r.t.\ every $O' \subseteq V \times \Scal$.
\end{definition}

\begin{example}\label{ex:time-split}
    From the Markov property in the summary graph (Figure \ref{fig:timesplit_subsampled}) we infer that $P_{lacI}^{[0,T]} \Indep P_{GFP}^{[0,T]} \given P_{tetR}^{[0,T]}$. If we consider the subsampled time-series, however, we might have $P_{lacI}^{\{0,t,T\}} \nIndep P_{GFP}^{\{0,t,T\}} \given P_{tetR}^{\{0,t,T\}}$.
\end{example}

Analysis of the information loss of the subsampling operation could be particularly interesting in the light of (im)possibility results for inferring features of $\Dcal$ from the subsampled system. In particular, inferences from subsampled time series need not represent features of the underlying continuous-time system. For example, conditional independence relations which hold in $\Dcal$ might not hold in $\Dcal^\Scal$, as portrayed in the example above; see also \cite{aalen2016can}, who show a similar phenomenon for local independence graphs. Another example is that of perfect adaptation: if $A \to B$ but $B$ perfectly adapts between two sampling points, no edge from $A$ to $B$ is present in the graph of the subsampled system \citep{blom2023causality,weinberger2026homeostasis}.

\subsection{Granger causality in time-split systems}\label{sec:granger_causality}
Having a time-splitting operation at our disposal, we can consider predicting future values of a process from past values of the same and other processes. For discrete-time stochastic processes, \cite{granger1969investigating,granger1980testing} called $A$ a cause of $B$ if, given the history of all other variables, the history of $A$ contains useful information for predicting $B$. \cite{florens1996noncausality} extended this notion to continuous time. We adopt the following slightly more flexible variant, allowing arbitrary conditioning sets:
\begin{definition}[Global Granger non-causation]\label{def:global_granger}
    Let $A, B, C\subseteq V$. We say that $X_A$ does not \emph{globally Granger cause $X_B$ given $X_C$} if
    \begin{equation*}
        X_A^{[0,s]} \Indep X_B^{(s,t]} \given X_C^{[0,s]} \quad \text{for all } 0 \leq s < t \leq T,
    \end{equation*}
    equivalently, $X_A^{[0,s]} \Indep X_B^{(s, T]} \given X_C^{[0,s]}$ for all $0 \leq s < T$.
\end{definition}
The classical notion of \emph{$X_A$ does not (globally) Granger-cause $X_B$} is recovered as the special case $C = V\setminus A$. If $\Dcal$ satisfies Assumption~\ref{ass:uniquely_solvable}, global Granger non-causation can be read off from the time-split causal graph via the $\sigma$-separation Markov property:
\begin{equation*}
    X_A^{[0,s]}\Perp^\sigma_{G(\Dcal^{\{[0,s],(s,T]\}})} X_B^{(s,t]} \given X_C^{[0,s]} \implies X_A^{[0,s]} \Indep X_B^{(s,t]} \given X_C^{[0,s]}.
\end{equation*}
When $\Dcal$ additionally satisfies Assumption~\ref{ass:additive_noise}, the same conclusion follows from the stronger $d$-separation Markov property in the time-split graph (Theorem~\ref{thm:markov_timesplit}(ii)).

The following result relates Granger causation to causation in $G(\Dcal)$.
\begin{restatable}[]{theorem}{grangeriscausation}\label{thm:granger_is_causation}
    Let $\Dcal$ be a system of causal SDEs satisfying Assumption~\ref{ass:uniquely_solvable} whose exogenous processes $X_w$ ($w\in W$) have independent increments, and let $u, v\in V$ with $u\neq v$.
    \begin{enumerate}[label=(\roman*)]
        \item Let $\Dcal^{\{[0,s], (s,T]\}}$ be faithful for every $0\leq s < T$. If there is a directed path from $u$ to $v$ in $G(\Dcal)$, then $u$ is a Granger cause of $v$.
        \item Let $G(\Dcal)$ have no bidirected edges. If $u$ is a Granger cause of $v$, then there is a directed path from $u$ to $v$ in $G(\Dcal)$.
    \end{enumerate}
\end{restatable}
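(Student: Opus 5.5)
Throughout, I read ``$u$ is a Granger cause of $v$'' as the negation of Definition~\ref{def:global_granger} with $A=\{u\}$, $B=\{v\}$ and $C=V\setminus\{u\}$. That is, there exist $0\le s<t\le T$ with
\[
X_u^{[0,s]}\nIndep X_v^{(s,t]}\given X_{V\setminus\{u\}}^{[0,s]}.
\]
Part (i) is proved by exhibiting a $\sigma$-connecting walk and invoking faithfulness. For part (ii) I plan to bypass graphical separation and give a direct functional argument, because graphical separation gets delicate when descendants of $u$ on $[0,s]$ are conditioned on.

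\textbf{Part (i).} Let $u=w_0\to w_1\to\cdots\to w_k=v$ be a directed path in $G(\Dcal)$. Fix $s$ and work in the two-interval time-split graph $G(\Dcal^{\{[0,s],(s,T]\}})$. The construction uses two facts.
\begin{itemize}
\item By Definition~\ref{def:sdes_timesplit}, the equation for $X_w^{(s,T]}$ contains $X_w^{[0,s]}(s)$. Hence the edge $(w,[0,s])\to(w,(s,T])$ is present for every $w$.
\item Each edge $w_i\to w_{i+1}$ means the mechanism of $w_{i+1}$ is non-constant in $x_{w_i}=\mathrm{cat}(x_{w_i}^{[0,s]},x_{w_i}^{(s,T]})$. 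So in the $(s,T]$-layer it yields an edge into $(w_{i+1},(s,T])$ from $(w_i,(s,T])$ or from $(w_i,[0,s])$.
\end{itemize}
The target walk is $(u,[0,s])\to(u,(s,T])\to(w_1,(s,T])\to\cdots\to(v,(s,T])$. This is a directed walk whose interior vertices all lie in the $(s,T]$-layer, so none is in the conditioning set $C=(V\setminus\{u\})\times\{[0,s]\}$. It is therefore $\sigma$-open, and faithfulness gives the required dependence with $t=T$.

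The main obstacle is the case where the dependence of $w_{i+1}$ on $w_i$ only ``lives'' on $[0,s]$, for example through an initial value. Then the only available edge comes from $(w_i,[0,s])$, which is conditioned on. I would handle this by choosing $s$ depending on the path. By adaptedness, non-constancy of the mechanism in $x_{w_i}$ on $[0,T]$ implies non-constancy in $x_{w_i}$ restricted to $(s,T]$ for all $s$ below some threshold. Otherwise the mechanism would depend on $x_{w_i}$ only through $x_{w_i}(0)$, and this degenerate case needs a separate check, using $s=0$ and the initial-value vertex. Taking $s$ below the minimum of the finitely many thresholds along the path makes all edges available in the $(s,T]$-layer. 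Faithfulness is assumed for every $s$, so this choice is allowed.

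\textbf{Part (ii).} I argue by contraposition: assume there is no directed path $u\to\cdots\to v$ in $G(\Dcal)$, so $u\notin\Anc(v)=:R$.
\begin{enumerate}
\item Apply essential unique solvability of $\Dcal^{\{[0,s],(s,T]\}}$ w.r.t.\ $R\times\{(s,T]\}$, available by Lemma~\ref{lem:timesplit_solvable}. Combined with Lemma~\ref{lem:acy_factorisation}, this gives a solution function for $X_R^{(s,T]}$ depending only on the endogenous parents of that set and on exogenous variables. Since $R$ is ancestrally closed, those parents lie in $R\times\{[0,s]\}$.
\item The relevant exogenous inputs are the increments $X_{W_R}^{(s,T]}$, where $W_R$ is the set of exogenous parents of $R$. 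Initial values enter only via $X_R^{[0,s]}$. This gives the a.s.\ representation
\[
X_v^{(s,T]}=F\bigl(X_R^{[0,s]},X_{W_R}^{(s,T]}\bigr).
\]
\item Because $u\notin R$, we have $R\subseteq V\setminus\{u\}$, so $X_R^{[0,s]}$ is part of the conditioning variable.
\item $X_u^{[0,s]}$ and $X_{V\setminus\{u\}}^{[0,s]}$ are functions of the exogenous processes on $[0,s]$ alone, by adaptedness and solvability on $[0,s]$. By mutual independence of the $X_w$ and their independent increments, $X_{W_R}^{(s,T]}$ is independent of all $[0,s]$ information.
\item A standard lemma then yields $X_u^{[0,s]}\Indep X_v^{(s,t]}\given X_{V\setminus\{u\}}^{[0,s]}$ for all $s<t$: if $Y=F(Z_1,N)$ with $N\Indep(X,Z)$ and $Z_1$ a component of $Z$, then $X\Indep Y\given Z$.
\end{enumerate}
The hypothesis of no bidirected edges is used to ensure that no exogenous process shared between $u$ (or its ancestry) and $R$ spoils step 4 through some non-increment component. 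I would double-check this point; alternatively, I would phrase the argument as a $\sigma$-separation in $G(\Dcal^{\{[0,s],(s,T]\}})$ and invoke Theorem~\ref{thm:markov_timesplit}(i).
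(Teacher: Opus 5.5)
\textbf{Part (i).} This is the paper's argument. The directed walk $(u,[0,s])\to(u,(s,T])\to(w_1,(s,T])\to\cdots\to(v,(s,T])$ has all interior vertices outside the conditioning set $(V\setminus\{u\})\times\{[0,s]\}$, and faithfulness finishes. The paper simply takes the within-layer edges $(w_i,(s,T])\to(w_{i+1},(s,T])$ as inherited from $G(\Dcal)$, so your extra care is justified. However, your proposed fallback at $s=0$ cannot work: there $(w_i,\{0\})$ is a conditioned, blockable non-collider, so the walk through it is blocked. Also, the degenerate case is dependence through the germ of $x_{w_i}$ at $0$, not only through $x_{w_i}(0)$.

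\textbf{Part (ii).} You take a genuinely different route from the paper. The paper shows directly that every walk from $(u,[0,s])$ to $(v,(s,T])$ is $\sigma$-blocked: the walk's last $[0,s]$-vertex is either a conditioned blockable non-collider, or $u$ itself, in which case the collider-free remainder would be a directed path $u\to\cdots\to v$. It then invokes Theorem~\ref{thm:markov_timesplit}(i).

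Your route breaks at step~2. The mechanism of $(r,(s,T])$ evaluates $f_r,g_r,h_r$ at $\mathrm{rec}(x^\Ecal_W)$, which contains $x_w^{[0,s]}$. Assumption~\ref{ass:uniquely_solvable} allows exogenous processes in $\alpha(r)$ and $\beta(r)$ to enter through their values, not just their increments. For example, $g_r(t,x_w)=x_w(t)$ gives $g_r^{(s,T]}(t)=x_w^{[0,s]}(s)+x_w^{(s,T]}(t)$. So in general only $X_v^{(s,T]}=F(X_R^{[0,s]},X_{W_R}^{[0,s]},X_{W_R}^{(s,T]})$ holds. Here $X_{W_R}^{[0,s]}$ is neither conditioned on nor independent of the $[0,s]$-information, so the lemma of step~5 does not apply.

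The symptom is that your argument never uses the no-bidirected-edges hypothesis, yet (ii) is false without it. Take an exogenous Brownian motion $X_w$ and independent Brownian motions $B_u,B_v$, with $\diff X_u=X_w\diff t+\diff B_u$ and $\diff X_v=X_w\diff t+\diff B_v$. This system has $u\leftrightarrow v$ and no directed path. Yet $X_u^{[0,s]}$ carries information on $X_w(s)$ beyond $X_v^{[0,s]}$, and $X_w(s)$ drives $X_v^{(s,T]}$, so $u$ Granger-causes $v$.

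To repair the argument, keep $X_{W_R}^{[0,s]}$ in $F$. Since the $(s,T]$-increments are independent of everything on $[0,s]$, it then suffices to show $X_u^{[0,s]}\Indep X_{W_R}^{[0,s]}\given X_{V\setminus\{u\}}^{[0,s]}$. This is where the hypothesis enters: with no bidirected edges, each $w\in W_R$ has its only endogenous child in $R$. So in the augmented time-split graph, a walk from $(w,[0,s])$ to $(u,[0,s])$ does one of two things. Either it contains a collider in the $(s,T]$-layer, which is not an ancestor of the conditioning set. Or it must leave $(R\cup W_R)\times\{[0,s]\}$ through an edge $p\to q$, where $p\in R\times\{[0,s]\}$ is conditioned on and $q$ lies outside the SCC of $p$. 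In both cases the walk is $\sigma$-blocked, and Theorem~\ref{thm:sigma_sep_mp} applied to the embedded time-split system gives the independence.

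This is the paper's graphical argument carried out in $G^+$. Working in $G^+$ also handles the cross-layer bidirected edges $(r,[0,s])\leftrightarrow(r,(s,T])$ that such $g_r$ create, which the paper's proof assumes absent. Your purely functional argument is complete only when the mechanisms of $R$ depend on exogenous processes through their $(s,T]$-increments alone. In that case it does not even need the no-bidirected hypothesis.
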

When combining the above assumptions of faithfulness and no bidirected edges, Granger-causation is equivalent to the existence of a directed path.
For discrete-time models, a similar link between Granger causation and causation in SCMs has also been remarked by \cite{peters2013causal} (Theorem 10.3), \cite{white2010granger} and \cite{eichler2012graphical}.

\subsubsection{Local independence}\label{sec:local_independence}
Besides the `global' Granger non-causality, much interest has been shown in a local notion called \emph{local independence} \citep{schweder1970composable,aalen1978nonparametric,didelez2008graphical,mogensen2020markov}. It has been conjectured that it is equivalent to the conditional independence $X_A^{[0,t)}\Indep X_B^{t} \given X_C^{[0,t)}$ \citep{didelez2008graphical}, but if $B\subseteq C$ and $X_B$ is continuous, then $X_B(t-) = X_B(t)$ a.s.\ so this conditional independence holds trivially even if there is a local dependence \citep{christgau2023nonparametric}. In this section we investigate the relation between Granger non-causality and local independence, and we derive a local independence Markov property based on $\sigma$-separation (and $d$-separation when appropriate) in the time-split graph.

Defining local independence requires some technical background. A \emph{special semimartingale} $X(t)$ adapted to a filtration $\Fcal$ on compact interval $[0,T]$ is a stochastic process that has a Doob-Meyer decomposition $X(t)=\Lambda(t) + M(t)$, where $\Lambda$ is a $\Fcal$-\emph{predictable} finite variation process with $\Lambda(0) = 0$, and $M$ is a local martingale with respect to $\Fcal$; this decomposition is unique up to indistinguishability. Throughout this section, let $\Dcal$ be a system of causal SDEs satisfying Assumption~\ref{ass:uniquely_solvable} such that for every $B, C \subseteq V$ the \emph{optional projection} (the unique càdlàg version of) $\EE[X_B(t)\given \Fcal^C_t]$ exists and is a special semimartingale, where $\Fcal^C$ denotes the filtration generated by $X_C$. A convenient sufficient condition is that $X_B$ is a \emph{quasimartingale} with respect to $\Fcal^V$ (see Definition~\ref{def:quasimartingale} and Lemma~\ref{thm:bounded_implies_special_projection} in the Appendix).

\begin{definition}[Local independence]\label{def:local_independence}
    Let $A, B, C\subseteq V$ and consider the following optional projections and their Doob-Meyer decompositions
    \begin{align}\label{eqn:local_independence}
        \begin{split}
            \EE[X_B(t)\given \Fcal_t^C]   & = \Lambda^C(t) + M^C(t)      \\
            \EE[X_B(t)\given \Fcal_t^{A,C}] & = \Lambda^{A,C}(t) + M^{A,C}(t).
        \end{split}
    \end{align}
    We say that $X_B$ is \emph{locally independent}\footnote{\cite{florens1996noncausality} refer to this as \emph{weak instantaneous non-causality}, \cite{comte1996noncausality} call it \emph{local Granger non-causality}.} from $X_A$ given $X_C$, written $X_A \not\to X_B \given X_C$, if $\Lambda^{A,C} = \Lambda^C$ a.s.
\end{definition}

Local independence is equivalent to $\Lambda^{A,C}$ having a $\Fcal^C$-predictable version, and this version is then necessarily a.s.\ equal to $\Lambda^C$ (see \citealp{mogensen2020markov}, Appendix E). Some authors assume that the trajectory $t\mapsto \Lambda^{A,C}(t)$ is absolutely continuous, i.e.\ $\Lambda^{A,C}(t) = \int_0^t \lambda^{A,C}(s) \diff s$ for some $\Fcal^{A,C}$-adapted process $\lambda^{A,C}$ called the \emph{intensity} process, in which case local independence is equivalent to the intensity process $\lambda^{A,C}$ having a $\Fcal^C$-adapted version.

\begin{remark}
    In some cases where the causal SDE for $X_v$ directly gives the Doob-Meyer decomposition w.r.t.\ $\Fcal^V$, the local independence $X_u \not\to X_v \given X_{V\setminus\{u\}}$ with $u\neq v$ can be read off from the causal graph. Since $v \in V\setminus\{u\}$, both optional projections in (\ref{eqn:local_independence}) equal $X_v$ itself, and the local independence is the statement that the compensators $\Lambda^{V}$ and $\Lambda^{V\setminus \{u\}}$ coincide.
    Suppose that the mechanism of $v$ has the martingale-driven form
    \begin{equation}\label{eqn:sde_martingale_integrator}
        X_v(t) = X_v^0 + \int_0^t g_v^1(s{-}, X_v, X_{\beta(v)}) \diff s + \int_0^t g_v^2(s{-}, X_v, X_{\beta(v)}) \diff m_v(s, X_{\gamma(v)}),
    \end{equation}
    where $\beta(v)\subseteq V$, $f_v$ models the initial condition, $g_v^2$ is bounded and $m_v(\cdot, X_{\gamma(v)})$ is a square-integrable martingale with respect to $\Fcal^W$ (this for example holds under Assumption~\ref{ass:additive_noise}). The stochastic integral $M^V := \int_0^\cdot g_v^2(s{-}, X_v, X_{\beta(v)}) \diff m_v(s, X_{\gamma(v)})$ is then a martingale wrt $\Fcal^W$. Since $M^V = X_v - X_v^0 - \int_0^\cdot g_v^1(s{-}, X_v, X_{\beta(v)})\diff s$ it is $\Fcal^V$-adapted, and by \cite{follmer2011local}, Theorem 2.2 it is a martingale with respect to $\Fcal^V\subseteq \Fcal^W$ as well, so (\ref{eqn:sde_martingale_integrator}) is the Doob-Meyer decomposition of $X_v$ in $\Fcal^V$, with martingale part $M^V$ and compensator $\Lambda^V := \int_0^\cdot g_v^1(s{-}, X_v, X_{\beta(v)})\diff s$.

    If $(u\to v)\notin G(\Dcal)$, then $g_v$ does not depend on $X_u$, so both $\Lambda^V$ and $M^V$ are adapted to $\Fcal^{V\setminus\{u\}}$. By the same argument $M^V$ is an $\Fcal^{V\setminus\{u\}}$-martingale, so (\ref{eqn:sde_martingale_integrator}) is the Doob-Meyer decomposition of $X_v$ in $\Fcal^{V\setminus\{u\}}$ as well. The two compensators thus coincide, giving $X_u \not\to X_v\given X_{V\setminus\{u\}}$, so $G(\Dcal)$ is a \emph{local independence graph} in the sense of \cite{didelez2008graphical,mogensen2018causal} and \cite{mogensen2020markov}.
\end{remark}

Some authors (e.g.\ \citealp{aalen1978nonparametric,florens1996noncausality,comte1996noncausality,didelez2008graphical,christgau2023nonparametric}) always condition on the history of the target variable, i.e.\ they consider $B\subseteq C$, in which case we get $X_B(t) = \EE[X_B(t) \given \Fcal_t^C] = \EE[X_B(t) \given \Fcal_t^{A,C}]$, so (\ref{eqn:local_independence}) reads $X_B = M^{A,C} + \Lambda^{A,C}$ and $X_B = M^C + \Lambda^C$, in which case local independence $X_A\not\to X_B \given X_C$ is equivalent to $M^{A,C} = M^C$ a.s. If $B\subseteq C$, local independence trivially holds if $X_B$ is almost surely differentiable, since then $X_B(t) = X_B(0) + \int_0^t X_B'(s) \diff s$, so $X_B' = \lambda^C = \lambda^{A,C}$ a.s. \citep{comte1996noncausality}.
Other authors \citep{mogensen2018causal,mogensen2020markov} allow for general conditioning sets $C$; we follow this convention.

The following result, due to \cite{florens1996noncausality} for the special case where $B\subseteq C$, relates Granger non-causality to local independence.
\begin{restatable}[]{theorem}{globalgrangerimplieslocalindependence}\label{thm:global_granger_implies_local_independence}
    Let $\Dcal$ be a system of causal SDEs satisfying Assumption~\ref{ass:uniquely_solvable} whose exogenous processes $X_w$ ($w\in W$) have independent increments, such that $X_V$ is a quasimartingale with respect to $\Fcal^V$ with $\EE\bigl[\sup_{t \in [0,T]} \|X_V(t)\|\bigr] < \infty$, and let $A, B, C \subseteq V$. If for all $0< s < t\leq T$ we have $X_A^{[0,s]}\Indep X_B^{(s,t]} \given X_C^{[0,s]}$ in the time-split system $\Dcal^{\{[0,s], (s,t], (t,T]\}}$, then $X_A\not\to X_B \given X_C$ and $M^C = M^{A,C}$.
    Conversely, if $B=C$, then $X_A\not\to X_B \given X_C$ implies $X_A^{[0,s]}\Indep X_B^{(s,t]} \given X_C^{[0,s]}$ for all $0\leq s < t\leq T$.
\end{restatable}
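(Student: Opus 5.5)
The plan is to reduce both directions to statements about conditional expectations of increments of $X_B$, using the quasimartingale and integrability assumptions to make the optional projections and their Doob--Meyer decompositions well defined (Lemma~\ref{thm:bounded_implies_special_projection}).

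\emph{Forward direction.} Fix $0<s<t\le T$. The conditional independence $X_A^{[0,s]}\Indep X_B^{(s,t]}\given X_C^{[0,s]}$ holds in the time-split system, and by Lemma~\ref{lem:timesplit_equivalent} the components coincide with the restrictions of $X_V$. Since $X_B(t)$ is a measurable function of the path $X_B^{(s,t]}$, and $\EE[\sup_t\|X_V(t)\|]<\infty$ makes it integrable, we get
\begin{equation*}
\EE\bigl[X_B(t)\given \Fcal^{A,C}_s\bigr]=\EE\bigl[X_B(t)\given \Fcal^{C}_s\bigr]\quad\text{a.s.}
\end{equation*}
(up to completion of the filtrations). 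Write $Y^C(t):=\EE[X_B(t)\given\Fcal^C_t]$ and $Y^{A,C}(t):=\EE[X_B(t)\given\Fcal^{A,C}_t]$. The tower property then gives
\begin{equation*}
\EE\bigl[Y^{A,C}(t)-Y^{A,C}(s)\given \Fcal^{A,C}_s\bigr]=\EE\bigl[Y^{C}(t)\given \Fcal^{C}_s\bigr]-Y^{A,C}(s).
\end{equation*}
Letting $s\uparrow t$ along partitions, this shows that the conditional increments used to build the compensator of $Y^{A,C}$ are $\Fcal^C$-measurable. Concretely, I would invoke the Rao/Doléans construction of the compensator of a quasimartingale as a limit of the discrete compensators $\sum_k \EE[Y(t_{k+1})-Y(t_k)\given\Fcal_{t_k}]$, converging in a weak $L^1$ sense. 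The discrete compensators of $Y^{A,C}$ w.r.t.\ $\Fcal^{A,C}$ differ from those of $Y^C$ w.r.t.\ $\Fcal^C$ only by terms $Y^C(t_k)-Y^{A,C}(t_k)$. Those terms vanish: taking $s=t_k$ and letting $t\downarrow s$, right-continuity of the optional projections together with the independence statement yields $Y^{A,C}(s)=Y^C(s)$ for each $s>0$, and at $s=0$ by right-continuity. Hence $Y^{A,C}=Y^C$ up to indistinguishability, the limits agree, and $\Lambda^{A,C}=\Lambda^C$, which gives $M^{A,C}=Y^{A,C}-\Lambda^{A,C}=M^C$.

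\emph{Converse ($B=C$).} Here $Y^C=X_B$ is $\Fcal^C$-adapted. The assumption $\Lambda^{A,C}=\Lambda^C$ makes $\Lambda^{A,C}$ $\Fcal^C$-predictable, so $M^C=X_B-\Lambda^C$ is simultaneously an $\Fcal^C$- and $\Fcal^{A,C}$-local martingale. To get the full conditional independence of the future path $X_B^{(s,t]}$ from $X_A^{[0,s]}$ given $X_B^{[0,s]}$, first-moment information is not enough. I would show that the $\Fcal^{A,C}$-semimartingale characteristics of $X_B$ (drift, quadratic variation, jump compensator) are $\Fcal^C$-predictable, using that $[X_B]$ is pathwise determined by $X_B$, and then apply the uniqueness of the martingale problem, or a Jacod-type characterization of conditional laws by characteristics, to conclude that the $\Fcal^{A,C}_s$-conditional law of $X_B^{(s,t]}$ is $\Fcal^C_s$-measurable.

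I expect this converse step to be the main obstacle, in particular controlling the jump compensator and ensuring uniqueness of the associated martingale problem under Assumption~\ref{ass:uniquely_solvable}. The first thing I would try is to exploit the independent increments of the exogenous drivers and the pathwise solution map to express the conditional law of the future given $\Fcal^{A,C}_s$ through the SDE started from the present state, and then check that this law depends on the past only through $X_C^{[0,s]}$.
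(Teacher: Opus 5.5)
Your forward direction is correct and is essentially the paper's proof. The conditional independence, together with right-continuity of $X_B$, gives $Y^{A,C}=Y^C$; what is needed at a fixed conditioning time is conditional dominated convergence under $\sup_t\|X_B(t)\|$, not right-continuity of the projections. The dyadic discrete compensators then coincide termwise, and so do their weak-$L^1$ limits. You compare the discrete compensators as processes on the grid. This is a slightly shorter route to $\Lambda^{A,C}=\Lambda^C$ at all times than the paper's separate check that $\EE[\alpha_T\given\Fcal^C_t]=\EE[\alpha_T\given\Fcal^{A,C}_t]$. The uniform integrability behind the weak limit is what Lemma~\ref{lem:projection_class_D} supplies.

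The converse, however, is a genuine gap, and it cannot be closed. Your remark that first-moment information is not enough is exactly the problem: the statement itself fails under the stated hypotheses. Take an exogenous $\xi\sim\mathrm{Bernoulli}(1/2)$, constant in time, and an independent Brownian motion $Z$. Let $X_a(t)=\xi$ and
\begin{equation*}
    X_b(t)=\int_0^t\bigl(1+X_a(u)\,\I\{u\geq T/2\}\bigr)\diff Z(u).
\end{equation*}
This system satisfies every hypothesis of the theorem:
\begin{itemize}
    \item Assumption~\ref{ass:uniquely_solvable} holds with $K(t,y)=1+|y(t)|$, since $g_b$ does not depend on $x_b$.
    \item The exogenous processes have independent increments.
    \item $X_V$ is a quasimartingale with $\EE[\sup_t\|X_V(t)\|]<\infty$, because $X_a$ is constant and $X_b$ is a martingale with bounded integrand.
\end{itemize}
For $A=\{a\}$ and $B=C=\{b\}$, $X_b$ is a martingale in both $\Fcal^b$ and $\Fcal^{a,b}$. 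Hence $\Lambda^b=\Lambda^{a,b}=0$ and $X_a\not\to X_b\given X_b$. Yet for $s<T/2<t$ the past $X_b^{[0,s]}=Z^{[0,s]}$ is independent of $\xi$. Meanwhile the quadratic variation of $X_b^{(s,t]}$ over $[T/2,t]$ equals $(1+\xi)^2(t-T/2)$ and so reveals $\xi$. Therefore $X_a^{[0,s]}\nIndep X_b^{(s,t]}\given X_b^{[0,s]}$.

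The example also shows where your route breaks. Here all $\Fcal^{a,b}$-characteristics of $X_b$ are $\Fcal^b$-predictable: the drift is zero, $[X_b]$ is read off the path, and there are no jumps. But a martingale problem whose characteristics are themselves read off the path is not well posed, so the characteristics do not determine the conditional law of the future. Your fallback fails for a related reason: the SDE for the future of $X_b$ genuinely involves $X_a$, and local independence does not control that dependence.

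The paper's own treatment of this direction is a one-line appeal to \cite{bremaud1978changes}, Theorem~3. That theorem characterises the conditional independence by immersion: every $\Fcal^C$-martingale is an $\Fcal^{A,C}$-martingale. Immersion is strictly stronger than $\Lambda^{A,C}=\Lambda^C$, which transfers only $M^C$. In the example, $t\mapsto\PP(\xi=1\given\Fcal^b_t)$ is an $\Fcal^b$-martingale but not an $\Fcal^{a,b}$-martingale. A valid converse needs an extra hypothesis. One option is immersion itself. Another is a predictable representation property of $\Fcal^C$ with respect to $M^C$, which together with local independence yields immersion.
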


The conditions of Theorem~\ref{thm:global_granger_implies_local_independence} are satisfied by the additive-noise class --- see Lemma~\ref{lem:additive_noise_quasimartingale} in the Appendix. \cite{florens1996noncausality} show that equivalence between local independence and Granger non-causality also holds for certain point processes and Markov processes for the case that $B$ is a strict subset of $C$.
As a corollary of Theorem~\ref{thm:global_granger_implies_local_independence}, we obtain the following Markov property for local independence in terms of the time-split graph.
\begin{corollary}[Markov property for local independence]\label{cor:local_independence_mp}
    Let $\Dcal$ be a system of causal SDEs satisfying Assumption~\ref{ass:uniquely_solvable} whose exogenous processes $X_w$ ($w\in W$) have independent increments, such that $X_V$ is a quasimartingale with respect to $\Fcal^V$ with $\EE\bigl[\sup_{t \in [0,T]} \|X_V(t)\|\bigr] < \infty$. Let $A, B, C \subseteq V$.
    \begin{enumerate}[label=(\roman*)]
        \item If $X_A^{[0,s]}\Perp^\sigma_{G(\Dcal^{\{[0,s],(s,T]\}})} X_B^{(s,T]}\given X_C^{[0,s]}$ for all $0< s < T$, then $X_A\not\to X_B \given X_C$.
        \item If $g_v$ is Markov, $f_v$ either models the initial condition or is Markov, and we have $X_A^{[0,s]}\Perp^\sigma_{G(\Dcal^{\{[0,s],(s,T]\}})} X_B^{(s,T]}\given X_C^{[0,s]}$ for some $0< s < T$, then $X_A\not\to X_B \given X_C$.
        \item If $\Dcal$ satisfies Assumption~\ref{ass:additive_noise} and $g_v$ is Markov, and $X_A^{[0,s]}\Perp^d_{G(\Dcal^{\{[0,s],(s,T]\}})} X_B^{(s,T]}\given X_C^{[0,s]}$ for some $0< s < T$, then $X_A\not\to X_B \given X_C$.
    \end{enumerate}
\end{corollary}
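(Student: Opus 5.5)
The plan is to reduce each part to the first half of Theorem~\ref{thm:global_granger_implies_local_independence}. That result says local independence follows once we know $X_A^{[0,s]}\Indep X_B^{(s,t]} \given X_C^{[0,s]}$ for all $0<s<t\leq T$ in $\Dcal^{\{[0,s],(s,t],(t,T]\}}$. So in each case the work is to produce these conditional independencies from the stated graphical separations, using the Markov properties for time-split systems (Theorem~\ref{thm:markov_timesplit}).

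\textbf{Part (i).} Fix $0<s<T$. Theorem~\ref{thm:markov_timesplit}(i) applies to $\Dcal^{\{[0,s],(s,T]\}}$, since $\Dcal$ satisfies Assumption~\ref{ass:uniquely_solvable} and has independent-increment noise. It turns the hypothesised $\sigma$-separation into $X_A^{[0,s]}\Indep X_B^{(s,T]}\given X_C^{[0,s]}$.

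For any $t\in(s,T]$, the path $X_B^{(s,t]}$ is the restriction of $X_B^{(s,T]}$ to $(s,t]$, hence a measurable function of it. Conditional independence is preserved under measurable maps of one argument, so $X_A^{[0,s]}\Indep X_B^{(s,t]}\given X_C^{[0,s]}$.

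By Lemma~\ref{lem:timesplit_equivalent}, the variables of the three-piece time-split system are restrictions of the same solution $X_V$ of $\Dcal$. The laws involved therefore coincide across the different partitions, and the independence holds in $\Dcal^{\{[0,s],(s,t],(t,T]\}}$. Theorem~\ref{thm:global_granger_implies_local_independence} now gives $X_A\not\to X_B\given X_C$.

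\textbf{Part (ii).} Here the separation is assumed for a single $s_0$, and the task is to transport it to every $s\in(0,T)$. The Markov/initial-condition assumptions give Markovianity (ii) in the list of sparsity sources. Combined with independent increments (iii), this means $G(\Dcal^{\{[0,s],(s,T]\}})$ has the same edge pattern for every $s$.

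Concretely, the edges are as follows.
\begin{itemize}
\item Within $[0,s]$ and within $(s,T]$, the edges copy $G(\Dcal)$, with bidirected edges from shared noise.
\item Across the split, $u^{[0,s]}\to v^{(s,T]}$ is present exactly when $u\to v$ or $u=v$ in $G(\Dcal)$, through the initial value $X_v^{[0,s]}(s)$ and the Markov dependence on the endpoint.
\item There are no bidirected edges across the split, by the independent-increments property.
\end{itemize}
This description does not involve $s$. The separation is purely graphical, so it holds for all $s\in(0,T)$, and part (i) applies.

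The main obstacle is verifying that the edge set is genuinely independent of $s$. Adaptedness and Markovianity determine which edges are absent, but the definition of $G$ also records whether the mechanism is actually non-constant in each argument, and this could in principle degenerate for particular $s$. To deal with this, I would note that we may replace the graph by a supergraph carrying the edges of the $s$-independent template; this is harmless because the Markov property also holds for supergraphs, since adding edges only removes separations. Separation in the template at $s_0$ then gives separation in the actual graph at every $s$.

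\textbf{Part (iii).} The argument is identical to (ii), with one substitution. Assumption~\ref{ass:additive_noise} implies Assumption~\ref{ass:uniquely_solvable} (Theorem~\ref{thm:ess_solvable_under_additive}). In part (i), Theorem~\ref{thm:markov_timesplit}(ii) (the $d$-separation version) replaces (i). The quasimartingale hypotheses are already part of the standing assumptions.
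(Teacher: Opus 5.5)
Your part (i) is correct and follows the intended route. You apply Theorem~\ref{thm:markov_timesplit}(i) to the two-piece split, restrict $X_B^{(s,T]}$ to $(s,t]$, pass to the three-piece split via Lemma~\ref{lem:timesplit_equivalent}, and conclude with Theorem~\ref{thm:global_granger_implies_local_independence}.

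\textbf{The gap in (ii) and (iii).} Both rest on the claim that the edge pattern of $G_s:=G(\Dcal^{\{[0,s],(s,T]\}})$ does not depend on $s$. In this paper ``Markov'' only means $g_v(t,x)=g_v(t,x(t))$, and Assumption~\ref{ass:additive_noise} allows a time-dependent drift $\mu(t,x)$. With explicit time dependence, the arguments in which $g_v^{[0,s]}$ and $g_v^{(s,T]}$ are non-constant really do change with $s$; this is not a degenerate edge case.

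Your supergraph patch runs in the wrong direction. It shows that separation in an $s$-independent template $\mathcal T\supseteq G_s$ yields the conditional independence at every $s$. But your hypothesis is separation in the actual graph $G_{s_0}$, and if $G_{s_0}\subsetneq\mathcal T$ this gives no separation in $\mathcal T$.

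\textbf{The conclusion can in fact fail.} Take $V=\{u,v\}$, $T=1$,
\[
X_u(t)=X_u^0+Z_u(t),\qquad X_v(t)=X_v^0+\int_0^t\phi(r)X_u(r)\,\diff r+Z_v(t),\qquad \phi(r):=(\tfrac12-r)^+,
\]
with independent Brownian motions $Z_u,Z_v$ and independent square-integrable $X_u^0,X_v^0$. This satisfies Assumption~\ref{ass:additive_noise}, hence all standing hypotheses of the corollary (Theorem~\ref{thm:ess_solvable_under_additive}, Lemma~\ref{lem:additive_noise_quasimartingale}), and $g_v$ is Markov.
\begin{itemize}
\item Since $\phi\equiv 0$ on $[\tfrac12,1]$, the graph $G_{1/2}$ has only the edges $u^{[0,1/2]}\to v^{[0,1/2]}$, $u^{[0,1/2]}\to u^{(1/2,1]}$ and $v^{[0,1/2]}\to v^{(1/2,1]}$. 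So $X_u^{[0,1/2]}$ and $X_v^{(1/2,1]}$ are $d$- and $\sigma$-separated given $X_v^{[0,1/2]}$.
\item Yet $X_u\not\to X_v\given X_v$ fails. The $\Fcal^{\{u,v\}}$-compensator of $X_v$ is $\int_0^\cdot\phi(r)X_u(r)\,\diff r$, while its $\Fcal^{\{v\}}$-compensator is $\int_0^\cdot\phi(r)\EE[X_u(r)\given\Fcal^{\{v\}}_r]\,\diff r$. These differ because $X_u(r)$ is not $\Fcal^{\{v\}}_r$-measurable for $r\in(0,\tfrac12)$.
\item Consistently, $G_{1/4}$ contains $u^{(1/4,1]}\to v^{(1/4,1]}$, and the separation fails at $s=\tfrac14$.
\end{itemize}

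\textbf{What is needed.} Parts (ii) and (iii) require an additional hypothesis such as time-invariance of $g_v$ (and of $f_v$ when it is Markov; in (iii), a drift $\mu(x)$). Under it, $G_s$ is literally the same graph for all $s\in(0,T)$ up to relabelling. The separation at $s_0$ then transfers to every $s$, and part (i) finishes the proof with no template needed.

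The paper's own justification is the remark after the corollary that the time-split graph ``does not depend on $s$''. It rests on the same claim, so your argument coincides with it, but neither goes through at the stated generality.
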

Note that in clauses (ii) and (iii), it suffices to verify the separation for a single $s$ since $g_v$ is Markov and $f_v$ either models the initial condition or is Markov, and hence the time-split graph $G(\Dcal^{\{[0,s],(s,T]\}})$ does not depend on $s\in(0,T)$.

\bigskip
Various other Markov properties have been derived for local independence in terms of the causal graph $G(\Dcal)$, see e.g.\ \cite{didelez2008graphical} for a Markov property for point processes in terms of $\delta$-separation (assuming $B\subseteq C$, excluding latent confounding), \cite{mogensen2018causal} for additive noise diffusion processes in terms of $\mu$-separation (not assuming $B\subseteq C$, allowing for latent confounding), and \cite{mogensen2022graphical} for Ornstein-Uhlenbeck processes (not assuming $B\subseteq C$, allowing for latent confounding).

\section{Discussion}

In this work we developed a formal framework that equips systems of stochastic differential equations (SDEs) with causal semantics and lifts the SCM toolkit --- causal graphs, graphical Markov properties and the do-calculus --- to this setting via a pathwise interpretation of the causal SDEs. We introduced the notion of \emph{essential unique solvability} of a system of causal SDEs, and provided an SCC-wise Lipschitz condition (Assumption~\ref{ass:uniquely_solvable}) and a subclass of additive noise SDE (Assumption~\ref{ass:additive_noise}) that are sufficient for this property to hold. We also introduced a \emph{marginalisation} operation that integrates out latent processes while preserving the observational and interventional distributions on the retained processes, and showed that systems of causal SDEs remain essentially uniquely solvable after marginalisation. Following the acyclification proof strategy of \cite{forre2017markov,bongers2021foundations,forre2025mathematical}, we established the $\sigma$-separation Markov property for essentially uniquely solvable systems of causal SDEs, and the stronger $d$-separation Markov property for the class of (cyclic) additive-noise SDEs via total variation convergence of continuous Euler approximations. The $\sigma$-separation Markov property implies the do-calculus and with that an explicit causal interpretation of the \emph{causal graph} of a system of causal SDEs. Combining marginalisation with time-split systems yields the \emph{subsampled} system (Definition~\ref{def:subsampled}), which models the dynamics observed at a finite set of time-points. Time-split systems give a Markov property for continuous-time Granger non-causality (equivalent to the absence of a directed path in the causal graph under faithfulness and no latent confounding) and an analogous Markov property for local independence. We also argue that time-split systems of causal SDEs provide a natural framework for constraint-based causal discovery from time-series data, as showcased in the following section.

\subsection{Constraint-based Causal Discovery}\label{sec:constraint_based_cd}
As mentioned earlier, the cornerstone of the discovery of causal relations is experimentation: if $\PP(Y|\Do(X))$ depends on $X$, then $X$ is a cause of $Y$. However, in many practical scenarios experimentation is impossible (e.g.\ expensive or unethical), so one might want to infer causal relations from observational data. Causal discovery algorithms estimate the underlying causal graph of an SCM from observational data by exploiting statistical patterns in the data. \emph{Constraint-based} causal discovery algorithms specifically rely on conditional independence patterns, the Markov property and faithfulness assumption of the SCM.

A classical example is the PC algorithm \citep{spirtes2000causation}, which assumes that the SCM is acyclic, faithful, and has no latent confounding. To allow for latent confounding, the FCI algorithm has been proposed by \cite{spirtes1999algorithm}. It has later been shown to be sound and complete for acyclic graphs (with the $d$-separation Markov property and faithfulness assumption) by \cite{zhang2008completeness}, and subsequently has been shown to be sound and complete for cyclic SCMs as well with regard to the $\sigma$-separation Markov property and faithfulness assumption, in the setting without selection bias \citep{mooij2020constraintbased}. FCI does not directly estimate the underlying graph of the SCM, but instead estimates a \emph{Partial Ancestral Graph} (PAG), representing ancestral relations and the Markov equivalence class. We apply FCI to the repressilator in Example \ref{ex:cd} below, assuming a conditional independence oracle.
Other constraint-based causal discovery algorithms that are known to be sound for (possibly cyclic) systems with the $\sigma$-separation Markov property and faithfulness assumption are Local Causal Discovery \citep{cooper1997simple,mooij2020joint} and Y-structures \citep{mani2006bayesian,mooij2020joint}.

For cyclic systems with a $d$-separation Markov property and faithfulness assumption, as in the setting of Theorem~\ref{thm:dsep_mp}, the FCI algorithm is not sound. For this scenario (excluding latent confounding and selection bias), \cite{richardson1996discovery} introduced the CCD algorithm, which was subsequently extended to the CCI algorithm by \cite{strobl2019constraintbased} to allow for latent confounding and selection bias. The CCI algorithm outputs a \emph{Maximal Almost Ancestral Graph} (MAAG). A main drawback is that we do not know whether any sparsity in the MAAG represents conditional independencies in the data, since no Markov property has been proven for MAAGs. Until that question is answered, we can only read off the absence and presence of ancestral relations from the MAAG.

\begin{example}\label{ex:cd}
    Consider the repressilator $\bar\Dcal$ after projection onto $Z^{P}_{cI}, P_{cI}, P_{lacI}, P_{tetR}$ and $P_{GFP}$. After time-splitting into the intervals $\Ecal = \{[0,s), [s,t), [t,T]\}$, the time-split causal graph $G(\bar\Dcal^\Ecal)$ is depicted in Figure \ref{fig:cd:ground_truth}. When applying CCI to this system with a $d$-separation oracle (emulating a consistent conditional independence test with infinite data, under the $d$-separation Markov property and the faithfulness assumption), we obtain the MAAG as depicted in Figure \ref{fig:cd:cci}. When applying FCI (assuming no selection bias) to this system with a $\sigma$-separation oracle (so under the $\sigma$-separation Markov property and the faithfulness assumption), we obtain the PAG as depicted in Figure \ref{fig:cd:fci}.
\end{example}

\begin{figure}[!htb]
    \centering
    \begin{subfigure}{0.6\linewidth}
        \centering
        \begin{tikzpicture}[xscale=1.8,yscale=1.35]
            \node[] (13) at (0, 0) {$Z^{P,[0,s)}_{cI}$};
                \node[] (1) at (0, -1) {$P_{cI}^{[0,s)}$};
            \node[] (2) at (0.5, -2) {$P_{lacI}^{[0,s)}$};
                \node[] (3) at (0, -3) {$P_{tetR}^{[0,s)}$};
            \node[] (4) at (0, -4) {$P_{GFP}^{[0,s)}$};
                \node[] (14) at (1.5, 0) {$Z^{P,[s,t)}_{cI}$};
            \node[] (5) at (1.5, -1) {$P_{cI}^{[s,t)}$};
                \node[] (6) at (2, -2) {$P_{lacI}^{[s,t)}$};
            \node[] (7) at (1.5, -3) {$P_{tetR}^{[s,t)}$};
                \node[] (8) at (1.5, -4) {$P_{GFP}^{[s,t)}$};
            \node[] (15) at (3, 0) {$Z^{P,[t,T]}_{cI}$};
            \node[] (9) at (3, -1) {$P_{cI}^{[t,T]}$};
            \node[] (10) at (3.5, -2) {$P_{lacI}^{[t,T]}$};
            \node[] (11) at (3, -3) {$P_{tetR}^{[t,T]}$};
            \node[] (12) at (3, -4) {$P_{GFP}^{[t,T]}$};

            \draw[->,>=Latex,thick] (1) to (2);
            \draw[<-,>=Latex,thick] (1) to (3);
            \draw[->,>=Latex,thick] (1) to (5);
            \draw[<-,>=Latex,thick] (1) to (13);
            \draw[->,>=Latex,thick] (2) to (3);
            \draw[->,>=Latex,thick] (2) to (6);
            \draw[->,>=Latex,thick] (3) to (4);
            \draw[->,>=Latex,thick] (3) to (7);
            \draw[->,>=Latex,thick] (4) to (8);
            \draw[->,>=Latex,thick] (5) to (6);
            \draw[<-,>=Latex,thick] (5) to (7);
            \draw[->,>=Latex,thick] (5) to (9);
            \draw[<-,>=Latex,thick] (5) to (14);
            \draw[->,>=Latex,thick] (6) to (7);
            \draw[->,>=Latex,thick] (6) to (10);
            \draw[->,>=Latex,thick] (7) to (8);
            \draw[->,>=Latex,thick] (7) to (11);
            \draw[->,>=Latex,thick] (8) to (12);
            \draw[->,>=Latex,thick] (9) to (10);
            \draw[<-,>=Latex,thick] (9) to (11);
            \draw[<-,>=Latex,thick] (9) to (15);
            \draw[->,>=Latex,thick] (10) to (11);
            \draw[->,>=Latex,thick] (11) to (12);
            \draw[->,>=latex,thick] (1) to [loop arc rev={135}{60}{0.6cm}] (1);
            \draw[->,>=latex,thick] (2) to [loop arc={55}{40}{0.6cm}] (2);
            \draw[->,>=latex,thick] (3) to [loop arc rev={135}{60}{0.6cm}] (3);
            \draw[->,>=latex,thick] (4) to [loop arc rev={135}{60}{0.6cm}] (4);

            \draw[->,>=latex,thick] (5) to [loop arc rev={135}{60}{0.6cm}] (5);
            \draw[->,>=latex,thick] (6) to [loop arc={55}{40}{0.6cm}] (6);
            \draw[->,>=latex,thick] (7) to [loop arc rev={135}{60}{0.6cm}] (7);
            \draw[->,>=latex,thick] (8) to [loop arc rev={135}{60}{0.6cm}] (8);

            \draw[->,>=latex,thick] (9) to [loop arc rev={135}{60}{0.6cm}] (9);
            \draw[->,>=latex,thick] (10) to [loop arc={55}{40}{0.6cm}] (10);
            \draw[->,>=latex,thick] (11) to [loop arc rev={135}{60}{0.6cm}] (11);
            \draw[->,>=latex,thick] (12) to [loop arc rev={135}{60}{0.6cm}] (12);
        \end{tikzpicture}
        \caption{The ground truth.}
        \label{fig:cd:ground_truth}
    \end{subfigure}
    \begin{subfigure}{0.48\linewidth}
        \centering
        \begin{tikzpicture}[xscale=1.8,yscale=1.35]
            \node[] (13) at (0, 0) {$Z^{P,[0,s)}_{cI}$};
                \node[] (1) at (0, -1) {$P_{cI}^{[0,s)}$};
            \node[] (2) at (0.5, -2) {$P_{lacI}^{[0,s)}$};
                \node[] (3) at (0, -3) {$P_{tetR}^{[0,s)}$};
            \node[] (4) at (0, -4) {$P_{GFP}^{[0,s)}$};
                \node[] (14) at (1.5, 0) {$Z^{P,[s,t)}_{cI}$};
            \node[] (5) at (1.5, -1) {$P_{cI}^{[s,t)}$};
                \node[] (6) at (2, -2) {$P_{lacI}^{[s,t)}$};
            \node[] (7) at (1.5, -3) {$P_{tetR}^{[s,t)}$};
                \node[] (8) at (1.5, -4) {$P_{GFP}^{[s,t)}$};
            \node[] (15) at (3, 0) {$Z^{P,[t,T]}_{cI}$};
            \node[] (9) at (3, -1) {$P_{cI}^{[t,T]}$};
            \node[] (10) at (3.5, -2) {$P_{lacI}^{[t,T]}$};
            \node[] (11) at (3, -3) {$P_{tetR}^{[t,T]}$};
            \node[] (12) at (3, -4) {$P_{GFP}^{[t,T]}$};

            \draw[o-o,>=Latex,thick] (1) to (2);
            \draw[o-o,>=Latex,thick] (1) to (3);
            \draw[o->,>=Latex,thick] (1) to (5);
            \draw[o->,>=Latex,thick,bend left=10] (1) to (7);
            \draw[o-o,>=Latex,thick] (1) to (13);
            \draw[o-o,>=Latex,thick] (2) to (3);
            \draw[o->,>=Latex,thick] (2) to (5);
            \draw[o->,>=Latex,thick] (2) to (6);
            \draw[o-o,>=Latex,thick] (3) to (4);
            \draw[o->,>=Latex,thick] (3) to (6);
            \draw[o->,>=Latex,thick] (3) to (7);
            \draw[o-o,>=Latex,thick,bend left=18] (3) to (13);
            \draw[->,>=Latex,thick] (4) to (8);
            \draw[-,>=Latex,thick] (5) to (6);
            \draw[-,>=Latex,thick] (5) to (7);
            \draw[->,>=Latex,thick] (5) to (9);
            \draw[->,>=Latex,thick,bend left=10] (5) to (11);
            \draw[<-o,>=Latex,thick] (5) to (14);
            \draw[-,>=Latex,thick] (6) to (7);
            \draw[->,>=Latex,thick] (6) to (9);
            \draw[->,>=Latex,thick] (6) to (10);
            \draw[->,>=Latex,thick] (7) to (8);
            \draw[->,>=Latex,thick] (7) to (10);
            \draw[->,>=Latex,thick] (7) to (11);
            \draw[<-o,>=Latex,thick,bend left=18] (7) to (14);
            \draw[->,>=Latex,thick] (8) to (12);
            \draw[-,>=Latex,thick] (9) to (10);
            \draw[-,>=Latex,thick] (9) to (11);
            \draw[<-o,>=Latex,thick] (9) to (15);
            \draw[-,>=Latex,thick] (10) to (11);
            \draw[->,>=Latex,thick] (11) to (12);
            \draw[<-o,>=Latex,thick,bend left=18] (11) to (15);
        \end{tikzpicture}
        \caption{The MAAG computed by CCI.}
        \label{fig:cd:cci}
    \end{subfigure}
    \begin{subfigure}{0.48\linewidth}
        \centering
        \begin{tikzpicture}[xscale=1.8,yscale=1.35]
            \node[] (13) at (0, 0) {$Z^{P,[0,s)}_{cI}$};
                \node[] (1) at (0, -1) {$P_{cI}^{[0,s)}$};
            \node[] (2) at (0.5, -2) {$P_{lacI}^{[0,s)}$};
                \node[] (3) at (0, -3) {$P_{tetR}^{[0,s)}$};
            \node[] (4) at (0, -4) {$P_{GFP}^{[0,s)}$};
                \node[] (14) at (1.5, 0) {$Z^{P,[s,t)}_{cI}$};
            \node[] (5) at (1.5, -1) {$P_{cI}^{[s,t)}$};
                \node[] (6) at (2, -2) {$P_{lacI}^{[s,t)}$};
            \node[] (7) at (1.5, -3) {$P_{tetR}^{[s,t)}$};
                \node[] (8) at (1.5, -4) {$P_{GFP}^{[s,t)}$};
            \node[] (15) at (3, 0) {$Z^{P,[t,T]}_{cI}$};
            \node[] (9) at (3, -1) {$P_{cI}^{[t,T]}$};
            \node[] (10) at (3.5, -2) {$P_{lacI}^{[t,T]}$};
            \node[] (11) at (3, -3) {$P_{tetR}^{[t,T]}$};
            \node[] (12) at (3, -4) {$P_{GFP}^{[t,T]}$};

            \draw[o-o,>=Latex,thick] (1) to (2);
            \draw[o-o,>=Latex,thick] (1) to (3);
            \draw[o->,>=Latex,thick] (1) to (5);
            \draw[o->,>=Latex,thick] (1) to (6);
            \draw[o->,>=Latex,thick,bend left=10] (1) to (7);
            \draw[o-o,>=Latex,thick] (1) to (13);
            \draw[o-o,>=Latex,thick] (2) to (3);
            \draw[o->,>=Latex,thick] (2) to (5);
            \draw[o->,>=Latex,thick] (2) to (6);
            \draw[o->,>=Latex,thick] (2) to (7);
            \draw[o-o,>=Latex,thick,bend right=5] (2) to (13);
            \draw[o-o,>=Latex,thick] (3) to (4);
            \draw[o->,>=Latex,thick,bend right=10] (3) to (5);
            \draw[o->,>=Latex,thick] (3) to (6);
            \draw[o->,>=Latex,thick] (3) to (7);
            \draw[o-o,>=Latex,thick,bend left=18] (3) to (13);
            \draw[->,>=Latex,thick] (4) to (8);
            \draw[o-o,>=Latex,thick] (5) to (6);
            \draw[o-o,>=Latex,thick] (5) to (7);
            \draw[->,>=Latex,thick] (5) to (9);
            \draw[->,>=Latex,thick] (5) to (10);
            \draw[->,>=Latex,thick,bend left=10] (5) to (11);
            \draw[<-o,>=Latex,thick] (5) to (14);
            \draw[o-o,>=Latex,thick] (6) to (7);
            \draw[->,>=Latex,thick] (6) to (9);
            \draw[->,>=Latex,thick] (6) to (10);
            \draw[->,>=Latex,thick] (6) to (11);
            \draw[<-o,>=Latex,thick,bend right=5] (6) to (14);
            \draw[->,>=Latex,thick] (7) to (8);
            \draw[->,>=Latex,thick,bend right=10] (7) to (9);
            \draw[->,>=Latex,thick] (7) to (10);
            \draw[->,>=Latex,thick] (7) to (11);
            \draw[<-o,>=Latex,thick,bend left=18] (7) to (14);
            \draw[->,>=Latex,thick] (8) to (12);
            \draw[o-o,>=Latex,thick] (9) to (10);
            \draw[o-o,>=Latex,thick] (9) to (11);
            \draw[<-o,>=Latex,thick] (9) to (15);
            \draw[o-o,>=Latex,thick] (10) to (11);
            \draw[<-o,>=Latex,thick,bend right=5] (10) to (15);
            \draw[->,>=Latex,thick] (11) to (12);
            \draw[<-o,>=Latex,thick,bend left=18] (11) to (15);
        \end{tikzpicture}
        \caption{The PAG computed by FCI.}
        \label{fig:cd:fci}
    \end{subfigure}
    \caption{Outputs of the constraint-based causal discovery algorithms CCI and FCI applied to a subset of the variables of the repressilator. The exogenous noise process $Z^{P}_{cI}$ is included as a parent of $P_{cI}$ in the cycle, for sake of exposing the difference between CCI and FCI.}
    \label{fig:cd}
\end{figure}

Other approaches to causal discovery for discrete-time systems include \cite{malinsky2018causala,runge2019detecting,reiter2024causal}; see \cite{assaad2022survey} for an overview.
Other approaches for causal discovery for systems of SDEs include \cite{guan2024identifying}, \cite{engelke2024levy}, \cite{nathaniel2025deep} and \cite{bruck2026graph}. Closely related to our framework, \cite{manten2025asymmetric} establish an asymmetric Markov property for the conditional independence $X_A^{[0,s]} \Indep X_B^{(s,t]} \given X_C^{[0,s]}, X_{C\setminus B}^{(s,t]}$, based on $\sigma$-separation in a `lifted graph' constructed from the causal graph $G(\Dcal)$; this lifted graph corresponds to the time-split causal graph $G(\Dcal^{\{[0,s], (s,t], (t, T]\}})$ if $f_v$ models the initial condition and $g_v$ is Markov. When Assumption~\ref{ass:additive_noise} holds, our $d$-separation Markov property in the time-split graph (Theorem~\ref{thm:markov_timesplit}(ii)) shows that their Markov property can be strengthened.


\subsection{Limitations and open problems}\label{sec:limitations}
Our framework assumes well-posedness of the underlying SDE system through a global Lipschitz condition, which might be extended to include more general classes of dynamics. We proved the $d$-separation Markov property for a class of additive-noise SDEs (Theorem~\ref{thm:dsep_mp}); we conjecture that the result holds for a larger class of systems of causal SDEs that excludes instantaneous cycles.

A central open direction is proving the $d$-separation Markov property for marginalised systems. This would boil down to finding a model class like Assumption~\ref{ass:additive_noise} for which we have a $d$-separation Markov property and which is itself closed under marginalisation.

Another open problem concerns the $d$-separation analogue of the do-calculus for cyclic SDEs. Theorem~\ref{thm:do_calculus} establishes the $\sigma$-separation do-calculus for essentially uniquely solvable systems of causal SDEs via an auxiliary system $\Dcal_{\Do(R_B\sim\nu)}$ augmented with intervention variables, and on acyclic systems the $d$-separation analogue follows trivially since $d$-separation and $\sigma$-separation coincide. For cyclic additive-noise SDEs the analogue does not follow from Theorem~\ref{thm:dsep_mp}, because the auxiliary system falls outside the additive-noise class. We conjecture that the $d$-separation do-calculus nevertheless holds for additive-noise SDEs satisfying Assumption~\ref{ass:additive_noise}.

On the causal-discovery side, several open problems are especially relevant for time-split systems of causal SDEs. Immediate ones are the extension of FCI with $\sigma$-separation to allow for selection bias, and investigating Markov properties for MAAGs and the completeness of CCI. An especially relevant extension is \emph{tiered FCI} (tFCI) by \cite{andrews2020completeness}, where one specifies a temporal ordering between the variables as background knowledge, excluding causal relations from future to past. Soundness of tFCI has only been proven for acyclic graphs with $d$-separation Markov property and faithfulness \citep{andrews2020completeness} without selection bias; it is unknown whether it is complete in this setting. Moreover, it is unknown whether tiered background knowledge can be used in FCI for cyclic systems with the $\sigma$-separation Markov property and faithfulness assumption, with and without selection bias. In the cyclic setting with the $d$-separation and faithfulness assumption, no extension of CCI has been proposed to incorporate tiered background knowledge. Lastly, it seems reasonable to believe that by using (tiered) FCI/CCI on a time-split system and then mapping back to the `summary' PAG/MAAG for $\Dcal$, we can identify more edges than straightforward FCI/CCI on $\Dcal$.

For causal discovery, a main bottleneck is consistent conditional independence testing for sample paths: if variables $X, Y, Z$ take values in $D(\Ical_X, \RR^k)$, $D(\Ical_Y, \RR^\ell)$ and $D(\Ical_Z, \RR^m)$ respectively, testing $X\Indep Y\given Z$ is not straightforward. Recent results for functional CI testing are proposed in \cite{lundborg2022conditional}, \cite{laumann2023kernelbased} and \cite{manten2024signature}. \cite{boeken2026topological} provide topological criteria for the existence of consistent conditional independence tests that may be useful for analysing this problem.

Beyond causal discovery, future work should explore statistical aspects of causal effect estimation; there is an opportunity to combine this formalism with existing statistical theory in this field. Finally, experimental validation on real-world dynamical systems -- such as gene regulatory networks, climate models, and financial systems -- will be essential to assess the practical utility of this approach.


\section*{Acknowledgements}
We thank Claude Code, Sonja Cox and Patrick Forré for helpful remarks and suggestions. Part of this work was carried out while Philip Boeken was at the Korteweg–de Vries Institute for Mathematics, University of Amsterdam. Philip Boeken was supported by Booking.com.

\begin{appendix}
	\section{Functional representation of semimartingale SDEs}\label{app:przybylowicz}
The pathwise interpretation of systems of causal SDEs (Definition~\ref{def:sde_pathwise}) and the solvability results of Section~\ref{sec:solvability} rest on the following functional-representation theorem of \cite{przybylowicz2024skorohod}, which we restate here for completeness. Recall that $x^{\wedge t}(s) := x(s \wedge t)$.

\begin{theoremapp}[\citealp{przybylowicz2024skorohod}, Theorem 3.1]\label{thm:przybylowicz_app}
    Let $m, d, r \in \NN$ and let $g : [0,T] \times D([0,T], \RR^d) \times D([0,T], \RR^r) \to \RR^{d \times m}$ be a matrix-valued map such that
    \begin{enumerate}[label=(\roman*)]
        \item $t \mapsto g(t, x, y)$ is càdlàg for all $(x, y) \in D([0,T], \RR^d) \times D([0,T], \RR^r)$;
        \item $g$ is $\Bcal([0,T]) \otimes \Bcal(D([0,T], \RR^d)) \otimes \Bcal(D([0,T], \RR^r))$-measurable;
        \item $g(t, x, y) = g(t, x^{\wedge t}, y^{\wedge t})$ for all $x, y$ and $t \in [0,T]$;
        \item $g$ satisfies the linear-growth and Lipschitz conditions \eqref{eqn:linear_growth}--\eqref{eqn:lipschitz} in $x$ given $y$: there exists a measurable $K : [0,T] \times D([0,T], \RR^r) \to (0, \infty)$ with $t \mapsto K(t, y)$ càdlàg, such that for all $x, x_1, x_2 \in D([0,T], \RR^d)$, $y \in D([0,T], \RR^r)$ and $t \in [0,T]$,
        \begin{align*}
            \|g(t, x, y)\|                  & \leq K(t, y)\bigl(1 + \sup_{0\leq s\leq t}\|x(s)\|\bigr), \\
            \|g(t, x_1, y) - g(t, x_2, y)\| & \leq K(t, y) \sup_{0\leq s\leq t}\|x_1(s) - x_2(s)\|.
        \end{align*}
    \end{enumerate}
    Then there exists a map
    \begin{equation*}
        \Psi : D([0,T], \RR^d) \times D([0,T], \RR^r) \times D([0,T], \RR^m) \to D([0,T], \RR^d),
    \end{equation*}
    measurable with respect to the Borel $\sigma$-algebras of the Skorokhod topology, such that for every $\RR^d$-valued càdlàg adapted process $F$, every $\RR^r$-valued càdlàg adapted process $G$, and every $\RR^m$-valued semimartingale $H$, the process $X := \Psi(F, G, H)$ satisfies $\PP$-almost surely the SDE
    \begin{equation*}
        X_t = F_t + \int_0^t g(s-, X, G) \diff H_s, \qquad t \in [0,T].
    \end{equation*}
    If $Y$ is another solution of the SDE, then $\PP(X = Y) = 1$.
\end{theoremapp}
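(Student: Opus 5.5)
The plan is to build $\Psi$ in two layers: first a universal pathwise version of the stochastic integral, then a Picard iteration run through that integral map. For the first layer I would follow Karandikar's construction. Given deterministic càdlàg paths $z$ and $h$, define the hitting times recursively by $\tau^n_0:=0$ and $\tau^n_{k+1}:=\inf\{t>\tau^n_k : |z(t)-z(\tau^n_k)|\ge 2^{-n}\}\wedge T$. Form the Riemann sums
\begin{equation*}
I^n(z,h)(t):=\sum_k z(\tau^n_k)\bigl(h(\tau^n_{k+1}\wedge t)-h(\tau^n_k\wedge t)\bigr).
\end{equation*}
Set $I(z,h):=\lim_n I^n(z,h)$ wherever this limit exists uniformly on $[0,T]$, and $I(z,h):=0$ otherwise. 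The steps, in order, are:
\begin{enumerate}[label=(\roman*)]
\item Show each $\tau^n_k$ and each $I^n$ is Borel for the $J_1$ topology. I would reduce hitting times to countably many rational-time evaluations, which is possible thanks to right-continuity.
\item Show the convergence set is Borel, since it is expressed by countably many uniform-Cauchy conditions.
\item Show $I(Z,H)=\int_0^\cdot Z(s-)\diff H(s)$ almost surely for every càdlàg adapted $Z$ and semimartingale $H$. The discretisation error of the integrand is at most $2^{-n}$ uniformly. After localising $H=\Lambda+M$, Doob/BDG gives $\PP(\sup_t|I^n-\int Z_-\diff H|>2^{-n/2})$ summable, and Borel--Cantelli then yields almost sure uniform convergence.
\end{enumerate}
The key feature is that this exceptional null set is handled without the map depending on the law of $(Z,H)$. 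Adaptedness of $I$ follows because $I^n(z,h)(t)$ only uses $z,h$ on $[0,t]$.

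For the SDE, define recursively $X^{(0)}:=F$ and $X^{(k+1)}:=F+I\bigl(g(\cdot,X^{(k)},G),H\bigr)$. Each iterate is a measurable adapted function of $(F,G,H)$, because $g$ is jointly measurable, adapted and càdlàg in $t$, so composition with $I$ is Borel. Then set $\Psi(F,G,H):=\lim_k X^{(k)}$ on the Borel set where the limit exists uniformly, and $0$ elsewhere. The existence part reduces to proving that $X^{(k)}$ converges uniformly almost surely for \emph{every} admissible triple $(F,G,H)$. Given that, the limit solves the SDE: since $g$ is Lipschitz, $g(\cdot,X^{(k)},G)\to g(\cdot,X,G)$ uniformly on sets where $K(\cdot,G)$ is bounded, and stochastic integrals are continuous along such sequences in ucp.

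I expect the main obstacle to be universality of this convergence. Picard iterates for general semimartingale drivers converge only in ucp or $\underline{\underline{S}}^p$, after slicing $H$ into pieces of small $\underline{\underline{H}}^\infty$-norm (Protter, Ch.~V). Any fixed subsequence chosen to upgrade this to almost sure convergence would depend on the law of $(F,G,H)$, which is not allowed. What I would try first is to build the localisation into the map itself. Define pathwise stopping times $\sigma_j$ at which $K(\cdot,y)$, $\sup_{s\le t}|f(s)|$, or the jumps and oscillation of $h$ exceed prescribed thresholds; these are measurable functionals of the paths. On each stochastic interval $[\sigma_j,\sigma_{j+1})$ the driver has small norm, so the Picard map is a contraction with a deterministic factor, say $\tfrac12$. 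That gives geometric decay $\EE[\sup|X^{(k+1)}-X^{(k)}|^2]\le C4^{-k}$ on each piece, hence almost sure convergence by Borel--Cantelli without any law-dependent subsequence. The pieces are then concatenated, with jumps at the $\sigma_j$ added explicitly. Once existence is in place, uniqueness ($\PP(X=Y)=1$ for any other solution $Y$) is the classical strong uniqueness for functional Lipschitz SDEs (Protter, Ch.~V, Thm.~7), applied after the same localisation.
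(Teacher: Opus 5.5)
\textbf{There is a genuine gap, at the step you yourself single out as the crux.} The paper gives no proof of this statement (it quotes Przybyłowicz et al., who refine Karandikar's pathwise integral), so I judge your outline on its own terms. Your first layer is Karandikar's construction, and its Borel--Cantelli argument is fine.

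The problem is the claim that on each pathwise piece the Picard map is a contraction with a deterministic factor. Thresholds on $K(\cdot,y)$, $f$, and the oscillation and jumps of $h$ do not make the driver small in any norm that controls $Y\mapsto\int Y_{-}\,\diff H$.

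\begin{itemize}
\item Already for Brownian $H$, the bracket $[H]$ over an oscillation-exit slice equals the slice length. No oscillation threshold makes that small in $L^\infty$, which is what Emery's inequality would need.
\item More fundamentally, the decomposition $H=\Lambda+M$, and with it the $\underline{\underline{H}}^\infty$-norm, depends on the law and the filtration, not on the path. After an equivalent change of measure, the same process acquires a drift $\int\beta\,\diff s$ whose sign oscillates so fast that the oscillation of $H$ is essentially unchanged. Its variation on a given slice can nevertheless be made as large as you like.
\item Then already for $g(t,x,y)=x(t)$, the map $Y\mapsto\int Y_{-}\,\diff H$ has Lipschitz constant larger than $1$ on that slice: take $Y$ following the sign of $\beta$.
\end{itemize}

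So the restart-and-concatenate construction is not justified as stated.

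\textbf{The pathwise localisation is also unnecessary, because the obstacle is misdiagnosed.} Picard differences do not merely tend to $0$ in ucp; they decay at a summable rate. Borel--Cantelli only needs summability, with constants that may depend on the law.

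\begin{itemize}
\item Fix $(F,G,H)$ and prelocalise with law-dependent stopping times. Arrange that $K(\cdot,G)$ and $F$ are bounded and that $H$ lies in $\underline{\underline{H}}^\infty$ and is $\beta$-sliceable (Protter, Ch.~V).
\item Emery's inequality then gives geometric decay of $\EE[\sup_{t<T_1}|X^{(k+1)}_t-X^{(k)}_t|^2]^{1/2}$ on the first slice.
\item Passing through the finitely many slices gives bounds of order $k^J2^{-k}$. The jumps of $H$ at the slice endpoints are bounded because $H\in\underline{\underline{H}}^\infty$, and they enter only through the Lipschitz bound. Alternatively, a Métivier--Pellaumail control process with a Gronwall argument gives factorial rates without any slicing.
\item Chebyshev and Borel--Cantelli then give a.s.\ uniform convergence of the \emph{whole} sequence $(X^{(k)})$ on each localising event, hence a.s.
\end{itemize}

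These stopping times appear only in the estimates, never in the definition of $\Psi$. So the plain global Picard limit built from your universal integral $I$ is already universal. With this replacement, the rest of your outline goes through: measurability of the iterates, passage to the limit, and uniqueness via Protter V.7.

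\textbf{Two smaller repairs in the first layer.}
\begin{itemize}
\item With the closed threshold $\ge 2^{-n}$, the event $\{\tau^n_{k+1}\le t\}$ is not determined by rational-time values: a path may attain the level at a single irrational jump time and lie strictly below it everywhere else. Use a strict inequality instead; then $\{\tau^n_{k+1}<t\}$ is a countable union over rational $s$, by right-continuity.
\item Setting $I:=0$ off the set where the limit is uniform on all of $[0,T]$ breaks the deterministic non-anticipation you claim. This is harmless for the statement, since $I(Z,H)$ is still indistinguishable from an adapted process under the usual conditions.
\end{itemize}
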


By condition (iii) the map $\Psi$ is non-anticipative: $\Psi(F, G, H)^{\wedge t}$ depends on $(F, G, H)$ only through $(F^{\wedge t}, G^{\wedge t}, H^{\wedge t})$, so the induced solution is adapted. The pathwise integral map of Definition~\ref{def:sde_pathwise} arises as the special case $F \equiv 0$ and $g(s-, X, G) = G(s-)$.

\begin{corollaryapp}[Pathwise It\^o integral map]\label{cor:pathwise_integral}
    There exists an adapted map
    \begin{equation*}
        \Psi : D([0,T], \RR^m) \times D([0,T], \RR^m) \to D([0,T], \RR),
    \end{equation*}
    measurable with respect to the Borel $\sigma$-algebras of the Skorokhod topology, such that for every $\RR^m$-valued càdlàg adapted process $G$ and every $\RR^m$-valued semimartingale $H$, the process $\Psi(G, H)$ is a version of the It\^o integral $\int_0^\cdot G(s-) \diff H(s)$.
\end{corollaryapp}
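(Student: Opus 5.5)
The corollary follows from Theorem~\ref{thm:przybylowicz_app} once the stochastic integral is recast as an SDE whose right-hand side does not depend on the unknown. The obvious choice $d=1$, $F\equiv 0$, $g(s,x,y)=y(s)$ (matrix $1\times m$) fails, because $g$ has no dependence on $x$ and violates linear growth with a constant $K(t,y)$ unless $K$ absorbs $|y(s)|$. Condition (iv) only asks for a measurable $K:[0,T]\times D([0,T],\RR^r)\to(0,\infty)$ that is càdlàg in $t$. So we take $r=m$, $G$ the integrand path, and
\[
    g(t,x,y):=y(t)^\top, \qquad K(t,y):=1+\|y(t)\|.
\]

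The hypotheses are then checked one by one.
\begin{enumerate}[label=(\roman*)]
    \item $t\mapsto y(t)$ is càdlàg, so $g$ is càdlàg in $t$.
    \item The evaluation map $(t,y)\mapsto y(t)$ on $[0,T]\times D$ is jointly Borel measurable: it is a limit of right-continuous approximations $y(\lceil 2^k t\rceil 2^{-k}\wedge T)$, and each coordinate projection is measurable for the Skorokhod Borel $\sigma$-algebra. Hence $g$ is measurable, and $K$ is measurable and càdlàg in $t$.
    \item $g(t,x,y)=y(t)=y^{\wedge t}(t)$, so $g$ is adapted.
    \item Linear growth holds because $\|g\|\le K$. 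The Lipschitz condition holds trivially, since the left-hand side is $0$.
\end{enumerate}
Theorem~\ref{thm:przybylowicz_app} therefore yields a measurable $\tilde\Psi:D(\RR)\times D(\RR^m)\times D(\RR^m)\to D(\RR)$, and we set $\Psi(G,H):=\tilde\Psi(0,G,H)$. Restricting to the measurable slice $x=0$ keeps $\Psi$ Borel measurable.

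For càdlàg adapted $G$ and semimartingale $H$, the theorem gives $\Psi(G,H)_t=\int_0^t g(s-,X,G)\diff H_s=\int_0^t G(s-)\diff H_s$ almost surely for each $t$. Here we use that the left limit of $s\mapsto G(s)$ is $G(s-)$. Both processes are càdlàg, so equality for each $t$ upgrades to indistinguishability, and $\Psi(G,H)$ is a version of the Itô integral. This is exactly the specialisation noted after the theorem.

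Adaptedness of $\Psi$ in the sense $\Psi(G,H)^{\wedge t}=\Psi(G^{\wedge t},H^{\wedge t})^{\wedge t}$ is the main obstacle. Theorem~\ref{thm:przybylowicz_app} is stated only for processes, and the remark on non-anticipativity is not an explicit statement about the map. I would handle this by appealing to the construction in \cite{przybylowicz2024skorohod}, where $\Psi$ is a limit of Euler-type schemes built from values on $[0,t]$.

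Alternatively, adaptedness can be forced by definition. Set $\Psi'(G,H)(t):=\Psi(G^{\wedge t},H^{\wedge t})(t)$. This is still a version of the Itô integral almost surely, since the stopped integral equals the integral of the stopped integrand against the stopped integrator. Checking that $\Psi'$ is jointly measurable and has càdlàg paths takes extra care, so the cleaner route is to cite the non-anticipative construction directly.
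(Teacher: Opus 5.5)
Your proposal is correct and is the paper's argument. The paper obtains the corollary as the special case $F\equiv 0$, $g(s,x,y)=y(s)$ of Theorem~\ref{thm:przybylowicz_app}; your choice $K(t,y)=1+\|y(t)\|$ spells out the check of condition (iv) that the paper leaves implicit, and for adaptedness of the map the paper, like you, relies on the non-anticipativity of the construction in \cite{przybylowicz2024skorohod} (asserted in the remark after Theorem~\ref{thm:przybylowicz_app}), so citing it is the right choice over the $\Psi'$ detour.
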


\section{Proofs of Section \ref{sec:solvability}}
\obsintdist*
\begin{proof}
    Let $\Phi_O$ be as in Definition~\ref{def:sde_pathwise} and $I^{[V]}$ a solution function of $\Dcal$ w.r.t.\ $V$. Since $X_V := I^{[V]}(X_W)$ satisfies $X_V = \Phi_V(X_V, X_W)$ $\PP$-a.s., it is an adapted solution of $\Dcal$. Let $\varphi_V:\Xcal_W\to \Xcal_V$ be an adapted measurable map with $\varphi_V(X_W) = \Phi_V(\varphi_V(X_W), X_W)$ $\PP$-a.s. By essential unique solvability we have $\varphi_V = I^{[V]}(X_W)$ $\PP$-a.s. The law of any adapted solution therefore equals $\PP(I^{[V]}(X_W)) = \PP_{\Dcal}(\varphi_V(X_W))$, independent of the choice of $I^{[V]}$.

    For the interventional case, let $L, O, S \subseteq V$ partition $V$ and $x_S \in \Xcal_S$. Essential unique solvability of $\Dcal$ w.r.t.\ $O$ provides the fixed-point and uniqueness conditions of Definition~\ref{def:ess_unique_solvable} for every adapted input process $\varphi_{L\cup S} : \Xcal_W \to \Xcal_{L\cup S}$. Considering instead the map $\varphi_{L\cup S}^* = (\varphi_L, x_S)$ leaves both conditions intact: for each such $\varphi_L$ the process $X_O := I^{[O]}(x_S, \varphi_L(X_W), X_W)$ is the $\PP$-a.s.\ unique adapted solution of $X_O = \Phi_O(X_O, x_S, \varphi_L(X_W), X_W)$, which is exactly the $O$-fixed-point equation of $\Dcal_{\Do(X_S = x_S)}$. Hence $I^{[O]}(x_S, \cdot, \cdot)$ is a solution function of $\Dcal_{\Do(X_S = x_S)}$ w.r.t.\ $O$. Taking $L=\emptyset$, the fully intervened system $\Dcal_{\Do(X_S = x_S)}$ has the $\PP$-a.s.\ unique adapted solution $I^{[O]}(x_S, X_W)$, and hence
    \begin{equation*}
        \PP\bigl(X_O \given \Do(X_S = x_S)\bigr) = \PP(I^{[O]}(x_S, X_W)),
    \end{equation*}
    independent of the choice of $I^{[O]}$.

    The map $I^{[O]} : \Xcal_S \times \Xcal_W \to \Xcal_O$ is jointly measurable by Definition~\ref{def:ess_unique_solvable}. For every Borel set $A \subseteq \Xcal_O$, the indicator $(x_S, x_W) \mapsto \I_A(I^{[O]}(x_S, x_W))$ is jointly measurable, so by Tonelli's theorem the integral
    \begin{equation*}
        x_S \mapsto \PP_{\Dcal}\bigl(X_O \in A \given \Do(X_S = x_S)\bigr) = \int_{\Xcal_W} \I_A\bigl(I^{[O]}(x_S, x_W)\bigr) \diff\PP(X_W)(x_W)
    \end{equation*}
    is measurable in $x_S$. Since $A \mapsto \PP_{\Dcal}(X_O \in A \mid \Do(X_S = x_S))$ is a probability measure on $\Xcal_O$ for each fixed $x_S$ (as a pushforward of $\PP(X_W)$), the map $x_S \mapsto \PP_{\Dcal}(X_O \given \Do(X_S = x_S))$ is a Markov kernel from $\Xcal_S$ to $\Xcal_O$.
\end{proof}

\begin{lemapp}\label{lem:joint_vs_componentwise}
    Let $g : [0,T] \times D([0,T], \RR^k) \times D([0,T], \RR^\ell) \to \RR^m$ be measurable, and write $g = (g^{(i)})_{i=1}^m$ with each $g^{(i)} : [0,T] \times D([0,T], \RR^k) \times D([0,T], \RR^\ell) \to \RR$. Then $g$ satisfies the linear-growth condition \eqref{eqn:linear_growth} (resp.\ the Lipschitz condition \eqref{eqn:lipschitz}) in $x$ given $y$ if and only if each $g^{(i)}$ satisfies the corresponding condition in $x$ given $y$.
\end{lemapp}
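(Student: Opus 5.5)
The proof reduces both equivalences to elementary inequalities between the Euclidean norm on $\RR^m$ and the absolute values of its coordinates. We use
\begin{equation*}
    |a_i| \le \|a\| \le \sqrt{m}\,\max_{i}|a_i| \le \sqrt{m}\sum_{i=1}^m |a_i| \quad \text{for } a\in\RR^m,
\end{equation*}
together with the fact that sums and maxima of finitely many measurable càdlàg functions into $(0,\infty)$ are again measurable, càdlàg and $(0,\infty)$-valued.

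The forward direction is immediate. Suppose $g$ satisfies \eqref{eqn:linear_growth}, respectively \eqref{eqn:lipschitz}, with constant $K(t,y)$. For each $i$,
\begin{equation*}
    |g^{(i)}(t,x,y)| \le \|g(t,x,y)\|
\end{equation*}
and
\begin{equation*}
    |g^{(i)}(t,x_1,y)-g^{(i)}(t,x_2,y)| \le \|g(t,x_1,y)-g(t,x_2,y)\|.
\end{equation*}
Hence each $g^{(i)}$ satisfies the same condition with the same function $K$. Measurability of each $g^{(i)}$ follows because coordinate projections are measurable.

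For the converse, suppose each $g^{(i)}$ satisfies the condition with its own measurable càdlàg $K_i:[0,T]\times D([0,T],\RR^\ell)\to(0,\infty)$. Define
\begin{equation*}
    K(t,y) := \sum_{i=1}^m K_i(t,y).
\end{equation*}
This is measurable as a finite sum of measurable functions. It is càdlàg in $t$ because right-continuity and existence of left limits are preserved under finite sums. It is strictly positive because each summand is. Then
\begin{equation*}
    \|g(t,x,y)\| \le \sum_i |g^{(i)}(t,x,y)| \le \sum_i K_i(t,y)\Bigl(1+\sup_{s\le t}\|x(s)\|\Bigr) = K(t,y)\Bigl(1+\sup_{s\le t}\|x(s)\|\Bigr).
\end{equation*}
The same chain applied to the differences $g^{(i)}(t,x_1,y)-g^{(i)}(t,x_2,y)$ gives the Lipschitz bound with the same $K$. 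Measurability of $g$ follows because its coordinates are measurable.

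No step poses a genuine obstacle. The only points needing care are that the combined constant $K$ must stay within the admissible class (measurable, càdlàg in $t$, valued in $(0,\infty)$) and that the same $K$ must work uniformly in $x,x_1,x_2$. Both hold for the sum construction. A maximum would also work, with an extra factor $\sqrt{m}$.
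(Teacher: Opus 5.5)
Your proposal is correct and follows essentially the same route as the paper: the forward direction uses $|g^{(i)}| \le \|g\|$ with the same $K$, and the converse builds a combined admissible constant from the $K_i$. The only difference is that you take $K = \sum_i K_i$ via $\|a\| \le \sum_i |a_i|$, whereas the paper takes the slightly sharper $K = \bigl(\sum_i K_i^2\bigr)^{1/2}$ via the identity $\|g\|^2 = \sum_i (g^{(i)})^2$; both choices are measurable, càdlàg and strictly positive, so either works.
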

\begin{proof}
    `$\Rightarrow$' For each $i$ and every $(t, x, y)$, $|g^{(i)}(t, x, y)| \leq \|g(t, x, y)\| \leq K(t, y)\bigl(1 + \sup_{0\leq s\leq t}\|x(s)\|\bigr)$, so $g^{(i)}$ satisfies \eqref{eqn:linear_growth} with the same $K$; the Lipschitz analogue is identical.

    `$\Leftarrow$' Suppose each $g^{(i)}$ satisfies \eqref{eqn:linear_growth} with measurable càdlàg $K_i$. Then
    \begin{equation*}
        \|g(t, x, y)\|^2 = \sum_{i=1}^m \bigl(g^{(i)}(t, x, y)\bigr)^2 \leq \sum_{i=1}^m K_i(t, y)^2 \bigl(1 + \sup_{0\leq s\leq t}\|x(s)\|\bigr)^2,
    \end{equation*}
    so $\|g(t, x, y)\| \leq K(t, y)\bigl(1 + \sup_{0\leq s\leq t}\|x(s)\|\bigr)$ with $K(t, y) := \bigl(\sum_{i=1}^m K_i(t, y)^2\bigr)^{1/2}$, which is measurable and càdlàg in $(t, y)$ as a finite combination of measurable càdlàg functions. The Lipschitz analogue is identical, with the same $K$.
\end{proof}

\itomap*
\begin{proof}
    Since $\alpha(v) \cap O = \emptyset$ for every $v \in O$, the joint SDE for $X_O$ reads
    \begin{equation}\label{eqn:itomap_joint_sde}
        X_O(t) = f_O\bigl(t, X_{\alpha(O)}\bigr) + \int_0^t g_O\bigl(s{-}, X_O, X_{\beta(O) \setminus O}\bigr) \diff h_O\bigl(s, X_{\gamma(O)}\bigr),
    \end{equation}
    with $\alpha(O) \subseteq (V \setminus O) \cup W$ (the assumption $\alpha(v) \cap O = \emptyset$), $\beta(O) \setminus O \subseteq (V \setminus O) \cup W$, and $\gamma(O) \subseteq W$ (Definition~\ref{def:sdes}). The componentwise Lipschitz and linear-growth conditions on $g_v$ in $(X_v, X_{\beta(v) \cap O})$ given $X_{\beta(v) \setminus O}$ extend trivially (by ignoring extra coordinates) to componentwise conditions on $g_v$ in $X_O$ given $X_{\beta(O) \setminus O}$, with the same $K_v$. Lemma~\ref{lem:joint_vs_componentwise} then gives joint Lipschitz and linear-growth conditions on $g_O := (g_v)_{v \in O}$ in $X_O$ given $X_{\beta(O) \setminus O}$, with joint constant $K_O(t, y) := \bigl(\sum_{v \in O} K_v(t, y_{\beta(v) \setminus O})^2\bigr)^{1/2}$. Theorem~\ref{thm:przybylowicz_app} then yields a measurable map $I^{[O]}$ such that $X_O = I^{[O]}(X_{\alpha(O)}, X_{\beta(O) \setminus O}, X_{\gamma(O)})$ is the $\PP$-a.s.\ unique adapted solution of \eqref{eqn:itomap_joint_sde}. Since this holds for every $X_W$-measurable adapted càdlàg process $X_{(\alpha(O) \cup (\beta(O)\setminus O)) \cap V}$, $I^{[O]}$ is thus a solution function as in Definition \ref{def:ess_unique_solvable}, and $\Dcal$ is essentially uniquely solvable w.r.t.\ $O$.
\end{proof}

\begin{lemapp}\label{lem:acy_factorisation}
    Let $\Dcal$ be a system of causal SDEs and $O \subseteq V$ such that $\Dcal$ is essentially uniquely solvable w.r.t.\ $O$. Then there exists an adapted solution function $\tilde I^{[O]} : \Xcal_{\pa(O)} \times \Xcal_W \to \Xcal_O$.
\end{lemapp}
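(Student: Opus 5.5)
The plan is to build $\tilde I^{[O]}$ from the given solution function $I^{[O]}:\Xcal_S\times\Xcal_W\to\Xcal_O$, with $S:=V\setminus O$, by freezing every coordinate of $S$ outside $\pa(O)$ at a fixed reference path. Write $N:=S\setminus\pa(O)$ and fix $x_N^*\in\Xcal_N$; the zero path will do. Define
\begin{equation*}
    \tilde I^{[O]}(x_{\pa(O)}, x_W) := I^{[O]}\bigl(x_{\pa(O)}, x_N^*, x_W\bigr).
\end{equation*}
This map is measurable, being a composition of $I^{[O]}$ with a measurable embedding. It is also adapted, because the constant path $x_N^*$ is trivially adapted: truncating $x_{\pa(O)}$ and $x_W$ at time $t$ yields the same input to $I^{[O]}$ as truncating all arguments. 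So the work lies in checking the two conditions of Definition~\ref{def:ess_unique_solvable}.

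The key observation is structural. For $v\in O$, the mechanism $\Phi_v$ of Definition~\ref{def:sde_pathwise} can depend on a variable $u\in S$ only if $u\in\alpha(v)\cup\beta(v)$. By Definition~\ref{def:sde_graph}, if $f_v$ and $g_v$ are not constant in $x_u$, then $u\to v$ is an edge and so $u\in\pa(O)$. Hence for $u\in N$ the mechanisms are constant in $x_u$, which gives the identity
\begin{equation*}
    \Phi_O(x_O, x_{\pa(O)}, x_N, x_W)=\Phi_O(x_O, x_{\pa(O)}, x_N^*, x_W)
\end{equation*}
for all arguments. Here "constant in $X_u$" is read as constant as a function on the whole path space. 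If the intended notion is only a.s.\ constancy, this step needs more care, and I expect that to be the main technical point to pin down.

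With the identity in hand, take an adapted measurable $\varphi_S=(\varphi_{\pa(O)},\varphi_N)$. Apply essential unique solvability of $I^{[O]}$ to the modified input $\varphi_S^*:=(\varphi_{\pa(O)},x_N^*)$, which is again adapted and measurable.

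For the fixed-point condition, we get $\PP(X_W)$-a.s.
\begin{equation*}
    \tilde I^{[O]}(\varphi_{\pa(O)}(X_W),X_W)=\Phi_O\bigl(\tilde I^{[O]}(\varphi_{\pa(O)}(X_W),X_W),\varphi_{\pa(O)}(X_W),x_N^*,X_W\bigr).
\end{equation*}
By the identity, the right-hand side equals the same expression with $\varphi_N(X_W)$ in place of $x_N^*$. So $\tilde I^{[O]}$ is an a.s.\ fixed point for the original input $\varphi_S$.

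For uniqueness, let $\varphi_O$ solve the fixed-point equation with input $\varphi_S$. By the identity it also solves it with input $\varphi_S^*$. Uniqueness for $I^{[O]}$ under the input $\varphi_S^*$ then gives $\varphi_O(X_W)=I^{[O]}(\varphi_{\pa(O)}(X_W),x_N^*,X_W)=\tilde I^{[O]}(\varphi_{\pa(O)}(X_W),X_W)$ a.s.

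Two points remain to check. First, the pathwise integral map $\Psi$ in $\Phi_v$ depends on $x_u$ only through $g_v$, so constancy of $g_v$ in $x_u$ transfers to $\Phi_v$. Second, $W$-coordinates are untouched by the construction, so no condition on them is needed.
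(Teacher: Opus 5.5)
Your proposal is correct and matches the paper's proof essentially step for step. The paper likewise freezes the coordinates $L=(V\setminus O)\setminus\pa(O)$ at a fixed path $x_L^0$, sets $\tilde I^{[O]}(x_{\pa(O)},x_W):=I^{[O]}(x_{\pa(O)},x_L^0,x_W)$, uses that $\Phi_O$ does not depend on $x_L$ at all (the same literal reading of ``not constant'' you adopt), and transfers the fixed-point and uniqueness properties through the modified input $(\varphi_{\pa(O)},x_L^0)$; note only that the edge criterion is ``$f_v$ \emph{or} $g_v$ is non-constant in $x_u$'', which is what your conclusion actually uses.
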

\begin{proof}
    By essential unique solvability of $\Dcal$ w.r.t.\ $O$ there exists an adapted solution function $I^{[O]} : \Xcal_S \times \Xcal_W \to \Xcal_O$ where $S := V\setminus O$. Define $L := (V\setminus O)\setminus\pa(O)$, fix any constant $x_L^0 \in \Xcal_L$, and define
    \begin{equation*}
        \tilde I^{[O]}(x_{\pa(O)}, x_W) := I^{[O]}(x_{\pa(O)}, x_L^0, x_W),
    \end{equation*}
    which is measurable and inherits adaptedness from $I^{[O]}$.

    Let $\varphi_S$ be adapted and measurable and set $\varphi_S^0 := (\varphi_{\pa(O)}, x_L^0)$.
    Since none of $f_O, g_O, h_O$ depend on $x_L$, $\Phi_O$ does not depend on $x_L$. In particular, for every $x_O$ and $x_W$,
    \begin{equation*}
        \Phi_O\bigl(x_O, \varphi_S(x_W), x_W\bigr) = \Phi_O\bigl(x_O, \varphi_S^0(x_W), x_W\bigr).
    \end{equation*}
    Since $\tilde I^{[O]}(\varphi_{\pa(O)}(x_W), x_W) = I^{[O]}(\varphi_S^0(x_W), x_W)$ is an essentially unique fixed point of $x_O = \Phi_O\bigl(x_O, \varphi_S^0(x_W), x_W\bigr)$ and thus also for $x_O = \Phi_O\bigl(x_O, \varphi_S(x_W), x_W\bigr)$, we obtain the result.
\end{proof}

\begin{lemapp}[Composition of SCC solution functions]\label{lem:scc_composition}
    Let $\Dcal$ be a system of causal SDEs that is essentially uniquely solvable w.r.t.\ every strongly connected component (SCC) of $G(\Dcal)$. Then $\Dcal$ is essentially uniquely solvable w.r.t.\ $V$.
\end{lemapp}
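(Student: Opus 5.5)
We build the solution function w.r.t.\ $V$ by induction along a topological order of the SCCs. Let $S_1, \dots, S_k$ be the SCCs of $G(\Dcal)$, ordered so that $\pa(S_j) \subseteq S_1 \cup \dots \cup S_{j-1}$; this is possible because the condensation of $G(\Dcal)$ is acyclic. By Lemma~\ref{lem:acy_factorisation}, for each $j$ we may take a measurable adapted solution function $I^{[S_j]} : \Xcal_{\pa(S_j)} \times \Xcal_W \to \Xcal_{S_j}$. We then define recursively
\begin{equation*}
    J_1(x_W) := I^{[S_1]}(x_W), \qquad J_j(x_W) := I^{[S_j]}\bigl((J_i(x_W))_{i<j}|_{\pa(S_j)}, x_W\bigr),
\end{equation*}
and set $I^{[V]} := (J_1, \dots, J_k)$. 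Compositions and products of measurable adapted maps are again measurable and adapted. Here we use that $x_W \mapsto (J_i(x_W))_{i<j}$ is an adapted map $\Xcal_W \to \Xcal_{\pa(S_j)}$, so it is a legitimate input process $\varphi_{V\setminus S_j}$ in Definition~\ref{def:ess_unique_solvable}; coordinates outside $\pa(S_j)$ can be filled in arbitrarily, e.g.\ with constants.

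For the \emph{fixed-point property}, fix $j$ and apply the first condition of Definition~\ref{def:ess_unique_solvable} for $S_j$ with input $\varphi := (J_i)_{i<j}$, padded as above. This gives $\PP(X_W)$-a.s.\ $J_j(X_W) = \Phi_{S_j}(J_j(X_W), \varphi(X_W), X_W)$. Since $\Phi_{S_j}$ depends on $x_{V\setminus S_j}$ only through $x_{\pa(S_j)}$ (no edges into $S_j$ from elsewhere, as in the proof of Lemma~\ref{lem:acy_factorisation}), this equals $\Phi_{S_j}(I^{[V]}(X_W), X_W)$. Intersecting the $k$ null sets yields $I^{[V]}(X_W) = \Phi_V(I^{[V]}(X_W), X_W)$ a.s. Because $S = \emptyset$ here, there are no input processes $\varphi_S$ to range over beyond the trivial one.

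For \emph{uniqueness}, let $\varphi_V : \Xcal_W \to \Xcal_V$ be measurable adapted with $\varphi_V(X_W) = \Phi_V(\varphi_V(X_W), X_W)$ a.s. We show by induction on $j$ that $\varphi_{S_j}(X_W) = J_j(X_W)$ a.s. Restricting to the $S_j$ coordinates, $\varphi_{S_j}$ is an adapted fixed point of $x_{S_j} = \Phi_{S_j}(x_{S_j}, \varphi_{V\setminus S_j}(X_W), X_W)$ with adapted input $\varphi_{V\setminus S_j}$. By the uniqueness clause for $S_j$,
\begin{equation*}
    \varphi_{S_j}(X_W) = I^{[S_j]}\bigl(\varphi_{\pa(S_j)}(X_W), X_W\bigr) \quad \text{a.s.}
\end{equation*}
By the induction hypothesis $\varphi_{\pa(S_j)}(X_W) = (J_i(X_W))_{i<j}|_{\pa(S_j)}$ a.s., so the right-hand side equals $J_j(X_W)$ a.s.

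The main obstacle is this last substitution. Replacing one argument of the measurable function $I^{[S_j]}$ by an a.s.-equal random element is harmless, since the two compositions agree outside the union of the exceptional null sets. However, one must check that $I^{[S_j]}$ is indeed a version defined on all of $\Xcal_{\pa(S_j)} \times \Xcal_W$, which it is. One must also check that the null sets depend only on the finitely many processes involved, which is what the weaker, non-uniform notion of solvability permits. Since there are finitely many SCCs, a finite union of null sets suffices.
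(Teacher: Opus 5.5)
Your proposal is correct and follows essentially the same route as the paper: topologically order the SCCs, take solution functions depending only on $\pa(S_j)$ and $X_W$ via Lemma~\ref{lem:acy_factorisation}, compose them recursively, and prove the fixed-point and uniqueness properties by induction over the SCCs. The only difference is in the uniqueness step, and it is cosmetic: the paper first substitutes the induction hypothesis into $\Phi_{C_i}$ and then invokes uniqueness with input $I^{[V]}_{\pa(C_i)}$, whereas you invoke uniqueness directly with input $\varphi_{V\setminus S_j}$ and substitute afterwards into $I^{[S_j]}$; both are valid.
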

\begin{proof}
    Let $(C_1, \ldots, C_k)$ be a topological ordering of the SCCs of $G(\Dcal)$, and for each $i$ let $I^{[C_i]}$ be a solution function of $\Dcal$ w.r.t.\ $C_i$. By Lemma~\ref{lem:acy_factorisation} the solution function $I^{[C_i]}$ may be taken to depend only on $X_W$ and the endogenous parents $X_{\pa(C_i)}$ that all lie in $C_1 \cup \cdots \cup C_{i-1}$. Writing $I^{[C_i]}(x_{\pa(C_i)}, x_W)$ accordingly, define $I^{[V]} : \Xcal_W \to \Xcal_V$ recursively along the topological order by
    \begin{equation*}
        I^{[V]}_{C_i}(x_W) := I^{[C_i]}\bigl(I^{[V]}_{\pa(C_i)}(x_W), x_W\bigr), \qquad i = 1, \ldots, k.
    \end{equation*}
    We verify by induction on the SCCs that $I^{[V]}$ is an essentially unique solution function of $\Dcal$ w.r.t.\ $V$. To that end, let $\varphi_V : \Xcal_W \to \Xcal_V$ be adapted with $\varphi_V(X_W) = \Phi_V(\varphi_V(X_W), X_W)$ $\PP$-a.s.
    Since $I^{[V]}_{C_1}(x_W) = I^{[C_1]}(x_W)$ we have
    \begin{equation*}
        I^{[V]}_{C_1}(X_W) = \Phi_{C_1}\bigl(I^{[V]}_{C_1}(X_W), X_W\bigr)
    \end{equation*}
    by the fixed-point property of $I^{[C_1]}$ and $\varphi_{C_1}(X_W) = I^{[V]}_{C_1}(X_W)$ $\PP$-a.s.\ by the essential uniqueness of $I^{[C_1]}$. If for $i \in \{2, ..., k\}$ we have $I^{[V]}_{C_j}(X_W) = \Phi_{C_j}\bigl(I^{[V]}_{C_j}(X_W), X_W\bigr)$ and $\varphi_{C_j}(X_W) = I^{[V]}_{C_j}(X_W)$ $\PP$-a.s. for all $j<i$, then, since $\pa(C_{i}) \subseteq C_1 \cup \cdots \cup C_{i-1}$ and $I^{[V]}_{C_i}(X_W) = I^{[C_i]}\bigl(I^{[V]}_{\pa(C_i)}(X_W), X_W\bigr)$, the fixed-point property of $I^{[C_i]}$ gives
    \begin{equation*}
        I^{[V]}_{C_i}(X_W) = \Phi_{C_i}\bigl(I^{[V]}_{C_i}(X_W), I^{[V]}_{\pa(C_i)}(X_W), X_W\bigr) \quad \PP\text{-a.s.}
    \end{equation*}
    Since $\varphi_{C_i}(X_W) = \Phi_{C_i}(\varphi_{C_i}(X_W), \varphi_{\pa(C_i)}(X_W), X_W)$ $\PP$-a.s., substituting the essential uniqueness $\varphi_{\pa(C_i)}(X_W) = I^{[V]}_{\pa(C_i)}(X_W)$ from the induction hypothesis, and since we have $I^{[V]}_{C_i}(x_W) = I^{[C_i]}\bigl(I^{[V]}_{\pa(C_i)}(x_W), x_W\bigr)$, we obtain by the essential uniqueness of $I^{[C_i]}$ that
    \begin{equation*}
        \varphi_{C_i}(X_W) = \Phi_{C_i}\bigl(\varphi_{C_i}(X_W), I^{[V]}_{\pa(C_i)}(X_W), X_W\bigr) = I^{[V]}_{C_i}(X_W) \quad \PP\text{-a.s.,}
    \end{equation*}
    which proves the result.
\end{proof}

\esssolvableunderass*
\begin{proof}
    Fix $O \subseteq V$ and write $S := V \setminus O$. By Definition~\ref{def:ess_unique_solvable}, a solution function of $\Dcal$ w.r.t.\ $O$ is a single measurable adapted map $I^{[O]} : \Xcal_S \times \Xcal_W \to \Xcal_O$ such that, for every measurable adapted $\varphi_S : \Xcal_W \to \Xcal_S$, the process $I^{[O]}(\varphi_S(X_W), X_W)$ is the essentially-unique adapted solution of
    \begin{equation*}
        x_O = \Phi_O\bigl(x_O, \varphi_S(X_W), X_W\bigr).
    \end{equation*}
    This can equivalently be interpreted as the fixed-point equation of the \emph{intervened system} $\Dcal_{\Do(X_S = \varphi_S(X_W))}$ w.r.t.\ its full endogenous set $O$: the system of causal SDEs on $O$ obtained by overriding the mechanisms of the variables in $S$ with the adapted process $\varphi_S(X_W)$, leaving each mechanism $\Phi_v$ ($v \in O$) unchanged. Thus $\Dcal$ is essentially uniquely solvable w.r.t.\ $O$ if and only if there is an essentially unique solution function $I^{[O]}$ for $\Dcal_{\Do(X_S = \varphi_S(X_W))}$ w.r.t.\ $O$ for every adapted $\varphi_S$; we establish the latter by verifying the conditions of Theorem~\ref{thm:ito_map} on each SCC of $G(\Dcal_{\Do(X_S = \varphi_S(X_W))})$ and composing via Lemma~\ref{lem:scc_composition}.

    Since $\varphi_S$ is a function only of the exogenous $X_W$, overriding $X_S$ by $\varphi_S(X_W)$ adds only exogenous dependence to the $O$-mechanisms and leaves the directed edges among $O$ exactly those of $G(\Dcal)$; hence the SCCs of $G(\Dcal_{\Do(X_S = \varphi_S(X_W))})$ are those of the subgraph of $G(\Dcal)$ induced on $O$, each contained in an SCC $\bar C$ of $G(\Dcal)$. Fix such an SCC $C \subseteq \bar C$. By Assumption~\ref{ass:uniquely_solvable}(i) on $\bar C$, $\alpha(v) \cap C \subseteq \alpha(v) \cap \bar C = \emptyset$ for every $v \in C$; and by Assumption~\ref{ass:uniquely_solvable}(ii) on $\bar C$, $g_v$ satisfies the linear-growth and Lipschitz conditions in $(X_v, X_{\beta(v) \cap \bar C})$ given $X_{\beta(v) \setminus \bar C}$, which implies the conditions in $(X_v, X_{\beta(v) \cap C})$ given $X_{\beta(v) \setminus C}$ by considering the variables $\beta(v) \cap (\bar C \setminus C)$ as the given inputs. Hence $C$ satisfies the hypotheses of Theorem~\ref{thm:ito_map}, and thus $\Dcal_{\Do(X_S = \varphi_S(X_W))}$ is essentially uniquely solvable w.r.t.\ $C$. Since the resulting solution function $I^{[O]}$ does not depend on $\varphi_S$ we obtain the result.
\end{proof}

\esssolvableunderadditive*
\begin{proof}
    For each $v \in V$, writing $\Sigma_{v\cdot}$ for the (constant) $v$-th row of $\Sigma$ and $\gamma(v) := \{w \in \gamma : \Sigma_{vw} \neq 0\}$ for the Brownian components driving $v$, the causal SDE
    \begin{equation*}
        X_v(t) = X_{\alpha_v}^0 + \int_0^t \mu_v(s, X_V(s)) \diff s + \int_0^t \Sigma_{v\cdot} \diff X_{\gamma(v)}(s)
    \end{equation*}
    matches the form $f_v(t, X_{\alpha(v)}) + \int_0^t g_v(s{-}, X_v, X_{\beta(v)}) \diff h_v(s, X_{\gamma(v)})$ of \eqref{eqn:sde} with $\alpha(v) := \{\alpha_v\} \subseteq \alpha \subseteq W$ the exogenous initial-condition coordinate, $\beta(v) := V \setminus \{v\}$, $\gamma(v) \subseteq \gamma \subseteq W$ the exogenous Brownian coordinates driving $v$, $f_v(t, X_{\alpha(v)}) := X_{\alpha_v}^0$, $g_v(s, X_v, X_{\beta(v)}) := (\mu_v(s, X_V(s)), \Sigma_{v\cdot})$, and $h_v(s, X_{\gamma(v)}) := (s, X_{\gamma(v)}(s))$.

    For every $v \in V$, $\alpha(v) \subseteq W$, so $v$ has no endogenous functional parents, and thus $\alpha(v) \cap S = \emptyset$ for every SCC $S$ and every $v \in S$.
    The drift component $\mu_v(s, X_V(s))$ inherits the linear-growth and Lipschitz conditions from $\mu$ by restriction. The constant component $\Sigma_{v\cdot}$ of $g_v$ has no $X$-dependence and does not contribute to the Lipschitz constant, and adds a term $\|\Sigma_{v\cdot}\|$ to the linear-growth constant.
\end{proof}

\section{Proofs of Section \ref{sec:marginalisation}}
\margclosuresimple*
\begin{proof}
    Fix the solution function $I^{[L]} : \Xcal_{V \setminus L} \times \Xcal_W \to \Xcal_L$ used in Definition~\ref{def:marginalisation}, and write $S := V \setminus (L \cup O)$. Since the integrator $h_v$ of each $v \in V \setminus L$ depends only on the exogenous $X_{\gamma(v)}$, marginalising $L$ leaves it unchanged and substitutes $I^{[L]}$ only into $f_v, g_v$; consequently the pathwise mechanism of $\Dcal_{\marg(L)}$ satisfies
    \begin{equation}\label{eqn:marg_mechanism}
        \tilde\Phi_O(x_O, x_S, x_W) = \Phi_O\bigl(I^{[L]}(x_S, x_O, x_W), x_O, x_S, x_W\bigr).
    \end{equation}

    Fix a solution function $I^{[L \cup O]}$ of $\Dcal$ w.r.t.\ $L \cup O$. We will show that $\tilde I^{[O]} := I^{[L \cup O]}_O$ is an essentially unique solution function for $\Dcal_{\marg(L)}$. To this end, let $\varphi_S : \Xcal_W \to \Xcal_S$ be measurable and adapted.

    By the fixed-point property of $I^{[L \cup O]}$ at $\varphi_S$ we have
    \begin{equation*}
        I^{[L \cup O]}(\varphi_S(X_W), X_W) = \Phi_{L \cup O}\bigl(I^{[L \cup O]}(\varphi_S(X_W), X_W), \varphi_S(X_W), X_W\bigr) \quad \PP(X_W)\text{-a.s.}
    \end{equation*}
    Reading off the $L$-block and regrouping its $O$- and $S$-arguments into the map $\varphi_{S\cup O}(x_W) := (\varphi_S(x_W), \tilde I^{[O]}(\varphi_S(x_W), x_W))$ we get that $I^{[L \cup O]}_L(\varphi_S(\cdot), \cdot)$ satisfies the fixed-point property
    \begin{equation*}
        I^{[L \cup O]}_L(\varphi_S(X_W), X_W) = \Phi_L\bigl(I^{[L \cup O]}_L(\varphi_S(X_W), X_W), \varphi_{S\cup O}(X_W), X_W\bigr) \quad \PP(X_W)\text{-a.s.}
    \end{equation*}
    Since $\Dcal$ is essentially uniquely solvable w.r.t.\ $L$, the uniqueness property of $I^{[L]}$ at $\varphi_{S\cup O}$ then gives
    \begin{equation}\label{eqn:marg_Lcoord}
        I^{[L \cup O]}_L(\varphi_S(x_W), x_W) = I^{[L]}(\varphi_{S\cup O}(x_W), x_W) \quad \PP(X_W)\text{-a.s.}
    \end{equation}
    Now the $O$-component of the fixed-point property of $I^{[L \cup O]}$ at $\varphi_S$ reads, using $\tilde I^{[O]} = I^{[L \cup O]}_O$ and $I^{[L \cup O]}(\varphi_S(X_W), X_W) = (I^{[L \cup O]}_L(\varphi_S(X_W), X_W), \tilde I^{[O]}(\varphi_S(X_W), X_W))$, as $\PP(X_W)$-a.s.:
    \begin{equation*}
        \tilde I^{[O]}(\varphi_S(X_W), X_W) = \Phi_O\bigl((I^{[L \cup O]}_L(\varphi_S(X_W), X_W),\ \tilde I^{[O]}(\varphi_S(X_W), X_W)),\ \varphi_S(X_W), X_W\bigr).
    \end{equation*}
    By \eqref{eqn:marg_Lcoord}, $I^{[L \cup O]}_L(\varphi_S(X_W), X_W) = I^{[L]}(\varphi_{S\cup O}(X_W), X_W) = I^{[L]}((\varphi_S(X_W), \tilde I^{[O]}(\varphi_S(X_W), X_W)), X_W)$; substituting this for the $L$-coordinate, the right-hand side is precisely \eqref{eqn:marg_mechanism} evaluated at $x_O = \tilde I^{[O]}(\varphi_S(X_W), X_W)$ and $x_S = \varphi_S(X_W)$. Hence, $\PP(X_W)$-a.s.,
    \begin{equation*}
        \tilde I^{[O]}(\varphi_S(x_W), x_W) = \tilde\Phi_O\bigl(\tilde I^{[O]}(\varphi_S(x_W), x_W), \varphi_S(x_W), x_W\bigr).
    \end{equation*}

    For essential uniqueness, let $\varphi_O : \Xcal_W \to \Xcal_O$ be adapted and measurable with $\varphi_O(x_W) = \tilde\Phi_O\bigl(\varphi_O(x_W), \varphi_S(x_W), x_W\bigr)$ for $\PP(X_W)$-a.a.\ $x_W$. Redefine $\varphi_{S\cup O} := (\varphi_S, \varphi_O)$ and set
    \begin{equation*}
        \varphi_{L\cup O}(x_W) := \bigl(I^{[L]}(\varphi_{S\cup O}(x_W), x_W),  \varphi_O(x_W)\bigr).
    \end{equation*}
    Then $\varphi_{L\cup O}$ is a fixed point of $x_{L\cup O} = \Phi_{L \cup O}(x_{L\cup O}, \varphi_S(X_W), X_W)$ $\PP(X_W)$-a.s., since the $L$-component satisfies
    \begin{equation*}
        \Phi_L(\varphi_{L\cup O}(x_W), \varphi_S(x_W), x_W) = \Phi_L\bigl(I^{[L]}(\varphi_{S\cup O}(x_W), x_W), \varphi_{S\cup O}(x_W), x_W\bigr) = I^{[L]}(\varphi_{S\cup O}(x_W), x_W)
    \end{equation*}
    by the fixed-point property of $I^{[L]}$ at $\varphi_{S\cup O}$, and the $O$-component satisfies $\Phi_O(\varphi_{L\cup O}(x_W), \varphi_S(x_W), x_W) = \tilde\Phi_O\bigl(\varphi_O(x_W), \varphi_S(x_W), x_W\bigr) = \varphi_O(x_W)$, by \eqref{eqn:marg_mechanism} and the hypothesis.
    Since $\varphi_{L\cup O}$ is adapted and $\Dcal$ is essentially uniquely solvable w.r.t.\ $L \cup O$, the essential uniqueness of $I^{[L \cup O]}$ at $\varphi_S$ gives $\varphi_{L\cup O}(X_W) = I^{[L \cup O]}(\varphi_S(X_W), X_W)$ $\PP(X_W)$-a.s. Reading off the $O$-component, we have
    \begin{equation*}
        \varphi_O(X_W) = I^{[L \cup O]}_O(\varphi_S(X_W), X_W) = \tilde I^{[O]}(\varphi_S(X_W), X_W)
    \end{equation*}
    $\PP(X_W)$-a.s.
\end{proof}

\begin{lemapp}\label{lem:marg_well_defined}
    Let $\Dcal$ be a system of causal SDEs, and let $L, O, S\subseteq V$ partition $V$. If $\Dcal$ is essentially uniquely solvable w.r.t.\ $L$ and $L \cup O$, then
    \begin{equation*}
        \PP_{\Dcal_{\marg(L)}}\bigl(X_O \given \Do(X_S = x_S)\bigr) = \PP_{\Dcal}\bigl(X_O \given \Do(X_S = x_S)\bigr),
    \end{equation*}
    and any two marginalisations $\Dcal_{\marg(L)}$ and $\Dcal'_{\marg(L)}$ yield the same distributions.
\end{lemapp}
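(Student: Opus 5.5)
The plan is to reduce both claims to Theorem~\ref{thm:marg_closure_simple} combined with Theorem~\ref{thm:obs_int_dist}. Since $\Dcal$ is essentially uniquely solvable w.r.t.\ $L$ and w.r.t.\ $L\cup O$, Theorem~\ref{thm:marg_closure_simple} (applied with the pair of disjoint sets $L$ and $O$) shows that $\Dcal_{\marg(L)}$ is essentially uniquely solvable w.r.t.\ $O$. Moreover, for any solution function $I^{[L\cup O]}$ of $\Dcal$, its restriction $I^{[L\cup O]}_O : \Xcal_S\times\Xcal_W\to\Xcal_O$ is a solution function of $\Dcal_{\marg(L)}$ w.r.t.\ $O$. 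This holds regardless of which solution function $I^{[L]}$ was used to build the marginalisation.

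Next I apply the interventional part of Theorem~\ref{thm:obs_int_dist} twice, both times with the partition having no latent block. In the marginalised system the endogenous set $O\cup S$ is partitioned by $O$ and $S$. Hence
\begin{equation*}
    \PP_{\Dcal_{\marg(L)}}\bigl(X_O \given \Do(X_S=x_S)\bigr) = \PP\bigl(I^{[L\cup O]}_O(x_S, X_W)\bigr),
\end{equation*}
and this law does not depend on the choice of solution function. In $\Dcal$ itself, $V$ is partitioned into $L\cup O$ and $S$. Essential unique solvability w.r.t.\ $L\cup O$ gives
\begin{equation*}
    \PP_{\Dcal}\bigl(X_{L\cup O}\given \Do(X_S=x_S)\bigr) = \PP\bigl(I^{[L\cup O]}(x_S,X_W)\bigr).
\end{equation*}
Taking the $O$-marginal yields $\PP(I^{[L\cup O]}_O(x_S,X_W))$. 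This is how I interpret $\PP_\Dcal(X_O\given\Do(X_S=x_S))$: as the $O$-marginal of the interventional distribution of the intervened system on the non-intervened block $L\cup O$. Comparing the two displays gives the stated equality.

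For the second claim, let $\Dcal_{\marg(L)}$ and $\Dcal'_{\marg(L)}$ be built from two solution functions $I^{[L]}$ and $I'^{[L]}$. The computation above shows that both interventional distributions equal $\PP(I^{[L\cup O]}_O(x_S,X_W))$ for the same, fixed $I^{[L\cup O]}$ of the original system. Hence they coincide for every partition $(L,O,S)$ that satisfies the hypotheses. The case $S=\emptyset$ covers the observational distributions, with Theorem~\ref{thm:obs_int_dist}'s observational clause applied w.r.t.\ $V$ and w.r.t.\ $O$.

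The main subtlety is Theorem~\ref{thm:marg_closure_simple} applied to the intervened value. Theorem~\ref{thm:obs_int_dist} is stated for deterministic $x_S$, while the solution function property quantifies over adapted inputs $\varphi_S$. A constant map $\varphi_S\equiv x_S$ is measurable and adapted, so this causes no difficulty. One must also check that the interventional distribution does not depend on the choice of $I^{[L\cup O]}$. This follows from the uniqueness clause of Theorem~\ref{thm:obs_int_dist}, since any two solution functions agree $\PP(X_W)$-a.s.\ at the constant input $x_S$. No further obstacle is expected.
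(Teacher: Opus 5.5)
Your proposal is correct and follows the paper's proof: Theorem~\ref{thm:marg_closure_simple} makes $I^{[L\cup O]}_O$ a solution function of $\Dcal_{\marg(L)}$ w.r.t.\ $O$. Theorem~\ref{thm:obs_int_dist}, applied to both systems, then identifies both interventional distributions with the pushforward $\PP\bigl(I^{[L\cup O]}_O(x_S, X_W)\bigr)$. Your explicit arguments for independence from the choice of $I^{[L]}$ and $I^{[L\cup O]}$, and for reading $\PP_\Dcal(X_O \mid \Do(X_S = x_S))$ as an $O$-marginal, only spell out what the paper leaves implicit.
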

\begin{proof}
    Let $I^{[L \cup O]} : \Xcal_S \times \Xcal_W \to \Xcal_{L \cup O}$ be a solution function of $\Dcal$ w.r.t.\ $L \cup O$, and set $\tilde I^{[O]}(x_S, x_W) := I^{[L \cup O]}_{O}(x_S, x_W)$. By Theorem~\ref{thm:marg_closure_simple}, $\Dcal_{\marg(L)}$ is essentially uniquely solvable w.r.t.\ $O$ and $\tilde I^{[O]}$ is a solution function of $\Dcal_{\marg(L)}$ w.r.t.\ $O$. By Theorem~\ref{thm:obs_int_dist} the interventional distribution of $\Dcal_{\marg(L)}$ is the pushforward via any solution function. Hence
    \begin{equation*}
        \PP_{\Dcal_{\marg(L)}}\bigl(X_{O} \given \Do(X_S = x_S)\bigr) = \PP(\tilde I^{[O]}(x_S,X_W)) = \PP(I^{[L \cup O]}_{O}(x_S, X_W)),
    \end{equation*}
    which by Theorem~\ref{thm:obs_int_dist} applied to $\Dcal$ equals $\PP_{\Dcal}(X_{O} \given \Do(X_S = x_S))$.
\end{proof}

\begin{lemapp}[Sequential marginalisation]\label{lem:marginalisation_steps}
    Let $\Dcal$ be a system of causal SDEs, let $L_1, L_2, O, S \subseteq V$ partition $V$. Assume that $\Dcal$ is essentially uniquely solvable w.r.t.\ $L_1$, $L_1 \cup L_2$, and $L_1 \cup L_2 \cup O$. Then $\Dcal_{\marg(L_1 \cup L_2)}$ and $(\Dcal_{\marg(L_1)})_{\marg(L_2)}$ are well-defined and essentially uniquely solvable w.r.t.\ $O$, and for every $x_S \in \Xcal_S$,
    \begin{equation*}
        \PP_{(\Dcal_{\marg(L_1)})_{\marg(L_2)}}\bigl(X_O \given \Do(X_S = x_S)\bigr) = \PP_{\Dcal_{\marg(L_1 \cup L_2)}}\bigl(X_O \given \Do(X_S = x_S)\bigr).
    \end{equation*}
\end{lemapp}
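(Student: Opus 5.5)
The plan is to reduce everything to Theorem~\ref{thm:marg_closure_simple} and Lemma~\ref{lem:marg_well_defined}, applied twice along the two routes.

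\emph{The one-step marginalisation.} Since $\Dcal$ is essentially uniquely solvable w.r.t.\ $L_1\cup L_2$, the marginalisation $\Dcal_{\marg(L_1\cup L_2)}$ is well-defined. Theorem~\ref{thm:marg_closure_simple}, applied with $L:=L_1\cup L_2$ and the disjoint set $O$, together with solvability w.r.t.\ $L_1\cup L_2\cup O$, shows that it is essentially uniquely solvable w.r.t.\ $O$. By Lemma~\ref{lem:marg_well_defined}, its interventional distribution $\PP(X_O\given\Do(X_S=x_S))$ equals that of $\Dcal$.

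\emph{The two-step marginalisation: first step.} $\Dcal_{\marg(L_1)}$ is well-defined by solvability w.r.t.\ $L_1$. Theorem~\ref{thm:marg_closure_simple} with $L:=L_1$ and $O:=L_2$ gives that $\Dcal_{\marg(L_1)}$ is essentially uniquely solvable w.r.t.\ $L_2$, with solution function the restriction $I^{[L_1\cup L_2]}_{L_2}$. Applying the same theorem with $O:=L_2\cup O$ gives solvability w.r.t.\ $L_2\cup O$, with solution function $I^{[L_1\cup L_2\cup O]}_{L_2\cup O}$. Hence $(\Dcal_{\marg(L_1)})_{\marg(L_2)}$ is well-defined.

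\emph{The two-step marginalisation: second step.} A second application of Theorem~\ref{thm:marg_closure_simple}, now to the system $\Dcal_{\marg(L_1)}$ with $L:=L_2$ and $O$, yields essential unique solvability of $(\Dcal_{\marg(L_1)})_{\marg(L_2)}$ w.r.t.\ $O$. A solution function is $I^{[L_1\cup L_2\cup O]}_O$, which is the restriction of the restriction.

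\emph{Chaining the distributions.} Lemma~\ref{lem:marg_well_defined} applied to $\Dcal_{\marg(L_1)}$ (partition $L_2,O,S$) equates the interventional distribution of $(\Dcal_{\marg(L_1)})_{\marg(L_2)}$ with that of $\Dcal_{\marg(L_1)}$. Lemma~\ref{lem:marg_well_defined} applied to $\Dcal$ with latent set $L_1$ and observed set $L_2\cup O$ then equates that distribution, marginalised to $O$, with the one of $\Dcal$. Two points need care here. First, the lemma as stated concerns the full remaining set, whereas we only want the $O$-marginal. This is handled by taking the $O$-coordinate of $I^{[L_1\cup L_2\cup O]}_{L_2\cup O}(x_S,X_W)$, which is a pushforward statement that follows directly from Theorem~\ref{thm:obs_int_dist}. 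Second, the intervention on $S$ is held fixed throughout. Since both routes give the law of $I^{[L_1\cup L_2\cup O]}_O(x_S,X_W)$, they coincide.

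\emph{Main obstacle.} The step needing the most care is verifying the hypotheses of Theorem~\ref{thm:marg_closure_simple} for the already-marginalised system $\Dcal_{\marg(L_1)}$. Its index sets are now $V\setminus L_1$ and $W$, and its mechanisms have $I^{[L_1]}$ substituted. I must check that it is a genuine system of causal SDEs: the substituted coefficients must remain measurable, càdlàg and adapted, which holds because $I^{[L_1]}$ is adapted and measurable, and the integrators are unchanged. Its solvability must also be read off from the restriction statement of Theorem~\ref{thm:marg_closure_simple} rather than assumed. Once this bookkeeping is done, the equality is a chain of the two cited results.
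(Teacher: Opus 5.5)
Your proposal is correct and follows the paper's proof: repeated applications of Theorem~\ref{thm:marg_closure_simple} show that $I^{[L_1\cup L_2\cup O]}_O$ is a common solution function w.r.t.\ $O$ for both $\Dcal_{\marg(L_1\cup L_2)}$ and $(\Dcal_{\marg(L_1)})_{\marg(L_2)}$. Theorem~\ref{thm:obs_int_dist} then identifies both interventional distributions with the pushforward of $\PP(X_W)$ under $I^{[L_1\cup L_2\cup O]}_O(x_S,\cdot)$; your extra chaining through Lemma~\ref{lem:marg_well_defined} is valid but redundant once that common solution function is established.
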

\begin{proof}
    Let $I^{[L_1 \cup L_2 \cup O]}$ be a solution function of $\Dcal$ w.r.t.\ $L_1 \cup L_2 \cup O$. Throughout we apply Theorem~\ref{thm:marg_closure_simple}, whose two hypotheses on a system are essential unique solvability w.r.t.\ the marginalised set and w.r.t.\ its union with the retained set. Applied to $\Dcal$ with marginalised set $L_1 \cup L_2$ and retained set $O$, it gives that $\Dcal_{\marg(L_1 \cup L_2)}$ is essentially uniquely solvable w.r.t.\ $O$ with solution function $I^{[L_1 \cup L_2 \cup O]}_O$. Applied to $\Dcal$ with marginalised set $L_1$ and retained set $L_2$, it gives that $\Dcal_{\marg(L_1)}$ is essentially uniquely solvable w.r.t.\ $L_2$, so $(\Dcal_{\marg(L_1)})_{\marg(L_2)}$ is defined; applied to $\Dcal$ with marginalised set $L_1$ and retained set $L_2 \cup O$, it gives that $I^{[L_1 \cup L_2 \cup O]}_{L_2 \cup O}$ is a solution function of $\Dcal_{\marg(L_1)}$ w.r.t.\ $L_2 \cup O$. Applied once more to $\Dcal_{\marg(L_1)}$ with marginalised set $L_2$ and retained set $O$, it yields $I^{[L_1 \cup L_2 \cup O]}_O$ as a solution function of $(\Dcal_{\marg(L_1)})_{\marg(L_2)}$ w.r.t.\ $O$.

    Both $\Dcal_{\marg(L_1 \cup L_2)}$ and $(\Dcal_{\marg(L_1)})_{\marg(L_2)}$ therefore admit $I^{[L_1 \cup L_2 \cup O]}_O$ as a solution function w.r.t.\ $O$, and by Theorem~\ref{thm:obs_int_dist} the interventional distributions coincide as pushforwards of $\PP(X_W)$ under this map.
\end{proof}

\begin{lemapp}[Marginalisation commutes with intervention]\label{lem:intervention_marg_commute}
    Let $\Dcal$ be a system of causal SDEs, let $L, O , S, T \subseteq V$ partition $V$, and let $x_T \in \Xcal_T$. Assume $\Dcal$ is essentially uniquely solvable w.r.t.\ $L$ and w.r.t.\ $L \cup O$. Then $(\Dcal_{\marg(L)})_{\Do(X_T = x_T)}$ and $(\Dcal_{\Do(X_T = x_T)})_{\marg(L)}$ are well-defined and essentially uniquely solvable w.r.t.\ $O$, and for every $x_S \in \Xcal_S$,
    \begin{equation*}
        \PP_{(\Dcal_{\marg(L)})_{\Do(X_T = x_T)}}\bigl(X_O \given \Do(X_S = x_S)\bigr) = \PP_{(\Dcal_{\Do(X_T = x_T)})_{\marg(L)}}\bigl(X_O \given \Do(X_S = x_S)\bigr).
    \end{equation*}
\end{lemapp}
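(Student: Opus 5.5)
The plan is to show that both systems admit a common solution function w.r.t.\ $O$, namely $I^{[L\cup O]}(x_S, x_T, \cdot)$ restricted to $O$, where $I^{[L\cup O]} : \Xcal_S\times\Xcal_T\times\Xcal_W\to\Xcal_{L\cup O}$ is a solution function of $\Dcal$ w.r.t.\ $L\cup O$. Equality of the interventional distributions then follows from Theorem~\ref{thm:obs_int_dist}: in each system the law of $X_O$ under $\Do(X_S=x_S)$ is the pushforward of $\PP(X_W)$ under the same map $x_W\mapsto I^{[L\cup O]}_O(x_S,x_T,x_W)$.

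\emph{The system $(\Dcal_{\marg(L)})_{\Do(X_T=x_T)}$.} Theorem~\ref{thm:marg_closure_simple} applies to $\Dcal$ with marginalised set $L$ and retained set $O\cup T$. For this I need solvability of $\Dcal$ w.r.t.\ $L\cup O\cup T$, which is not among the hypotheses. So I instead apply the theorem with retained set $O$ only. That gives that $\Dcal_{\marg(L)}$ is essentially uniquely solvable w.r.t.\ $O$, with solution function $I^{[L\cup O]}_O : \Xcal_S\times\Xcal_T\times\Xcal_W\to\Xcal_O$. The intervention part of Theorem~\ref{thm:obs_int_dist}, applied to $\Dcal_{\marg(L)}$ with intervened set $T$, latent-role set $S$ and solved set $O$, then shows that $(\Dcal_{\marg(L)})_{\Do(X_T=x_T)}$ is essentially uniquely solvable w.r.t.\ $O$ with solution function $I^{[L\cup O]}_O(\cdot, x_T, \cdot)$.

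\emph{The system $(\Dcal_{\Do(X_T=x_T)})_{\marg(L)}$.} Theorem~\ref{thm:obs_int_dist}, applied with intervened set $T$, shows that $\Dcal_{\Do(X_T=x_T)}$ is essentially uniquely solvable w.r.t.\ $L$, with solution function $I^{[L]}(\cdot,x_T,\cdot)$. It likewise shows solvability w.r.t.\ $L\cup O$, with solution function $I^{[L\cup O]}(\cdot,x_T,\cdot)$. Hence the marginalisation $(\Dcal_{\Do(X_T=x_T)})_{\marg(L)}$ is well defined. Theorem~\ref{thm:marg_closure_simple}, applied to the intervened system, gives that it is essentially uniquely solvable w.r.t.\ $O$ with solution function $I^{[L\cup O]}_O(\cdot,x_T,\cdot)$.

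\emph{Comparison.} Both systems therefore have the solution function $I^{[L\cup O]}_O(\cdot,x_T,\cdot)$ on $\Xcal_S\times\Xcal_W$. A second application of Theorem~\ref{thm:obs_int_dist}, now intervening on $S$ with $x_S$, gives that both interventional laws equal $\PP(I^{[L\cup O]}_O(x_S,x_T,X_W))$.

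\emph{Main obstacle.} The delicate step is bookkeeping in the first construction. Intervening on $T$ in $\Dcal_{\marg(L)}$ sets the $T$-mechanisms to constants, but the marginalised $O$-mechanisms still contain $I^{[L]}(x_O,x_S,x_T,x_W)$ with $x_T$ now fixed. I must check that this substitution is literally the mechanism of the intervened system, and that Theorem~\ref{thm:obs_int_dist} legitimately treats $S$ as the ``$L$'' argument there, i.e.\ that an arbitrary adapted input on $S$ combined with the constant $x_T$ is admissible. Both checks are immediate, since constant paths are measurable and adapted.

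The two marginalised systems may differ as objects, because the choices of $I^{[L]}$ differ. This causes no difficulty: Lemma~\ref{lem:marg_well_defined} shows that the distributions do not depend on the choice.
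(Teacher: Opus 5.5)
Your proposal is correct and follows essentially the same route as the paper's proof. Both arguments apply Theorem~\ref{thm:marg_closure_simple} with retained set $O$, avoiding the need for solvability w.r.t.\ $L\cup O\cup T$, and combine it in either order with the intervention part of Theorem~\ref{thm:obs_int_dist}. This shows that both systems admit $I^{[L\cup O]}_O(\cdot, x_T, \cdot)$ as a solution function w.r.t.\ $O$, and the interventional laws then coincide as pushforwards of $\PP(X_W)$ under $I^{[L\cup O]}_O(x_S, x_T, \cdot)$.
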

\begin{proof}
    Let $I^{[L \cup O]} : \Xcal_{T \cup S} \times \Xcal_W \to \Xcal_{L \cup O}$ be a solution function of $\Dcal$ w.r.t.\ $L \cup O$.

    By Theorem \ref{thm:obs_int_dist}, since $\Dcal$ is essentially uniquely solvable w.r.t.\ $L$, $\Dcal_{\Do(X_T = x_T)}$ is essentially uniquely solvable w.r.t.\ $L$, and thus the marginalisation $(\Dcal_{\Do(X_T = x_T)})_{\marg(L)}$ is well-defined. Similarly $\Dcal$ and thus also $\Dcal_{\Do(X_T = x_T)}$ are essentially uniquely solvable w.r.t.\ $L \cup O$, where the latter has solution function $I^{[L \cup O]}(x_T, \cdot, \cdot) : \Xcal_S \times \Xcal_W \to \Xcal_{L \cup O}$. By Theorem~\ref{thm:marg_closure_simple}, $(\Dcal_{\Do(X_T = x_T)})_{\marg(L)}$ is then essentially uniquely solvable w.r.t.\ $O$ with solution function $I^{[L \cup O]}_O(x_T, \cdot, \cdot)$.

    By Theorem~\ref{thm:marg_closure_simple}, $\Dcal_{\marg(L)}$ is essentially uniquely solvable w.r.t.\ $O$ with solution function $I^{[L \cup O]}_O : \Xcal_{T \cup S} \times \Xcal_W \to \Xcal_O$. By Theorem \ref{thm:obs_int_dist}, the system $(\Dcal_{\marg(L)})_{\Do(X_T = x_T)}$ is essentially uniquely solvable w.r.t.\ $O$ with solution function $I^{[L \cup O]}_O(x_T, \cdot, \cdot) : \Xcal_S \times \Xcal_W \to \Xcal_O$.

    Both systems therefore admit $I^{[L \cup O]}_O(x_T, \cdot, \cdot) : \Xcal_S \times \Xcal_W \to \Xcal_O$ as a solution function w.r.t.\ $O$, so by Theorem~\ref{thm:obs_int_dist} their interventional distributions under $\Do(X_S = x_S)$ coincide, both being the pushforward of $\PP(X_W)$ under $I^{[L \cup O]}_O(x_T, x_S, \cdot)$.
\end{proof}

\section{Proofs of Section \ref{sec:markov_properties}}

\begin{theoremapp}\label{thm:self_cyclic_graph_separations}
    Let $G = (V, E)$ be a directed graph and let $G^-\subseteq G$ be the directed graph with all self-loops $v\to v$ removed. Let $A, B, C\subseteq V$, then we have
    \begin{align*}
        A \Perp^d_{G} B \given C      & \iff A \Perp^d_{G^-} B \given C      \\
        A \Perp^\sigma_{G} B \given C & \iff A \Perp^\sigma_{G^-} B \given C
    \end{align*}
\end{theoremapp}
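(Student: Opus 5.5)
I will prove both equivalences by a walk-surgery argument: from any unblocked walk in one graph I construct an unblocked walk in the other. The graphs $G$ and $G^-$ share the same vertex set and differ only in self-loops. A self-loop never changes ancestral relations, since $\Anc_G(C) = \Anc_{G^-}(C)$. It also never changes strongly connected components, since $v$ is in its own SCC either way. Hence the notions of collider in $\Anc(C)$ and of blockable non-collider are the same in both graphs, for any walk that avoids self-loop edges. Every walk in $G^-$ is a walk in $G$, so the direction ``$A \Perp_G B \given C \implies A \Perp_{G^-} B \given C$'' is immediate. This holds for both $d$- and $\sigma$-separation: a $G^-$-walk is blocked in $G^-$ iff it is blocked in $G$, because all the relevant local notions agree.

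For the converse I will show the contrapositive. Suppose $\pi$ is a walk in $G$ between $A$ and $B$ that is not ($d$- resp.\ $\sigma$-) blocked by $C$. I will show there is an unblocked walk in $G^-$. The construction deletes every occurrence of a self-loop. Whenever $\pi$ contains a segment $\cdots u \ast\!\!- v \to v -\!\!\ast w \cdots$, I contract the repeated node to a single occurrence of $v$, giving $\cdots u \ast\!\!- v -\!\!\ast w \cdots$; iterating this removes all self-loops. The endpoints are unchanged, and the endpoints are not in $C$ because $\pi$ was unblocked. The key check is the status of the merged occurrence of $v$, compared with the two occurrences of $v$ in $\pi$.

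\emph{Case 1:} the merged $v$ is a collider in the new walk, i.e.\ both remaining edges have arrowheads at $v$. Then in $\pi$ the first occurrence of $v$ was $u \ast\!\!\to v \to v$, a collider only if the loop points into it. Whichever orientation the loop is read in, one of the two occurrences of $v$ in $\pi$ has two arrowheads at it, so it is a collider. Since $\pi$ is unblocked, $v \in \Anc(C)$, and the merged collider is open.

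\emph{Case 2:} the merged $v$ is a non-collider, say with $v \to w$ or $u \leftarrow v$. Then the occurrence of $v$ in $\pi$ adjacent to that out-edge is a non-collider in $\pi$ with the same out-edge. For $d$-separation, unblockedness of $\pi$ gives $v \notin C$, so the merged non-collider is open. For $\sigma$-separation, suppose the merged $v$ is blockable via some out-edge $v \to w$ with $w$ in a different SCC. Then the occurrence of $v$ in $\pi$ adjacent to $w$ is a non-collider pointing to $w$, hence also blockable, so $v \notin C$. If the merged $v$ is unblockable, it is open regardless of whether $v \in C$.

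The main obstacle is the bookkeeping in Case 2. One must ensure that the occurrence of $v$ in $\pi$ carrying the relevant out-edge $v\to w$ really is a non-collider, and not a collider because of the loop's orientation. This is handled by noting that an occurrence with an outgoing edge on the walk can never be a collider. The remaining work is to cover degenerate cases: a loop at an endpoint, which is handled by simply dropping it, and consecutive loops, handled by induction on the number of loops. After these are addressed, the new walk lies in $G^-$, connects $A$ and $B$, and is unblocked, which completes the proof.
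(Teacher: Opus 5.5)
Your proof is correct and follows essentially the same route as the paper. The easy direction holds because $G^-\subseteq G$ with identical ancestor sets and SCCs, and the converse contracts each self-loop on an open walk and checks the collider/non-collider status of the merged occurrence of $v$. For $\sigma$-separation you are more careful than the paper: you use the occurrence of $v$ in $\pi$ that carries the out-edge towards a neighbour in another SCC, whereas the paper's phrasing (``the occurrence of $v$ at the tail of the self-loop'') would not by itself give blockability, e.g.\ in $u \to v \to v \to w$ that occurrence points only along the loop.
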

\begin{proof}
    Since $G^-$ is a subgraph of $G$, any separation in $G$ immediately implies a separation in $G^-$.

    For the implication $A \nPerp^d_{G} B \given C \implies A \nPerp^d_{G^-} B \given C$ we follow the proof of Proposition 3.5 in \cite{mogensen2020markov}. Let $\pi$ be an active walk in $G$. For any self-loop at $v$ on the walk, let $\pi'$ be the walk in which this self-loop is removed. If $v\in A$ or $v\in B$, then the walk $\pi'$ is still active. Otherwise, if $v$ is a non-collider in $\pi'$, we must have had one of $...\to v \to v \to ...$, $...\to v \ot v \to ...$, $...\ot v \to v \to ...$, $...\ot v \ot v \to ...$, $...\ot v \to v \ot ...$ or $...\ot v \ot v \ot ...$ in $\pi$; in each of these the occurrence of $v$ at the tail of the self-loop is a non-collider, so we must have had $v\notin C$ for $\pi$ to be active, so $\pi'$ is active. If $v$ is a collider in $\pi'$, then in $\pi$ we either had $...\to v \ot v \ot ...$ or $...\to v \to v \ot ...$, so $v\in \Anc(C)$ for these walks to be open, so $\pi'$ is open as well. Iterating this for every self-loop on $\pi$ gives an active walk in $G^-$, and so we get $A \nPerp^d_{G^-} B \given C$. For $\sigma$-separation, the same case analysis applies. The only additional consideration is blockability of non-colliders: a non-collider $v$ is blockable if it has a child on the walk not in the same SCC. Removing self-loops does not affect the SCCs, and therefore the SCCs of $G^-$ are identical to those of $G$. In particular, blockability of non-colliders is the same in $G$ and $G^-$, so the argument carries over.
\end{proof}

\subsection{Proofs of Section \ref{sec:sigmasep_mp}}
We carry out the proof of Theorem~\ref{thm:sigma_sep_mp} via the acyclification strategy of \cite{forre2017markov,bongers2021foundations,forre2025mathematical} adapted to the essentially uniquely solvable setting.
\begin{definitionapp}[Acyclification]\label{def:weak_acyclification}
    Let $\Dcal$ be a system of causal SDEs that is essentially uniquely solvable w.r.t.\ every SCC $C$ of $G(\Dcal)$. An \emph{acyclification} is the SCM $\Dcal^{\mathrm{acy}} = \angs{V, W, \Xcal_V, \Xcal_W, \tilde\Phi, \PP(X_W)}$ with sample spaces $\Xcal_v = D([0,T], \RR)$ and, for each SCC $C$ of $G(\Dcal)$ and each $v\in C$, structural equation
    \begin{equation*}
        x_v = I^{[C]}_v(x_{\pa(C)}, x_W),
    \end{equation*}
    where $I^{[C]}$ is an adapted solution function from Lemma~\ref{lem:acy_factorisation}.
\end{definitionapp}
By construction, the acyclification $\Dcal^{\mathrm{acy}}$ is an acyclic SCM \citep{pearl2009causality}, and hence essentially uniquely solvable with respect to $V$ by recursive substitution of the structural equations. Moreover, it satisfies the $d$-separation Markov property \citep[Theorem 6.3]{bongers2021foundations}.

Recall from \cite[Definition 3.5.1]{forre2025mathematical} that a \emph{graphical acyclification} $\tilde G$ of a graph $G$ on $V$ is for example obtained from $G$ by, for each SCC $C$, replacing all edges within $C$ and all directed edges $u \to v$ into $C$ (with $u \notin C$, $v \in C$) by bidirected edges between every pair of distinct vertices in $C$ and directed edges $u \to v'$ for every $v' \in C$, respectively. The resulting graph is acyclic on the SCCs of $G$, and by \cite{forre2025mathematical}, Proposition 3.5.2, $d$-separation in $\tilde G$ corresponds to $\sigma$-separation in $G$: for all $A, B, C \subseteq V$ we have $A \Perp_{G}^\sigma B \given C \iff A \Perp_{\tilde G}^d B \given C$.

\begin{lemapp}\label{lem:weak_acy}
    Let $\Dcal$ be a system of causal SDEs that is essentially uniquely solvable w.r.t.\ every SCC of $G(\Dcal)$, and $\Dcal^{\mathrm{acy}}$ an acyclification. Then:
    \begin{enumerate}[label=(\roman*)]
        \item $\PP_{\Dcal}(X_V, X_W) = \PP_{\Dcal^{\mathrm{acy}}}(X_V, X_W)$;
        \item $G^+(\Dcal^{\mathrm{acy}}) \subseteq \tilde G^+(\Dcal)$, the graphical acyclification of $G^+(\Dcal)$.
    \end{enumerate}
\end{lemapp}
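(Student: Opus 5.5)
For part (i), the plan is to show that the solution of the acyclic SCM $\Dcal^{\mathrm{acy}}$ and any solution of $\Dcal$ coincide almost surely as functions of $X_W$. Then both joint laws are pushforwards of $\PP(X_W)$ under the same map.

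Take a topological ordering $(C_1,\dots,C_k)$ of the SCCs of $G(\Dcal)$. By recursive substitution, $\Dcal^{\mathrm{acy}}$ has the solution $X_V^{\mathrm{acy}} = J(X_W)$ with $J_{C_i}(x_W) = I^{[C_i]}(J_{\pa(C_i)}(x_W), x_W)$. This $J$ is exactly the map $I^{[V]}$ built in the proof of Lemma~\ref{lem:scc_composition}, and that lemma shows it is a solution function of $\Dcal$ w.r.t.\ $V$.

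By Theorem~\ref{thm:obs_int_dist}, every adapted solution $\varphi_V(X_W)$ of $\Dcal$ satisfies $\varphi_V(X_W)=J(X_W)$ $\PP$-a.s. Therefore $(X_V,X_W) = (J(X_W),X_W)$ a.s.\ in both models, and $\PP_{\Dcal}(X_V,X_W) = \PP_{\Dcal^{\mathrm{acy}}}(X_V,X_W)$.

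One subtlety needs care. Solutions of $\Dcal$ are defined as adapted processes on $\Omega$, not a priori as functions of $X_W$. I would argue that by the pathwise formulation (Definition~\ref{def:sde_pathwise}) solutions are taken as adapted measurable maps of $X_W$, consistent with Definition~\ref{def:ess_unique_solvable} and the proof of Theorem~\ref{thm:obs_int_dist}. I would state this convention explicitly.

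For part (ii), I compare edge sets directly. In $\Dcal^{\mathrm{acy}}$ the structural equation for $v\in C$ is $I^{[C]}_v(x_{\pa(C)},x_W)$, so it depends on its arguments only through $\pa(C)$ and $W$. Hence every directed edge into $v$ in $G^+(\Dcal^{\mathrm{acy}})$ comes from some $u\in\pa(C)$ or some $w\in W$.

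I would sharpen the exogenous part by taking a refined version of Lemma~\ref{lem:acy_factorisation'}s construction. In that version the solution function depends on $x_W$ only through $W_C := \{w : w\to v' \text{ in } G^+(\Dcal) \text{ for some } v'\in C\}$, the exogenous parents of $C$. The argument is the same as in Lemma~\ref{lem:acy_factorisation}: $\Phi_C$ does not depend on the other exogenous coordinates, so freezing them at a constant preserves both the fixed-point property and essential uniqueness. One catch is that freezing an exogenous coordinate at a constant is not a priori an admissible input. The fix is to absorb those coordinates into the ignored arguments, since $\Phi_C$ is literally constant in them, so the fixed-point equation is unchanged for every realisation.

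With this refinement, each directed edge $u\to v$ in $G^+(\Dcal^{\mathrm{acy}})$ with $v\in C$ has $u\in\pa(C)\cup W_C$. That is, $u\notin C$ and $u\to v'$ in $G^+(\Dcal)$ for some $v'\in C$. This is precisely the kind of edge that the graphical acyclification $\tilde G^+(\Dcal)$ contains as $u\to v$ for all $v\in C$. Self-loops and edges within $C$ do not occur in $\Dcal^{\mathrm{acy}}$. Finally, $G^+$ of an SCM on $V\cup W$ with independent exogenous variables has no bidirected edges, so the inclusion holds.

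The main obstacle is the second part of (ii). One must verify that the SCCs of $G^+(\Dcal)$ used in the graphical acyclification coincide with those of $G(\Dcal)$; this holds because exogenous nodes are sources. One must also make the exogenous-restriction refinement of Lemma~\ref{lem:acy_factorisation} rigorous while preserving measurability and adaptedness.
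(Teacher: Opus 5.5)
Your part (i) and the edge bookkeeping in (ii) follow the paper's proof. The paper also builds $I^{[V]}$ recursively along the topological order of SCCs. It identifies this map both as the solution of $\Dcal^{\mathrm{acy}}$ and, via the proof of Lemma~\ref{lem:scc_composition}, as the essentially unique solution function of $\Dcal$. It then splits edges $u\to v$ with $v\in C$ into $u\in\pa(C)$ and $u\in W$.

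The gap is in the one step where you go beyond the paper, the exogenous case. The paper only asserts that $I^{[C]}_v$ can \emph{essentially} depend on $x_u$, $u\in W$, if some $\Phi_{v'}$ with $v'\in C$ does. You instead freeze $x_{W\setminus W_C}$ at a constant, and your justification for this is wrong. The fixed-point and uniqueness properties of $I^{[C]}$ hold only $\PP(X_W)$-a.s., and the set $\{x_{W\setminus W_C}=y\}$ is typically $\PP(X_W)$-null. The fact that $\Phi_C$ ignores these coordinates constrains the equation, not the values of $I^{[C]}$ on that null set. Unlike the endogenous coordinates frozen in Lemma~\ref{lem:acy_factorisation}, an exogenous constant is not an admissible input, so ``absorbing'' it into ignored arguments does not help.

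Here is a counterexample. Let $J_0(x_{\pa(C)},x_{W_C})$ be a solution function, take $u\in\pa(C)$ and $v\in C$, and let $X_{w'}$ be a Brownian motion for some $w'\in W\setminus W_C$. Add $\I\{t=T,\ x_u(0)=x_{w'}(T)\}$ to the $v$-component of $J_0$. The result is still a measurable, adapted solution function. Indeed, for every adapted input, $\varphi_u(X_W)(0)$ is a function of $X_W(0)$, while $X_{w'}(T)-X_{w'}(0)$ is an independent Gaussian, so the event has probability zero. Now freeze $x_{w'}$ at any $y$ and feed in the deterministic input $x_u\equiv y_{w'}(T)$. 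The output differs at time $T$ from the unique fixed point, surely.

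The same example shows that (ii) cannot hold for an \emph{arbitrary} acyclification if edges are read off literally. You should therefore either say you prove (ii) for a suitably chosen acyclification (which is all Theorem~\ref{thm:sigma_sep_mp} needs), or read edges modulo null modifications, as the paper implicitly does.

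What does hold follows from independence of the exogenous processes and Fubini. Fix an adapted input $\varphi$. For $\PP(X_{W\setminus W_C})$-a.a.\ $y$, the map $x_W\mapsto I^{[C]}(\varphi(x_W),x_{W_C},y)$ is an adapted a.s.\ fixed point. To see this, apply the fixed-point property to the inputs $x_W\mapsto\varphi(x_{W_C},z)$ for fixed $z$, then integrate over $z$. By essential uniqueness, this map is then a.s.\ equal to $I^{[C]}(\varphi(X_W),X_W)$.

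The exceptional $y$-set depends on $\varphi$, so no single frozen value works for all inputs. A genuine $W_C$-measurable solution function needs a different construction, for example the a.s.-constant value of $y\mapsto I^{[C]}(x_{\pa(C)},x_{W_C},y)$, with measurability and adaptedness still to be checked.

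Alternatively, note that (i) and the Markov property only involve one input per SCC, namely $I^{[V]}_{\pa(C)}(X_W)$, and a Fubini-good $y$ exists for it. Freezing at that $y$ gives an acyclic SCM with the same joint law whose mechanisms literally depend only on $\pa(C)\cup W_C$. Those mechanisms need not be solution functions in the sense of Definition~\ref{def:weak_acyclification}, however.
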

\begin{proof}
    Let $C_1 < \cdots < C_k$ be a topological ordering of the DAG of SCCs of $G(\Dcal)$.

    \emph{(i)}
    Define $I^{[V]} : \Xcal_W \to \Xcal_V$ recursively along the topological order of SCCs:
    \begin{equation*}
        I^{[V]}_{C_i}(x_W) := I^{[C_i]}\bigl(I^{[V]}_{\pa(C_i)}(x_W), x_W\bigr), \qquad i = 1, \ldots, k.
    \end{equation*}
    By acyclicity of $G(\Dcal^{\mathrm{acy}})$, this recursive construction yields an adapted solution function of $\Dcal^{\mathrm{acy}}$. As shown in the proof of Lemma~\ref{lem:scc_composition} the same $I^{[V]}$ is an essentially unique solution function of $\Dcal$ w.r.t.\ $V$. Hence under both $\Dcal$ and $\Dcal^{\mathrm{acy}}$ the endogenous variables are given $\PP$-a.s.\ by the same measurable map $X_V = I^{[V]}(X_W)$ of the common exogenous $X_W$, so the joint law of $(X_V, X_W)$ is in either case the pushforward of $\PP(X_W)$ under $x_W \mapsto (I^{[V]}(x_W), x_W)$; therefore $\PP_{\Dcal}(X_V, X_W) = \PP_{\Dcal^{\mathrm{acy}}}(X_V, X_W)$.

    \emph{(ii)}
    Let $v \in V$ and write $C$ for the SCC of $v$ in $G(\Dcal)$. By construction, $I^{[C]}_v$ depends only on $x_{\pa(C)}$ and $x_W$, so any directed edge $u \to v$ in $G^+(\Dcal^{\mathrm{acy}})$ has $u \in \pa(C) \cup W$. If $u \in \pa(C)$, then by definition of $\pa(C)$ there exists $v' \in C$ with $u \to v'$ in $G(\Dcal)$, hence $u \to v$ lies in $\tilde G^+(\Dcal)$. If $u \in W$, then since $I^{[C]}$ is the essentially unique solution of $X_C = \Phi_C(X_C, X_{\pa(C)}, X_W)$, its component $I^{[C]}_v$ can depend on $x_u$ only if some mechanism $\Phi_{v'}$ with $v' \in C$ essentially depends on $x_u$, that is, $u \to v'$ in $G^+(\Dcal)$ for some $v' \in C$; the graphical acyclification therefore puts $u \to v$ in $\tilde G^+(\Dcal)$.
\end{proof}

\sigmasepmarkov*
\begin{proof}
    Let $\tilde G^+(\Dcal)$ be a graphical acyclification of $G^+(\Dcal)$. By \cite[Proposition 3.5.2]{forre2025mathematical}, $A \Perp_{G^+(\Dcal)}^\sigma B \given C \iff A \Perp_{\tilde G^+(\Dcal)}^d B \given C$. By Lemma~\ref{lem:weak_acy}(ii), $G^+(\Dcal^{\mathrm{acy}}) \subseteq \tilde G^+(\Dcal)$, and removing edges preserves $d$-separation so $A \Perp_{\tilde G^+(\Dcal)}^d B \given C \implies A \Perp_{G^+(\Dcal^{\mathrm{acy}})}^d B \given C$. Since $G(\Dcal^{\mathrm{acy}})$ is acyclic, the $d$-separation global Markov property for acyclic SCMs \cite[Theorem 6.3]{bongers2021foundations} applies:
    \begin{equation*}
        A \Perp_{G^+(\Dcal^{\mathrm{acy}})}^d B \given C \implies X_A \Indep_{\PP_{\Dcal^{\mathrm{acy}}}} X_B \given X_C.
    \end{equation*}
    By Lemma~\ref{lem:weak_acy}(i), $\Dcal$ and $\Dcal^{\mathrm{acy}}$ induce the same joint law $\PP(X_V, X_W)$, so the conditional independence---which may involve exogenous variables, as $A, B, C \subseteq V \cup W$---transfers to $\Dcal$.
\end{proof}

\subsection{Proofs of Section \ref{sec:dsep_mp}}

\dsepsummaryeuler*
\begin{proof}
    Let $A, B, C \subseteq V$ be such that $X_A^\Delta \nPerp_{G(\Mcal_\Dcal^\Delta)}^d X_B^\Delta \given X_C^\Delta$, then we also have a $d$-connection in the augmented graph $X_A^\Delta \nPerp_{G^+(\Mcal_\Dcal^\Delta)}^d X_B^\Delta \given X_C^\Delta$, so let $\pi^{\Delta} = (v_0^{t_0}, ..., v_n^{t_n})$ be an active path in $G^+(\Mcal_\Dcal^\Delta)$ from $X_A^\Delta$ to $X_B^\Delta$ (given $X_C^\Delta$), where for ease of notation we write for any component $B_v^{[t_k, t_{k+1}]} = v^{t_{k+1}}$, for $v\in W$. This induces a walk $\pi := (v_0, ..., v_n)$ that is active given $C$:
    \begin{itemize}
        \item for every collider $v$ on $\pi$, there is a $t_j$ such that $v^{t_j}$ is a collider on $\pi^\Delta$, and since $v^{t_j} \in \Anc(X_C^\Delta)$, a directed path from $v^{t_j}$ to some $c^{t_i}$ with $c \in C$ in $G^+(\Mcal_\Dcal^\Delta)$ projects onto a directed path from $v$ to $c$ in $G^+(\Dcal)$, so $v\in \Anc(C)$;
        \item for every non-collider $v$ on $\pi$, we have
        \begin{itemize}
            \item $w\to v\to z$ with possibly $w=v$ and possibly $z=v$, and a corresponding non-collider $v^{t_j}$ in $w^{t_{j-1}} \to v^{t_j} \to z^{t_{j+1}}$ we have $v^{t_j}\notin X_C^{\Delta}$ and hence $v\notin C$;
            \item $w\ot v\to z$ then either $v\notin V$ so $v\notin C$, or if $v\in V$ then (with possibly either $w=v$ or $z=v$) there is a corresponding non-collider $v^{t_j}$ in $w^{t_{j+1}} \ot v^{t_j} \to z^{t_{j+1}}$ we have $v^{t_j}\notin X_C^{\Delta}$ and hence $v\notin C$;
            \item we have $v_0^{t_0}\notin X_C^\Delta$ and $v_n^{t_n}\notin X_C^\Delta$, and hence $v_0 \notin C$ and $v_n \notin C$.
        \end{itemize}
    \end{itemize}
    From $\pi$, construct the walk $\pi'$ in $G^+(\Dcal)$ between $A$ and $B$ by replacing every maximal subwalk of the form $v_j \to \cdots \to v_j$ by the single vertex $v_j$, which retains the boundary edges the subwalk had with the rest of $\pi$. Such a subwalk is a directed cycle $v_j \to w_1 \to \cdots \to w_k \to v_j$, visiting $v_j$ at its start and end. Its first occurrence of $v_j$ is the tail of the cycle --- incident to the outgoing edge $v_j \to w_1$ --- so it is a non-collider in $\pi$ and $v_j \notin C$; hence if $v_j$ is a non-collider in $\pi'$, $\pi'$ stays active there. If $v_j$ is a collider in $\pi'$ (both boundary edges point into $v_j$), then its last occurrence is the head of the cycle --- incident to the incoming edge $w_k \to v_j$ --- and, together with the incoming boundary edge, has two incoming edges, so it is a collider in $\pi$, giving $v_j \in \Anc(C)$. The interior cycle vertices are non-colliders on a directed path, hence not in $C$, so removing them does not affect activeness. (This is the self-loop argument of Theorem~\ref{thm:self_cyclic_graph_separations} (above), applied to the directed cycle.) Hence $\pi'$ is active given $C$, and so $X_A \nPerp_{G^+(\Dcal)}^d X_B \given X_C$, or equivalently $X_A \nPerp_{G(\Dcal)}^d X_B \given X_C$.

    Since $\Mcal_\Dcal^\Delta$ is acyclic, the $d$-separation Markov property gives $X_A^\Delta \Indep X_B^\Delta \given X_C^\Delta$.
\end{proof}

\begin{lemapp}\label{lem:euler_decomposition}
    Let $\Dcal$ satisfy Assumption~\ref{ass:additive_noise} and fix a grid $t_k = Tk/n$ ($k = 0, \ldots, n$). For each cell $[t_k, t_{k+1}]$ let $\bar X_\gamma^k$ be the Brownian bridge of $X_\gamma$,
    \begin{equation*}
        \bar X_\gamma^k(t) := \bigl(X_\gamma(t) - X_\gamma(t_k)\bigr) - \tfrac{t-t_k}{t_{k+1}-t_k}\bigl(X_\gamma(t_{k+1}) - X_\gamma(t_k)\bigr),
    \end{equation*}
    and set $\tilde X_V^k := \Sigma\bar X_\gamma^k$; for $S \subseteq V$ write $\tilde X_S^k := (\tilde X_v^k)_{v \in S}$ and $\tilde X_S := (\tilde X_S^k)_{k=0}^{n-1}$. Then the continuous Euler scheme $X_V^n$ of~\eqref{eqn:euler_continuous} satisfies, for every $S \subseteq V$,
    \begin{equation*}
        \sigma(X_S^n) = \sigma(X_S^\Delta) \vee \sigma(\tilde X_S).
    \end{equation*}
    Moreover, $X_V^\Delta$ and $\tilde X_V$ are independent.
\end{lemapp}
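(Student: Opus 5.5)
The plan is to make the continuous Euler scheme explicit on each cell and read off both claims from that formula. On $[t_k, t_{k+1})$, equation~\eqref{eqn:euler_continuous} gives, after adding and subtracting the linear interpolation of the Brownian increment,
\begin{equation*}
    X_V^n(t) = X_V^\Delta(t_k) + \tfrac{t-t_k}{t_{k+1}-t_k}\Bigl(\mu(t_k, X_V^\Delta(t_k))(t_{k+1}-t_k) + \Sigma\bigl(X_\gamma(t_{k+1})-X_\gamma(t_k)\bigr)\Bigr) + \Sigma\bar X_\gamma^k(t).
\end{equation*}
By the Euler recursion, the bracketed term equals $X_V^\Delta(t_{k+1}) - X_V^\Delta(t_k)$. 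Here I use that $X_V^n(t_k) = X_V^\Delta(t_k)$, which follows by induction on $k$ since $X_\gamma$ is continuous. The scheme therefore takes the form
\begin{equation*}
    X_V^n(t) = X_V^\Delta(t_k) + \tfrac{t-t_k}{t_{k+1}-t_k}\bigl(X_V^\Delta(t_{k+1}) - X_V^\Delta(t_k)\bigr) + \tilde X_V^k(t),
\end{equation*}
that is, the linear interpolation of the discrete Euler values plus the bridge $\tilde X_V^k = \Sigma \bar X_\gamma^k$. The key point is that this identity holds coordinatewise: for each $v\in V$, the path $X_v^n$ depends only on $(X_v^\Delta, \tilde X_v)$, with no cross-coordinate dependence.

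For the $\sigma$-algebra identity, take $S\subseteq V$. The coordinatewise formula shows that $X_S^n$ is a measurable function of $(X_S^\Delta, \tilde X_S)$. Measurability into the Skorokhod space holds because a continuous interpolation is a measurable map and evaluations generate the Borel $\sigma$-algebra. This gives the inclusion $\subseteq$. Conversely, $X_S^\Delta(t_k) = X_S^n(t_k)$ is a coordinate evaluation of $X_S^n$. Then $\tilde X_S^k(t) = X_S^n(t) - [\text{interpolation of } X_S^n(t_k), X_S^n(t_{k+1})]$ is a measurable function of $X_S^n$, which gives $\supseteq$.

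For independence, $X_V^\Delta$ is a measurable function of $X_\alpha^0$ and the grid increments $(X_\gamma(t_{k+1})-X_\gamma(t_k))_k$. The bridges $(\bar X_\gamma^k)_k$ are independent of $X_\alpha^0$ by assumption. It remains to show they are jointly independent of the grid increments. Within a cell this is the classical fact that the Brownian bridge is independent of the endpoint increment: the pair is jointly Gaussian, and the covariance $\mathrm{Cov}(\bar X^k(t), X(t_{k+1})-X(t_k))$ equals $(t-t_k) - \tfrac{t-t_k}{t_{k+1}-t_k}(t_{k+1}-t_k) = 0$, applied componentwise since the components are independent. Across cells the claim follows from the independence of Brownian increments over disjoint cells, since each cell's bridge and increment are functions of that cell's increment path. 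Applying the deterministic map $\Sigma$ preserves independence, so $X_V^\Delta \Indep \tilde X_V$.

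I expect the only delicate step to be the measurability and "generation" bookkeeping in the $\sigma$-algebra equality. This means ensuring that the coordinate projections, interpolations and subtractions are Borel on $D([0,T],\RR)$ with the $J_1$ topology. I would handle it by invoking that the Borel $\sigma$-algebra of $D$ is generated by the evaluation maps, and checking the identity at the level of evaluations at each fixed $t$.
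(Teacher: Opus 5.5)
Your proposal is correct and follows the paper's proof essentially step for step. It uses the same rewriting of the continuous Euler scheme on each cell as the linear interpolation of the grid values plus $\tilde X_V^k$, the same coordinatewise two-sided reading-off of $\sigma(X_S^n)=\sigma(X_S^\Delta)\vee\sigma(\tilde X_S)$, and the same within-cell zero-covariance plus disjoint-cell-increments argument for $X_V^\Delta \Indep \tilde X_V$. One step deserves more care: the independence step needs $X_\alpha^0 \Indep X_\gamma$ for the whole process $X_\gamma$, not just independence of the bridges from $X_\alpha^0$ and from the increments separately, and Assumption~\ref{ass:additive_noise} provides exactly this.
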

\begin{proof}
    Write $\Delta_k X_\gamma := X_\gamma(t_{k+1}) - X_\gamma(t_k)$. At the grid points $X_V^n(t_k) = X_V^\Delta(t_k)$, so on $[t_k, t_{k+1}]$ the continuous Euler scheme~\eqref{eqn:euler_continuous} reads $X_V^n(t) = X_V^\Delta(t_k) + \mu(t_k, X_V^\Delta(t_k))(t-t_k) + \Sigma\bigl(X_\gamma(t) - X_\gamma(t_k)\bigr)$. Substituting the gridpoint recursion $\mu(t_k, X_V^\Delta(t_k))(t_{k+1}-t_k) = X_V^\Delta(t_{k+1}) - X_V^\Delta(t_k) - \Sigma\Delta_k X_\gamma$ and collecting terms,
    \begin{equation*}
        X_V^n(t) = X_V^\Delta(t_k) + \tfrac{t-t_k}{t_{k+1}-t_k}\bigl(X_V^\Delta(t_{k+1}) - X_V^\Delta(t_k)\bigr) + \tilde X_V^k(t).
    \end{equation*}
    For $C\subseteq V$, $X_C^n$ is determined by $X_C^\Delta$ and $\tilde X_C$, and conversely $X_C^\Delta(t_k) = X_C^n(t_k)$ and $\tilde X_C^k(t) = X_C^n(t) - X_C^n(t_k) - \tfrac{t-t_k}{t_{k+1}-t_k}\bigl(X_C^n(t_{k+1}) - X_C^n(t_k)\bigr)$, so $\sigma(X_C^n) = \sigma(X_C^\Delta, \tilde X_C)$.

    On each cell the Brownian bridge $\bar X_\gamma^k$ and the increment $\Delta_k X_\gamma$ are jointly Gaussian with $\mathrm{Cov}\bigl(\bar X_\gamma^k(t), \Delta_k X_\gamma\bigr) = (t - t_k) I_d - \tfrac{t-t_k}{t_{k+1}-t_k}(t_{k+1}-t_k) I_d = 0$ for every $t \in [t_k, t_{k+1}]$, hence independent; bridges and increments on disjoint cells are independent by the independence of Brownian increments. Together with the independence of $X_\alpha^0$ and $X_\gamma$, the bridges $\tilde X_V$ are independent of $\sigma(X_\alpha^0, \{\Delta_k X_\gamma\}_k) \supseteq \sigma(X_V^\Delta)$, i.e.\ $X_V^\Delta \Indep \tilde X_V$.
\end{proof}

\begin{lemapp}\label{lem:mp_brownian_bridge}
    Let $\Dcal$ satisfy Assumption~\ref{ass:additive_noise}, and let $\tilde X_V$ be the Brownian bridges of Lemma~\ref{lem:euler_decomposition}. Then for all $A, B, C \subseteq V$,
    \begin{equation*}
        X_A \Perp^d_{G(\Dcal)} X_B \given X_C \implies \tilde X_A \Indep \tilde X_B \given \tilde X_C.
    \end{equation*}
\end{lemapp}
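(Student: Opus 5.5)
The plan is to observe that the bridge vector $\tilde X_V$ is itself generated by an acyclic SCM without endogenous-to-endogenous edges, whose graph is a subgraph of $G(\Dcal)$, and then to invoke the $d$-separation Markov property for acyclic SCMs \cite[Theorem 6.3]{bongers2021foundations}, as in the proof of Theorem~\ref{thm:sigma_sep_mp}.

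\emph{Step 1: the bridge SCM.} For each $w \in \gamma$ define the exogenous variable
\begin{equation*}
    \bar X_w := (\bar X_w^k)_{k=0}^{n-1},
\end{equation*}
which takes values in a finite product of path spaces and hence in a standard Borel space.

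\begin{itemize}
    \item The components of $X_\gamma$ are independent standard Brownian motions, and each $\bar X_w^k$ is a measurable functional of $X_w$ alone. Hence the family $(\bar X_w)_{w\in\gamma}$ is mutually independent.
    \item For each $v \in V$ the bridge satisfies the structural equation
    \begin{equation*}
        \tilde X_v = \Bigl(\sum_{w\in\gamma(v)} \Sigma_{vw}\,\bar X_w^k\Bigr)_{k=0}^{n-1}, \qquad \gamma(v) = \{w\in\gamma : \Sigma_{vw}\neq 0\},
    \end{equation*}
    with $\gamma(v)$ as in the proof of Theorem~\ref{thm:ess_solvable_under_additive}.
\end{itemize}

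This defines an SCM $\Mcal^\sim$ with endogenous index set $V$ and exogenous index set $\gamma$. Every structural equation depends only on exogenous variables, so $\Mcal^\sim$ is acyclic and its observational distribution is exactly the law of $\tilde X_V$.

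\emph{Step 2: graph comparison.} The augmented graph $G^+(\Mcal^\sim)$ has only the edges $w \to v$ with $\Sigma_{vw}\neq 0$. These same edges appear in $G^+(\Dcal)$, because $h_v(s,X_{\gamma(v)}) = (s, X_{\gamma(v)}(s))$ is non-constant in every $X_w$ with $w \in \gamma(v)$. Consequently:
\begin{itemize}
    \item $G(\Mcal^\sim)$ has no directed edges.
    \item Its bidirected edges $u \leftrightarrow v$ arise exactly when $\gamma(u)\cap\gamma(v)\neq\emptyset$, and each such edge is also present in $G(\Dcal)$.
\end{itemize}
So $G(\Mcal^\sim)$ is a subgraph of $G(\Dcal)$ on the same vertex set $V$.

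\emph{Step 3: separation passes to the subgraph.} I will check that $d$-separation is preserved when passing to a subgraph. Take any walk in $G(\Mcal^\sim)$ that is $d$-active given $C$.
\begin{itemize}
    \item Its non-colliders and endpoints lie outside $C$, and this condition does not depend on the ambient graph.
    \item Its colliders lie in $\Anc_{G(\Mcal^\sim)}(C) \subseteq \Anc_{G(\Dcal)}(C)$.
\end{itemize}
Hence the same walk is $d$-active in $G(\Dcal)$. By contraposition, $A \Perp^d_{G(\Dcal)} B \given C$ implies $A \Perp^d_{G(\Mcal^\sim)} B \given C$.

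Self-loops play no role in this argument, by Theorem~\ref{thm:self_cyclic_graph_separations}. The augmented versions of the graphs are handled with the same reasoning, via the standard correspondence between separation in $G$ and in $G^+$ for endogenous $A,B,C$.

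\emph{Step 4: conclusion and main difficulty.} Since $\Mcal^\sim$ is acyclic, \cite[Theorem 6.3]{bongers2021foundations} applied to $\Mcal^\sim$ turns $A \Perp^d_{G(\Mcal^\sim)} B \given C$ into $\tilde X_A \Indep \tilde X_B \given \tilde X_C$.

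I expect the main obstacle to be Step 2, specifically making sure that bidirected edges in $G(\Dcal)$ are defined through the same exogenous Brownian coordinates that drive the bridges. The bridges are built from $\Sigma$ applied to all of $X_\gamma$, so this relies on reading $\gamma(v)$ as the support of row $v$ of $\Sigma$, exactly as in the proof of Theorem~\ref{thm:ess_solvable_under_additive}.

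A secondary check is the mutual independence of the $\bar X_w$ across $w$. Independence across cells is not needed here, since each $\bar X_w$ already bundles all cells of coordinate $w$ together.
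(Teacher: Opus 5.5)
Your proof is correct and follows the paper's argument. In both, the bridges form an acyclic linear SCM with no directed edges, whose bidirected edges come from shared Brownian coordinates and hence lie in $G(\Dcal)$, so removing edges preserves $d$-separation and the acyclic Markov property gives the conclusion. The only difference is that you bundle all cells of each Brownian coordinate into a single exogenous variable, whereas the paper works cell by cell and then combines the per-cell independencies using independence of the bridges across cells; your bundling makes that final combination step unnecessary.
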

\begin{proof}
    Fix an interval $k$. The components $\bar X_{\gamma_w}^k$ ($w \in \gamma$) of the vector Brownian bridge $\bar X_\gamma^k$ are mutually independent and jointly Gaussian (as $X_\gamma$ has independent components), and $\tilde X_v^k = \sum_{w\in\gamma}\Sigma_{vw}\bar X_{\gamma_w}^k$. Hence $\tilde X_V^k = (\tilde X_v^k)_{v\in V}$ is the solution of the acyclic linear structural causal model with mutually independent exogenous variables $(\bar X_{\gamma_w}^k)_{w\in\gamma}$ and structural equations $\tilde X_v^k = \Sigma_{v\cdot}\bar X_\gamma^k$, whose latent projection onto $V$ has no directed edges and a bidirected edge $u \leftrightarrow v$ exactly when $u$ and $v$ share a driving component, i.e.\ $\Sigma_{uw} \neq 0 \neq \Sigma_{vw}$ for some $w \in \gamma$. Since the initial-condition exogenous variables $\alpha$ are in bijection with $V$ and contribute no bidirected edges, these coincide with the bidirected edges of $G(\Dcal)$ (Definition~\ref{def:sde_graph}); as the projection has no further edges, it is a subgraph of $G(\Dcal)$. Removing edges preserves $d$-separation, so $X_A \Perp^d_{G(\Dcal)} X_B \given X_C$ implies the $d$-separation of $A$ and $B$ given $C$ in this projection, and the $d$-separation global Markov property for acyclic SCMs \citep[Theorem 6.3]{bongers2021foundations} gives $\tilde X_A^k \Indep \tilde X_B^k \given \tilde X_C^k$. Since the bridges on distinct intervals are independent, $\tilde X_A \Indep \tilde X_B \given \tilde X_C$.
\end{proof}

\continuouseulerdsepmp*
\begin{proof}
    By Lemma~\ref{thm:dsep_summary_euler}, $X_A \Perp^d_{G(\Dcal)} X_B \given X_C$ implies $X_A^\Delta \Indep X_B^\Delta \given X_C^\Delta$ and by Lemma~\ref{lem:mp_brownian_bridge} we have $\tilde X_A \Indep \tilde X_B \given \tilde X_C$. Since $X_V^\Delta$ and $\tilde X_V$ are independent by Lemma~\ref{lem:euler_decomposition}, this combines into the conditional independence $(X_A^\Delta, \tilde X_A) \Indep (X_B^\Delta, \tilde X_B) \given (X_C^\Delta, \tilde X_C)$. By Lemma~\ref{lem:euler_decomposition}, $\sigma(X_S^n) = \sigma(X_S^\Delta) \vee \sigma(\tilde X_S)$ for each $S \in \{A, B, C\}$, so that $X_A^n$ and $X_B^n$ are measurable functions of $(X_A^\Delta, \tilde X_A)$ and $(X_B^\Delta, \tilde X_B)$, while conditioning on $X_C^n$ coincides with conditioning on $(X_C^\Delta, \tilde X_C)$; hence this gives $X_A^n \Indep X_B^n \given X_C^n$.
\end{proof}

\begin{lemapp}\label{thm:gauss_bound}
    Let $\Dcal$ satisfy Assumption~\ref{ass:additive_noise}. Then for any $\delta > 0$, the process $G(t) := \Sigma X_\gamma(t)$ satisfies
    \begin{equation*}
        \EE\left[\exp(3\delta \sup_{0\leq s\leq T}\|G(s)\|^2)\right] < \infty
    \end{equation*}
    whenever $6\delta T\|\Sigma\|_{\mathrm{op}}^2 < 1$.
\end{lemapp}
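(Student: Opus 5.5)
The plan is to reduce to a Fernique-type bound for the scalar supremum of Brownian motion. First, use $\|G(s)\| \le \|\Sigma\|_{\mathrm{op}}\|X_\gamma(s)\|$ to get
\begin{equation*}
    \sup_{0\le s\le T}\|G(s)\|^2 \le \|\Sigma\|_{\mathrm{op}}^2 \sup_{0\le s\le T}\|X_\gamma(s)\|^2 \le \|\Sigma\|_{\mathrm{op}}^2 \sum_{w\in\gamma} \sup_{0\le s\le T} |X_w(s)|^2 .
\end{equation*}
Since the components $X_w$ ($w\in\gamma$) are independent standard Brownian motions, the exponential of this bound factorises. Writing $c := 3\delta\|\Sigma\|_{\mathrm{op}}^2$, it therefore suffices to show $\EE[\exp(c \sup_{s\le T} |B(s)|^2)] < \infty$ for a one-dimensional standard Brownian motion $B$ whenever $2cT<1$. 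This condition is exactly the hypothesis $6\delta T\|\Sigma\|_{\mathrm{op}}^2<1$.

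For the scalar bound, let $M := \sup_{s\le T}|B(s)|$. The reflection principle gives $\PP(\sup_{s\le T} B(s) \ge x) = 2\PP(B(T)\ge x)$, and applying it to both $B$ and $-B$ yields
\begin{equation*}
    \PP(M\ge x) \le 4\PP(B(T)\ge x) \le 4 e^{-x^2/(2T)}.
\end{equation*}
The layer-cake formula then gives
\begin{equation*}
    \EE[e^{cM^2}] = 1 + \int_0^\infty 2cx\, e^{cx^2}\,\PP(M\ge x)\diff x \le 1 + 8c\int_0^\infty x\, e^{-(1/(2T)-c)x^2}\diff x,
\end{equation*}
and the last integral is finite precisely when $c<1/(2T)$.

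Alternatively, Doob's $L^p$ inequality applied to the submartingale $e^{\lambda B}$ would work, but the reflection route gives the sharp constant directly.

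The only delicate step is bookkeeping of constants: the bound $\|x\|^2 \le \sum_w x_w^2$ is an equality, so there is no loss there, and the factorisation into $d$ independent terms just multiplies $d$ finite quantities. The threshold therefore comes out as exactly $c<1/(2T)$, with the slack $6$ versus $2\cdot 3$ matching precisely. I don't expect a real obstacle beyond citing the reflection principle correctly.
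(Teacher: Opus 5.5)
Your proposal is correct and follows the paper's proof essentially step for step: the operator-norm bound, factorisation over the independent Brownian components, the union bound with the reflection principle, and the layer-cake integral with threshold $c<1/(2T)$. The only difference is that you use the cruder tail bound $\PP(B(T)\ge x)\le e^{-x^2/(2T)}$ instead of the paper's $\tfrac12 e^{-x^2/(2T)}$, which changes only the constant in front of an integral that is finite in either case.
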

\begin{proof}
    Note that $\|G(s)\| \leq \|\Sigma\|_{\mathrm{op}}\|X_\gamma(s)\|$ and $\|X_\gamma(s)\|^2 = \sum_{i=1}^d W_i(s)^2$, where $W_i := X_{\gamma_i}$ are independent standard Brownian motions. Writing $M_i := \sup_{0\leq s\leq T}|W_i(s)|$, we have $\sup_{0\leq s\leq T}\|G(s)\|^2 \leq \|\Sigma\|_{\mathrm{op}}^2 \sum_{i=1}^d M_i^2$, and by independence of the components
    \begin{equation*}
        \EE\left[\exp\left(3\delta \sup_{0\leq s\leq T}\|G(s)\|^2\right)\right] \leq \prod_{i=1}^d \EE\left[\exp\left(3\delta\|\Sigma\|_{\mathrm{op}}^2 M_i^2\right)\right].
    \end{equation*}
    The event $\{M_i \geq a\}$ is contained in $\{\sup_{0\leq s\leq T} W_i(s) \geq a\} \cup \{\sup_{0\leq s\leq T}(-W_i(s)) \geq a\}$, so by a union bound and the symmetry of $W_i$, the reflection principle $\PP(\sup_{0\leq s\leq T} W_i(s) \geq a) = 2\PP(W_i(T) \geq a)$, and the Gaussian tail bound $\PP(W_i(T) \geq a) \leq \tfrac{1}{2}e^{-a^2/2T}$,
    \begin{equation*}
        \PP(M_i \geq a) \leq 2\,\PP\Big(\sup_{0\leq s\leq T} W_i(s) \geq a\Big) = 4\,\PP(W_i(T) \geq a) \leq 2e^{-a^2/2T}.
    \end{equation*}
    With $\lambda := 3\delta\|\Sigma\|_{\mathrm{op}}^2 > 0$,
    \begin{equation*}
        \EE\left[\exp(\lambda M_i^2)\right] = 1 + \int_0^\infty 2\lambda a\, e^{\lambda a^2}\,\PP(M_i \geq a)\diff a \leq 1 + 4\lambda\int_0^\infty a\, e^{-(\frac{1}{2T} - \lambda)a^2}\diff a = 1 + \frac{4T\lambda}{1 - 2T\lambda},
    \end{equation*}
    which is finite whenever $\lambda < 1/(2T)$, i.e.\ $6\delta T\|\Sigma\|_{\mathrm{op}}^2 < 1$. As a finite product of finite factors, $\EE[\exp(3\delta \sup_{0\leq s\leq T}\|G(s)\|^2)]$ is then finite.
\end{proof}

\eulertvconvergence*
\begin{proof}
    Without loss of generality, assume that the linear growth coefficient $K(t)$ of $\mu$ satisfies $1 \leq K := \sup_{0\leq t \leq T}|K(t)| < \infty$. For readability, write $X := X_V$, $X_0 := X_\alpha^0$ and $W := X_\gamma$. Define $M := \|\Sigma^{-1}\|_{\mathrm{op}}$, the process $G(t) := \Sigma W(t)$, and $\theta(s, x) := \Sigma^{-1}\mu(s,x)$ for $x \in \RR^d$; for a path or process $y$ we abbreviate $\mu(s, y) := \mu(s, y(s))$ and $\theta(s, y) := \Sigma^{-1}\mu(s, y(s))$.

    \paragraph{Step 1: Disintegration with respect to initial condition.}
    By Theorem \ref{thm:ito_map}, $X = I^{[V]}(X_0, W)$ where $I^{[V]}$ is the adapted solution function. For any deterministic $x_0\in\RR^d$, the process $X^{x_0} := I^{[V]}(x_0, W)$ is therefore $\Fcal^W$-adapted (and hence so is $\theta(s, X^{x_0})$). We write $\PP_{X^{x_0}}$ for the law of the solution with deterministic initial condition $x_0$. Since $X_0$ is independent of $W$, the law with deterministic initial condition is a version of the conditional law, i.e.\ $\PP(X\in\cdot\given X_0 = x_0) = \PP(I^{[V]}(x_0, W)\in\cdot) = \PP_{X^{x_0}}(\cdot)$, and hence the law of $X$ disintegrates as $\PP(X \in \cdot) = \int \PP_{X^{x_0}}(\cdot)\diff\PP_{X_0}(x_0)$. The same holds for the continuous Euler scheme $X^{x_0,n}$ with initial condition $x_0$.

    \paragraph{Step 2: Density of the centred process $\PP_{Y^{x_0}}$ with respect to $\PP_G$.}
    Fix $x_0\in\RR^d$ and let $Y^{x_0} := X^{x_0} - x_0$, which starts at $Y^{x_0}(0) = 0$; we compute its density with respect to $\PP_G$ (the law $\PP_{X^{x_0}}$ itself is supported on paths started at $x_0$ and is mutually singular with $\PP_G$ for $x_0\neq 0$). The SDE for $X^{x_0}$ gives
    \begin{equation*}
        \|X^{x_0}(t)\| \leq \|x_0\| + \int_0^t\|\mu(s,X^{x_0})\|\diff s + \|G(t)\| \leq (\|x_0\| + Kt + \sup_{0\leq s\leq t}\|G(s)\|) + K\int_0^t\sup_{0\leq u\leq s}\|X^{x_0}(u)\|\diff s,
    \end{equation*}
    where we used the linear growth condition $\|\mu(s,X^{x_0})\| \leq K(1+\sup_{0\leq u\leq s}\|X^{x_0}(u)\|)$. Gr\"onwall's inequality then gives $\sup_{0\leq t \leq T}\|X^{x_0}(t)\| \leq (\|x_0\| + KT + \sup_{0\leq s\leq T}\|G(s)\|)e^{KT}$, so for $\delta>0$ we have
    \begin{align*}
        \EE\left[\exp\left(\delta e^{-2KT} \sup_{0\leq t \leq T}\|X^{x_0}(t)\|^2\right)\right] & \leq \exp(3\delta \|x_0\|^2)\exp(3\delta K^2T^2)\EE\left[\exp(3\delta \sup_{0\leq s\leq T}\|G(s)\|^2)\right]
    \end{align*}
    which is finite for sufficiently small $\delta$ by Lemma \ref{thm:gauss_bound}. Partitioning $[0,T]$ into intervals of width $t_k - t_{k-1} \leq \delta e^{-2KT} / K^2M^2$, we bound
    \begin{align*}
        \EE\left[\exp\left(\frac{1}{2}\int_{t_{k-1}}^{t_k}\|\theta(s, X^{x_0})\|^2\diff s\right)\right] & \leq \EE\left[\exp\left(\frac{1}{2}M^2 K^2(t_k-t_{k-1})(1+\sup_{0\leq t\leq T}\|X^{x_0}(t)\|)^2\right)\right] \\
                                                                                                        & \leq \EE\left[\exp\left(\delta e^{-2KT}(1+\sup_{0\leq t\leq T}\|X^{x_0}(t)\|^2)\right)\right] < \infty
    \end{align*}
    for each $k$. Since $X^{x_0} = I^{[V]}(x_0, W)$ is $\Fcal^W$-adapted, the piecewise Novikov condition (\citealp{karatzas1988brownian}, Section 3.5, Corollary 5.14) gives $\EE[Z^{x_0}]=1$ for the random variable
    \begin{equation*}
        Z^{x_0} :=
        \exp\left(-\int_0^T \theta(s, X^{x_0})\diff W(s) - \frac{1}{2}\int_0^T \|\theta(s, X^{x_0})\|^2\diff s\right).
    \end{equation*}
    Defining the process $\xi^{x_0}(t) := \int_0^t \theta(s, X^{x_0}) \diff s + W(t)$, by \cite{liptser2001statistics} Theorem 7.3 the laws $\PP_{\xi^{x_0}}$ and $\PP_W$ are equivalent with density
    \begin{equation*}
        \frac{\diff\PP_{\xi^{x_0}}}{\diff\PP_W}(\omega) = \exp\left(\int_0^T \theta(s, x_0 + \int_0^s\Sigma\diff\xi^{x_0}(u)(\omega))\diff \xi^{x_0}(s)(\omega) - \frac{1}{2}\int_0^T \|\theta(s, x_0 + \int_0^s\Sigma\diff\xi^{x_0}(u)(\omega))\|^2\diff s\right),
    \end{equation*}
    where $\omega \in \Omega$ denotes a point in the underlying probability space and we used that $\theta(s, X^{x_0}) = \theta(s, x_0 + \int_0^s\Sigma\diff\xi^{x_0}(u))$ is $\Fcal_T^{\xi^{x_0}}$-measurable. Let $F(W)$ be the solution function of $\int_0^t\Sigma\diff W(s)$, with inverse $F^{-1}(G)$ the solution function of $\int_0^t\Sigma^{-1}\diff G(s)$. Since $Y^{x_0} = X^{x_0} - x_0 = F(\xi^{x_0})$ and $G = F(W)$, pushing forward through $F$ gives $\PP_{Y^{x_0}} \sim \PP_G$ with density
    \begin{equation}\label{eqn:density_x}
        \frac{\diff\PP_{Y^{x_0}}}{\diff\PP_{G}}(G(\omega)) = \exp\left(\int_0^T \theta(s, x_0 + G(\omega))\diff W(s)(\omega) - \frac{1}{2}\int_0^T \|\theta(s, x_0 + G(\omega))\|^2\diff s\right).
    \end{equation}

    \paragraph{Step 3: Density of $\PP_{Y^{x_0,n}}$ with respect to $\PP_G$.}
    The continuous Euler scheme $X^{x_0,n}$ satisfies $X^{x_0,n}(t) = x_0 + \int_0^t \mu_n(s, X^{x_0,n}(s))\diff s + \int_0^t\Sigma\diff W(s)$ with $\mu_n(t, x) = \mu(t_k, x(t_k))$ for $t\in [t_k, t_{k+1})$. Since $t_k \leq t$, the linear growth bound $\|\mu_n(t, x)\| = \|\mu(t_k, x(t_k))\| \leq K(1+\|x(t_k)\|) \leq K(1+\sup_{u\leq t}\|x(u)\|)$ holds, and similarly $\mu_n$ inherits the Lipschitz constant of $\mu$. Defining $\theta_n(t, x) = \Sigma^{-1}\mu_n(t, x)$ and the centred process $Y^{x_0,n} := X^{x_0,n} - x_0$, the same Gr\"onwall and Novikov arguments as in Step 2 give $\PP_{Y^{x_0,n}} \sim \PP_G$ with density
    \begin{equation}\label{eqn:density_xn}
        \frac{\diff\PP_{Y^{x_0,n}}}{\diff\PP_{G}}(G(\omega)) = \exp\left(\int_0^T \theta_n(s, x_0 + G(\omega))\diff W(s)(\omega) - \frac{1}{2}\int_0^T \|\theta_n(s, x_0 + G(\omega))\|^2\diff s\right).
    \end{equation}

    \paragraph{Step 4: Comparing the densities.}
    Comparing (\ref{eqn:density_x}) and (\ref{eqn:density_xn}), the log-density ratio is
    \begin{align}
        \begin{split}
            \label{eqn:log_density_ratio}
            \log\frac{\diff\PP_{Y^{x_0,n}}}{\diff\PP_{Y^{x_0}}}(G(\omega)) &= \int_0^T \big(\theta_n(s, x_0+G(\omega)) - \theta(s, x_0+G(\omega))\big)\diff W(s)(\omega) \\
            &\quad - \frac{1}{2}\int_0^T \big(\|\theta_n(s, x_0+G(\omega))\|^2- \|\theta(s, x_0+G(\omega))\|^2\big)\diff s.
        \end{split}
    \end{align}

    Our goal is to show that the densities $\frac{\diff\PP_{Y^{x_0,n}}}{\diff\PP_G}$ converge to $\frac{\diff\PP_{Y^{x_0}}}{\diff\PP_G}$ almost surely, for almost all $x_0$. By Scheff\'e's Theorem, this gives total variation convergence $\PP_{Y^{x_0,n}} \tvto \PP_{Y^{x_0}}$ for almost all $x_0$. Since $X^{x_0,n} = Y^{x_0,n} + x_0$ and $X^{x_0} = Y^{x_0} + x_0$, and total variation distance is invariant under the bi-measurable translation $y\mapsto y + x_0$, this gives $\PP_{X^{x_0,n}} \tvto \PP_{X^{x_0}}$ for almost all $x_0$, after which the disintegration from the beginning of the proof yields the desired total variation convergence $\PP_{X^n} \tvto \PP_X$. Since the densities are exponentials, it suffices to show that the log-density ratio (\ref{eqn:log_density_ratio}) converges to 0 for $(\PP_{X_0}\otimes\PP)$-almost all $(x_0, \omega)$.

    Our strategy is as follows. We bound both terms in (\ref{eqn:log_density_ratio}) in terms of the single quantity $\int_0^T\|\theta_n(s, x_0+G) - \theta(s, x_0+G)\|^2\diff s$ (Step 5). We then show that this quantity converges to 0 in $L^1(\PP_{X_0}\otimes\PP)$ (Step 6), which by the bounds implies that the log-density ratio converges to 0 in $L^1(\PP_{X_0}\otimes\PP)$ as well. $L^1$ convergence implies convergence in probability, which guarantees the existence of a subsequence $n_m$ along which the log-density ratio converges to 0 for $(\PP_{X_0}\otimes\PP)$-almost all $(x_0, \omega)$, as desired (Step 7).

    \paragraph{Step 5: Bounding the log-density ratio.}
    For the stochastic integral in (\ref{eqn:log_density_ratio}), the It\^o isometry gives
    \begin{equation*}
        \EE\left[\left(\int_0^T \big(\theta_n(s, x_0+G)- \theta(s, x_0+G)\big)\diff W(s)\right)^2\right] = \EE\left[\int_0^T\|\theta_n(s, x_0+G) - \theta(s, x_0+G)\|^2\diff s\right]
    \end{equation*}
    and thus $\EE[|\int(\theta_n-\theta)\diff W(s)|] \leq \EE[|\int(\theta_n-\theta)\diff W(s)|^2]^{1/2} = \EE[\int\|\theta_n-\theta\|^2\diff s]^{1/2}$ by Jensen's inequality.
    For the Lebesgue integral in (\ref{eqn:log_density_ratio}), by the reverse triangle inequality $\big|\|\theta_n\|^2 - \|\theta\|^2\big| = (\|\theta_n\| + \|\theta\|)\big|\|\theta_n\| - \|\theta\|\big| \leq (2\|\theta\| + \|\theta_n - \theta\|)\|\theta_n - \theta\|$, so combined with Cauchy--Schwarz on the product $\|\theta\|\|\theta_n - \theta\|$ we have
    \begin{align*}
        \left|\int_0^T \left(\|\theta_n\|^2 - \|\theta\|^2\right)\diff s\right| & \leq 2\left(\int_0^T\|\theta\|^2\diff s\right)^{1/2}\left(\int_0^T\|\theta_n - \theta\|^2\diff s\right)^{1/2} + \int_0^T\|\theta_n - \theta\|^2\diff s,
    \end{align*}
    and thus by Cauchy--Schwarz on the product $2\left(\int_0^T\|\theta\|^2\diff s\right)^{1/2}\left(\int_0^T\|\theta_n - \theta\|^2\diff s\right)^{1/2}$ we get
    \begin{align*}
        \EE\left[\left|\int_0^T \left(\|\theta_n\|^2 - \|\theta\|^2\right)\diff s\right|\right] & \leq L(x_0)\EE\left[\int_0^T\|\theta_n - \theta\|^2\diff s\right]^{1/2} + \EE\left[\int_0^T\|\theta_n - \theta\|^2\diff s\right],
    \end{align*}
    where $\EE[\int_0^T\|\theta\|^2\diff s] \leq TM^2K^2\EE[(1+\|x_0\|+\sup_u\|G(u)\|)^2] := \frac{1}{4}L(x_0)^2 < \infty$ by linear growth, whose finiteness follows from Doob's maximal inequality (\citealp{protter2005stochastic}, Theorem I.20), applied to the nonnegative submartingale $\|G\|$ (a convex function of the continuous martingale $G = \Sigma W$): $\EE[\sup_u\|G(u)\|^2] \leq 4\EE[\|G(T)\|^2] = 4\mathrm{tr}(\Sigma\Sigma^\top T) < \infty$, where $\Sigma\Sigma^\top T$ is the covariance of $G(T)$. Combining these bounds we obtain
    \begin{multline}
        \label{eqn:log_density_bound}
        \EE\left[\left|\log\frac{\diff\PP_{Y^{x_0,n}}}{\diff\PP_{Y^{x_0}}}(G)\right|\right] \leq \left(1+ \tfrac{1}{2}L(x_0)\right)\EE\left[\int_0^T\|\theta_n - \theta\|^2\diff s\right]^{1/2} + \tfrac{1}{2}\EE\left[\int_0^T\|\theta_n - \theta\|^2\diff s\right].
    \end{multline}
    We aim at showing $L^1(\PP_{X_0}\otimes\PP)$-convergence of the log-density ratio from which we then obtain a subsequence which converges for $(\PP_{X_0}\otimes\PP)$-almost all $(x_0, \omega)$.
    Integrating (\ref{eqn:log_density_bound}) over $X_0$ and applying Cauchy--Schwarz over $\PP_{X_0}$ to the first term gives
    \begin{multline*}
        \EE_{X_0}\left[\EE\left[\left|\log\frac{\diff\PP_{Y^{X_0,n}}}{\diff\PP_{Y^{X_0}}}(G)\right|\right]\right] \leq \EE_{X_0}\left[(1+\tfrac{1}{2}L(X_0))^2\right]^{1/2} \EE_{X_0}\left[\EE\left[\int_0^T\|\theta_n - \theta\|^2\diff s\right]\right]^{1/2} \\
        + \tfrac{1}{2}\EE_{X_0}\left[\EE\left[\int_0^T\|\theta_n - \theta\|^2\diff s\right]\right],
    \end{multline*}
    where $\EE_{X_0}[(1+\tfrac{1}{2}L(X_0))^2] < \infty$ since $\EE[\|X_0\|^2] < \infty$ by assumption. Hence $L^1(\PP_{X_0}\otimes\PP)$-convergence of the log-density ratio reduces to showing
    \begin{equation}\label{eqn:convergence}
        \EE_{X_0}\left[\EE\left[\int_0^T\|\theta_n(s, X_0 + G) - \theta(s, X_0 + G)\|^2\diff s\right]\right] \to 0.
    \end{equation}

    \paragraph{Step 6: $L^1(\PP_{X_0}\otimes\PP\otimes\lambda)$-convergence of $\|\theta_n - \theta\|^2$.}
    To obtain (\ref{eqn:convergence}) we show pointwise convergence of $\|\theta_n(s, x_0 + G(\omega)) - \theta(s, x_0 + G(\omega))\|^2 \to 0$ for $\PP_{X_0}\otimes\PP\otimes\lambda$-almost all $(x_0, \omega, s)$, and apply dominated convergence on $(\RR^d \times \Omega \times [0,T], \PP_{X_0}\otimes\PP\otimes\lambda)$ to obtain the result.

    Denote with $t_k^n(s)$ the $t_k$ such that $s\in [t_k, t_{k+1})$. Since $\theta_n(s, x) = \Sigma^{-1}\mu(t_k^n(s), x)$ and $\theta(s, x) = \Sigma^{-1}\mu(s, x)$, we have
    \begin{equation}\label{eqn:theta_bound}
        \|\theta_n(s, x_0 + G) - \theta(s, x_0 + G)\|^2 \leq M^2\|\mu(t_k^n(s), x_0 + G) - \mu(s, x_0 + G)\|^2.
    \end{equation}

    \emph{Pointwise convergence:}
    Fix $(x_0, \omega)$. By the abbreviation above, $\mu(t_k^n(s), x_0+G) = \mu(t_k^n(s), x_0+G(t_k^n(s)))$ and $\mu(s, x_0+G) = \mu(s, x_0+G(s))$. As $t_k^n(s) \to s$ we have $x_0 + G(t_k^n(s))(\omega) \to x_0 + G(s)(\omega)$ by continuity of $G$, and hence
    \begin{equation*}
        \|\mu(t_k^n(s), x_0+G) - \mu(s, x_0+G)\| \to 0
    \end{equation*}
    by joint continuity of $\mu$ (continuous in $t$ and Lipschitz, hence continuous, in the state).

    \emph{Dominating function:} By the triangle inequality and linear growth $\|\mu(t, x_0+G)\| \leq K(1+\|x_0\| + \sup_u\|G(u)\|)$, so
    \begin{equation*}
        \|\theta_n(s, x_0+G) - \theta(s, x_0+G)\|^2 \leq 4M^2 K^2(1+\|x_0\|+\sup_u\|G(u)\|)^2,
    \end{equation*}
    uniformly in $n$ and $s$. Since $\EE[\sup_u\|G(u)\|^2] < \infty$ by Doob's maximal inequality (\citealp{protter2005stochastic}, Theorem I.20) and $\EE[\|X_0\|^2] < \infty$ by assumption, this dominating function is integrable against $\PP_{X_0}\otimes \PP$. Dominated convergence on $(\RR^d \times \Omega \times [0,T], \PP_{X_0}\otimes\PP\otimes\lambda)$ gives (\ref{eqn:convergence}).

    \paragraph{Step 7: Conclusion.}
    As argued in Step 5, (\ref{eqn:convergence}) gives $L^1(\PP_{X_0}\otimes\PP)$-convergence of the log-density ratio, and hence a subsequence $n_m$ along which the log-density ratio converges to 0 for $(\PP_{X_0}\otimes\PP)$-a.a.\ $(x_0,\omega)$. By Fubini's theorem, for $\PP_{X_0}$-a.a.\ $x_0$ the log-density ratio converges $\PP$-a.s.\ to 0, so the densities $\frac{\diff\PP_{Y^{x_0,n_m}}}{\diff\PP_G}$ converge to $\frac{\diff\PP_{Y^{x_0}}}{\diff\PP_G}$ $\PP_G$-a.s., and by Scheff\'e's Theorem $\PP_{Y^{x_0,n_m}} \tvto \PP_{Y^{x_0}}$; since total variation is invariant under the translation $y\mapsto y + x_0$, also $\PP_{X^{x_0,n_m}} \tvto \PP_{X^{x_0}}$.
    By the disintegration established in Step 1 we have
    \begin{equation*}
        d_{TV}(\PP_{X^{n_m}}, \PP_X) \leq \int d_{TV}(\PP_{X^{x_0,n_m}}, \PP_{X^{x_0}})\diff\PP_{X_0}(x_0) \to 0,
    \end{equation*}
    where the convergence to 0 follows by dominated convergence (since $d_{TV} \leq 1$).
\end{proof}

\section{Proofs of Section \ref{sec:causal_inference}}

Rules 2 and 3 of the do-calculus (Theorem \ref{thm:do_calculus}) involve $\sigma$-separation conditions in a graph augmented with intervention nodes $R_v$ (Definition \ref{def:intervention_variables_graph}). One approach to formalise this is to introduce a separate framework of SCMs with free input nodes --- the iSCM framework of \cite{forre2025mathematical} --- and establish the Markov property for that framework. Instead, we model the intervention variables as genuine endogenous variables in an extended system of causal SDEs with an auxiliary exogenous distribution $\nu$. This allows us to apply the $\sigma$-separation Markov property (Theorem \ref{thm:sigma_sep_mp}) directly to the extended system, without having to re-prove it for a different framework.

\begin{definitionapp}[System of causal SDEs with intervention variables]\label{def:intervention_variables}
    Let $\Dcal$ be a system of causal SDEs, $B \subseteq V$, and $\nu$ a probability measure on $\Xcal_{E_B} := \prod_{v \in B}(\Xcal_v \cup \{\star\})$. The \emph{system with intervention variables} $\Dcal_{\Do(R_B \sim \nu)}$ has:
    \begin{itemize}
        \item endogenous variables $V \cup R_B$, where each indicator $X_{R_v}$ has sample space $\Xcal_{R_v} := \Xcal_v \cup \{\star\}$;
        \item exogenous variables $W \cup E_B$, where each $X_{E_v}$ has sample space $\Xcal_{E_v} := \Xcal_v \cup \{\star\}$, with joint distribution $\PP(X_W) \otimes \nu$;
        \item for each $v \in B$, the equation $X_{R_v} = X_{E_v}$ for the intervention variable, and for each $v \in V$ the modified causal SDE
    \end{itemize}
    \begin{equation*}
        X_v(t) = \begin{cases}
            X_{R_v}(t)                                                                              & \text{if } v \in B \text{ and } X_{R_v} \in \Xcal_v, \\
            f_v(t, X_{\alpha(v)}) + \int_0^t g_v(s-, X_v, X_{\beta(v)}) \diff h_v(s, X_{\gamma(v)}) & \text{otherwise}.
        \end{cases}
    \end{equation*}
\end{definitionapp}
For non-product $\nu$ the intervention variables $X_{R_v}$ ($v \in B$) are dependent. For the causal graph and the Markov properties we regard their exogenous parents as a single node $E_B$ with law $\nu$, each $X_{R_v}$ reading its coordinate; this adds the bidirected edges $R_u \leftrightarrow R_v$ ($u \neq v \in B$) to $G(\Dcal_{\Do(R_B \sim \nu)})$ and makes $\Dcal_{\Do(R_B \sim \nu)}$ a system of causal SDEs and thus equipped with a $\sigma$-separation Markov property.\footnote{Treating $E_B$ as a single node makes it a higher-dimensional exogenous process, a relaxation of the one-dimensional exogenous process convention of Definition~\ref{def:sdes}.}

For any partition $B = B_1 \cup B_2$ and $x_{B_2} \in \Xcal_{B_2}$, we will abuse notation and write $\Dcal_{\Do(X_{R_{B_1}} = \star, X_{R_{B_2}} = x_{B_2})}$ for $(\Dcal_{\Do(R_B \sim \nu)})_{\Do(X_{R_{B_1}} = \star, X_{R_{B_2}} = x_{B_2})}$.

Before proving the lemma we record a localisation property: on an event decided at time $0$, essential unique solvability pins down the solution on that event alone.

\begin{lemapp}[Localisation of essential unique solvability]\label{lem:f0_local_solvability}
    Let $\Dcal$ be a system of causal SDEs that is essentially uniquely solvable w.r.t.\ $O \subseteq V$, with solution function $I^{[O]}$ and $S := V \setminus O$. Fix an adapted input $\varphi_S : \Xcal_W \to \Xcal_S$ and a measurable set $G \subseteq \Xcal_W$ such that $\I_G(X_W) = \I_G(X_W^{\wedge 0})$. If an adapted measurable map $\varphi_O:\Xcal_W \to \Xcal_O$ satisfies $\varphi_O(X_W) = \Phi_O(\varphi_O(X_W), \varphi_S(X_W), X_W)$ $\PP$-a.s.\ on $G$, then $\varphi_O(X_W) = I^{[O]}(\varphi_S(X_W), X_W)$ $\PP$-a.s.\ on $G$.
\end{lemapp}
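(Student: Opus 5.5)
The plan is to patch $\varphi_O$ off $G$ with the solution function itself, and then invoke the global uniqueness clause of Definition~\ref{def:ess_unique_solvable}. Define the glued map
\begin{equation*}
    \psi_O(x_W) := \I_G(x_W)\,\varphi_O(x_W) + \bigl(1-\I_G(x_W)\bigr)\, I^{[O]}(\varphi_S(x_W), x_W).
\end{equation*}
I would then show that $\psi_O$ is a measurable adapted map $\Xcal_W \to \Xcal_O$ satisfying the fixed-point equation $\PP$-a.s. on all of $\Xcal_W$. Uniqueness then gives $\psi_O(X_W) = I^{[O]}(\varphi_S(X_W), X_W)$ $\PP$-a.s., and restricting to $G$, where $\psi_O = \varphi_O$, yields the claim.

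Measurability of $\psi_O$ is immediate, since $G$ is measurable and both pieces are measurable. For adaptedness I would use $\I_G(x_W) = \I_G(x_W^{\wedge 0})$: for each $t$, $\psi_O(x_W)^{\wedge t}$ equals either $\varphi_O(x_W)^{\wedge t}$ or $I^{[O]}(\varphi_S(x_W),x_W)^{\wedge t}$. The choice between them is made by $\I_G(x_W^{\wedge 0})$, which is a function of $x_W^{\wedge t}$. Each piece depends only on $x_W^{\wedge t}$ by adaptedness, so $\psi_O$ is adapted. This is exactly where the hypothesis that $G$ is decided at time $0$ enters; an event depending on the whole path would break adaptedness.

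For the fixed-point property I split into two cases. On $G$, $\psi_O = \varphi_O$, so the right-hand side $\Phi_O(\psi_O, \varphi_S, X_W)$ coincides with $\Phi_O(\varphi_O, \varphi_S, X_W)$, and the hypothesis gives the equation a.s. on $G$. On $G^c$, $\psi_O = I^{[O]}(\varphi_S, X_W)$, and the first clause of Definition~\ref{def:ess_unique_solvable} gives the equation a.s. The point to justify in both cases is that $\Phi_O$ evaluated at a glued path agrees with $\Phi_O$ evaluated at the relevant piece on each event. For $f_v$ and $g_v$ this is pointwise, hence trivial. For the pathwise integral $\Psi$ of Corollary~\ref{cor:pathwise_integral}, however, $\Psi$ is only a version of the Itô integral, so equality on $G$ must come from the local property of stochastic integrals.

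That local property is the main obstacle. The claim to establish is that for an $\Fcal_0$-measurable event $G$, $\I_G \int H\,\diff Z = \int \I_G H\,\diff Z$, and that the integrals of two adapted integrands agreeing on $G$ agree a.s. on $G$. Since $\I_G$ is $\Fcal_0$-measurable, it is a bounded predictable process, constant in time, and pulling it inside the integral is a standard property (Protter, Ch.~II). Combined with the fact that $\Psi(G,H)$ is a.s. equal to the Itô integral for every adapted càdlàg integrand, this gives $\Psi(g_v(\cdot,\psi_O,\dots),h_v) = \Psi(g_v(\cdot,\varphi_O,\dots),h_v)$ a.s. on $G$, and likewise on $G^c$ with $I^{[O]}$ in place of $\varphi_O$. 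This completes the fixed-point verification and hence the proof.
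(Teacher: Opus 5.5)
Your proposal is correct and follows the same route as the paper. You glue $\varphi_O$ on $G$ with $I^{[O]}(\varphi_S(X_W),X_W)$ on $G^c$, use that $G$ is decided at time $0$ to get adaptedness, verify the fixed-point equation separately on $G$ and $G^c$, and conclude by essential uniqueness; your ``main obstacle'' is not actually one, because $\Phi_O$ (including the pathwise integral $\Psi$) is a fixed deterministic map on path space. So for every $x_W\in G$ the glued input and $\varphi_O(x_W)$ are the same path, and $\Phi_O$ agrees pointwise without any appeal to the local property of the It\^o integral (your detour is valid, just unnecessary).
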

\begin{proof}
    Write $I := I^{[O]}(\varphi_S(X_W), X_W)$ and set $\tilde \varphi_O := \varphi_O\,\I_G + I\,\I_{G^c}$. Since $I$, $\varphi_O$ and $\I_G$ are adapted (the latter since $\I_G(X_W) = \I_G(X_W^{\wedge 0})$), $\tilde \varphi_O$ is adapted as well. By construction we then have
    \begin{align*}
        \Phi_O(\tilde \varphi_O(X_W), \varphi_S(X_W), X_W) = \begin{cases}
                                                                 \Phi_O(\varphi_O(X_W), \varphi_S(X_W), X_W) & \text{ on } G    \\
                                                                 \Phi_O(I, \varphi_S(X_W), X_W)              & \text{ on } G^c.
                                                             \end{cases}
    \end{align*}
    Since both right-hand sides are fixed points on their respective events --- the first by hypothesis, the second because $I$ is a fixed point in general --- we have that $\tilde \varphi_O(X_W) = \Phi_O(\tilde \varphi_O(X_W), \varphi_S(X_W), X_W)$ $\PP$-a.s. The essential uniqueness of $I$ then gives $\tilde \varphi_O = I$ $\PP$-a.s., and thus $\varphi_O = I$ $\PP$-a.s.\ on $G$.
\end{proof}

\begin{restatable}[]{lemapp}{interventionvariablesweaklysimple}\label{lem:intervention_variables_lemma}
    Let $\Dcal$ be a system of causal SDEs that is essentially uniquely solvable w.r.t.\ every $O \subseteq V$, let $B \subseteq V$, and let $\nu$ be a probability measure on $\Xcal_{E_B}$.
    \begin{itemize}
        \item[(i)]
        $\Dcal_{\Do(R_B \sim \nu)}$ is essentially uniquely solvable w.r.t.\ every SCC of $G(\Dcal_{\Do(R_B\sim \nu)})$.\footnote{This can be strengthened to being essentially uniquely solvable w.r.t.\ every $O\subseteq V\cup R_B$, but since this is more complicated and not necessary for our purposes, we do not prove this stronger result.}
        \item[(ii)]
        We have $\nu$-a.s.
        \begin{equation*}
            \PP_{\Dcal_{\Do(R_B \sim \nu)}}(X_V \mid \Do(X_{R_B} = x_{R_B})) = \PP_{\Dcal_{\Do(R_B \sim \nu)}}(X_V \mid X_{R_B} = x_{R_B}).
        \end{equation*}
        \item[(iii)]
        For any partition $B = B_1 \cup B_2$ and $x_{B_2} \in \Xcal_{B_2}$, we have
        \begin{equation*}
            \PP_{\Dcal_{\Do(R_B \sim \nu)}}(X_V \mid \Do(X_{R_{B_1}} = \star, X_{R_{B_2}} = x_{B_2})) = \PP_{\Dcal}(X_V \mid \Do(X_{B_2} = x_{B_2})).
        \end{equation*}
    \end{itemize}
\end{restatable}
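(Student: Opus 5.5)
The plan is to exploit that the intervention variables $X_{R_B}$ are functions only of the exogenous node $E_B$, and that they affect $X_V$ only at time $0$. The switch ``$X_{R_v}\in\Xcal_v$ or $=\star$'' is therefore an event decided by $X_{E_B}$ alone, so $\I_G(X_{W\cup E_B}) = \I_G(X^{\wedge 0}_{W\cup E_B})$ for the relevant events $G$. This is exactly what Lemma~\ref{lem:f0_local_solvability} is designed for. For every $D\subseteq B$, write $G_D := \{x_{E_v}\in\Xcal_v \text{ for } v\in D,\ x_{E_v}=\star \text{ for } v\in B\setminus D\}$. These finitely many events partition the exogenous space.

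For (i), I would first describe the SCCs of $G(\Dcal_{\Do(R_B\sim\nu)})$. Each $R_v$ has only the exogenous parent $E_B$ and the single child $v$, so it is a singleton SCC. Its solution function is simply $x_{E_v}$, which is trivially essentially unique. The remaining SCCs are exactly the SCCs $C$ of $G(\Dcal)$, because adding $R_v\to v$ creates no new cycles. For such a $C$, set $D_C := C\cap B$ and, on each event $G_D$, define
\[
I^{[C]}_{\mathrm{ext}} := \bigl(x_{R_{D\cap C}},\ I^{[C\setminus D]}(x_{R_{D\cap C}}, x_{V\setminus C}, x_W)\bigr).
\]
Here $I^{[C\setminus D]}$ is the solution function of $\Dcal$ w.r.t.\ $C\setminus D$, which exists by hypothesis since $\Dcal$ is solvable w.r.t.\ every subset. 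Gluing over the $G_D$ gives a measurable, adapted function, since $\I_{G_D}$ is $\Fcal_0$-measurable. On $G_D$ the extended mechanism for $C$ coincides with the mechanism of $\Dcal$ in which the coordinates $D\cap C$ are overridden by $x_{R}$, so the fixed-point property follows from that of $I^{[C\setminus D]}$. For uniqueness, I take any adapted fixed point $\varphi_C$ and restrict to each $G_D$. Lemma~\ref{lem:f0_local_solvability}, applied to $\Dcal$ w.r.t.\ $C\setminus D$ with input $(\varphi_{D\cap C}, \varphi_{V\setminus C})$ and event $G_D$, gives agreement a.s.\ on $G_D$. There is one technical point: the lemma is stated for inputs that are functions of $X_W$, and here the inputs depend on $X_{E_B}$ as well. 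I would handle this by enlarging the exogenous set to $W\cup E_B$, since the definition of solvability is insensitive to adding exogenous variables on which $\Phi$ does not depend. Alternatively, I would condition on $X_{E_B}$ using independence.

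For (iii), the intervention $\Do(X_{R_{B_1}}=\star, X_{R_{B_2}}=x_{B_2})$ makes the system, on all of $\Omega$, literally equal to $\Dcal_{\Do(X_{B_2}=x_{B_2})}$ plus constant $R$-coordinates. By Theorem~\ref{thm:obs_int_dist}, applied to $\Dcal$ with $O=V\setminus B_2$, the unique solution is $I^{[V\setminus B_2]}(x_{B_2},X_W)$, and both laws equal its pushforward.

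For (ii), by (i) together with Lemma~\ref{lem:scc_composition}, the extended system has the a.s.\ unique solution $X_V = J(X_{E_B}, X_W)$, where on $G_D$ we have $J := I^{[V\setminus D]}(x_{E_D}, X_W)$ (with $x_{E_D}$ also supplying the coordinates of $D$). Since $X_{R_B}=X_{E_B}$ is independent of $X_W$, the conditional law of $X_V$ given $X_{R_B}=x$ is the pushforward of $\PP(X_W)$ under $J(x,\cdot)$, for $\nu$-a.a.\ $x$, by the standard disintegration of independent products (Fubini). By the argument of (iii), this pushforward equals $\PP(X_V\mid\Do(X_{R_B}=x))$.

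I expect the main obstacle to be the uniqueness part of (i): making Lemma~\ref{lem:f0_local_solvability} applicable to inputs that depend on the extra exogenous $E_B$, and verifying that adaptedness of the glued functions survives the $\star$-valued coordinates.
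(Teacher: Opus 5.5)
Your overall route is the paper's. The sets $\{R_v\}$ are singleton SCCs solved by $x_{E_v}$. The remaining SCCs are those of $G(\Dcal)$, and each is solved regime by regime using $I^{[C\setminus D]}$, with uniqueness from Lemma~\ref{lem:f0_local_solvability} after fixing $x_{E_B}$ and applying Fubini. Parts (ii) and (iii) then reduce to the solution of $\Dcal_{\Do(X_D=x_D)}$ together with independence of $X_{E_B}$ and $X_W$. Your (ii) and (iii) are fine as described; keying the regimes on $x_{E_B}$ is legitimate there, because $R_B$ is solved for and $X_{R_B}=X_{E_B}$ holds.

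Part (i), however, fails as written, because the regimes are keyed on the wrong variables. When you solve for an SCC $C\subseteq V$, the variables $R_B$ lie outside $C$. By Definition~\ref{def:ess_unique_solvable} their values are therefore \emph{arbitrary} adapted inputs $\varphi_{R_B}(X_W,X_{E_B})$. The equation $X_{R_B}=X_{E_B}$ is the mechanism of $R_B$ and is not imposed. The mechanism of $v\in C\cap B$ switches on $\varphi_{R_v}$, not on $x_{E_v}$. Your events $G_D$ are defined through $x_{E_B}$, so your claim that ``on $G_D$ the extended mechanism for $C$ coincides with $\Dcal$ with $D\cap C$ overridden'' fails whenever the $\star$-pattern of $\varphi_{R_B}$ differs from that of $X_{E_B}$. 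For instance, take $v\in C\cap B$, the constant input $\varphi_{R_v}\equiv\star$, and $\nu(x_{E_v}\in\Xcal_v)>0$. On $G_D$ with $v\in D$, your function outputs $x_{R_v}=\star\notin\Xcal_v$ for the coordinate $v$. This is not a fixed point, since $v$ actually follows its original SDE there.

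The fix is what the paper does:
\begin{enumerate}
\item Partition the \emph{argument} space $\Xcal_{R_B}$ into cells $F_\pi$ according to which coordinates of $C\cap B$ are non-$\star$.
\item Define the solution function cellwise in terms of $x_{R_B}$.
\item Analyse it on the events $G_\pi:=\{(x_W,x_{E_B}):\varphi_{R_B}(x_W,x_{E_B})\in F_\pi\}$.
\end{enumerate}
These events are no longer functions of $X_{E_B}$ alone, so your justification of the time-$0$ property (``decided by $X_{E_B}$'') must be replaced. The correct reason is that $\I_{G_\pi}$ is $\Fcal_0$-measurable because $\varphi_{R_B}$ is adapted, with $\star$ treated as a state fixed at time $0$. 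With that change your argument goes through: fix $x_{E_B}$, apply Lemma~\ref{lem:f0_local_solvability} on the section $G_\pi(x_{E_B})\subseteq\Xcal_W$, and conclude by Fubini.
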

\begin{proof}
    \textbf{(i)}
    We verify essential unique solvability w.r.t.\ each SCC $C$ of $G(\Dcal)$. The singletons are immediate: for $O = \{R_v\}$ the mechanism is $X_{R_v} = X_{E_v}$, so $\tilde I^{[R_v]} := x_{E_v}$ is the unique adapted solution function. It remains to treat an SCC $C$ of $G(\Dcal)$.

    So fix an SCC $C\subseteq V$ of $G(\Dcal)$, and let $\Phi$ denote the pathwise mechanism of the augmented system $\Dcal_{\Do(R_B\sim \nu)}$. The complement of $C$ in $V \cup R_B$ splits into the endogenous inputs $S := V \setminus C$ and the regime indicators $R_B$; accordingly fix adapted measurable maps $\varphi_S : \Xcal_W \times \Xcal_{E_B} \to \Xcal_S$ and $\varphi_{R_B} : \Xcal_W \times \Xcal_{E_B} \to \Xcal_{R_B}$. We must exhibit a measurable adapted map $\tilde I^{[C]} : \Xcal_S \times \Xcal_{R_B} \times \Xcal_W \times \Xcal_{E_B} \to \Xcal_C$ that is the following fixed point:
    \begin{equation*}
        \tilde I^{[C]}\bigl(\varphi_S, \varphi_{R_B}, X_W, X_{E_B}\bigr) = \Phi_C\bigl(\tilde I^{[C]}(\varphi_S, \varphi_{R_B}, X_W, X_{E_B}),\, \varphi_S,\, \varphi_{R_B},\, X_W, X_{E_B}\bigr)
    \end{equation*}
    $(\PP(X_W)\otimes\nu)$-a.s.\ (evaluating $\varphi_S, \varphi_{R_B}$ both at $(X_W, X_{E_B})$), and the a.s.-unique such fixed point among adapted processes: every adapted $\varphi_C$ with $\varphi_C = \Phi_C(\varphi_C, \varphi_S, \varphi_{R_B}, \cdot)$ $(\PP(X_W)\otimes\nu)$-a.s.\ equals $\tilde I^{[C]}(\varphi_S, \varphi_{R_B}, \cdot)$ $(\PP(X_W)\otimes\nu)$-a.s.

    A regime configuration $x_{R_B} \in \Xcal_{R_B}$ specifies for each $w \in B$ whether $X_{R_w}$ is idle ($x_{R_w} = \star$) or set to a value $x_{R_w} \in \Xcal_w$. Only the coordinates $w \in C \cap B$ act on the $C$-block, so for each partition $\pi = B_1^\pi \cup B_2^\pi$ of $C \cap B$, let
    \begin{equation*}
        F_\pi := \bigl\{x_{R_B} = (x_{R_{B_1^\pi}}, x_{R_{B_2^\pi}}, x_{R_{B\setminus C}}) \in \Xcal_{R_B} : x_{R_{B_1^\pi}} = \star \text{ and } x_{R_{B_2^\pi}} \neq \star\bigr\}
    \end{equation*}
    be the cell of regime configurations on which exactly the variables in $B_2^\pi$ are intervened on; the $2^{|C \cap B|}$ Borel sets $\{F_\pi\}$ partition $\Xcal_{R_B}$. In particular, for $x_{R_B} \in F_\pi$, all variables in $C\cap B_2^{\pi}$ are intervened on, and all variables in $C^\pi := C\setminus B_2^\pi$ are governed by the original mechanism $\Phi_{C^\pi}$ of $\Dcal$. Let $I^{[C^\pi]}$ be a solution function of $\Dcal$ w.r.t.\ $C^\pi$, and define $\tilde I^{[C]} : \Xcal_S \times \Xcal_{R_B} \times \Xcal_W \times \Xcal_{E_B} \to \Xcal_C$ on each cell $F_\pi$ by matching this mechanism,
    \begin{equation*}
        \tilde I^{[C]}_v(x_S, x_{R_B}, x_W)\big|_{F_\pi} := \begin{cases}
            x_{R_v}                                  & v \in C \cap B_2^\pi, \\
            I^{[C^\pi]}_v(x_S, x_{R_{B_2^\pi}}, x_W) & v \in C^\pi,
        \end{cases}
    \end{equation*}
    constant in $x_{E_B}$, and define the adapted and measurable map $\tilde I^{[C]} := \sum_\pi \I_{F_\pi}(x_{R_B})\, \tilde I^{[C]}\big|_{F_\pi}$.

    Composing $\tilde I^{[C]}$ with $\varphi_{R_B}$ shifts the choice of which variables in $R_B$ are intervened on to the realised exogenous values $(x_W, x_{E_B})$. To analyse the fixed-point and uniqueness properties of $\tilde I^{[C]}$, define
    \begin{equation*}
        G_\pi := \bigl\{(x_W, x_{E_B}) : \varphi_{R_B}(x_W, x_{E_B}) \in F_\pi\bigr\},
    \end{equation*}
    such that we can express
    \begin{equation*}
        \tilde I^{[C]}_v(\varphi_S, \varphi_{R_B}, x_W)\big|_{G_\pi} := \begin{cases}
            \varphi_{R_v}                                        & v \in C \cap B_2^\pi, \\
            I^{[C^\pi]}_v(\varphi_S, \varphi_{R_{B_2^\pi}}, x_W) & v \in C^\pi.
        \end{cases}
    \end{equation*}
    We analyse the fixed-point and uniqueness properties on a single cell $G_\pi$. For each $x_{E_B}$ write $G_\pi(x_{E_B}) := \{x_W : (x_W, x_{E_B})\in G_\pi\} \subseteq \Xcal_W$. For each $v\in C^{\pi}$, by the fixed-point property of $I_v^{[C^\pi]}$ the process $I_v^{[C^\pi]}(\varphi_{S\cup R_{B_2^\pi}}(\cdot, x_{E_B}), \cdot)$ is a fixed point of $\Phi_v(\cdot, \varphi_{S\cup R_{B_2^\pi}}(\cdot, x_{E_B}), \cdot)$ on $G_\pi(x_{E_B})$ outside of some $\PP(X_W)$-null set, and for every other solution $\varphi_v : \Xcal_W\times \Xcal_{E_B} \to \Xcal_v$ we have by Lemma~\ref{lem:f0_local_solvability} that $\varphi_v = I_v^{[C^\pi]}$ $\PP(X_W)$-a.s.\ on $G_\pi(x_{E_B})$. The set of $(x_W, x_{E_B}) \in G_\pi$ where these properties fail is Borel with each section $G_\pi(x_{E_B})$ being $\PP(X_W)$-null, hence is $(\PP(X_W)\otimes\nu)$-null by Fubini. For each $v\in C\cap B_2^\pi$, the mechanism $\Phi_v$ in $\Dcal_{\Do(R_B\sim \nu)}$ does not depend on $X_C$: it sets $X_v = X_{R_v}$ (Definition~\ref{def:intervention_variables}), which on $G_\pi$ equals $\varphi_{R_v}\in\Xcal_v$. The $v$-component of the fixed-point equation is thus $X_v = \varphi_{R_v}$, whose unique adapted solution is $\tilde I^{[C]}_v = \varphi_{R_v}$ (on the whole of $G_\pi$).

    Since $\{G_\pi\}_{\pi}$ partitions $\Xcal_W \times \Xcal_{E_B}$, $\tilde I^{[C]}(\varphi_S(\cdot), \varphi_{R_B}(\cdot), \cdot)$ is $(\PP(X_W)\otimes\nu)$-a.s.\ a fixed point of $\Phi_C$, and any adapted fixed point $\varphi_C$ equals it $(\PP(X_W)\otimes\nu)$-a.s. Since $(\varphi_S, \varphi_{R_B})$ was an arbitrary adapted input, $\Dcal_{\Do(R_B \sim \nu)}$ is essentially uniquely solvable w.r.t.\ $C$.

    \medskip
    \textbf{(ii)}
    For a regime $x_{R_B} \in \Xcal_{R_B}$, let $B_2 := \{v \in B : x_{R_v} \neq \star\}$ be its intervened coordinates and $x_{B_2} := (x_{R_v})_{v \in B_2} \in \Xcal_{B_2}$ their values. By $\Dcal$'s essential unique solvability and Theorem~\ref{thm:obs_int_dist} (intervening on $B_2 \subseteq V$), the system $\Dcal_{\Do(X_{B_2} = x_{B_2})}$ is essentially uniquely solvable w.r.t.\ $V$; write $h(x_W, x_{R_B}) \in \Xcal_V$ for its $V$-solution, giving a measurable map $h : \Xcal_W \times \Xcal_{R_B} \to \Xcal_V$.
    By part (i) and Lemma~\ref{lem:scc_composition}, $\Dcal_{\Do(R_B \sim \nu)}$ is essentially uniquely solvable w.r.t.\ $V \cup R_B$, so it admits the solution function $I^{[V \cup R_B]} : \Xcal_W \times \Xcal_{E_B} \to \Xcal_V \times \Xcal_{R_B}$. By its essential unique solvability we have for each $x_{E_B}$ that $I^{[V\cup R_B]}_V(x_W, x_{E_B}) = h(x_W, x_{E_B})$ holds $\PP(X_W)$-a.s.

    For each $v\in V$ the causal SDE for $v$ in $\Dcal_{\Do(X_{R_B}=x_{R_B})}$ coincides with the causal SDEs for $v$ in $\Dcal_{\Do(X_{B_2} = x_{B_2})}$. Hence, for every $x_{R_B}$,
    \begin{equation}\label{eqn:regime_intervention}
        \PP_{\Dcal_{\Do(R_B \sim \nu)}}(X_V \given \Do(X_{R_B} = x_{R_B})) = \PP\bigl(h(X_W, x_{R_B})\bigr).
    \end{equation}
    As $X_V = h(X_W, X_{R_B})$ with $X_{R_B} = X_{E_B}$ a.s.\ and $X_W \Indep X_{E_B}$ under $\PP(X_W) \otimes \nu$, we have for $\nu$-a.a.\ $x_{R_B}$
    \begin{equation*}
        \PP_{\Dcal_{\Do(R_B \sim \nu)}}(X_V \given X_{R_B} = x_{R_B}) = \PP\bigl(h(X_W, x_{R_B}) \given X_{E_B} = x_{R_B}\bigr) = \PP\bigl(h(X_W, x_{R_B})\bigr).
    \end{equation*}
    The interventional and conditional distributions therefore agree $\nu$-a.s.

    \medskip
    \textbf{(iii)}
    Taking $x_{R_B} = (\star, x_{B_2})$ in \eqref{eqn:regime_intervention} gives
    \begin{equation*}
        \PP_{\Dcal_{\Do(R_B \sim \nu)}}(X_V \given \Do(X_{R_{B_1}} = \star, X_{R_{B_2}} = x_{B_2})) = \PP\bigl(h(X_W, (\star, x_{B_2}))\bigr).
    \end{equation*}
    By definition $h(\cdot, (\star, x_{B_2}))$ is the $V$-solution of $\Dcal_{\Do(X_{B_2} = x_{B_2})}$, so the right-hand side equals $\PP_\Dcal(X_V \given \Do(X_{B_2} = x_{B_2}))$.
\end{proof}

\docalculus*
\begin{proof}
    The proof follows \cite[Theorem 5.1.2]{forre2025mathematical}.

    \emph{Rule 1.} By Theorem~\ref{thm:sigma_sep_mp}, the $\sigma$-separation $A \Perp^\sigma_{G(\Dcal)} B \given C$ gives $X_A \Indep X_B \given X_C$ under $\PP_\Dcal$. By \cite{kallenberg2021foundations}, Theorem 8.9, this gives $\PP_\Dcal(X_A \given X_B, X_C) = \PP_\Dcal(X_A \given X_C)$ $\PP_\Dcal(X_B, X_C)$-a.s.

    \emph{Rule 2.}
    Define $\nu := \tfrac{1}{2}\delta_{(\star, ..., \star)} + \tfrac{1}{2}\PP_\Dcal(X_B)$ on $\Xcal_{R_B}$.
    Write $\Dcal_\nu := \Dcal_{\Do(R_B\sim\nu)}$ and let $\tilde G(\Dcal)_{\Do(R_B)}$ be the graph $G(\Dcal)_{\Do(R_B)}$ appended with bidirected edges $R_u \leftrightarrow R_v$ for all $u \neq v\in B$. Then we also have the $\sigma$-separation $A \Perp^\sigma_{\tilde G(\Dcal)_{\Do(R_B)}} R_B \given B \cup C$, since compared to $G(\Dcal)_{\Do(R_B)}$, adding bidirected edges between the intervention variables does not open up a walk between $A$ and $R_B$ (conditioned on $B\cup C$). Since $G(\Dcal_\nu)$ is a subgraph of $\tilde G(\Dcal)_{\Do(R_B)}$ we also  have $A \Perp^\sigma_{G(\Dcal_\nu)} R_B \given B \cup C$. By Lemma~\ref{lem:intervention_variables_lemma}(i) $\Dcal_\nu$ is essentially uniquely solvable with respect to every strongly connected component of $G(\Dcal_\nu)$, so the $\sigma$-separation Markov property holds and we have under $\PP_{\Dcal_\nu}$ the conditional independence $X_A \Indep X_{R_B} \given X_B, X_C$, which gives the factorisation
    \begin{equation*}
        \PP_{\Dcal_\nu}(X_A, X_B, X_C, X_{R_B}) = \QQ(X_A \given X_B, X_C) \otimes \PP_{\Dcal_\nu}(X_B, X_C, X_{R_B}),
    \end{equation*}
    for some Markov kernel $\QQ : \Xcal_B \times \Xcal_C \to \mathcal{P}(\Xcal_A)$. By conditioning the factorisation above on $X_{R_B}$ and applying Lemma~\ref{lem:intervention_variables_lemma}(ii) we get
    \begin{equation}\label{eq:factorisation}
        \PP_{\Dcal_\nu}(X_A, X_B, X_C \given \Do(X_{R_B})) = \QQ(X_A \given X_B, X_C) \otimes \PP_{\Dcal_\nu}(X_B, X_C \given \Do(X_{R_B}))
    \end{equation}
    $\nu$-a.s. Under $\Do(X_{R_B} = \star)$, Lemma~\ref{lem:intervention_variables_lemma}(iii) and \eqref{eq:factorisation} give
    \begin{equation*}
        \PP_\Dcal(X_A, X_B, X_C) = \QQ(X_A \given X_B, X_C) \otimes \PP_\Dcal(X_B, X_C),
    \end{equation*}
    and hence $\QQ(X_A \given X_B, X_C) = \PP_\Dcal(X_A \given X_B, X_C)$ holds $\PP_\Dcal(X_B, X_C)$-a.s.
    Under $\Do(X_{R_B} = x_B)$ for $x_B \in \Xcal_B$, Lemma~\ref{lem:intervention_variables_lemma}(iii) and \eqref{eq:factorisation} give, for $\PP_\Dcal(X_B)$-a.a.\ $x_B$ that
    \begin{equation}\label{eqn:rule_2_eqn}
        \PP_\Dcal(X_A, X_C \given \Do(X_B = x_B)) = \QQ(X_A \given X_B = x_B, X_C) \otimes \PP_\Dcal(X_C \given \Do(X_B = x_B))
    \end{equation}
    and hence $\QQ(X_A \given X_B, X_C) = \PP_\Dcal(X_A \given \Do(X_B=x_B), X_C)$ holds for $\PP_\Dcal(X_B)$-almost all $x_B$, $\PP_\Dcal(X_C \given \Do(X_B = x_B))$-a.s. If for $\PP_\Dcal(X_B)$-a.a.\ $x_B$ we also have $\PP_\Dcal(X_C \mid X_B = x_B) \ll \PP_\Dcal(X_C \mid \Do(X_B = x_B))$, then $\QQ(X_A \given X_B, X_C) = \PP_\Dcal(X_A \given \Do(X_B), X_C)$ holds $\PP_\Dcal(X_B, X_C)$-a.s.\ as well, in which case $\PP_\Dcal(X_A \mid X_B, X_C) = \PP_\Dcal(X_A \mid \Do(X_B), X_C)$ holds $\PP_\Dcal(X_B, X_C)$-a.s.

    \emph{Rule 3.}
    Let $x_B \in \Xcal_B$ and define $\nu := \bigotimes_{v\in B}\left(\tfrac{1}{2}\delta_{\star} + \tfrac{1}{2}\delta_{x_v}\right)$ on $\Xcal_{R_B}$. Since $\nu$ is a product measure we have that $G(\Dcal_\nu)$ is a subgraph of $G(\Dcal)_{\Do(R_B)}$, so $A \Perp^\sigma_{G(\Dcal_\nu)} R_B \given C$. By the $\sigma$-separation Markov property for $\Dcal_\nu$ we have the conditional independence $X_A \Indep X_{R_B} \given X_C$, giving the factorisation
    \begin{equation*}
        \PP_{\Dcal_\nu}(X_A, X_C, X_{R_B}) = \QQ(X_A \given X_C) \otimes \PP_{\Dcal_\nu}(X_C, X_{R_B}),
    \end{equation*}
    for some Markov kernel $\QQ : \Xcal_C \to \mathcal{P}(\Xcal_A)$, and similarly to Rule 2 this yields
    \begin{equation*}
        \PP_{\Dcal_\nu}(X_A, X_C \given \Do(X_{R_B})) = \QQ(X_A \given X_C) \otimes \PP_{\Dcal_\nu}(X_C \given \Do(X_{R_B})) \quad \nu\text{-a.s.}
    \end{equation*}
    Under $\Do(X_{R_B} = \star)$, Lemma~\ref{lem:intervention_variables_lemma}(iii) gives $\QQ(X_A \given X_C) = \PP_\Dcal(X_A \given X_C)$ $\PP_\Dcal(X_C)$-a.s. Since $\nu$ assigns positive mass to $x_B$, we have by Lemma~\ref{lem:intervention_variables_lemma}(iii) under $\Do(X_{R_B} = x_B)$ that
    \begin{equation*}
        \QQ(X_A \given X_C) = \PP_\Dcal(X_A \given \Do(X_B = x_B), X_C) \quad \PP_\Dcal(X_C \given \Do(X_B = x_B))\text{-a.s.}
    \end{equation*}
    If $\PP_\Dcal(X_C) \ll \PP_\Dcal(X_C \given \Do(X_B = x_B))$ then this last equality holds $\PP_\Dcal(X_C)$-a.s., and combining with the previous equality yields $\PP_\Dcal(X_A \given \Do(X_B = x_B), X_C) = \PP_\Dcal(X_A \given X_C)$ $\PP_\Dcal(X_C)$-a.s.
\end{proof}

\causationimpliespath*
\begin{proof}
    If there is no directed path from $u$ to $v$ in $G(\Dcal)$, then there is no directed path from $u$ to $v$ in $G(\Dcal)_{\Do(R_u)}$ (since $R_u$ has only the edge $R_u \to u$), and hence $v \Perp^\sigma_{G(\Dcal)_{\Do(R_u)}} R_u$. Applying Rule 3 of Theorem~\ref{thm:do_calculus} with $A = \{v\}$, $B = \{u\}$, $C = \emptyset$ (the absolute continuity condition is vacuous since $C = \emptyset$) gives $\PP(X_v \given \Do(X_u)) = \PP(X_v)$.
\end{proof}

\section{Proofs of Section \ref{sec:time-evaluations}}
Define the \emph{restriction map} $\mathrm{res}^\Ical : D([0,T], \RR^n) \to D(\Ical, \RR^n)$ as $\mathrm{res}^\Ical(x) := x|_\Ical$, and extend as $\mathrm{res}^\Ecal = (\mathrm{res}^{\Ical_1}, ..., \mathrm{res}^{\Ical_m})$.
\begin{lemapp}\label{lem:timesplit_equivalent}
    Let $\Dcal = \angs{V, W, X_W, f, g, h}$ be a system of causal SDEs, $\Ecal$ a finite partition of $[0,T]$ into intervals and $\Dcal^\Ecal$ the time-split SDE. Then:
    \begin{enumerate}[label=(\roman*)]
        \item If $X_V$ is a solution of $\Dcal$, then $X_V^\Ecal := \mathrm{res}^\Ecal(X_V)$ is a solution of $\Dcal^\Ecal$.
        \item Conversely, if $X_V^\Ecal$ is a solution of $\Dcal^\Ecal$, then $X_V := \mathrm{cat}(X_V^\Ecal)$ is a solution of $\Dcal$.
    \end{enumerate}
\end{lemapp}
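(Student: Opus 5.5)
The plan is a direct verification: cut a solution along the intervals of $\Ecal$, or glue one back together, and use additivity of the pathwise integral over adjacent intervals. I will show that for each $v\in V$ and each $\Ical_k$ the time-split equation for $X_v^{\Ical_k}$ is the difference of the original equation for $X_v$ at time $s\in\Ical_k$ and at the left boundary $b_{k-1,k}$. The key tools are the restriction/concatenation identities $\mathrm{cat}(\mathrm{res}^\Ecal(x))=x$ and $\mathrm{rec}(y^\Ecal)=y$, and the definition of the exogenous increment processes $X_w^{\Ical_k}$. Together they give $f_v^{\Ical_k}(s,X^\Ecal_{\alpha(v)}) = f_v(s,X_{\alpha(v)})-f_v(b_{k-1,k},X_{\alpha(v)})$, and likewise $g_v^{\Ical_k}$ and $h_v^{\Ical_k}$ evaluate to $g_v$ and $h_v$ along the original paths. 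I will also use the additivity of the Itô integral,
\begin{equation*}
\int_0^s g\,\diff h = \int_0^{b_{k-1,k}} g\,\diff h + \int_{b_{k-1,k}}^s g\,\diff h \quad \text{a.s.},
\end{equation*}
where the convention on $b_{k-1,k}$ (a left limit if $\Ical_{k-1}$ is right-open) is chosen so that a jump at $\sup\Ical_{k-1}$ is counted exactly once. This holds for the pathwise map $\Psi$ up to indistinguishability, since $\Psi$ is a càdlàg version of the Itô integral.

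For (i), let $X_V$ solve $\Dcal$ and put $X_v^{\Ical_k}:=X_v|_{\Ical_k}$. For $k=1$ the equation reads $X_v(s)=f_v(0,\cdot)+ f_v(s,\cdot)-f_v(0,\cdot)+\int_0^s g_v\,\diff h_v$, which is (\ref{eqn:sde}) restricted to $\Ical_1$; adaptedness of $f_v$ makes $f_v(0,X_{\alpha(v)}^{\Ical_1})$ well defined. For $k\ge 2$, I will evaluate (\ref{eqn:sde}) at $s\in\Ical_k$ and at $b_{k-1,k}$ (passing to left limits in $t$, which is allowed because both sides are càdlàg) and subtract. This gives
\begin{equation*}
X_v(s)=X_v(b_{k-1,k})+f_v(s,\cdot)-f_v(b_{k-1,k},\cdot)+\int_{b_{k-1,k}}^s g_v\,\diff h_v,
\end{equation*}
and $X_v(b_{k-1,k}) = X_v^{\Ical_{k-1}}(b_{k-1,k})$ by construction. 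Adaptedness of $X_V^\Ecal$ is inherited from $X_V$. There is one point to check: the equation holds a.s.\ simultaneously for all $s$. This follows because (\ref{eqn:sde}) holds on a single null set, the processes are càdlàg, and there are only finitely many intervals.

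For (ii), I will define $X_V:=\mathrm{cat}(X_V^\Ecal)$ and prove (\ref{eqn:sde}) on $\Ical_1\cup\dots\cup\Ical_k$ by induction on $k$, telescoping the increments with the same additivity identity. To close the telescoping I need $X_v^{\Ical_{k-1}}(b_{k-1,k})$ to equal the value of the concatenated path at that boundary point (or its left limit). I also need the concatenation to be càdlàg, so that it is a legitimate element of $D([0,T],\RR)$. Both follow from the time-split equation itself: each piece equals its boundary value plus terms that are right-continuous at $\inf\Ical_k$ (the integral vanishes at $b_{k-1,k}$), and left limits exist because the right-hand side is càdlàg on the closure of the interval. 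If $\Ical_{k-1}$ is right-closed, then $b_{k-1,k}=\sup\Ical_{k-1}$ and matching is immediate; if it is right-open, the left-limit convention handles the jump. Adaptedness of $X_V$ follows because each piece is adapted.

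I expect the main obstacle to be the boundary bookkeeping in (ii). Specifically, one must make sure that jumps of $h_v$ or $f_v$ at interval endpoints are neither lost nor double-counted when combining the pieces, and that left-open intervals yield a right-continuous concatenation. I will handle this by a case split on whether $\Ical_{k-1}$ is right-open or right-closed. In each case I will check that the terms $f_v(b_{k-1,k},\cdot)$ and $\int_0^{b_{k-1,k}}$ telescope exactly with the corresponding left-limit convention.
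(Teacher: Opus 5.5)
Your proposal is correct and follows the paper's own proof. For (i), the paper also evaluates the original equation at $s\in\Ical_k$ and at $b_{k-1,k}$, subtracts, and unwinds $\mathrm{cat}$/$\mathrm{rec}$; for (ii), it also inducts on $k$, substituting the induction hypothesis for $X_v^{\Ical_{k-1}}(b_{k-1,k})$ and combining the two integrals. Your additional checks go beyond what the paper writes out: existence of left limits at right-open endpoints, right-continuity of the concatenation at left-open interval starts, and the single null set.
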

\begin{proof}
    \emph{(i)} Let $X_V$ solve $\Dcal$ and set $X_V^\Ecal := \mathrm{res}^\Ecal(X_V)$; let $X_W^\Ecal$ denote the corresponding tuple of exogenous increment processes from Definition \ref{def:sdes_timesplit}. Fix $v\in V$, $\Ical_k\in\Ecal$, $s \in \Ical_k$. Evaluating the SDE at $s$ and at $b_{k-1, k}$ gives
    \begin{equation*}
        X_v(s) = X_v(b_{k-1, k}) + \big(f_v(s, X_{\alpha(v)}) - f_v(b_{k-1, k}, X_{\alpha(v)})\big) + \int_{b_{k-1, k}}^s g_v(u-, X_v, X_{\beta(v)}) \diff h_v(u, X_{\gamma(v)}).
    \end{equation*}
    We have $X_v(b_{k-1, k}) = X_v^{\Ical_{k-1}}(b_{k-1, k})$, which for $k=1$ equals $X_v(0) = f_v(0, X_{\alpha(v)}^{\Ical_1})$ by Definition~\ref{def:sdes_timesplit}. Under the reconstructions $X_{\alpha(v)} = (\mathrm{cat}(X_{\alpha(v)\cap V}^\Ecal), \mathrm{rec}(X_{\alpha(v)\cap W}^\Ecal))$ and similarly for $\{v\}\cup\beta(v)$ and $\gamma(v)$, writing out the definitions of $f_v^{\Ical_k}, g_v^{\Ical_k}, h_v^{\Ical_k}$ yields the time-split SDE for $X_v^{\Ical_k}$.

    \emph{(ii)} Let $X_V^\Ecal$ solve $\Dcal^\Ecal$ and set $X_V := \mathrm{cat}(X_V^\Ecal)$, $X_W := \mathrm{rec}(X_W^\Ecal)$. Under these reconstructions, $f_v^{\Ical_k}, g_v^{\Ical_k}, h_v^{\Ical_k}$ evaluate via the original $f_v, g_v, h_v$ as in Definition \ref{def:sdes_timesplit}. We show by induction on $k$ that $X_v(s) = f_v(s, X_{\alpha(v)}) + \int_0^s g_v(u-, X_v, X_{\beta(v)}) \diff h_v(u, X_{\gamma(v)})$ for all $s \in \Ical_k$.

    \emph{Base ($k=1$).} The time-split SDE on $\Ical_1$ straightforwardly reduces to $X_v^{\Ical_1}(s) = f_v(s, X_{\alpha(v)}) + \int_0^s g_v(u-, X_v, X_{\beta(v)}) \diff h_v(u, X_{\gamma(v)})$.

    \emph{Inductive step ($k \geq 2$).} The time-split SDE on $\Ical_k$ reads
    \begin{equation*}
        X_v^{\Ical_k}(s) = X_v^{\Ical_{k-1}}(b_{k-1, k}) + \big(f_v(s, X_{\alpha(v)}) - f_v(b_{k-1, k}, X_{\alpha(v)})\big) + \int_{b_{k-1, k}}^s g_v(u-, X_v, X_{\beta(v)}) \diff h_v(u, X_{\gamma(v)}).
    \end{equation*}
    We have $X_v^{\Ical_{k-1}}(b_{k-1, k}) = X_v(b_{k-1, k})$, which by the induction hypothesis equals $f_v(b_{k-1, k}, X_{\alpha(v)}) + \int_0^{b_{k-1, k}} g_v(u-, X_v, X_{\beta(v)}) \diff h_v(u, X_{\gamma(v)})$. Substituting and combining the integrals gives $X_v(s) = f_v(s, X_{\alpha(v)}) + \int_0^s g_v(u-, X_v, X_{\beta(v)}) \diff h_v(u, X_{\gamma(v)})$.
\end{proof}

\begin{lemapp}[Solvability of the time-split system]\label{lem:timesplit_solvable}
    Let $\Dcal$ be a system of causal SDEs satisfying Assumption~\ref{ass:uniquely_solvable} whose exogenous processes have independent increments, and let $\Ecal$ be a finite partition of $[0,T]$ into intervals. Then the time-split system $\Dcal^\Ecal$ is essentially uniquely solvable w.r.t.\ every $O \subseteq V \times \Ecal$.
\end{lemapp}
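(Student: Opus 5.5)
We follow the proof of Theorem~\ref{thm:ess_solvable_under_ass}. Fix $O \subseteq V\times\Ecal$ with complement $S := (V\times\Ecal)\setminus O$, and fix an admissible adapted input $\varphi_S$ of the exogenous increments $X_{W\times\Ecal}$. As there, essential unique solvability w.r.t.\ $O$ means that the intervened system $\Dcal^\Ecal_{\Do(X_S = \varphi_S)}$ has an essentially unique solution function w.r.t.\ $O$, with a map that does not depend on $\varphi_S$. We construct it in three steps.

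\emph{Step 1: reduce to one interval at a time.} By the adaptedness sparsity noted after Definition~\ref{def:sdes_timesplit}, there are no edges from $(u,\Ical_j)$ to $(v,\Ical_k)$ when $j>k$. Hence every SCC of $G(\Dcal^\Ecal)$ lies in a single time slice $V\times\{\Ical_k\}$. Inside a slice, the edges among the $(v,\Ical_k)$ are those of $G(\Dcal)$. Edges between slices come only from the initial value $X_v^{\Ical_{k-1}}(b_{k-1,k})$ and from the dependence on the past through $\mathrm{cat}$ and $\mathrm{rec}$, and all of these point forward. So every SCC $C$ of $G(\Dcal^\Ecal)$ restricted to $O$ has the form $C=\bar C\times\{\Ical_k\}$, with $\bar C$ contained in an SCC of $G(\Dcal)$ restricted to the $O$-coordinates of slice $k$. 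By Lemma~\ref{lem:scc_composition}, applied to the intervened system as in Theorem~\ref{thm:ess_solvable_under_ass}, it then suffices to prove essential unique solvability w.r.t.\ each such SCC $C$.

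\emph{Step 2: verify Theorem~\ref{thm:ito_map} on a single SCC $C=\bar C\times\{\Ical_k\}$.} The mechanism of $(v,\Ical_k)$ has three parts.
\begin{itemize}
\item The shift $X_v^{\Ical_{k-1}}(b_{k-1,k})$ comes from an earlier slice, so it enters as an input.
\item The term $f_v^{\Ical_k}$ depends on $X_{\alpha(v)}^\Ecal$. By Assumption~\ref{ass:uniquely_solvable}(i), $\alpha(v)$ does not meet the SCC of $v$ in $G(\Dcal)$, so there is no functional dependence inside $C$.
\item The integrand $g_v^{\Ical_k}(u-,\cdot)=g_v(u-,\mathrm{cat}(\cdot),\mathrm{rec}(\cdot))$ depends on the $C$-coordinates only through $x^{\Ical_k}$ placed in $\mathrm{cat}$.
\end{itemize}
The Lipschitz and linear-growth bounds of Assumption~\ref{ass:uniquely_solvable}(ii), stated with $\sup_{s\le t}$, therefore transfer to bounds on $\Ical_k$. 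The earlier slices are treated as the given argument $y$, and Lemma~\ref{lem:joint_vs_componentwise} glues the components together. The integrators $h_v(u,\mathrm{rec}(X^\Ecal_{\gamma(v)}))$ restricted to $\Ical_k$ are semimartingales, because $\mathrm{rec}$ recovers $X_{\gamma(v)}$. Independence of increments is what makes $X_{W\times\Ecal}$ a family of mutually independent exogenous processes, as Definition~\ref{def:sdes} requires. It also guarantees that $X_w^{\Ical_k}$ is a semimartingale in the filtration generated by the past slices together with the current increments.

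We then apply Theorem~\ref{thm:przybylowicz_app} on the interval $\Ical_k$. If $\Ical_k$ is left-open we apply it on the closure, starting from the shifted initial value; for a degenerate interval (a single point) the equation reduces to $X_v^{\{t\}} = X_v^{\Ical_{k-1}}(b) + f$-increment $+$ jump term, which is an explicit formula. This yields the solution function $I^{[C]}$, exactly as in the proof of Theorem~\ref{thm:ito_map}.

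\emph{Step 3: the obstacle.} The main difficulty is the interval bookkeeping. Theorem~\ref{thm:przybylowicz_app} is stated on $[0,T]$ for càdlàg inputs, but intervened inputs $\varphi_{(u,\Ical_j)}$ on right-open intervals need not be càdlàg after concatenation. This is exactly why admissibility is imposed: it guarantees that $\mathrm{cat}(\varphi)$ is càdlàg and that the left limits $b_{k-1,k}$ exist.

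I would handle this by embedding each slice equation into an SDE on $[0,T]$. Freeze all inputs outside $\Ical_k$ by stopping at $b_{k-1,k}$ and extending constantly, in the spirit of Lemma~\ref{lem:timesplit_embedding}. Then apply Theorem~\ref{thm:ito_map} there and restrict back to $\Ical_k$. Uniqueness on $\Ical_k$ follows because any other adapted fixed point, extended in the same way, solves the embedded SDE.
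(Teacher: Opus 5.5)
Your proposal is correct and follows the same route as the paper. Adaptedness confines every SCC of $G(\Dcal^\Ecal)$ to a single time slice, the linear-growth and Lipschitz bounds transfer through $\mathrm{cat}$ and $\mathrm{rec}$, and the SCC-wise Lipschitz solvability machinery finishes the argument. The paper packages this by concluding that $\Dcal^\Ecal$ satisfies Assumption~\ref{ass:uniquely_solvable} and citing Theorem~\ref{thm:ess_solvable_under_ass}. You instead unroll that theorem's proof (Theorem~\ref{thm:ito_map} on each SCC plus Lemma~\ref{lem:scc_composition}) and spell out the interval and admissibility bookkeeping and the role of independent increments, which the paper's one-paragraph proof leaves implicit.
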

\begin{proof}
    By adaptedness, $G(\Dcal^\Ecal)$ has no edges from later to earlier intervals, so every SCC of $G(\Dcal^\Ecal)$ is of the form $S \times \{\Ical_k\}$ for some SCC $S$ of $G(\Dcal)$. On each such SCC, the functions $f_v^{\Ical_k}, g_v^{\Ical_k}, h_v^{\Ical_k}$ are compositions of $f_v, g_v, h_v$ with $\mathrm{cat}$ and $\mathrm{rec}$, and the linear-growth and Lipschitz bounds transfer pointwise since $\|\mathrm{cat}(x^\Ecal)\|_\infty \leq \max_k \|x^{\Ical_k}\|_\infty$ and $\|\mathrm{rec}(y^\Ecal)\|_\infty \leq \sum_k \|y^{\Ical_k}\|_\infty \leq |\Ecal| \max_k \|y^{\Ical_k}\|_\infty$. Hence $\Dcal^\Ecal$ satisfies Assumption~\ref{ass:uniquely_solvable}, and by Theorem~\ref{thm:ess_solvable_under_ass} it is essentially uniquely solvable w.r.t.\ every $O \subseteq V \times \Ecal$.
\end{proof}

For any subset $O\subseteq (V\cup W)\times \Ecal$ and tuple of processes $\bar x_O = (\bar x_u^{\Ical})_{(u,\Ical)\in O} \in D([0,T], \RR)^{|O|}$, let the restriction map $\mathrm{res}(\bar x_O) = (\mathrm{res}^{\Ical}(\bar x_u^{\Ical}))_{(u,\Ical)\in O}$ be applied element-wise. As a main tool for embedding each variable $x_{v}^{\Ical_k}\in D(\Ical_k, \RR)$ of $\Dcal^{\Ecal}$ in $D([0,T], \RR)$, consider the following map:
\begin{equation*}
    \mathrm{ext}(x_v^{\Ical_k})(t)  := \begin{cases}
        0                                               & \text{if } t< \inf\Ical_k                              \\
        \lim_{s\downarrow\inf\Ical_k}x_{v}^{\Ical_k}(s) & \text{if } t = \inf\Ical_k \text{ and } t\notin\Ical_k \\
        x_v^{\Ical_k}(t)                                & \text{if } t\in \Ical_k                                \\
        x_v^{\Ical_k}(b_{k, k+1})                       & \text{if } t\geq \sup\Ical_k,
    \end{cases}
\end{equation*}
extended componentwise. Note that $\mathrm{res}\circ\mathrm{ext} = \mathrm{id}$.

\begin{definitionapp}[Embedded time-split system]\label{def:embedded}
    Given $\Dcal$ such that each exogenous process $X_w$ has independent increments, let $\Dcal^\Ecal = \angs{V\times\Ecal, W\times\Ecal, X_{W\times\Ecal}, f_V^\Ecal, g_V^\Ecal, h_V^\Ecal}$ be the time-split system of causal SDEs (Definition~\ref{def:sdes_timesplit}) for some finite partition $\Ecal$ of $[0,T]$ into intervals, and let $\Phi_v^{\Ical_k}$ denote its pathwise mechanism. We define the \emph{embedded time-split system} $\bar \Dcal^\Ecal := \angs{V\times\Ecal, W\times\Ecal, \bar\Xcal_{V\times\Ecal}, \bar\Xcal_{W\times\Ecal}, \bar\Phi_V^\Ecal, \bar\PP}$ on $[0,T]$ as follows:
    \begin{itemize}
        \item Sample spaces $\bar \Xcal_{V\times\Ecal} := D([0,T], \RR)^{|V\times\Ecal|}$ and $\bar\Xcal_{W\times\Ecal} := D([0,T], \RR)^{|W\times\Ecal|}$.
        \item For each $(v, \Ical_k) \in V\times\Ecal$, the embedded causal SDE $\bar X_v^{\Ical_k}(t) = \bar\Phi_v^{\Ical_k}(\bar x_V^\Ecal, \bar x_W^\Ecal)(t)$ with
        \begin{equation*}
            \bar\Phi_v^{\Ical_k}(\bar x_V^\Ecal, \bar x_W^\Ecal) := \mathrm{ext}\bigl(\Phi_v^{\Ical_k}(\mathrm{res}(\bar x_V^\Ecal), \mathrm{res}(\bar x_W^\Ecal))\bigr).
        \end{equation*}
        \item As exogenous distribution $\PP(\bar X_W^\Ecal) := \bigotimes_{(w,\Ical)\in W\times \Ecal}\PP(\bar X_w^\Ical)$ the distribution of the random variable
        \begin{equation*}
            \bar X_w^{\Ical_k} := \mathrm{ext}(X_w^{\Ical_k})
        \end{equation*}
        where $X_W^\Ecal$ is any random variable with distribution $\PP(X_W^\Ecal)$.
    \end{itemize}
\end{definitionapp}

\begin{lemapp}\label{lem:timesplit_embedding}
    Let $\Dcal^\Ecal$ be the time-split system of causal SDEs from Definition \ref{def:sdes_timesplit} with exogenous processes $X_w$ ($w \in W$) having independent increments, then:
    \begin{enumerate}[label=(\roman*)]
        \item If $\Dcal^\Ecal$ is essentially uniquely solvable w.r.t.\ $O \subseteq V\times\Ecal$, then so is $\bar\Dcal^\Ecal$.
        \item $G(\Dcal^\Ecal) = G(\bar\Dcal^\Ecal)$.
        \item For every $S \subseteq V\times\Ecal$, with $O := (V\times\Ecal)\setminus S$, and every $x_S \in \Xcal_S$,
        \begin{equation*}
            \PP_{\Dcal^\Ecal}\bigl(X_O \given \Do(X_S = x_S)\bigr) = \PP_{\bar\Dcal^\Ecal}\bigl(\mathrm{res}(\bar X_O) \given \Do(\bar X_S = \mathrm{ext}(x_S))\bigr).
        \end{equation*}
    \end{enumerate}
\end{lemapp}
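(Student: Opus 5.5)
The approach is to work directly with the two maps $\mathrm{ext}$ and $\mathrm{res}$, using $\mathrm{res}\circ\mathrm{ext}=\mathrm{id}$. By construction, $\bar\Phi^{\Ical_k}_v = \mathrm{ext}\circ\Phi^{\Ical_k}_v\circ\mathrm{res}$, and the embedded exogenous variables are $\bar X_W^\Ecal = \mathrm{ext}(X_W^\Ecal)$. We first check that $\mathrm{ext}$ and $\mathrm{res}$ are measurable and adapted between the relevant Skorokhod spaces. For adaptedness, note that $\mathrm{ext}(x)^{\wedge t}$ depends only on $x$ restricted to $\Ical_k\cap[0,t]$. The independent-increments assumption matters only to make the product measure $\bigotimes\PP(\bar X^{\Ical}_w)$ the actual joint law of $\mathrm{ext}(X_W^\Ecal)$. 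This guarantees that $\bar X_W^\Ecal$ has the same law as $\mathrm{ext}(X_W^\Ecal)$, so the exogenous families of the two systems are identified pathwise via $\mathrm{ext}$.

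For (i), take a solution function $I^{[O]}$ of $\Dcal^\Ecal$ and define
\[
\bar I^{[O]}(\bar x_S,\bar x_W):=\mathrm{ext}\bigl(I^{[O]}(\mathrm{res}(\bar x_S),\mathrm{res}(\bar x_W))\bigr).
\]
Given an adapted input $\bar\varphi_S:\bar\Xcal_W\to\bar\Xcal_S$, we set $\varphi_S:=\mathrm{res}\circ\bar\varphi_S\circ\mathrm{ext}$, which is an adapted input for $\Dcal^\Ecal$; admissibility holds because restrictions of càdlàg paths are admissible. The fixed-point property of $I^{[O]}$ at $\varphi_S$ then transports to that of $\bar I^{[O]}$ at $\bar\varphi_S$ by applying $\mathrm{ext}$ and using $\mathrm{res}\circ\mathrm{ext}=\mathrm{id}$. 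Here we use that $\bar X_W^\Ecal=\mathrm{ext}(\mathrm{res}(\bar X_W^\Ecal))$ holds a.s., since the law is supported on the image of $\mathrm{ext}$. For uniqueness, let $\bar\varphi_O$ be a fixed point. Then every output of $\bar\Phi$ lies in the image of $\mathrm{ext}$, so $\bar\varphi_O=\mathrm{ext}(\mathrm{res}\,\bar\varphi_O)$ a.s. Moreover, $\mathrm{res}\circ\bar\varphi_O\circ\mathrm{ext}$ is a fixed point of $\Dcal^\Ecal$, so essential uniqueness of $I^{[O]}$ gives the claim.

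For (ii), edges are defined by non-constancy of mechanisms in each argument. The mechanism $\bar\Phi_v^{\Ical_k}$ is non-constant in $\bar x_u^{\Ical}$ if and only if $\Phi_v^{\Ical_k}$ is non-constant in $x_u^{\Ical}$. This follows because $\mathrm{ext}$ is injective and $\mathrm{res}$ is surjective, via $\mathrm{res}\circ\mathrm{ext}=\mathrm{id}$. The same argument applies to the bidirected edges arising from shared exogenous parents. For (iii), we combine (i) with Theorem~\ref{thm:obs_int_dist}. The interventional law of $\bar\Dcal^\Ecal$ under $\Do(\bar X_S=\mathrm{ext}(x_S))$ is the law of
\[
\mathrm{ext}\bigl(I^{[O]}(x_S,\mathrm{res}(\bar X_W))\bigr).
\]
Applying $\mathrm{res}$ yields $I^{[O]}(x_S,X_W^\Ecal)$ in law, which is $\PP_{\Dcal^\Ecal}(X_O\given\Do(X_S=x_S))$.

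The main obstacle is part (i), specifically the correspondence between adapted inputs and fixed points across the two systems. There are two concerns. First, an arbitrary adapted $\bar\varphi_S$ on $\bar\Xcal_W$ is only evaluated on the image of $\mathrm{ext}$, but we need the a.s. statements to hold under $\PP(\bar X_W)$; this is why we use that this law is concentrated on $\mathrm{ext}(\Xcal_W)$. Second, measurability and adaptedness of $\mathrm{ext}$ at left-open endpoints, where a right limit is taken, must be checked carefully. Should the intended solvability notion for $\Dcal^\Ecal$ restrict to admissible inputs, we would verify that $\mathrm{res}\circ\bar\varphi_S$ is admissible. This holds automatically, because $\mathrm{res}$ of a càdlàg path on $[0,T]$ has left limits at every interval end and the required right-continuity at the junctions.
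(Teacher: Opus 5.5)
Your proposal is correct and follows the paper's proof essentially step for step. It uses the same solution function $\bar I^{[O]}=\mathrm{ext}\circ I^{[O]}\circ\mathrm{res}$, the same transport of inputs and fixed points via $\varphi_S=\mathrm{res}\circ\bar\varphi_S\circ\mathrm{ext}$, $\mathrm{res}\circ\mathrm{ext}=\mathrm{id}$ and $\bar X_W^\Ecal=\mathrm{ext}(\mathrm{res}(\bar X_W^\Ecal))$ a.s., the same edge comparison for (ii), and the same pushforward identity for (iii).

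One side claim is wrong: admissibility of $\mathrm{res}\circ\bar\varphi_S$ is not automatic. If $\Ical_k$ is right-closed, concatenating the restrictions of the two unrelated càdlàg paths $\bar\varphi_{(v,\Ical_k)}$ and $\bar\varphi_{(v,\Ical_{k+1})}$ need not give a path that is right-continuous at $\sup\Ical_k$. So, as the paper does, you must also restrict to admissible inputs $\bar\varphi_S$ for the embedded system.
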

\begin{proof}
    \emph{(i)} Fix $O \subseteq V \times \Ecal$ and write $S := (V \times \Ecal) \setminus O$. By essential unique solvability of $\Dcal^\Ecal$ w.r.t.\ $O$, there is an adapted measurable solution function $I^{[O]} : \Xcal_S \times \Xcal_{W\times\Ecal} \to \Xcal_O$, that satisfies for every admissible adapted measurable $\varphi_S : \Xcal_{W\times\Ecal} \to \Xcal_S$ the fixed-point property
    \begin{equation}\label{eqn:embedding_baseeqn}
        I^{[O]}(\varphi_S(X_W^\Ecal), X_W^\Ecal) = \Phi_O\bigl(I^{[O]}(\varphi_S(X_W^\Ecal), X_W^\Ecal), \varphi_S(X_W^\Ecal), X_W^\Ecal\bigr)
    \end{equation}
    $\PP$-a.s., and is essentially unique.
    Define $\bar I^{[O]} : \bar\Xcal_S \times \bar\Xcal_{W\times\Ecal} \to \bar\Xcal_O$ as the componentwise extension of the time-split solution to $[0,T]$,
    \begin{equation*}
        \bar I^{[O]}(\bar x_S, \bar x_W^\Ecal) := \mathrm{ext}\bigl(I^{[O]}(\mathrm{res}(\bar x_S), \mathrm{res}(\bar x_W^\Ecal))\bigr),
    \end{equation*}
    which is adapted and measurable.
    Fix an admissible, adapted measurable $\bar\varphi_S : \bar\Xcal_{W\times\Ecal} \to \bar\Xcal_S$ and let $\varphi_S := \mathrm{res} \circ \bar\varphi_S \circ \mathrm{ext} : \Xcal_{W\times\Ecal} \to \Xcal_S$, which is admissible, adapted and measurable as well.
    Write $X_W^\Ecal := \mathrm{res}(\bar X_W^\Ecal)$, so that $\mathrm{res}(\bar\varphi_S(\bar X_W^\Ecal)) = \varphi_S(x_W^\Ecal)$. Using $\bar\Phi_O = \mathrm{ext}\circ\Phi_O\circ\mathrm{res}$, $\mathrm{res}\circ\mathrm{ext} = \mathrm{id}$, and \eqref{eqn:embedding_baseeqn},
    \begin{align*}
        \bar\Phi_O\bigl(\bar I^{[O]}(\bar\varphi_S(\bar X_W^\Ecal), \bar X_W^\Ecal), \bar\varphi_S(\bar X_W^\Ecal), \bar X_W^\Ecal\bigr)
         & = \mathrm{ext}\bigl(\Phi_O(I^{[O]}(\varphi_S(x_W^\Ecal), x_W^\Ecal), \varphi_S(x_W^\Ecal), x_W^\Ecal)\bigr)                      \\
         & = \mathrm{ext}\bigl(I^{[O]}(\varphi_S(x_W^\Ecal), x_W^\Ecal)\bigr) = \bar I^{[O]}(\bar\varphi_S(\bar X_W^\Ecal), \bar X_W^\Ecal)
    \end{align*}
    $\bar\PP$-a.s., so $\bar I^{[O]}(\bar\varphi_S(\bar X_W^\Ecal), \bar X_W^\Ecal)$ is a fixed point of $\bar\Phi_O$.

    Let $\bar\varphi_O : \bar\Xcal_{W\times\Ecal} \to \bar\Xcal_O$ be a measurable adapted function with $\bar\varphi_O(\bar X_W^\Ecal) = \bar\Phi_O(\bar\varphi_O(\bar X_W^\Ecal), \bar\varphi_S(\bar X_W^\Ecal), \bar X_W^\Ecal)$ $\bar\PP$-a.s., and set $\varphi_O := \mathrm{res} \circ \bar\varphi_O \circ \mathrm{ext}$, so that $\mathrm{res}(\bar\varphi_O(\bar X_W^\Ecal)) = \varphi_O(x_W^\Ecal)$ $\bar\PP$-a.s. Using $\bar\Phi_O = \mathrm{ext}\circ\Phi_O\circ\mathrm{res}$ and applying $\mathrm{res}$ (with $\mathrm{res}\circ\mathrm{ext} = \mathrm{id}$) gives $\varphi_O(x_W^\Ecal) = \Phi_O(\varphi_O(x_W^\Ecal), \varphi_S(x_W^\Ecal), x_W^\Ecal)$; since $\bar\PP(\mathrm{res}(\bar X_W^\Ecal)) = \PP(X_W^\Ecal)$ this holds $\PP$-a.s., so by the essential uniqueness of $I^{[O]}$, $\varphi_O(x_W^\Ecal) = I^{[O]}(\varphi_S(x_W^\Ecal), x_W^\Ecal)$ $\PP$-a.s. Substituting back and using \eqref{eqn:embedding_baseeqn},
    \begin{equation*}
        \bar\varphi_O(\bar X_W^\Ecal) = \mathrm{ext}\bigl(\Phi_O(I^{[O]}(\varphi_S(x_W^\Ecal), x_W^\Ecal), \varphi_S(x_W^\Ecal), x_W^\Ecal)\bigr) = \mathrm{ext}\bigl(I^{[O]}(\varphi_S(x_W^\Ecal), x_W^\Ecal)\bigr) = \bar I^{[O]}(\bar\varphi_S(\bar X_W^\Ecal), \bar X_W^\Ecal)
    \end{equation*}
    $\bar\PP$-a.s.

    \emph{(ii)} By Definition~\ref{def:embedded}, $\bar\Phi_v^{\Ical_k} = \mathrm{ext}\circ\Phi_v^{\Ical_k}\circ\mathrm{res}$, and $\mathrm{ext}, \mathrm{res}$ are deterministic with $\mathrm{res}^{\Ical_j}(\bar x_u^{\Ical_j}) = x_u^{\Ical_j}$; hence $\bar\Phi_v^{\Ical_k}$ essentially depends on $\bar x_u^{\Ical_j}$ if and only if $\Phi_v^{\Ical_k}$ essentially depends on $x_u^{\Ical_j}$, so the directed edges into $(v,\Ical_k)$ coincide in $G(\bar\Dcal^\Ecal)$ and $G(\Dcal^\Ecal)$ --- including the initial-value edge $(v,\Ical_{k-1})\to(v,\Ical_k)$, carried by the term $X_v^{\Ical_{k-1}}(b_{k-1,k})$ inside $\Phi_v^{\Ical_k}$ (Definition~\ref{def:sdes_timesplit}). Bidirected edges match via the product factorisation $\bar\PP = \bigotimes_{(w,\Ical)} \PP(\bar X_w^\Ical)$. Hence $G(\bar\Dcal^\Ecal) = G(\Dcal^\Ecal)$.

    \emph{(iii)} Fix a solution function $I^{[O]} : \Xcal_S \times \Xcal_{W\times\Ecal} \to \Xcal_O$ of $\Dcal^\Ecal$ w.r.t.\ $O$, and let $\bar I^{[O]} = \mathrm{ext}\circ I^{[O]}\circ\mathrm{res}$ be the solution function of $\bar\Dcal^\Ecal$ from part (i). Since $\mathrm{res}\circ\mathrm{ext} = \mathrm{id}$,
    \begin{equation}\label{eqn:rho_bar_g}
        \mathrm{res}\bigl(\bar I^{[O]}(\bar x_S, \bar x_W^\Ecal)\bigr) = I^{[O]}\bigl(\mathrm{res}(\bar x_S), \mathrm{res}(\bar x_W^\Ecal)\bigr).
    \end{equation}
    By part (i), $\bar\Dcal^\Ecal$ is essentially uniquely solvable, so its interventional distributions are well-defined as pushforwards under the (intervened) solution function. Applied to $\Dcal^\Ecal$ and to $\bar\Dcal^\Ecal$, this gives
    \begin{align*}
        \PP_{\Dcal^\Ecal}\bigl(X_O \given \Do(X_S = x_S)\bigr)                             & = I^{[O]}(x_S, \cdot)_* \PP(X_W^\Ecal),                             \\
        \PP_{\bar\Dcal^\Ecal}\bigl(\bar X_O \given \Do(\bar X_S = \mathrm{ext}(x_S))\bigr) & = \bar I^{[O]}(\mathrm{ext}(x_S), \cdot)_* \bar\PP(\bar X_W^\Ecal).
    \end{align*}
    The law of $\mathrm{res}(\bar X_O)$ under the embedded interventional distribution is the pushforward of $\bar\PP(\bar X_W^\Ecal)$ under
    \begin{align*}
        \bar x_W^\Ecal \mapsto \mathrm{res}\bigl(\bar I^{[O]}(\mathrm{ext}(x_S), \bar x_W^\Ecal)\bigr) & \overset{\eqref{eqn:rho_bar_g}}{=} I^{[O]}\bigl(\mathrm{res}(\mathrm{ext}(x_S)), \mathrm{res}(\bar x_W^\Ecal)\bigr) = I^{[O]}\bigl(x_S, \mathrm{res}(\bar x_W^\Ecal)\bigr),
    \end{align*}
    where the final equality uses $\mathrm{res}\circ\mathrm{ext} = \mathrm{id}$. By Definition \ref{def:embedded}, $\bar\PP(\mathrm{res}(\bar X_W^\Ecal)) = \PP(X_W^\Ecal)$, so this pushforward equals $I^{[O]}(x_S, \cdot)_* \PP(X_W^\Ecal) = \PP_{\Dcal^\Ecal}\bigl(X_O \given \Do(X_S = x_S)\bigr)$ as required.
\end{proof}

\markovtimesplit*
\begin{proof}
    \emph{(i)} By Lemma~\ref{lem:timesplit_solvable}, $\Dcal^\Ecal$ is essentially uniquely solvable w.r.t.\ every $O \subseteq V \times \Ecal$. By Lemma~\ref{lem:timesplit_embedding}(i), the embedded time-split system $\bar\Dcal^\Ecal$ is also essentially uniquely solvable as a system of causal SDEs on $[0,T]$, so the $\sigma$-separation Markov property applies to it. Suppose $X_A^{\Ical_A} \Perp^\sigma_{G(\Dcal^\Ecal)} X_B^{\Ical_B} \given X_{C_1}^{\Ical_{C_1}}, \ldots, X_{C_r}^{\Ical_{C_r}}$. By Lemma \ref{lem:timesplit_embedding}(ii), $G(\bar\Dcal^\Ecal) = G(\Dcal^\Ecal)$, so the same $\sigma$-separation holds in $G(\bar\Dcal^\Ecal)$ between the corresponding embedded variables: $\bar X_A^{\Ical_A} \Perp^\sigma_{G(\bar\Dcal^\Ecal)} \bar X_B^{\Ical_B} \given \bar X_{C_1}^{\Ical_{C_1}}, \ldots, \bar X_{C_r}^{\Ical_{C_r}}$. The $\sigma$-separation Markov property in $\bar\Dcal^\Ecal$ then gives $\bar X_A^{\Ical_A} \Indep \bar X_B^{\Ical_B} \given \bar X_{C_1}^{\Ical_{C_1}}, \ldots, \bar X_{C_r}^{\Ical_{C_r}}$ under $\bar\PP$. By Lemma \ref{lem:timesplit_embedding}(iii) with $S = \emptyset$, $\bar\PP(\mathrm{res}(\bar X_V)) = \PP(X_V)$, and hence $X_A^{\Ical_A} \Indep X_B^{\Ical_B} \given X_{C_1}^{\Ical_{C_1}}, \ldots, X_{C_r}^{\Ical_{C_r}}$ under $\PP$.

    \emph{(ii)} Let $E_\Ecal := \bigcup_{\Ical\in\Ecal}\{\inf\Ical, \sup\Ical\}$ be the set of end-points of the partition $\Ecal$, and consider the Euler scheme $X^{\Delta}_V$ on a grid containing $E_\Ecal$. Write $X_{C}^{\Ical_C} := (X_{C_1}^{\Ical_{C_1}},\ldots,X_{C_r}^{\Ical_{C_r}})$ for brevity, and similarly $(X_{C}^\Delta)^{\Ical_C}$ and $(X_{C}^n)^{\Ical_C}$ for the Euler-grid and continuous-Euler analogues.
    Suppose $X_A^{\Ical_A}\Perp^d X_B^{\Ical_B} \given X_{C}^{\Ical_C}$ in the time-split graph $G(\Dcal^\Ecal)$. By the same argument as Lemma \ref{thm:dsep_summary_euler} adapted to the time-split graph (collapsing time-points within each interval to its $(v,\Ical)$-vertex respects interval membership since the Euler grid contains $E_\Ecal$), this gives
    $(X_A^\Delta)^{\Ical_A}\Perp^d (X_B^\Delta)^{\Ical_B} \given (X_{C}^\Delta)^{\Ical_C}$ in the Euler graph $G(\Mcal_\Dcal^\Delta)$, where we write $(X_A^\Delta)^{\Ical_A} = (X_A^\Delta(t_k) : t_k \in \Ical_A)$. Since the Euler graph is acyclic, the $d$-separation Markov property gives $(X_A^\Delta)^{\Ical_A}\Indep (X_B^\Delta)^{\Ical_B} \given (X_{C}^\Delta)^{\Ical_C}$. Let $X_V^n$ be the continuous Euler scheme from~\eqref{eqn:euler_continuous}. By the argument in the proof of Theorem~\ref{thm:continuous_euler_dsep_mp} applied to the interval-restricted blocks --- the gridpoints (functions of the increments) being independent of the Brownian bridges, with the gridpoint conditional independence above and the bridge conditional independence from the shared-noise structure --- the continuous Euler scheme satisfies $(X_A^n)^{\Ical_A}\Indep (X_B^n)^{\Ical_B} \given (X_{C}^n)^{\Ical_C}$. Let $\mathrm{proj}(X_V) := (X_A^{\Ical_A}, X_B^{\Ical_B}, X_{C}^{\Ical_C})$, then
    \begin{align*}
        d_{TV}(\PP(\mathrm{proj}(X^n_V)), \PP(\mathrm{proj}(X_V))) & = \sup_{A \in \sigma(\mathrm{proj})} \left|\PP(X^n_V \in A) - \PP(X_V\in A)\right|     \\
                                                                   & \leq \sup_{A \in \Bcal(C([0,T], \RR^d))} \left|\PP(X^n_V \in A) - \PP(X_V\in A)\right| \\
                                                                   & = d_{TV}(\PP(X^n_V), \PP(X_V)).
    \end{align*}
    By Theorem \ref{thm:euler_tv_convergence} there exists a subsequence $n_m$ such that the right-hand side converges to 0, and by Theorem \ref{thm:ci_tv_closed} we conclude that $X_A^{\Ical_A} \Indep X_B^{\Ical_B} \given X_{C}^{\Ical_C}$.
\end{proof}

\docalctimesplit*
\begin{proof}
    We prove Rule 2 explicitly; Rules 1 and 3 follow from similar arguments.
    Suppose that for $\Dcal^\Ecal$ we have $X_A^{\Ical_A} \Perp^\sigma_{G(\Dcal^\Ecal)_{\Do(R_B^{\Ical_B})}} R_B^{\Ical_B} \given X_B^{\Ical_B}, X_{C}^{\Ical_C}$, and $\PP_{\Dcal^\Ecal}(X_{C}^{\Ical_C} \mid X_B^{\Ical_B} = x_B) \ll \PP_{\Dcal^\Ecal}(X_{C}^{\Ical_C} \mid \Do(X_B^{\Ical_B} = x_B))$ for $\PP_{\Dcal^\Ecal}(X_B^{\Ical_B})$-a.a.\ $x_B$. By Lemma \ref{lem:timesplit_embedding}(ii), $G(\bar\Dcal^\Ecal) = G(\Dcal^\Ecal)$, and so $X_A^{\Ical_A} \Perp^\sigma_{G(\bar \Dcal^\Ecal)_{\Do(R_B^{\Ical_B})}} R_B^{\Ical_B} \given X_B^{\Ical_B}, X_{C}^{\Ical_C}$ as well. By Lemma \ref{lem:timesplit_embedding}(iii), for every $S\subseteq V\times\Ecal$ (write $O := (V\times\Ecal)\setminus S$) and every $x_S\in\Xcal_S$,
    \begin{equation}\label{eqn:docalctimesplit_pushforward}
        \PP_{\Dcal^\Ecal}\bigl(X_O \given \Do(X_S = x_S)\bigr) = \PP_{\bar\Dcal^\Ecal}\bigl(\mathrm{res}(\bar X_O) \given \Do(\bar X_S = \mathrm{ext}(x_S))\bigr),
    \end{equation}
    and similarly:
    \begin{equation}\label{eqn:docalctimesplit_pushforward_2}
        \PP_{\Dcal^\Ecal}\bigl(\mathrm{ext}(X_O) \given \Do(X_S = x_S)\bigr) = \PP_{\bar\Dcal^\Ecal}\bigl(\bar X_O \given \Do(\bar X_S = \mathrm{ext}(x_S))\bigr).
    \end{equation}
    Here \eqref{eqn:docalctimesplit_pushforward_2} follows by applying $\mathrm{ext}$ to \eqref{eqn:docalctimesplit_pushforward}: by Lemma~\ref{lem:timesplit_embedding}(i) the intervened solution of $\bar\Dcal^\Ecal$ is $\bar X_O = \bar I^{[O]}(\mathrm{ext}(x_S), \bar X_W^\Ecal) = \mathrm{ext}\bigl(I^{[O]}(x_S, X_W^\Ecal)\bigr)$, which lies in the image of $\mathrm{ext}$, so $\bar X_O = \mathrm{ext}(\mathrm{res}(\bar X_O))$.
    Since absolute continuity is preserved under push-forwards, \eqref{eqn:docalctimesplit_pushforward_2} gives $\PP_{\bar\Dcal^\Ecal}(\bar X_{C}^{\Ical_C} \mid \bar X_B^{\Ical_B}=\bar x_B) \ll \PP_{\bar\Dcal^\Ecal}(\bar X_{C}^{\Ical_C} \mid \Do(\bar X_B^{\Ical_B}=\bar x_B))$ for $\PP_{\bar\Dcal^\Ecal}(\bar X_B^{\Ical_B})$-almost all $\bar x_B$.
    The embedded time-split system $\bar\Dcal^\Ecal$ is a system of causal SDEs on $[0,T]$; since $\Dcal^\Ecal$ is essentially uniquely solvable w.r.t.\ every $O \subseteq V\times\Ecal$ (Lemma~\ref{lem:timesplit_solvable}), so is $\bar\Dcal^\Ecal$ by Lemma~\ref{lem:timesplit_embedding}(i), and Theorem~\ref{thm:do_calculus} applies to $\bar\Dcal^\Ecal$.
    Applying Rule 2 to $\bar\Dcal^\Ecal$ then gives
    \begin{equation*}
        \PP_{\bar\Dcal^\Ecal}\bigl(\bar X_A^{\Ical_A} \given \bar X_B^{\Ical_B}, \bar X_{C}^{\Ical_C}\bigr) = \PP_{\bar\Dcal^\Ecal}\bigl(\bar X_A^{\Ical_A} \given \Do(\bar X_B^{\Ical_B}), \bar X_{C}^{\Ical_C}\bigr) \quad \PP_{\bar\Dcal^\Ecal}(\bar X_B^{\Ical_B}, \bar X_{C}^{\Ical_C})\text{-a.s.}
    \end{equation*}
    Componentwise pushforward by $\mathrm{res}$ (using \eqref{eqn:docalctimesplit_pushforward}) yields
    $\PP_{\Dcal^\Ecal}(X_A^{\Ical_A} \given X_B^{\Ical_B}, X_{C}^{\Ical_C}) = \PP_{\Dcal^\Ecal}(X_A^{\Ical_A} \given \Do(X_B^{\Ical_B}), X_{C}^{\Ical_C})$ $\PP_{\Dcal^\Ecal}(X_B^{\Ical_B}, X_{C}^{\Ical_C})$-a.s., establishing Rule 2 for $\Dcal^\Ecal$.
\end{proof}

\subsection{Proofs of Section \ref{sec:granger_causality}}\label{app:granger}
\grangeriscausation*
\begin{proof}
    \emph{(i)} Let $u = z_0 \to z_1 \to \cdots \to z_n = v$ be a directed path in $G(\Dcal)$, and fix any $0\leq s < T$. By Definition~\ref{def:sdes_timesplit}, the causal SDE for $X_u^{(s,T]}$ contains the initial-value term $X_u^{[0,s]}(b)$, giving the self-edge $u^{[0,s]} \to u^{(s,T]}$ in $G(\Dcal^{\{[0,s], (s,T]\}})$. Within $(s,T]$, the causal SDE for each $X_{z_{i+1}}^{(s,T]}$ has parent set $\alpha(z_{i+1})\cup\beta(z_{i+1})\cup\gamma(z_{i+1})$, inheriting the edge $z_i^{(s,T]} \to z_{i+1}^{(s,T]}$ from $z_i \to z_{i+1}$ in $G(\Dcal)$. Composing these yields the walk
    \begin{equation*}
        u^{[0,s]} \to u^{(s,T]} \to z_1^{(s,T]} \to \cdots \to z_{n-1}^{(s,T]} \to v^{(s,T]}
    \end{equation*}
    in $G(\Dcal^{\{[0,s], (s,T]\}})$. Every non-endpoint node lies in $(s,T]$ and so is not in the conditioning set $X_{V\setminus\{u\}}^{[0,s]}$, so the walk is $\sigma$-active. By faithfulness of $\Dcal^{\{[0,s], (s,T]\}}$, $X_u^{[0,s]} \nIndep X_v^{(s,T]} \given X_{V\setminus\{u\}}^{[0,s]}$, so $u$ is a Granger cause of $v$.

    \emph{(ii)} We prove the contrapositive. Suppose there is no directed path from $u$ to $v$ in $G(\Dcal)$, and there is no latent confounding (so $G(\Dcal^{\{[0,s], (s,T]\}})$ contains no bidirected edges). Fix $0\leq s < T$ and write $C := X_{V\setminus\{u\}}^{[0,s]}$.
    Let $\pi$ be any walk from $u^{[0,s]}$ to $v^{(s,T]}$; we show that it is $\sigma$-blocked given $C$. Let $w^{[0,s]}$ be the \emph{last} node of $\pi$ lying in $[0,s]$, so all subsequent nodes lie in $(s,T]$. By adaptedness $G(\Dcal^{\{[0,s], (s,T]\}})$ has no edges from $(s,T]$ to $[0,s]$, so the edge from $w^{[0,s]}$ to the next node is of the form $w^{[0,s]} \to w'^{(s,T]}$; hence $w^{[0,s]}$ is a non-collider that points to $w'^{(s,T]}$, which lies in a different interval and thus a different strongly connected component (SCCs of the time-split graph never span intervals), so $w^{[0,s]}$ is blockable. If $w \neq u$, then $w^{[0,s]} \in C$ is a blockable non-collider, so $\pi$ is $\sigma$-blocked. If $w = u$, then every node of $\pi$ after $u^{[0,s]}$ lies in $(s,T]$. Suppose $\pi$ were $\sigma$-open; since $C \subseteq [0,s]$ and no $(s,T]$-node is an ancestor of a $[0,s]$-node, any collider on the portion after $u^{[0,s]}$ would lie outside $\Anc(C)$ and block $\pi$, so this portion is collider-free. As its first edge $u^{[0,s]} \to w'^{(s,T]}$ is outgoing, it is then a directed path $u^{[0,s]} \to w'^{(s,T]} \to \cdots \to v^{(s,T]}$, giving a directed path from $u$ to $v$ in $G(\Dcal)$, contradicting the assumption; hence $\pi$ is $\sigma$-blocked.
    Therefore every walk from $u^{[0,s]}$ to $v^{(s,T]}$ is $\sigma$-blocked given $C$, and by the $\sigma$-separation Markov property (Theorem~\ref{thm:markov_timesplit}(i)), $X_u^{[0,s]} \Indep X_v^{(s,T]} \given X_{V\setminus\{u\}}^{[0,s]}$ for every $0\leq s < T$. Hence $u$ is not a Granger cause of $v$.
\end{proof}

\subsubsection{Proofs of Section \ref{sec:local_independence}}\label{app:local_independence}
\begin{definition}[Quasimartingale]\label{def:quasimartingale}
    A real-valued càdlàg process $X$ adapted to a filtration $\Fcal$ on $[0,T]$ with $\EE[|X(t)|] < \infty$ for all $t\in[0,T]$ is a \emph{quasimartingale} with respect to $\Fcal$ if
    \begin{equation*}
        \mathrm{Var}(X, \Fcal) := \sup_{\tau} \mathrm{Var}_\tau(X, \Fcal) < \infty,
        \qquad \text{where} \quad
        \mathrm{Var}_\tau(X, \Fcal) := \EE\Bigl[\sum_{i=0}^{n-1} \bigl|\EE[X(t_{i+1}) - X(t_i) \given \Fcal_{t_i}]\bigr|\Bigr]
    \end{equation*}
    for a finite partition $\tau = \{0 = t_0 < t_1 < \cdots < t_n = T\}$ of $[0,T]$, and the supremum ranges over all such $\tau$. A vector-valued process is a quasimartingale if each of its components is.
\end{definition}
By Rao's theorem (\cite{protter2005stochastic}, III.18) every càdlàg quasimartingale is a special semimartingale. Since a quasimartingale remains a quasimartingale under optional projection \citep[Theorem 2.4]{follmer2011local}, the notion of local independence is well-defined for quasimartingales:
\begin{lemapp}\label{thm:bounded_implies_special_projection}
    If $X_B$ is a quasimartingale with respect to $\Fcal^V$, then for every $C \subseteq V$ the optional projection $\EE[X_B(t)\given \Fcal^C_t]$ exists and is a quasimartingale with respect to $\Fcal^C$, hence a special semimartingale. In particular the notion of local independence is well-defined.
\end{lemapp}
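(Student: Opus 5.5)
The plan is to reduce the lemma to the two cited facts: Föllmer--Protter (Theorem 2.4) for the optional projection, and Rao's theorem for the special semimartingale property. The work lies in checking their hypotheses in our setting.

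First, I would fix $C \subseteq V$ and note that $\Fcal^C_t \subseteq \Fcal^V_t$ for all $t$. Both filtrations are taken in their usual augmentation (right-continuous and completed), which the optional projection theorem needs. Since $X_B$ is a quasimartingale, $\EE[|X_B(t)|] < \infty$ for each $t$. Hence the conditional expectations $\EE[X_B(t)\given \Fcal^C_t]$ are well defined. The general optional projection theorem, applied componentwise to the measurable process $X_B$, then gives an $\Fcal^C$-optional process $Y$ with $Y(\tau) = \EE[X_B(\tau)\given\Fcal^C_\tau]$ at all bounded $\Fcal^C$-stopping times $\tau$.

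Second, I would show that $Y$ has a càdlàg version and is an $\Fcal^C$-quasimartingale. This is the content of Föllmer--Protter, Theorem 2.4, which I would invoke directly. For the variation bound, the key computation is that for any partition $\tau$, by the tower property and conditional Jensen,
\begin{equation*}
    \EE\bigl[\bigl|\EE[Y(t_{i+1})-Y(t_i)\given\Fcal^C_{t_i}]\bigr|\bigr] = \EE\bigl[\bigl|\EE\bigl[\EE[X_B(t_{i+1})-X_B(t_i)\given\Fcal^V_{t_i}]\given\Fcal^C_{t_i}\bigr]\bigr|\bigr] \leq \EE\bigl[\bigl|\EE[X_B(t_{i+1})-X_B(t_i)\given\Fcal^V_{t_i}]\bigr|\bigr].
\end{equation*}
Summing over $i$ gives $\mathrm{Var}_\tau(Y,\Fcal^C) \leq \mathrm{Var}_\tau(X_B,\Fcal^V)$. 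Taking the supremum over $\tau$ then gives $\mathrm{Var}(Y,\Fcal^C) < \infty$.

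Third, Rao's theorem (Protter III.18) decomposes the càdlàg quasimartingale $Y$ as a difference of two nonnegative supermartingales. This makes $Y$ a special semimartingale, so it has a unique Doob--Meyer decomposition $\Lambda^C + M^C$ with predictable finite-variation part $\Lambda^C$. Applying this with $C$ and with $A\cup C$ shows that both projections in \eqref{eqn:local_independence} have Doob--Meyer decompositions, so Definition~\ref{def:local_independence} is well-posed.

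The main obstacle is regularity. The optional projection is only determined up to indistinguishability, and it is not automatically càdlàg; Rao's theorem needs the càdlàg property. I would rely on Föllmer--Protter for this point rather than reprove it. As a fallback, I would use the inequality above to show that $Y$ is a quasimartingale on rational times. I would then obtain a càdlàg modification via Rao's supermartingale decomposition and standard supermartingale regularisation under the usual conditions. Finally, I would identify this modification with the optional projection using right-continuity of $X_B$ and of the filtration.
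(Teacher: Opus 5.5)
Your proposal is correct and follows the paper's argument. The paper likewise just combines F\"ollmer--Protter (Theorem 2.4), for preservation of the quasimartingale property under optional projection, with Rao's theorem to conclude that the projection is a special semimartingale. Your explicit tower-property and conditional-Jensen bound $\mathrm{Var}_\tau(Y,\Fcal^C)\le\mathrm{Var}_\tau(X_B,\Fcal^V)$, together with the regularity fallback, spells out what the paper delegates entirely to that citation; it is a sound addition but not essential.
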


For proving Theorem \ref{thm:global_granger_implies_local_independence} we need some additional definitions and a technical lemma. For a càdlàg process $\Lambda$ we write
\begin{equation*}
    \int_0^t |\diff\Lambda_s| := \sup_{0 = t_0 < t_1 < \cdots < t_n = t} \sum_{i=1}^n \|\Lambda(t_i) - \Lambda(t_{i-1})\|
\end{equation*}
for its \emph{total variation} on $[0,t]$, a value in $[0,\infty]$. Then $\Lambda$ is \emph{of finite variation} if $\int_0^T |\diff\Lambda_s| < \infty$ almost surely, and \emph{of integrable variation} if $\EE\bigl[\int_0^T |\diff\Lambda_s|\bigr] < \infty$. Note that this total variation differs from the mean variation $\mathrm{Var}(\cdot, \Fcal)$ of Definition~\ref{def:quasimartingale} (applied componentwise to vector-valued processes), which conditions the increments on $\Fcal$ before taking absolute values. A family $\{\xi_i\}_{i\in I}$ of integrable random variables is \emph{uniformly integrable} if $\lim_{c\to\infty} \sup_{i\in I} \EE[\|\xi_i\| \I\{\|\xi_i\| > c\}] = 0$; a c\`adl\`ag adapted process $X$ with $\EE[|X(t)|] < \infty$ for all $t$ is \emph{of class~(D)} if the family $\{X(\tau) : \tau \text{ a finite-valued stopping time}\}$ is uniformly integrable; and a martingale $M$ is a \emph{uniformly integrable martingale} if the family $\{M(t) : t\in[0,T]\}$ is uniformly integrable.

\begin{lemapp}\label{lem:projection_class_D}
    Let $X_B$ be a quasimartingale with respect to $\Fcal^V$ (componentwise) such that $\EE\bigl[\sup_{t\in[0,T]}\|X_B(t)\|\bigr] < \infty$, and let $C \subseteq V$. Then the optional projection $Y(t) := \EE[X_B(t)\given \Fcal^C_t]$ is of class~(D), and its canonical decomposition $Y = Y(0) + \Lambda^C + M^C$ has $\Lambda^C$ of integrable variation, and $M^C$ a uniformly integrable martingale.
\end{lemapp}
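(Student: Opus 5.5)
The argument has three parts: class~(D) of $Y$ via a uniform supremum bound, finite mean variation of $Y$ inherited from $X_B$, and then Rao's theorem together with the standard results for class~(D) quasimartingales (Dellacherie--Meyer; \citealp{protter2005stochastic}, Ch.~III).

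\emph{Class~(D).} Set $\xi := \sup_{t\in[0,T]}\|X_B(t)\|$, which is integrable by assumption. For each $t$ we have $\|Y(t)\| \leq \EE[\xi \given \Fcal^C_t]$ a.s. Both sides are optional: $Y$ is the càdlàg optional projection, and $N(t) := \EE[\xi\given\Fcal^C_t]$ is its càdlàg martingale version. Hence the inequality holds up to indistinguishability, by the optional section theorem or simply by right-continuity on a countable dense set. For any finite stopping time $\tau$, optional sampling of the uniformly integrable martingale $N$ gives $\|Y(\tau)\| \leq N(\tau) = \EE[\xi\given\Fcal^C_\tau]$. The family $\{\EE[\xi\given\Fcal^C_\tau]\}_\tau$ is uniformly integrable, being conditional expectations of a single integrable variable. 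Therefore $\{Y(\tau)\}$ is uniformly integrable, so $Y$ is of class~(D).

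\emph{Quasimartingale property.} By Lemma~\ref{thm:bounded_implies_special_projection} (\citealp[Theorem 2.4]{follmer2011local}), $Y$ is an $\Fcal^C$-quasimartingale. Concretely, for a partition $\tau$ the tower property with $\Fcal^C_{t_i}\subseteq\Fcal^V_{t_i}$ gives
\begin{equation*}
\EE\bigl[Y(t_{i+1})-Y(t_i)\given \Fcal^C_{t_i}\bigr] = \EE\bigl[\EE[X_B(t_{i+1})-X_B(t_i)\given\Fcal^V_{t_i}]\given\Fcal^C_{t_i}\bigr].
\end{equation*}
Applying conditional Jensen then yields $\mathrm{Var}_\tau(Y,\Fcal^C)\le \mathrm{Var}_\tau(X_B,\Fcal^V)$. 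Taking suprema over partitions, $\mathrm{Var}(Y,\Fcal^C) < \infty$.

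\emph{Decomposition.} This is the step needing the most care: we must use the refined form of Rao's theorem, not just the special-semimartingale conclusion. I would invoke the result that a class~(D) quasimartingale is the difference of two nonnegative supermartingales of class~(D). Applying Doob--Meyer to each gives two predictable increasing integrable processes $A^\pm$ and two uniformly integrable martingales. The compensator is $\Lambda^C = A^+ - A^-$ (up to sign), with $\EE\int_0^T|\diff\Lambda^C_s| \le \EE[A^+_T + A^-_T] < \infty$; in fact it is bounded by $\mathrm{Var}(Y,\Fcal^C)$. Setting $M^C := Y - Y(0) - \Lambda^C$, we see that $M^C$ is a local martingale dominated in the class~(D) sense. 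Indeed, $\sup_t\|\Lambda^C(t)\|\le\int_0^T|\diff\Lambda^C|$ is integrable, so $M^C$ is of class~(D) and hence a uniformly integrable martingale. Finally, uniqueness of the canonical decomposition of special semimartingales identifies this $\Lambda^C$, $M^C$ with the ones in the statement. All steps are applied componentwise for vector-valued $X_B$.
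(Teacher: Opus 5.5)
Your proposal is correct and follows essentially the same route as the paper: the class~(D) bound $\|Y(\tau)\|\le\EE[\xi\given\Fcal^C_\tau]$, then Rao's theorem in $\Fcal^C$ and Doob--Meyer on each supermartingale to get integrable variation of $\Lambda^C$, then class~(D) of $M^C$ from that of $Y$, and finally uniqueness of the canonical decomposition. The differences are minor: you verify the $\Fcal^C$-quasimartingale property of $Y$ explicitly, where the paper takes it from Lemma~\ref{thm:bounded_implies_special_projection}; and the class~(D) refinement of Rao that you invoke is not needed, because the plain Rao decomposition into positive supermartingales already gives $\EE[A_T]<\infty$ through the positive-supermartingale Doob--Meyer theorem, which is what the paper uses.
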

\begin{proof}
    Write $\chi := \sup_{t\in[0,T]}\|X_B(t)\|$, which is integrable by assumption. For any finite-valued $\Fcal^C$-stopping time $\tau$ we have $\|Y(\tau)\| = \|\EE[X_B(\tau)\given \Fcal_\tau^C]\| \leq \EE[\chi \given \Fcal_\tau^C]$, and the family $\{\EE[\chi\given\Gcal] : \Gcal \subseteq \Fcal \text{ a sub-}\sigma\text{-algebra}\}$ is uniformly integrable since $\chi$ is integrable. Hence $\{Y(\tau)\}$ is uniformly integrable, i.e.\ $Y$ is of class~(D).

    By Rao's theorem (\cite{protter2005stochastic}, III.17) applied in the filtration $\Fcal^C$, we may write $Y = U - W$ with $U, W$ positive right-continuous $\Fcal^C$-supermartingales. By the Doob-Meyer decomposition (\cite{protter2005stochastic}, III.16) we have $U = U(0) + M^U - A^U$ with $M^U$ an $\Fcal^C$-local martingale and $A^U$ an increasing predictable process with $A^U_0 = 0$; since $U \geq 0$ we have $\lim_{t}\EE[U(t)] \geq 0 > -\infty$, so that same theorem gives $\EE[A^U_T] < \infty$. Writing $W = W(0) + M^W - A^W$ analogously, we obtain
    \begin{equation*}
        Y = Y(0) + (M^U - M^W) - (A^U - A^W),
    \end{equation*}
    a decomposition into an $\Fcal^C$-local martingale and a predictable finite-variation process vanishing at $0$; by uniqueness of the decomposition (\cite{protter2005stochastic}, III.34) we have $\Lambda^C = -(A^U - A^W)$ and $M^C = M^U - M^W$. Since $A^U$ and $A^W$ are increasing with $A^U_0 = A^W_0 = 0$, their total variations are $\int_0^T |\diff A^U_s| = A^U_T$ and $\int_0^T |\diff A^W_s| = A^W_T$; as $\Lambda^C$ is their difference, subadditivity of the total variation gives $\int_0^T|\diff\Lambda^C_s| \leq A^U_T + A^W_T$, so $\EE[\int_0^T |\diff\Lambda^C_s|] < \infty$.

    For any finite-valued $\Fcal^C$-stopping time $\tau$ we have $\|M^C(\tau)\| \leq \|Y(\tau)\| + \|Y(0)\| + \int_0^T|\diff\Lambda^C_s|$, where the first family is uniformly integrable by the above and the remaining two terms are integrable random variables not depending on $\tau$, hence $M^C$ is a local martingale of class~(D). Being a local martingale, $M^C$ admits a localising sequence $\tau_n \uparrow \infty$ for which each stopped process $M^C(\cdot\wedge\tau_n)$ is a martingale, so for $s\leq t$
    \begin{equation}\label{eqn:stopped_martingale}
        \EE[M^C(t\wedge\tau_n)\given\Fcal^C_s] = M^C(s\wedge\tau_n).
    \end{equation}
    Since $\tau_n \uparrow \infty$, both $M^C(t\wedge\tau_n) \to M^C(t)$ and $M^C(s\wedge\tau_n) \to M^C(s)$ almost surely as $n\to\infty$. Each $t\wedge\tau_n$ is a finite $\Fcal^C$-stopping time, so $\{M^C(t\wedge\tau_n) : n\geq 1\}$ is a subfamily of the class~(D) family and is uniformly integrable; combined with the almost-sure convergence, the $L^1$-convergence criterion (\cite{kallenberg2021foundations}, Theorem 5.12) gives $M^C(t\wedge\tau_n) \to M^C(t)$ in $L^1$. By the conditional Jensen inequality, $\EE\bigl|\EE[Z\given\Fcal^C_s]\bigr| \leq \EE\bigl[\EE[|Z|\given\Fcal^C_s]\bigr] = \EE|Z|$ for any integrable $Z$. Applying this to $Z = M^C(t\wedge\tau_n) - M^C(t)$ gives $\EE\big[|\EE[M^C(t\wedge\tau_n)\given\Fcal^C_s] - \EE[M^C(t)\given\Fcal^C_s]|\big] \leq \EE\big[|M^C(t\wedge\tau_n) - M^C(t)|\big] \to 0$, so the left-hand side of (\ref{eqn:stopped_martingale}) converges to $\EE[M^C(t)\given\Fcal^C_s]$ in $L^1$, and hence in probability. The right-hand side of (\ref{eqn:stopped_martingale}) converges to $M^C(s)$ almost surely, and hence in probability, and thus $\EE[M^C(t)\given\Fcal^C_s] = M^C(s)$ almost surely. Hence $M^C$ is a martingale.
\end{proof}

In the original work of \cite{florens1996noncausality}, it is claimed for the setting where $B\subseteq C$ that if $X_A^{[0,s]}\Indep X_B^{(s,t]} \given X_C^{[0,s]}$, then in the Doob-Meyer decomposition $X_B = \Lambda^C + M^C$, the $\Fcal_t^C$-local martingale $M^C$ is a $\Fcal_t^{A,C}$-local martingale, in which case we also have $\Lambda^C = \Lambda^{A,C}$ and hence local independence.
That $M^C$ is a $\Fcal_t^{A,C}$-local martingale should follow from their Corollary 2.1, saying that if $X_A^{[0,s]}\Indep X_B^{(s,t]} \given X_C^{[0,s]}$, then any $\Fcal_t^B$-adapted $\Fcal_t^C$-local martingale is an $\Fcal_t^{A, C}$-local martingale. However, the $\Fcal_t^C$-local martingale $M^C$ is not necessarily $\Fcal_t^B$-adapted, so one cannot apply Corollary 2.1 to obtain that $M^C$ is also a $\Fcal_t^{A,C}$-local martingale. The following proof corrects this error and extends the theorem to the setting where $B$ is not contained in $C$.

\globalgrangerimplieslocalindependence*
\begin{proof}
    We must show that $\Lambda_t^{C} = \Lambda_t^{A,C}$ a.s. -- we will derive this property via the definition of these processes as given in the proof of the Doob-Meyer decomposition from \cite{kallenberg2021foundations}, Theorem 10.5. Let $X_B^{C}(t) := \EE[X_B(t)\given \Fcal_t^C]$ and $X_B^{A,C}(t) := \EE[X_B(t)\given \Fcal_t^{A,C}]$. Consider the points $t_j^n := jT/2^n$ that define the dyadic partition of $[0,T]$. Define
    \begin{equation}\label{eqn:doob_meyer_last_element}
        (\alpha_T^{C})^n := \sum_{j=1}^{2^n} \EE[X_B^{C}(t_j^n) - X_B^C(t_{j-1}^n)\given \Fcal^{C}_{t_{j-1}^n}].
    \end{equation}
    We first show that $\{(\alpha_T^{C})^n : n\geq 1\}$ is uniformly integrable. By Lemma~\ref{thm:bounded_implies_special_projection}, $X_B^C$ is a special semimartingale with canonical decomposition $X_B^C = X_B^C(0) + \Lambda^C + M^C$ in which $\Lambda^C$ is of integrable variation and the martingale $M^C$ is uniformly integrable, by Lemma~\ref{lem:projection_class_D}. Since $M^C$ is a martingale, its increments vanish under the conditional expectations in (\ref{eqn:doob_meyer_last_element}), so
    \begin{equation*}
        (\alpha_T^{C})^n = \sum_{j=1}^{2^n}\EE\bigl[\Lambda^C(t_j^n) - \Lambda^C(t_{j-1}^n) \given \Fcal^C_{t_{j-1}^n}\bigr].
    \end{equation*}
    Write $|\Lambda^C|_t := \int_0^t |\diff \Lambda^C_s|$ for the total variation process of $\Lambda^C$, which is increasing with $\EE[|\Lambda^C|_T] < \infty$ and hence a submartingale of class~(D), and let
    \begin{equation*}
        A^n_t := \sum_{j\,:\,t_j^n \leq t} \EE\bigl[|\Lambda^C|_{t_j^n} - |\Lambda^C|_{t_{j-1}^n} \given \Fcal^C_{t_{j-1}^n}\bigr].
    \end{equation*}
    For each $j$ we have
    \begin{equation*}
        |\Lambda^C(t_j^n) - \Lambda^C(t_{j-1}^n)| = \Bigl|\int_{t_{j-1}^n}^{t_j^n}\diff\Lambda^C_s\Bigr| \leq \int_{t_{j-1}^n}^{t_j^n}|\diff\Lambda^C_s| = |\Lambda^C|_{t_j^n} - |\Lambda^C|_{t_{j-1}^n},
    \end{equation*}
    and thus $|(\alpha_T^{C})^n| \leq A^n_T$ by the triangle and the conditional Jensen inequality. The argument in the proof of Lemma 10.7 of \cite{kallenberg2021foundations} shows that the terminal values $\{A^n_T : n \geq 1\}$ form a uniformly integrable family, so $\{(\alpha_T^C)^n : n\geq 1\}$ is uniformly integrable by domination.

    By the Dunford-Pettis criterion (\cite{kallenberg2021foundations}, Lemma 5.13) there is a subsequence $n_k$ such that $(\alpha_T^{C})^{n_k}$ has a weak limit $\alpha_T^{C} \in L^1(\Fcal_{T}^{C})$. The Doob-Meyer decomposition is then given by
    \begin{equation}\label{eqn:compensator}
        \Lambda^{C}_t := X_B^C(t) - \EE[X_B^C(T) - \alpha^{C}_T \given \Fcal_t^{C}]
    \end{equation}
    and $M_t^{C} := X_B^C(t) - \Lambda^{C}_t$. That $M^{C}_t$ is a martingale is immediate, and see \cite{kallenberg2021foundations} for a proof that $\Lambda^{C}$ is a predictable finite-variation process. We obtain the Doob-Meyer decomposition of $X_B^{A,C}$ with respect to $\Fcal_t^{A,C}$ similarly, via the last element $\alpha_T^{A,C} \in \Fcal_T^{A,C}$ and defining $\Lambda^{A,C}$ in terms of $\alpha_T^{A,C}$. Since $X_B$ is c\`adl\`ag, $X_B(s) = \lim_{\epsilon\downarrow 0}X_B(s+\epsilon)$, so $X_B(s)$ is measurable with respect to $\sigma(X_B^{(s,t]})$, and hence, for $0<s<t$, the assumed conditional independence $X_A^{[0,s]}\Indep X_B^{(s,t]} \given X_C^{[0,s]}$ implies $X_A^{[0,s]}\Indep X_B(s) \given X_C^{[0,s]}$ and thus $X_B^{C}(s) = \EE[X_B(s)\given \Fcal_s^{C}] = \EE[X_B(s)\given \Fcal_s^{A,C}] = X_B^{A,C}(s)$. Since the optional projections are c\`adl\`ag and the filtrations right-continuous, letting $s\downarrow 0$ extends this identity to $s=0$ and yields $\EE[X_B(r)\given \Fcal_0^{C}] = \EE[X_B(r)\given \Fcal_0^{A,C}]$ for every $r\in(0,T]$; hence $X_B^{C} = X_B^{A,C}$. We then see from (\ref{eqn:doob_meyer_last_element}) (and the tower property) that $(\alpha_T^{C})^n = (\alpha_T^{A,C})^n$, and hence also $\alpha_T^{C} = \alpha_T^{A,C}$. We also have for any $t\in [0,T]$ and $i$ such that $t_{i-1}^n \leq t < t_{i}^n$ that
    \begin{equation*}
        \EE[(\alpha_T^{C})^n\given \Fcal^{C}_t] = \sum_{j=1}^i \EE[X_B^C(t_j^n)- X_B^C(t_{j-1}^n) \given \Fcal^{C}_{t_{j-1}^n}] + \sum_{j=i+1}^{2^n} \EE[X_B^C(t_j^n) - X_B^C(t_{j-1}^n)\given \Fcal^{C}_{t}],
    \end{equation*}
    which by the assumed conditional independence and the tower property is equal to $\EE[(\alpha_T^{C})^n\given \Fcal^{A,C}_t]$. Since the map $(\alpha_T^{C})^n \mapsto \EE[(\alpha_T^{C})^n \given \Fcal_t^{C}]$ is continuous from the weak topology $\sigma(L^1(\Fcal_T^{C}), L^\infty(\Fcal_T^{C}))$ to $\sigma(L^1(\Fcal_t^{C}), L^\infty(\Fcal_t^{C}))$ (see e.g.\ the discussion in \cite{dellacherie1978probabilities}, II.42), we get that $\EE[(\alpha_T^{C})^{n_k}\given \Fcal^{C}_t]$ converges weakly to $\EE[\alpha^{C}_T\given \Fcal_t^{C}]$, giving $\EE[\alpha^{C}_T\given \Fcal_t^{C}] = \EE[\alpha^{C}_T\given \Fcal_t^{A,C}]$. By the independence assumption we also have $\EE[X_B(T) \given \Fcal_t^{C}] = \EE[X_B(T) \given \Fcal_t^{A,C}]$, so from (\ref{eqn:compensator}) we obtain $\Lambda^{C} = \Lambda^{A,C}$. Since $X_B^{C} = X_B^{A,C}$ this also gives $M^{C} = M^{A,C}$.

    If $B=C$, the remaining implication follows directly from \cite{bremaud1978changes} Theorem~3, as also remarked by \cite{florens1996noncausality}.
\end{proof}

\begin{lemapp}\label{lem:additive_noise_quasimartingale}
    If $\Dcal$ satisfies Assumption~\ref{ass:additive_noise}, then $X_V$ is a quasimartingale with respect to $\Fcal^V$ and $\EE[\sup_{t\in[0,T]}\|X_V(t)\|] < \infty$.
\end{lemapp}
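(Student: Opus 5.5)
The proof proceeds in two steps: first the maximal moment bound, then the quasimartingale property. Throughout we use the integral form \eqref{eqn:sde_additive}, namely $X_V(t) = X_\alpha^0 + \int_0^t \mu(s, X_V(s))\diff s + \Sigma X_\gamma(t)$, with $G(t) := \Sigma X_\gamma(t)$ as in the proof of Theorem~\ref{thm:euler_tv_convergence}. Let $K := \sup_t K(t) < \infty$ be a bound for the linear-growth constant of $\mu$.

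\emph{Step 1 (maximal moment).} The Gr\"onwall argument from Step~2 of the proof of Theorem~\ref{thm:euler_tv_convergence} applies pathwise with the random initial value $X_\alpha^0$ in place of $x_0$. It gives
\begin{equation*}
\sup_{t\le T}\|X_V(t)\| \le \bigl(\|X_\alpha^0\| + KT + \sup_{s\le T}\|G(s)\|\bigr)e^{KT}.
\end{equation*}
The right-hand side is integrable, and even square-integrable. Indeed, $\EE[\|X_\alpha^0\|^2]<\infty$ holds by Assumption~\ref{ass:additive_noise}. Moreover, $\EE[\sup_s\|G(s)\|^2]\le 4\,\mathrm{tr}(\Sigma\Sigma^\top)T$ by Doob's maximal inequality, exactly as in Step~5 of that proof. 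This yields $\EE[\sup_t\|X_V(t)\|]<\infty$, together with the square-integrable version that we need next.

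\emph{Step 2 (quasimartingale property).} Each component $X_v$ is $\Fcal^V$-adapted, c\`adl\`ag (in fact continuous) and integrable. Write $X_v = X_v(0) + A_v + G_v$, where $A_v(t) := \int_0^t\mu_v(s,X_V(s))\diff s$. The process $G_v = \Sigma_{v\cdot}X_\gamma$ is a square-integrable $\Fcal^W$-martingale. It is $\Fcal^V$-adapted, since $G = X_V - X_\alpha^0 - A$ and $X_\alpha^0 = X_V(0)$. Hence it is an $\Fcal^V$-martingale, by the tower property as in the Remark following Definition~\ref{def:local_independence} (\citealp{follmer2011local}). Consequently, for any partition $\tau$,
\begin{equation*}
\EE\bigl[X_v(t_{i+1})-X_v(t_i)\given \Fcal^V_{t_i}\bigr] = \EE\Bigl[\int_{t_i}^{t_{i+1}}\mu_v(s,X_V(s))\diff s \Bigm| \Fcal^V_{t_i}\Bigr].
\end{equation*}
Using $|\EE[Y\given\Fcal]|\le\EE[|Y|\given\Fcal]$ and the tower property, we obtain
\begin{equation*}
\mathrm{Var}_\tau(X_v,\Fcal^V)\le\EE\Bigl[\int_0^T|\mu_v(s,X_V(s))|\diff s\Bigr]\le KT\bigl(1+\EE[\sup_t\|X_V(t)\|]\bigr).
\end{equation*}
This bound is uniform in $\tau$, and it is finite by Step~1.

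The main point needing care is the martingale property of $G_v$ under the smaller filtration $\Fcal^V$: it needs $G_v(t)$ to be $\Fcal^V_t$-measurable, which follows from the representation $G = X_V - X_V(0) - A$ above. A second point is the integrability of $G_v(t)$, which is guaranteed because $G_v$ is Gaussian. The rest of the argument is routine.
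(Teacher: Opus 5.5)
Your proposal is correct and follows essentially the paper's proof, with the two steps in the opposite order: Gr\"onwall plus Doob's maximal inequality give $\EE[\sup_t\|X_V(t)\|]<\infty$, and vanishing conditional Brownian increments bound $\mathrm{Var}_\tau(X_V,\Fcal^V)$ by $\EE[\int_0^T\|\mu(s,X_V(s))\|\diff s]$ uniformly in $\tau$. The one ingredient you leave implicit is $\Fcal^V_t\subseteq\Fcal^W_t$, which your tower step needs and which the paper gets from $X_V = I^{[V]}(X_W)$ with $I^{[V]}$ adapted; given this inclusion, $\EE[G(t_{i+1})-G(t_i)\given\Fcal^V_{t_i}]=0$ follows directly, so the $\Fcal^V$-adaptedness of $G$ that you single out as the main point is not actually needed.
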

\begin{proof}
    By Theorems \ref{thm:ess_solvable_under_additive} and \ref{thm:ess_solvable_under_ass}, $\Dcal$ is essentially uniquely solvable w.r.t.\ $V$, so $X_V = I^{[V]}(X_W)$ a.s.\ for a measurable adapted solution function $I^{[V]}$, giving $\Fcal_t^V \subseteq \Fcal_t^W$ for all $t$. Since $X_\gamma$ is a Brownian motion independent of the remaining exogenous processes, its increments after $t$ are independent of $\Fcal_t^W$, so $X_\gamma$ is an $\Fcal^W$-Brownian motion, and by the tower property, for $s\leq t$,
    \begin{equation*}
        \EE\bigl[\Sigma(X_\gamma(t) - X_\gamma(s)) \given \Fcal_s^V\bigr] = \EE\bigl[\EE[\Sigma(X_\gamma(t) - X_\gamma(s))\given \Fcal_s^W] \given \Fcal_s^V\bigr] = 0.
    \end{equation*}
    Hence for any finite partition $\tau = \{t_0, \ldots, t_n\}$ of $[0,T]$, using \eqref{eqn:sde_additive} and the conditional Jensen inequality,
    \begin{align*}
        \sum_{i} \bigl\|\EE[X_V(t_{i+1}) - X_V(t_i) \given \Fcal^V_{t_i}]\bigr\| & = \sum_i \Bigl\|\EE\Bigl[\int_{t_i}^{t_{i+1}}\mu(s, X_V(s))\diff s \Bigm| \Fcal^V_{t_i}\Bigr]\Bigr\| \\
                                                                                 & \leq \sum_i \EE\Bigl[\int_{t_i}^{t_{i+1}}\|\mu(s, X_V(s))\|\diff s \Bigm| \Fcal^V_{t_i}\Bigr].
    \end{align*}
    Taking expectations and using the tower property, the intervals of $\tau$ reassemble into $[0,T]$, so by the linear-growth condition
    \begin{equation*}
        \mathrm{Var}_\tau(X_V, \Fcal^V) \leq \EE\Bigl[\int_0^T \|\mu(s, X_V(s))\|\diff s\Bigr] \leq KT\bigl(1 + \EE[\sup\nolimits_{t\in[0,T]}\|X_V(t)\|]\bigr) < \infty.
    \end{equation*}
    As this bound does not depend on $\tau$, we have $\mathrm{Var}(X_V, \Fcal^V) < \infty$, and $\EE[\|X_V(t)\|] < \infty$ for every $t$, so $X_V$ is a quasimartingale with respect to $\Fcal^V$.

    For the integrability condition, writing $K := \sup_{0\leq t\leq T}K(t) < \infty$ for the linear-growth constant of $\mu$ (finite as $K$ is c\`adl\`ag on a compact interval), Gr\"onwall's inequality gives $\sup_{0\leq t\leq T}\|X_V(t)\| \leq (\|X_\alpha^0\| + KT + \sup_{0\leq s\leq T}\|\Sigma X_\gamma(s)\|)e^{KT}$ (as in Step 2 of the proof of Theorem~\ref{thm:euler_tv_convergence}). Since $\EE[\|X_\alpha^0\|^2]<\infty$ by Assumption~\ref{ass:additive_noise} and $\EE[\sup_{0\leq s\leq T}\|\Sigma X_\gamma(s)\|^2]<\infty$ by Doob's maximal inequality (\citealp{protter2005stochastic}, Theorem I.20) applied to the nonnegative submartingale $\|\Sigma X_\gamma\|$ (a convex function of the continuous martingale $\Sigma X_\gamma$), we obtain $\EE[\sup_{0\leq t\leq T}\|X_V(t)\|] < \infty$.
\end{proof}
\end{appendix}

\bibliographystyle{apalike}
\bibliography{refs}       

\begin{thebibliography}{}

\bibitem[Aalen, 1978]{aalen1978nonparametric}
Aalen, O. (1978).
\newblock Nonparametric {{Inference}} for a {{Family}} of {{Counting
  Processes}}.
\newblock {\em The Annals of Statistics}, 6(4):701--726.

\bibitem[Aalen et~al., 2016]{aalen2016can}
Aalen, O., R{\o}ysland, K., Gran, J., Kouyos, R., and Lange, T. (2016).
\newblock Can we believe the {{DAGs}}? {{A}} comment on the relationship
  between causal {{DAGs}} and mechanisms.
\newblock {\em Statistical Methods in Medical Research}, 25(5):2294--2314.

\bibitem[Aalen et~al., 2008]{aalen2008survival}
Aalen, O.~O., Borgan, {\O}., and Gjessing, H.~K. (2008).
\newblock {\em Survival and Event History Analysis: A Process Point of View}.
\newblock Statistics for Biology and Health. Springer, New York.

\bibitem[Aalen et~al., 2012]{aalen2012causality}
Aalen, O.~O., R{\o}ysland, K., Gran, J.~M., and Ledergerber, B. (2012).
\newblock Causality, mediation and time: A dynamic viewpoint.
\newblock {\em Journal of the Royal Statistical Society: Series A (Statistics
  in Society)}, 175(4):831--861.

\bibitem[Andrews et~al., 2020]{andrews2020completeness}
Andrews, B., Spirtes, P., and Cooper, G.~F. (2020).
\newblock On the {{Completeness}} of {{Causal Discovery}} in the {{Presence}}
  of {{Latent Confounding}} with {{Tiered Background Knowledge}}.
\newblock In {\em Proceedings of the {{Twenty Third International Conference}}
  on {{Artificial Intelligence}} and {{Statistics}}}, pages 4002--4011. PMLR.

\bibitem[Assaad et~al., 2022]{assaad2022survey}
Assaad, C., Devijver, E., and Gaussier, E. (2022).
\newblock Survey and {{Evaluation}} of {{Causal Discovery Methods}} for {{Time
  Series}}.
\newblock {\em Journal of Artificial Intelligence Research}, 73:767--819.

\bibitem[Blom and Mooij, 2023]{blom2023causality}
Blom, T. and Mooij, J. (2023).
\newblock Causality and independence in perfectly adapted dynamical systems.
\newblock {\em Journal of Causal Inference}, 11(1).

\bibitem[Boeken and Mooij, 2024]{boeken2024dynamic}
Boeken, P. and Mooij, J.~M. (2024).
\newblock Dynamic {{Structural Causal Models}}.

\bibitem[Boeken et~al., 2026]{boeken2026topological}
Boeken, P., Skapinakis, E., Genin, K., and Mooij, J.~M. (2026).
\newblock Topological {{Criteria}} for {{Hypothesis Testing}} with
  {{Finite-Precision Measurements}}.

\bibitem[Bongers et~al., 2022]{bongers2022causal}
Bongers, S., Blom, T., and Mooij, J. (2022).
\newblock Causal {{Modeling}} of {{Dynamical Systems}}.

\bibitem[Bongers et~al., 2021]{bongers2021foundations}
Bongers, S., Forr{\'e}, P., Peters, J., and Mooij, J. (2021).
\newblock Foundations of structural causal models with cycles and latent
  variables.
\newblock {\em The Annals of Statistics}, 49(5).

\bibitem[Bowsher, 2010]{bowsher2010stochastic}
Bowsher, C.~G. (2010).
\newblock Stochastic kinetic models: {{Dynamic}} independence, modularity and
  graphs.
\newblock {\em The Annals of Statistics}, 38(4):2242--2281.

\bibitem[Br{\'e}maud and Yor, 1978]{bremaud1978changes}
Br{\'e}maud, P. and Yor, M. (1978).
\newblock Changes of filtrations and of probability measures.
\newblock {\em Zeitschrift f{\"u}r Wahrscheinlichkeitstheorie und Verwandte
  Gebiete}, 45(4):269--295.

\bibitem[Br{\"u}ck et~al., 2026]{bruck2026graph}
Br{\"u}ck, F., Engelke, S., and Volgushev, S. (2026).
\newblock Graph structure learning for stable processes.

\bibitem[Christgau et~al., 2023]{christgau2023nonparametric}
Christgau, A., Petersen, L., and Hansen, N. (2023).
\newblock Nonparametric conditional local independence testing.
\newblock {\em The Annals of Statistics}, 51(5).

\bibitem[Comte and Renault, 1996]{comte1996noncausality}
Comte, F. and Renault, E. (1996).
\newblock Noncausality in {{Continuous Time Models}}.
\newblock {\em Econometric Theory}, 12(2):215--256.

\bibitem[Cooper, 1997]{cooper1997simple}
Cooper, G. (1997).
\newblock A {{Simple Constraint-Based Algorithm}} for {{Efficiently Mining
  Observational Databases}} for {{Causal Relationships}}.
\newblock {\em Data Mining and Knowledge Discovery}.

\bibitem[Dawid, 2021]{dawid2021decisiontheoretic}
Dawid, P. (2021).
\newblock Decision-theoretic foundations for statistical causality.
\newblock {\em Journal of Causal Inference}, 9(1):39--77.

\bibitem[Dellacherie and Meyer, 1978]{dellacherie1978probabilities}
Dellacherie, C. and Meyer, P. (1978).
\newblock {\em Probabilities and Potential}.
\newblock Number~29 in North-{{Holland}} Mathematics Studies. Hermann [u.a.],
  Paris.

\bibitem[Didelez, 2008]{didelez2008graphical}
Didelez, V. (2008).
\newblock Graphical {{Models}} for {{Marked Point Processes Based}} on {{Local
  Independence}}.
\newblock {\em Journal of the Royal Statistical Society. Series B (Statistical
  Methodology)}, 70(1):245--264.

\bibitem[Eichler, 2012]{eichler2012graphical}
Eichler, M. (2012).
\newblock Graphical modelling of multivariate time series.
\newblock {\em Probability Theory and Related Fields}, 153(1):233--268.

\bibitem[Eichler and Didelez, 2010]{eichler2010granger}
Eichler, M. and Didelez, V. (2010).
\newblock On {{Granger}} causality and the effect of interventions in time
  series.
\newblock {\em Lifetime Data Analysis}, 16(1):3--32.

\bibitem[Elowitz and Leibler, 2000]{elowitz2000synthetic}
Elowitz, M.~B. and Leibler, S. (2000).
\newblock A synthetic oscillatory network of transcriptional regulators.
\newblock {\em Nature}, 403(6767):335--338.

\bibitem[Engelke et~al., 2024]{engelke2024levy}
Engelke, S., Ivanovs, J., and Th{\o}stesen, J. (2024).
\newblock L{\textbackslash}'evy graphical models.

\bibitem[Ferreira and Assaad, 2024]{ferreira2024identifying}
Ferreira, S. and Assaad, C. (2024).
\newblock Identifying macro conditional independencies and macro total effects
  in summary causal graphs with latent confounding.

\bibitem[Florens and Fougere, 1996]{florens1996noncausality}
Florens, J. and Fougere, D. (1996).
\newblock Noncausality in {{Continuous Time}}.
\newblock {\em Econometrica}, 64(5):1195--1212.

\bibitem[F{\"o}llmer and Protter, 2011]{follmer2011local}
F{\"o}llmer, H. and Protter, P. (2011).
\newblock Local martingales and filtration shrinkage.
\newblock {\em ESAIM: Probability and Statistics}, 15:S25--S38.

\bibitem[Forr{\'e} and Mooij, 2017]{forre2017markov}
Forr{\'e}, P. and Mooij, J. (2017).
\newblock Markov {{Properties}} for {{Graphical Models}} with {{Cycles}} and
  {{Latent Variables}}.

\bibitem[Forr{\'e} and Mooij, 2020]{forre2020causal}
Forr{\'e}, P. and Mooij, J. (2020).
\newblock Causal {{Calculus}} in the {{Presence}} of {{Cycles}}, {{Latent
  Confounders}} and {{Selection Bias}}.
\newblock In {\em PMLR}, pages 71--80. {PMLR}.

\bibitem[Forr{\'e} and Mooij, 2025]{forre2025mathematical}
Forr{\'e}, P. and Mooij, J. (2025).
\newblock A {{Mathematical Introduction}} to {{Causality}}.

\bibitem[Gill and Robins, 2001]{gill2001causal}
Gill, R. and Robins, J. (2001).
\newblock Causal {{Inference}} for {{Complex Longitudinal Data}}: {{The
  Continuous Case}}.
\newblock {\em The Annals of Statistics}, 29(6):1785--1811.

\bibitem[Granger, 1969]{granger1969investigating}
Granger, C. (1969).
\newblock Investigating {{Causal Relations}} by {{Econometric Models}} and
  {{Cross-spectral Methods}}.
\newblock {\em Econometrica}, 37(3):424--438.

\bibitem[Granger, 1980]{granger1980testing}
Granger, C. (1980).
\newblock Testing for causality: {{A}} personal viewpoint.
\newblock {\em Journal of Economic Dynamics and Control}, 2:329--352.

\bibitem[Guan et~al., 2024]{guan2024identifying}
Guan, V., Janssen, J., Rahmani, H., Warren, A., Zhang, S., Robeva, E., and
  Schiebinger, G. (2024).
\newblock Identifying {{Drift}}, {{Diffusion}}, and {{Causal Structure}} from
  {{Temporal Snapshots}}.

\bibitem[Hansen and Sokol, 2014]{hansen2014causal}
Hansen, N. and Sokol, A. (2014).
\newblock Causal interpretation of stochastic differential equations.
\newblock {\em Electronic Journal of Probability}, 19(none):1--24.

\bibitem[Jacod and Shiryaev, 2003]{jacod2003limit}
Jacod, J. and Shiryaev, A. (2003).
\newblock {\em Limit {{Theorems}} for {{Stochastic Processes}}}, volume 288 of
  {\em Grundlehren Der Mathematischen {{Wissenschaften}}}.
\newblock Springer Berlin Heidelberg, Berlin, Heidelberg.

\bibitem[Kallenberg, 1996]{kallenberg1996existence}
Kallenberg, O. (1996).
\newblock On the existence of universal functional solutions to classical
  {{SDE}}'s.
\newblock {\em The Annals of Probability}, 24(1).

\bibitem[Kallenberg, 2021]{kallenberg2021foundations}
Kallenberg, O. (2021).
\newblock {\em Foundations of {{Modern Probability}}}, volume~99 of {\em
  Probability {{Theory}} and {{Stochastic Modelling}}}.
\newblock Springer International Publishing, Cham.

\bibitem[Karandikar, 1995]{karandikar1995pathwise}
Karandikar, R.~L. (1995).
\newblock On pathwise stochastic integration.
\newblock {\em Stochastic Processes and their Applications}, 57(1):11--18.

\bibitem[Karatzas and Shreve, 1988]{karatzas1988brownian}
Karatzas, I. and Shreve, S. (1988).
\newblock {\em Brownian {{Motion}} and {{Stochastic Calculus}}}, volume 113 of
  {\em Graduate {{Texts}} in {{Mathematics}}}.
\newblock Springer US, New York, NY.

\bibitem[Laumann et~al., 2023]{laumann2023kernelbased}
Laumann, F., {von K{\"u}gelgen}, J., Park, J., Sch{\"o}lkopf, B., and Barahona,
  M. (2023).
\newblock Kernel-{{Based Independence Tests}} for {{Causal Structure Learning}}
  on {{Functional Data}}.
\newblock {\em Entropy}, 25(12):1597.

\bibitem[Lauritzen, 2024]{lauritzen2024total}
Lauritzen, S. (2024).
\newblock Total variation convergence preserves conditional independence.
\newblock {\em Statistics \& Probability Letters}, 214:110200.

\bibitem[Liptser and Shiryaev, 2001]{liptser2001statistics}
Liptser, R. and Shiryaev, A. (2001).
\newblock {\em Statistics of Random Processes: {{I}}: {{General}} Theory}.
\newblock Number~5 in Applications of {{Mathematics}}. Springer, Berlin,
  Germany ; Heidelberg, Germany.

\bibitem[Lok, 2008]{lok2008statistical}
Lok, J. (2008).
\newblock Statistical {{Modeling}} of {{Causal Effects}} in {{Continuous
  Time}}.
\newblock {\em The Annals of Statistics}, 36(3):1464--1507.

\bibitem[Lundborg et~al., 2022]{lundborg2022conditional}
Lundborg, A., Shah, R., and Peters, J. (2022).
\newblock Conditional {{Independence Testing}} in {{Hilbert Spaces}} with
  {{Applications}} to {{Functional Data Analysis}}.
\newblock {\em J. R. Stat. Soc. Ser. B Methodol.}, 84(5):1821--1850.

\bibitem[Lyons et~al., 2007]{lyons2007differential}
Lyons, T., Caruana, M., and L{\'e}vy, T. (2007).
\newblock {\em Differential {{Equations Driven}} by {{Rough Paths}}:
  {{{\'E}cole}} d'{{{\'E}t{\'e}}} de {{Probabilit{\'e}s}} de {{Saint-Flour
  XXXIV}} - 2004}, volume 1908 of {\em Lecture {{Notes}} in {{Mathematics}}}.
\newblock Springer Berlin Heidelberg, Berlin, Heidelberg.

\bibitem[Malinsky and Spirtes, 2018]{malinsky2018causala}
Malinsky, D. and Spirtes, P. (2018).
\newblock Causal {{Structure Learning}} from {{Multivariate Time Series}} in
  {{Settings}} with {{Unmeasured Confounding}}.
\newblock In {\em Proceedings of 2018 {{ACM SIGKDD Workshop}} on {{Causal
  Discovery}}}, pages 23--47. PMLR.

\bibitem[Mani, 2006]{mani2006bayesian}
Mani, S. (2006).
\newblock {\em A {{Bayesian Local Causal Discovery Framework}}}.
\newblock University of {{Pittsburgh ETD}}, University of Pittsburgh.

\bibitem[Manten et~al., 2024]{manten2024signature}
Manten, G., Casolo, C., Ferrucci, E., Mogensen, S., Salvi, C., and Kilbertus,
  N. (2024).
\newblock Signature {{Kernel Conditional Independence Tests}} in {{Causal
  Discovery}} for {{Stochastic Processes}}.

\bibitem[Manten et~al., 2025]{manten2025asymmetric}
Manten, G., Casolo, C., Mogensen, S., and Kilbertus, N. (2025).
\newblock An {{Asymmetric Independence Model}} for {{Causal Discovery}} on
  {{Path Spaces}}.

\bibitem[Mogensen and Hansen, 2022]{mogensen2022graphical}
Mogensen, S. and Hansen, N. (2022).
\newblock Graphical modeling of stochastic processes driven by correlated
  noise.
\newblock {\em Bernoulli}, 28(4).

\bibitem[Mogensen and Hansen, 2020]{mogensen2020markov}
Mogensen, S. and Hansen, N.~R. (2020).
\newblock Markov equivalence of marginalized local independence graphs.
\newblock {\em The Annals of Statistics}, 48(1).

\bibitem[Mogensen et~al., 2018]{mogensen2018causal}
Mogensen, S., Malinsky, D., and Hansen, N. (2018).
\newblock Causal {{Learning}} for {{Partially Observed Stochastic Dynamical
  Systems}}.

\bibitem[Mooij and Claassen, 2020]{mooij2020constraintbased}
Mooij, J. and Claassen, T. (2020).
\newblock Constraint-{{Based Causal Discovery}} using {{Partial Ancestral
  Graphs}} in the presence of {{Cycles}}.
\newblock In {\em UAI2020}, pages 1159--1168. PMLR.

\bibitem[Mooij et~al., 2013]{mooij2013ordinary}
Mooij, J., Janzing, D., and Sch{\"o}lkopf, B. (2013).
\newblock From {{Ordinary Differential Equations}} to {{Structural Causal
  Models}}: The deterministic case.

\bibitem[Mooij et~al., 2020]{mooij2020joint}
Mooij, J., Magliacane, S., and Claassen, T. (2020).
\newblock Joint causal inference from multiple contexts.
\newblock {\em The Journal of Machine Learning Research},
  21(1):99:3919--99:4026.

\bibitem[Nathaniel et~al., 2025]{nathaniel2025deep}
Nathaniel, J., Roesch, C., Buch, J., DeSantis, D., Rupe, A., Lamb, K., and
  Gentine, P. (2025).
\newblock Deep {{Koopman}} operator framework for causal discovery in nonlinear
  dynamical systems.

\bibitem[Neal, 2000]{neal2000deducing}
Neal, R.~M. (2000).
\newblock On {{Deducing Conditional Independence}} from d-{{Separation}} in
  {{Causal Graphs}} with {{Feedback}} ({{Research Note}}).
\newblock {\em Journal of Artificial Intelligence Research}, 12:87--91.

\bibitem[Niemiro, 2024]{niemiro2024causal}
Niemiro, W. (2024).
\newblock Causal graphs, composable stochastic processes and conditional
  independence.
\newblock {\em Applicationes Mathematicae}, pages 1--22.

\bibitem[Pearl, 1993]{pearl1993comment}
Pearl, J. (1993).
\newblock Comment: {{Graphical Models}}, {{Causality}} and {{Intervention}}.
\newblock {\em Statistical Science}, 8(3):266--269.

\bibitem[Pearl, 1995]{pearl1995causal}
Pearl, J. (1995).
\newblock Causal {{Diagrams}} for {{Empirical Research}}.
\newblock {\em Biometrika}, 82(4):669--688.

\bibitem[Pearl, 2009]{pearl2009causality}
Pearl, J. (2009).
\newblock {\em Causality}.
\newblock Cambridge University Press.

\bibitem[Pearl and Dechter, 1996]{pearl1996identifying}
Pearl, J. and Dechter, R. (1996).
\newblock Identifying independencies in causal graphs with feedback.
\newblock In {\em Proceedings of the {{Twelfth}} International Conference on
  {{Uncertainty}} in Artificial Intelligence}, {{UAI}}'96, pages 420--426, San
  Francisco, CA, USA. Morgan Kaufmann Publishers Inc.

\bibitem[Peters et~al., 2020]{peters2020causal}
Peters, J., Bauer, S., and Pfister, N. (2020).
\newblock Causal models for dynamical systems.

\bibitem[Peters et~al., 2013]{peters2013causal}
Peters, J., Janzing, D., and Sch{\"o}lkopf, B. (2013).
\newblock Causal {{Inference}} on {{Time Series}} using {{Restricted Structural
  Equation Models}}.
\newblock In {\em Advances in {{Neural Information Processing Systems}}},
  volume~26. Curran Associates, Inc.

\bibitem[Protter, 2005]{protter2005stochastic}
Protter, P. (2005).
\newblock {\em Stochastic {{Integration}} and {{Differential Equations}}},
  volume~21 of {\em Stochastic {{Modelling}} and {{Applied Probability}}}.
\newblock Springer Berlin Heidelberg, Berlin, Heidelberg.

\bibitem[Przyby{\l}owicz et~al., 2024]{przybylowicz2024skorohod}
Przyby{\l}owicz, P., Schwarz, V., Steinicke, A., and Sz{\"o}lgyenyi, M. (2024).
\newblock A {{Skorohod}} measurable universal functional representation of
  solutions to semimartingale {{SDEs}}.
\newblock {\em Stochastic Analysis and Applications}, 42(6):1137--1155.

\bibitem[Reisach et~al., 2025]{reisach2025case}
Reisach, A.~G., Su{\'a}rez, A., Weichwald, S., and Chambaz, A. (2025).
\newblock The {{Case}} for {{Time}} in {{Causal DAGs}}.

\bibitem[Reiter et~al., 2024]{reiter2024causal}
Reiter, N., Gerhardus, A., Wahl, J., and Runge, J. (2024).
\newblock Causal {{Inference}} on {{Process Graphs}}, {{Part I}}: {{The
  Structural Equation Process Representation}}.

\bibitem[Richardson, 1996]{richardson1996discovery}
Richardson, T. (1996).
\newblock A discovery algorithm for directed cyclic graphs.
\newblock In {\em Proceedings of the {{Twelfth}} International Conference on
  {{Uncertainty}} in Artificial Intelligence}, {{UAI}}'96, pages 454--461, San
  Francisco, CA, USA. Morgan Kaufmann Publishers Inc.

\bibitem[R{\o}ysland et~al., 2024]{roysland2024graphical}
R{\o}ysland, K., Ryalen, P., Nyg{\aa}rd, M., and Didelez, V. (2024).
\newblock Graphical criteria for the identification of marginal causal effects
  in continuous-time survival and event-history analyses.

\bibitem[Rubenstein et~al., 2018]{rubenstein2018deterministic}
Rubenstein, P., Bongers, S., Sch{\"o}lkopf, B., and Mooij, J. (2018).
\newblock From {{Deterministic ODEs}} to {{Dynamic Structural Causal Models}}.
\newblock In {\em Conference on {{Uncertainty}} in {{Artificial
  Intelligence}}}.

\bibitem[Runge et~al., 2019]{runge2019detecting}
Runge, J., Nowack, P., Kretschmer, M., Flaxman, S., and Sejdinovic, D. (2019).
\newblock Detecting and quantifying causal associations in large nonlinear time
  series datasets.
\newblock {\em Science Advances}, 5(11):eaau4996.

\bibitem[Ryalen et~al., 2026]{ryalen2026causal}
Ryalen, P.~C., Stensrud, M.~J., and R{\o}ysland, K. (2026).
\newblock On causal inference with marked point process data.

\bibitem[Rytgaard et~al., 2022]{rytgaard2022continuoustime}
Rytgaard, H., Gerds, T., and Van Der~Laan, M. (2022).
\newblock Continuous-time targeted minimum loss-based estimation of
  intervention-specific mean outcomes.
\newblock {\em The Annals of Statistics}, 50(5).

\bibitem[Scheff\'e, 1947]{scheffe1947useful}
Scheff\'e, H. (1947).
\newblock A {{Useful Convergence Theorem}} for {{Probability Distributions}}.
\newblock {\em The Annals of Mathematical Statistics}, 18(3):434--438.

\bibitem[Schwank and Drton, 2026]{schwank2026nonparametric}
Schwank, R. and Drton, M. (2026).
\newblock Non-parametric recovery of causal diffusion mechanisms from
  steady-state observations.

\bibitem[Schweder, 1970]{schweder1970composable}
Schweder, T. (1970).
\newblock Composable {{Markov Processes}}.
\newblock {\em Journal of Applied Probability}, 7(2):400--410.

\bibitem[Skorokhod, 1956]{skorokhod1956limit}
Skorokhod, A. (1956).
\newblock Limit {{Theorems}} for {{Stochastic Processes}}.
\newblock {\em Theory of Probability \& Its Applications}, 1(3):261--290.

\bibitem[Spirtes, 1994]{spirtes1994conditional}
Spirtes, P. (1994).
\newblock Conditional independence in directed cyclic graphical models for
  feedback.
\newblock {\em Technical Report CMU-PHIL-54, Carnegie Mellon University}.

\bibitem[Spirtes, 1995]{spirtes1995directed}
Spirtes, P. (1995).
\newblock Directed cyclic graphical representations of feedback models.
\newblock In {\em Proceedings of the {{Eleventh Conference}} on {{Uncertainty}}
  in {{Artificial Intelligence}} ({{UAI-95}})}, pages 499--506.

\bibitem[Spirtes et~al., 1993]{spirtes1993causation}
Spirtes, P., Glymour, C., and Scheines, R. (1993).
\newblock {\em Causation, {{Prediction}}, and {{Search}}}, volume~81 of {\em
  Lecture {{Notes}} in {{Statistics}}}.
\newblock Springer, New York, NY.

\bibitem[Spirtes et~al., 2000]{spirtes2000causation}
Spirtes, P., Glymour, C., and Scheines, R. (2000).
\newblock {\em Causation, Prediction, and Search}.
\newblock Adaptive Computation and Machine Learning. MIT Press, Cambridge,
  Mass, 2nd ed edition.

\bibitem[Spirtes et~al., 1999]{spirtes1999algorithm}
Spirtes, P., Meek, C., and Richardson, T. (1999).
\newblock An {{Algorithm}} for {{Causal Inference}} in the {{Presence}} of
  {{Latent Variables}} and {{Selection Bias}}.
\newblock In {\em Computation, {{Causation}} and {{Discovery}}}, pages
  211--252. The MIT Press.

\bibitem[Strobl, 2019]{strobl2019constraintbased}
Strobl, E. (2019).
\newblock A constraint-based algorithm for causal discovery with cycles, latent
  variables and selection bias.
\newblock {\em International Journal of Data Science and Analytics},
  8(1):33--56.

\bibitem[Weinberger, 2026]{weinberger2026homeostasis}
Weinberger, N. (2026).
\newblock Homeostasis and causal control.
\newblock {\em Biology \& Philosophy}, 41(2):18.

\bibitem[White and Lu, 2010]{white2010granger}
White, H. and Lu, X. (2010).
\newblock Granger {{Causality}} and {{Dynamic Structural Systems}}.
\newblock {\em Journal of Financial Econometrics}, 8(2):193--243.

\bibitem[Yamada and Watanabe, 1971]{yamada1971uniqueness}
Yamada, T. and Watanabe, S. (1971).
\newblock On the uniqueness of solutions of stochastic differential equations.
\newblock {\em Kyoto Journal of Mathematics}, 11(1).

\bibitem[Zhang, 2008]{zhang2008completeness}
Zhang, J. (2008).
\newblock On the completeness of orientation rules for causal discovery in the
  presence of latent confounders and selection bias.
\newblock {\em Artificial Intelligence}, 172(16-17):1873--1896.

\end{thebibliography}

\end{document}